\newtheorem*{theorema}{Theorem}
\newtheorem{prop}{Proposition}[section]
\newtheorem{definition}{Definition}[section]
\newtheorem{lemma}{Lemma}[section]
\newtheorem{sublemma}{Sublemma}[section]
\newtheorem{remark}{Remark}[section]
\newtheorem{cor}{Corollary}[section]
\newtheorem{claim}{Claim}[section]
\begin{document}

\author{Hiroki Takahasi *}

\
\title{Prevalent dynamics at the first bifurcation of
H\'enon-like families}
\thanks{* Department of Mathematics, Faculty of Science,
Kyoto University, Kyoto 606-8502, JAPAN\\
(e-mail address: takahasi@math.kyoto-u.ac.jp)}

\begin{abstract}
We study the dynamics of strongly dissipative H\'enon-like maps,
around the first bifurcation parameter $a^*$ at which the uniform
hyperbolicity is destroyed by the formation of tangencies inside
the limit set. We prove that $a^*$ is a full Lebesgue density
point of the set of parameters for which Lebesgue almost every
initial point diverges to infinity under positive iteration.
A key ingredient is that $a^*$
corresponds to ``non-recurrence of every critical point'',
reminiscent of Misiurewicz parameters in one-dimensional dynamics.
Adapting on the one hand Benedicks $\&$ Carleson's parameter
exclusion argument, we construct a set of ``good parameters''
having $a^*$ as a full density point. Adapting Benedicks $\&$
Viana's volume control argument on the other, we analyze Lebesgue
typical dynamics corresponding to these good parameters.
\end{abstract}
\maketitle

\section{Introduction}


One important problem in dynamics is to describe transitions from
structurally stable to unstable regimes. Equally important is to
describe how strange attractors are created. A prototypical
example intimately connected to these problems is given by the
H\'enon family
$$H_{a}\colon(x,y)\mapsto (1-ax^2+\sqrt{b}y, \pm\sqrt{b}x),\ \ 0<b\ll1.$$
For all large $a$, one gets a uniformly hyperbolic horseshoe
\cite{DN}, a paradigmatic example of structurally stable chaotic
systems. As one decreases $a$, the horseshoe loses its stability
at a bifurcation parameter, and then a nonuniformly hyperbolic
strange attractor is created, with positive probability in
parameter space \cite{BC91}. The aim of this paper is to shed some
light on the process of this sort of transition from horseshoes to
strange attractors.

We work within a framework set up by Palis for studying
bifurcations of diffeomorphisms:
 consider arcs of diffeomorphisms losing their
 stability through generic bifurcations, and analyze which dynamical
phenomena are more frequently displayed (in the sense of Lebesgue
measure in parameter space) in the sequel of the bifurcation. More
precisely, let $(\varphi_a)$ be a parametrized family of
diffeomorphisms which undergoes a first bifurcation at $a=a^*$,
i.e., $\varphi_a$ is structurally stable for $a>a^*$ and
$\varphi_{a^*}$ has a cycle. We assume $(\varphi_a)$ unfolds the
bifurcation generically. A dynamical phenomenon $\mathcal P$ is
{\it prevalent} at $a^*$ if
$$\liminf_{\varepsilon\to+0} \varepsilon^{-1}{\rm Leb}
(\{a\in[a^*-\varepsilon,a^*]\colon \varphi_{a} \text{ displays
$\mathcal P$}\})>0.$$

This framework originates in the work of Newhouse and Palis
\cite{NP}, on the frequency of bifurcation sets in the unfoldings
of homoclinic tangencies. In that paper, diffeomorphisms before
the first bifurcation are Morse-Smale. Palis and Takens
\cite{PT0,PT1,PT2}, inspired by works of Newhouse, studied the
prevalence of uniform hyperbolicity in arcs of diffeomorphisms for
which the non-wandering set of the diffeomorphism  at the
bifurcation is a union of a non-trivial basic set of saddle type
and an orbit of tangency. In opposite direction, the frequency of
non-hyperbolicity was studied by Palis and Yoccoz \cite{PY1,PY2,PY3}.

For the H\'enon family, the first bifurcation where the horseshoe
ceases to be stable corresponds to the formation of homoclinic or
heteroclinic tangencies \cite{BS1}. This tangency is quadratic,
and $(H_a)_a$
unfolds the tangency generically \cite{BS2}. The orbit of the
tangency is accumulated by transverse homoclinic points, and hence
contained in the limit set. In \cite{CLR}, all these statements
are extended
 to H\'enon-like families, a perturbation of the H\'enon
family (see Section 2 for a precise definition).

This sort of bifurcation is completely different from the one
treated in \cite{PT0,PT1,PT2,PY1,PY2,PY3}. A key aspect of models
treated in these papers is that the orbit of tangency at the first
bifurcation is not contained in the limit set. This implies a
global control on new orbits added to the underlying basic set,
and moreover allows one to use its invariant foliations to
translate dynamical problems to the problem on how two Cantor sets
intersect each other. This argument is not viable, if the orbit of
tangency, responsible for the loss of the stability of the system,
is contained in the limit set, as in the case of H\'enon-like
families. Let us call such a bifurcation an {\it internal tangency
bifurcation}.

For an H\'enon-like family $(f_{a})$, we aim to describe changes
in the set $$K_{a}=\left\{ z\in\mathbb R^2\colon
\{f_{a}^nz\}_{n\in\mathbb Z} \text{ is bounded}\right\}.$$ By a result of
\cite{CLR}, there is a parameter $a^*$ such that $K_a$
is a hyperbolic set for $a>a^*$, and $(f_a)_a$ unfolds a quadratic
tangency at $a=a^*$ generically. This suggests that the structure
of $K_a$ depends in a very discontinuous way upon $a$. For
instance, $a^*$ is accumulated from left by:
$a$-intervals for which $f_{a}$ has sinks \cite{AY83,GS};
sets with positive Lebesgue measure for which $f_{a}$
has nonuniformly hyperbolic attractors \cite{MV93}, etc. A
consequence of our theorem is that the frequency of such
parameters tends to zero as $a\to a^*$. Let
$$K_{a}^+=\left\{ z\in\mathbb R^2\colon \{f_{a}^nz\}_{n\geq0}
\text{ is bounded}\right\}.$$

\begin{figure}
\begin{center}
\input{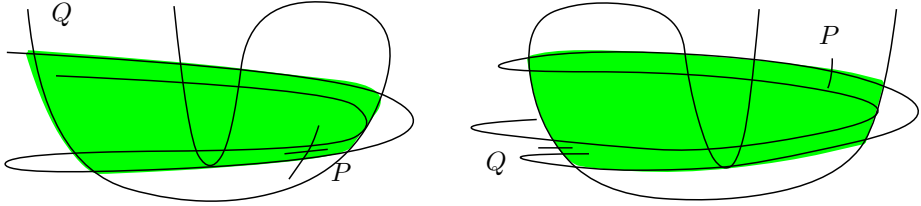}
\caption{Manifold organization for $a=a^*$. There exist two hyperbolic fixed saddles $P$, $Q$
near $(1/2,0)$, $(-1,0)$ correspondingly.
In the orientation preserving case (left),
$W^u(Q)$ meets $W^s(Q)$ tangencially.
In the orientation reversing case (right), $W^u(P)$ meets $W^s(Q)$
tangencially. The shaded regions represent $R_0$ (see Sect.2.1).}
\label{tangency}
\end{center}
\end{figure}

\begin{theorema}
For an H\'enon-like family $(f_{a})$
there exists
a set $\Delta$ of $a$-values such that:
\smallskip

{\rm (a)} $\lim_{\varepsilon\to+0}\varepsilon^{-1}{{\rm
Leb}(\Delta\cap[a^*-\varepsilon,a^*])}=1$;
\smallskip

{\rm (b)} if $a\in \Delta$, then $K_{a}^+$ has zero Lebesgue
measure.

{\rm (c)} if $a\in\Delta$, then $f_a$ is transitive on $K_a$.
\end{theorema}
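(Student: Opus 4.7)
The plan combines two classical technologies: a Benedicks--Carleson type parameter exclusion, which produces $\Delta$ and yields the density statement (a), and a Benedicks--Viana type volume control, which delivers (b). Item (c) will follow from transitivity of $K_{a^*}$ together with hyperbolic continuation for nearby parameters.

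\emph{Construction of $\Delta$ and (a).} The starting point is that $a^*$ is a Misiurewicz-like parameter: every critical point is non-recurrent, yielding uniform hyperbolic estimates on $K_{a^*}$ outside any fixed neighborhood of the critical orbit, and uniformly bounded binding times. For $a$ slightly below $a^*$ these estimates persist up to a time scale that grows as $|a-a^*|\to 0$. I would inductively define a nested family $\Delta_1\supset\Delta_2\supset\cdots$ inside $[a^*-\varepsilon,a^*]$ by keeping, at stage $n$, those parameters for which the critical orbit satisfies a basic assumption $\mathrm{dist}(\zeta_k,\mathcal C)\ge e^{-\alpha k}$ and an exponential expansion bound $\|Df_a^k(f_a\zeta_0)\|\ge e^{ck}$ for all $k\le n$. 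A distortion estimate combined with a translation-in-parameter argument would bound the stage-$n$ exclusion by $e^{-\beta n}\varepsilon$ for some $\beta>0$; summing over $n\ge N(\varepsilon)$ with $N(\varepsilon)\to\infty$ as $\varepsilon\to 0$ gives $\mathrm{Leb}([a^*-\varepsilon,a^*]\setminus\Delta)=o(\varepsilon)$, i.e., (a).

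\emph{Items (b) and (c).} For $a\in\Delta$, the parameter exclusion delivers a nonuniformly hyperbolic structure: exponential expansion of unstable directions outside a critical region and stable manifolds through deep-return points. Adapting the Benedicks--Viana volume control, I would partition a large bounded region $R\supset K_a^+$ by the itineraries of $f_a^n$ relative to this critical region, show that each cell has area contracting by a factor $\le\lambda<1$ per return, and telescope to obtain $\mathrm{Leb}\{z\in R:f_a^k z\in R,\ k\le n\}\to 0$, giving (b). For (c), \cite{CLR} already shows that the tangency orbit at $a^*$ is accumulated by transverse homoclinic points, so $K_{a^*}$ is transitive; for $a\in\Delta$ the hyperbolic continuation of the horseshoe persists off a shrinking neighborhood of the critical orbit, while folds of $W^u(Q)$ (or $W^u(P)$) continue to accumulate on $K_a$, and combining these yields transitivity of $f_a|_{K_a}$.

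\emph{Main obstacle.} The most delicate step is the parameter exclusion itself. Because the tangency at $a^*$ sits \emph{inside} the limit set (an internal tangency bifurcation), the critical orbit for $a<a^*$ interacts nontrivially with other pieces of $K_a$, and the Cantor-set intersection technique used in \cite{PT1,PT2,PY1,PY2,PY3} is not available. One must track simultaneously the binding of the critical orbit to itself and its avoidance of the rest of $K_a$, fusing the BC induction with the hyperbolic structure on $K_{a^*}$ guaranteed by Misiurewicz-type non-recurrence, and controlling the geometric interaction of the critical folds with the horseshoe's invariant foliations; the passage to the two-dimensional setting also forces one to replace the discrete critical point of one-dimensional theory by a family of critical approximations along which careful binding and distortion estimates must be carried out.
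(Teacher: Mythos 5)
Your outline for (a) and (b) matches the paper's strategy: a Benedicks--Carleson exclusion driven by the Misiurewicz-like property of $f_{a^*}$ (Proposition 4.1 in the paper, which says every critical approximation of $f_{a^*}$ is non-recurrent) yields $\Delta$ with exponentially small stage-$n$ exclusion $\varepsilon e^{-\alpha n/8}$ and a starting index $n_0(\varepsilon)\to\infty$, giving the full density in (a); and a Benedicks--Viana volume control, partitioning by close-return combinatorics and telescoping area contraction, gives (b). You correctly flag the two-dimensional subtleties (non-uniqueness of critical approximations and binding points) as the hard part.

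However, your argument for (c) is genuinely different from the paper's and has a gap. The paper does \emph{not} obtain transitivity by continuing the horseshoe of $f_{a^*}$: it derives (c) as a quick corollary of (b). Concretely, with $H(Q)$ the closure of transverse homoclinic points of $Q$, one has $H(Q)\subset K$ for free; for the reverse inclusion, take $z\in K$ and any open $U\ni z$. Since $|U\cap K^+|=0$ by (b), Lebesgue-a.e.\ point of $U$ eventually leaves $R_0$, forcing $U$ to intersect $W^s(Q)$; hence $W^s(Q)$ is dense in $K$, and the Inclination Lemma upgrades this to density of transverse homoclinic points, so $K\subset H(Q)$. Your proposed mechanism --- hyperbolic continuation of the horseshoe off a shrinking neighborhood of the critical orbit, plus persistence of folds of $W^u$ accumulating on $K_a$ --- does not by itself exclude pieces of $K_a$ that are dynamically disconnected from the continued horseshoe. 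For $a<a^*$ the unfolding can a priori create sinks or additional basic sets (and on other parameter intervals it does); the continuation argument controls only the continuation of $K_{a^*}$, not all of $K_a$. To rule out such extra invariant pieces you need the measure statement (b) (a sink's basin would have positive Lebesgue measure inside $K^+$), so (c) should be derived from (b) rather than argued independently by continuity from $a^*$.
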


To grasp the meanings of the theorem, it is worthwhile to recall
Jakobson's theorem \cite{Jak81} for the quadratic family $x\to
1-ax^2$, which states that $a=2$ is a (one-sided) full Lebesgue
density point of the set of parameters corresponding to absolutely
continuous invariant probability measures. These measures allow
one to statistically predict the asymptotic ``fate" of Lebesgue
almost every initial conditions. For $a>2$, the orbit of the
critical point $x=0$ is escaping, and thus the invariant set is
uniformly hyperbolic. In other words, $a=2$ is a first bifurcation
parameter of the quadratic family. Immediately right after the
bifurcation one mainly gets ``observable chaos". Our theorem
asserts a sharp contrast to this sort of transition. For
$a\in\Delta$, $K_a$ behaves like a basic set of saddle type, in
that Lebesgue typical points escape from any neighborhood of it.
This means that, physically observable complicated behaviors are
chaotic transient around $K_a$, not
sustained in time. 

This striking difference at the first bifurcation stems from a
simple fact intrinsic to two-dimension: at the parameter $a^*$,
the unstable manifold of the saddle fixed point(s) is not confined
in any bounded region. Indeed, one key step in the proof of the
theorem is to show that, for carefully chosen parameters, the
unstable manifold intersects $K_a^+$ in a set with zero Lebesgue
measure 
on the manifold.

By the continuous dependence of invariant manifolds on parameter,
one can take a parameter $a'<a^*$ such that $W^u(P)$ is unbounded
for $a'<a<a^*$. Let $a^{**}$ denote the smallest with this
property. Our parameter set $\Delta$ is contained in
$(a^{**},a^*]$. Benedicks and Carleson \cite{BC91}, Mora and Viana
\cite{MV93} constructed a set of $a$-values near $2$,
corresponding to maps for which the closure of $W^u(P)$ is a
nonuniformly hyperbolic strange attractor. Their parameter sets
are at the left of $a^{**}$. Figure 2 indicates a landscape in the
$(a,b)$-plane (as usual, $b$ controls the closeness to the
quadratic family, see (\ref{form0})). In the orientation
preserving case, $a^{**}$ corresponds to the tangency between
$W^u(P)$ and $W^s(Q)$.


\begin{figure}[t]\label{fig1}
\begin{center}
\input{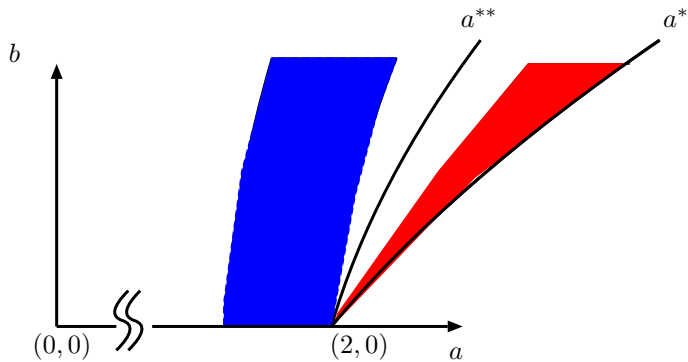}
\end{center}
\caption{Landscape in the $(a,b)$-plane near $(2,0)$. Parameter sets 
constructed in \cite{BC91,MV93,WY01} corresponding to
nonuniformly hyperbolic strange attractors are contained in 
the blue region. Our parameter set is contained in the red region.
The dynamics are uniformly hyperbolic at the right of the $a^*$-line.}
\end{figure}


In view of the theorem, one might speculate that maps in
$\{f_a\colon a\in\Delta\}$ would retain some weak form of
hyperbolicity, as a memory of the uniform hyperbolicity before the
bifurcation. For the moment, we do not know if the uniform
hyperbolicity is prevalent at $a^*$. To our knowledge, the only presently
known result on the prevalence of hyperbolicity in internal
tangency bifurcations is due to Rios \cite{R01}, on arcs of
surface diffeomorphisms destroying type 3 horseshoes
 (horseshoes with three symbols \cite{PT1}).

To prove the theorem, we build on and develop the machinery for
the analysis of strongly dissipative H\'enon maps
\cite{BC91,BV01,BY93,MV93,WY01}. Excluding undesirable parameters
inductively, we construct the parameter set $\Delta$ having $a^*$
as a full density point. We then investigate the dynamics of
$f\in\{f_a\colon a\in\Delta\}.$

A parameter exclusion argument in the spirit of Jakobson
\cite{Jak81}, Benedicks and Carleson \cite{BC85,BC91} was first
brought into the study of homoclinic bifurcations by Palis and
Yoccoz \cite{PY2,PY3}. As we mentioned in the beginning, the
underlying basic set at the bifurcation is used in a crucial way
there, and the same approach does not work in our context of
internal tangency bifurcation. In order to prove that $K_a^+$ has
zero Lebesgue measure, we develop the volume control argument of
Benedicks and Viana \cite{BV01}.

The rest of this paper consists of six sections and one appendix.
In Section 2 we analyze one fixed map, collecting
results from \cite{BC91,BV01,MV93,WY01} and \cite{T1} as far as we
need them.
In Section 3 we recall the procedure in \cite{T1} for finding suitable
critical approximations, used as guides for orbits falling in critical regions.

The parameter set $\Delta$ is constructed in Section 4. This part
closely follows the previous construction of the parameter set in
\cite{T1}, modulo the assertion that $a^*$ is a full density point
of $\Delta$. It is at this point where the characteristic of the
first bifurcation is crucial. We show that the
map $f_{a^*}$ behaves as if it is a ``two-dimensional Misiurewicz
map", in the sense that {\it every} critical approximation of it
 is non-recurrent. Then it is possible,
as in the one-dimensional case
\cite{BC85,Jak81}, to arrange the induction construction in such
a way that less and less proportions of parameters in
$[a^*-\varepsilon,a^*]$ are excluded as $\varepsilon\to +0$, and
the total fractions of $\Delta$ in the intervals get closer to
one. Consequently, $\Delta$ must have $a^*$ as a full density
point.

For the remaining three sections we consider the dynamics of one
fixed map $f\in\{f_a\colon a\in\Delta\}.$ In Section 5 we identify
an well-organized geometric structure of the unstable manifold,
close to the one identified by Wang and Young \cite{WY01}. Using
this structure, in Section 6 we analyze the dynamics on the
unstable manifold. Combining a classical large deviation argument
\cite{BC91,BY93} with a continuity argument from the first
bifurcation, we prove that $K^+$ intersects the unstable manifold
in a set with zero Lebesgue measure. In Section 7 we study the
dynamics on $K^+$. A careful adaptation of the volume control
argument \cite{BV01} together with the conclusion of Section 6
shows that $K^+$ cannot have positive two-dimensional Lebesgue
measure.






\section{Preliminaries}
In this section we analyze one fixed map $f$, collecting
results from \cite{BC91,BV01,MV93,WY01} and \cite{T1} as far as we
need them.

\subsection{H\'enon-like families}
We deal with a parameterized family 
$(f_a)$ of
diffeomorphisms on $\mathbb R^2$ such that $f=f_a$ has the form
\begin{equation}\label{form0}f_a\colon (x,y)\mapsto
(1-ax^2,0)+b\cdot \Phi(a,b,x,y),\end{equation}
where $(a,b)$ is close to $(2,0)$ and $\Phi$ is bounded, continuous,
$C^4$ in $(a,x,y)$.

Although $f$ is globally defined on $\mathbb R^2$, it is possible
to localize our consideration to a compact domain defined as
follows. If $f$ preserves orientation, let $W^u=W^u(Q)$.
Otherwise, let $W^u=W^u(P)$.
 Let $R_0$ denote the compact domain bounded by
$W^u$ and $W^s(Q)$, as indicated in Figure 1 in the case $a=a^*$.
By a result of
\cite{CLR}, points outside of $R_0$ escape to infinity either by
positive or negative iterations. Hence $K\subset R_0$ holds. Let
$D_0=\{(x,y)\notin R_0\colon x\geq\sqrt{2}\}$. It can be read out
from \cite{CLR} that $K^+\subset D_0\cup R_0$ holds. By the
obvious uniform hyperbolicity on $D_0$,
$K^+\cap D_0$ has zero Lebesgue measure. 
Therefore, for the proof of the theorem, it suffices to show that
$K^+\cap R_0$ has zero Lebesgue measure. To this end, the next
lemma allows us to focus on the dynamics inside $R_0$.

\begin{claim}
$K^+\cap R_0=\bigcap_{n\geq0}f^{-n}R_0$.
\end{claim}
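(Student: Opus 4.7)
The plan is to exploit the explicit form (\ref{form0}) together with the two input facts from \cite{CLR}, namely the inclusion $K^+\subseteq R_0\cup D_0$ and the trivial forward-invariance of $K^+$. The easy direction $\bigcap_{n\geq 0}f^{-n}R_0\subseteq K^+\cap R_0$ is pure compactness: any forward orbit that never leaves the compact set $R_0$ is automatically bounded, and taking $n=0$ also forces $z\in R_0$.

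For the opposite inclusion I would first isolate the geometric observation that $f(\mathbb{R}^2)\cap D_0=\emptyset$. The first coordinate of $f_a(x,y)$ equals $1-ax^2+b\Phi_1(a,b,x,y)$, and since $\Phi$ is bounded and $(a,b)$ is close to $(2,0)$, this quantity is at most $1+b\|\Phi\|_\infty$, which is strictly less than $\sqrt{2}$ for the small values of $b$ under consideration. Since $D_0\subseteq\{x\geq\sqrt{2}\}$, no image of $f$ can lie in $D_0$.

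This observation promotes $K^+\cap R_0$ to a forward-invariant set. Indeed, if $z\in K^+\cap R_0$ then forward-invariance of $K^+$ gives $f(z)\in K^+\subseteq R_0\cup D_0$, while the observation rules out $f(z)\in D_0$; hence $f(z)\in K^+\cap R_0$. Iterating gives $f^nz\in R_0$ for every $n\geq 0$, i.e.\ $z\in\bigcap_{n\geq 0}f^{-n}R_0$. There is essentially no obstacle to this argument; the content of the claim is simply that the parabolic folding in (\ref{form0}) caps the first coordinate of any $f$-image below the threshold $\sqrt{2}$ defining $D_0$, so once $K^+\subseteq R_0\cup D_0$ is granted, a bounded forward orbit starting in $R_0$ has no option but to remain in $R_0$ forever.
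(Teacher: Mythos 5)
Your proof is correct, and it follows a genuinely different route from the paper's. The paper traces where an orbit goes once it leaves $R_0$: it lands in the thin strip $D_1$ to the left of $W^s_{\rm loc}(Q)$, which is forward-invariant (cf.\ (M1)) and disjoint from $K^+$, and this gives a contradiction. You instead avoid identifying the exit region entirely: you observe that the first coordinate of any $f$-image is $1-ax^2+b\,\Phi_1\leq 1+b\|\Phi\|_\infty<\sqrt{2}$, so $f(\mathbb{R}^2)\cap D_0=\emptyset$, and then use the stated inclusion $K^+\subset R_0\cup D_0$ together with the automatic forward-invariance of $K^+$ to conclude $f(K^+\cap R_0)\subset K^+\cap R_0$. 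Your version buys a cleaner, more self-contained deduction from the CLR inclusion, needing only the algebraic form (\ref{form0}) rather than any geometric bookkeeping about what happens beyond $W^s(Q)$; the paper's version is more directly tied to the picture in Figure 1 and to the modification (M1) set up for the localized map $\tilde f$. One small inaccuracy: forward-invariance of $K^+$ is immediate from its definition and is not an ``input fact from \cite{CLR}'' as you phrase it, but this does not affect the argument.
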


\begin{proof}
Let $z\in K^+\cap R_0$. Suppose that
 $z\notin f^{-n}R_0$ holds for some $n>0.$
Let $n_0$ denote the smallest integer with this property. Then
$f^{n_0+1}z\in D_1$, where  $D_1$ is the set of points $(x,y)$
which is at the left of $W^s_{\rm loc}(Q)$ and $|y|\leq\sqrt{b}$.
As $D_1\cap K^+=\emptyset$, $z\notin K^+$ holds, which is a
contradiction. Consequently, $K^+\cap R_0\subset
\bigcap_{n\geq0}f^{-n}R_0$ holds. The reverse inclusion is
obvious.
\end{proof}

To structure the dynamics inside $R_0$, we construct {\it critical
points} and use them as {\it guides}. Unlike the attractor context
\cite{BC91, MV93, WY01}, the construction of critical points has
to take into consideration possible leaks out of $R_0$ under
iteration, and unbounded derivatives at infinity is a bit
problematic. To bypass this problem, we work with a new family
$(\tilde f_{a,b})$ which is obtained by modifying the quadratic
map $x\to1-ax^2$, and $\Phi$ in (\ref{form0}) so
that the following holds:
\medskip

\noindent (M1) $f=\tilde f$ on $R_0$ and $\tilde fD_1\subset
D_1$;

\noindent (M2) if $z\in R_0$ and $\tilde fz\notin R_0$, then for
any $n\geq1$ and a nonzero tangent vector $v$ at ${\tilde f}^nz$
with ${\rm slope}(v)\leq \sqrt{b}$, ${\rm slope}(D\tilde f v)\leq
\sqrt{b}$ and $\Vert D\tilde fv\Vert \geq 2\Vert v\Vert$;

 \noindent (M3) there exists a constant $C_0>0$ such that $\Vert
\partial^i f\Vert \leq C_0$ and $|\det D\tilde f|\leq C_0b$ on $D_1\cup
R_0\cup fR_0$ $(1\leq i\leq 4)$, where $\partial^i$ denotes any
partial derivative in $a,x,y$ of order $i$.


\subsection{Hyperbolic behavior}
Constructive constants are $\alpha,M,\delta,$ chosen in this order.
The $\alpha,\delta$ are small, and $M$ is a large integer.
Having chosen all of them, we choose sufficiently small $b$.
The letter $C$ denotes any generic constants
which depend only on $(f_a)$ restricted to $[-2,2]^2$.

From this point on, let us denote $\tilde f$ by $f$. We start with
basic properties of $f$. For $\delta>0$, define
$I(\delta)=\{(x,y)\in R_0\colon |x|<\delta\}.$ The next lemma
establishes a uniform hyperbolicity outside of $I(\delta)$. Not
only for orbits staying inside $R_0$, the hyperbolicity estimates
hold for orbits which leak out of $R_0$.
\begin{lemma}\label{outside} For any $\lambda_0\in(0,\log2)$ and $\delta>0$, the
following holds for $(a,b)$ close to $(2,0)$. Let $z\in R_0$ be
such that
 $z$, $fz,\cdots,f^{n-1}z\notin I(\delta)$,
and let $v$ be a tangent vector at $z$ with ${\rm
slope}(v)\leq \sqrt{b}$. Then:
\smallskip

\noindent{\rm (a)} ${\rm slope}(Df^nv)\leq \sqrt{b} \text{ and
}\Vert Df^nv\Vert \geq \delta e^{\lambda_0 n}\Vert v\Vert$;
\smallskip

\noindent {\rm (b)} if, in addition, $f^nz\in I(\delta)$, then
$\Vert Df^nv\Vert\geq e^{\lambda_0 n}\Vert v\Vert.$
\end{lemma}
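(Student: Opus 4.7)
The approach is to reduce the claim to the classical one-dimensional Misiurewicz-type expansion estimate for the quadratic family $\phi_a(x)=1-ax^2$ via a cone-field argument, and then to absorb iterates that leak out of $R_0$ using property~(M2). The cone step goes as follows. From (\ref{form0}) and (M3),
\[
 Df=\begin{pmatrix}-2ax+bA_{11}&bA_{12}\\ bA_{21}&bA_{22}\end{pmatrix},\qquad |A_{ij}|\le C_0.
\]
For any $v=(v_1,v_2)$ with $|v_2|\le\sqrt b\,|v_1|$ based at $z=(x,y)$ with $|x|\ge\delta$, a direct computation yields $|(Dfv)_1|\ge 2a|x|(1-C\sqrt{b}/\delta)|v_1|$ and $|(Dfv)_2|\le Cb|v_1|$, so $\operatorname{slope}(Dfv)\le Cb/(2a\delta)\le\sqrt b$ provided $b$ is small compared to $\delta^2$, and $\|Dfv\|\ge 2a|x|(1-C\sqrt b)\|v\|$. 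Hence the cone $\{\operatorname{slope}\le\sqrt b\}$ is forward invariant along orbit segments staying in $R_0\setminus I(\delta)$, and by~(M2) it is also preserved on iterates that have left $R_0$.

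Second, I invoke the classical one-dimensional estimate: for every $\lambda_0\in(0,\log 2)$, provided $a$ is close enough to $2$ and $\delta$ is small, any quadratic orbit $\bar x_0,\dots,\bar x_{n-1}$ avoiding $(-\delta,\delta)$ satisfies $\prod_{i=0}^{n-1}2a|\bar x_i|\ge\delta e^{\lambda_0 n}$, and the lower bound improves to $e^{\lambda_0 n}$ when $\bar x_n\in(-\delta,\delta)$. A bounded-distortion argument transfers this to the actual two-dimensional orbit: writing $x_i$ for the first coordinate of $f^iz$, the identity $x_{i+1}=\phi_a(x_i)+O(b)$ combined with $|x_i|\ge\delta$ gives, via comparison with a shadowing quadratic orbit,
\[
 \prod_{i=0}^{n-1}2a|x_i|\ge\tfrac12\delta e^{\lambda_0 n} \quad\text{in case (a),}\qquad \prod_{i=0}^{n-1}2a|x_i|\ge\tfrac12 e^{\lambda_0 n} \quad\text{in case (b),}
\]
for $b$ sufficiently small. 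Iterating the per-step estimate from the first paragraph then delivers $\|Df^n v\|\ge\delta e^{\lambda_0 n}\|v\|$ and $\ge e^{\lambda_0 n}\|v\|$ respectively, after absorbing the factor $(1-C\sqrt b)^n$ by applying the 1D estimate with exponent slightly larger than $\lambda_0$ and then choosing $b$ small enough that the deficit is swallowed.

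Finally, to handle iterates that leave $R_0$, let $k_0$ be the smallest index with $f^{k_0+1}z\notin R_0$ (if none exists, the argument above is already complete). By (M1) the orbit then stays in $D_1$ for all $i>k_0$, so (M2) gives cone preservation together with per-step norm doubling on the tail. Concatenating the estimate on $\{0,\dots,k_0\}$ with the $2^{n-k_0-1}$ factor, and using $2\ge e^{\lambda_0}$, yields the claimed bound. The main technical obstacle is the transfer step of the second paragraph: the sequence $(x_i)$ is not a genuine quadratic orbit, and for large $n$ it can drift from its shadowing orbit under the expanding one-dimensional dynamics, so controlling $\prod 2a|x_i|$ against $\prod 2a|\bar x_i|$ requires the bounded-distortion machinery of \cite{BC91,MV93}. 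Once that is in place, the cone geometry reduces the two-dimensional statement cleanly to the classical one-dimensional one.
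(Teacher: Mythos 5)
Your proof is correct and follows essentially the same route as the paper: on the portion of the orbit that stays in $R_0$, invoke closeness to the one-dimensional quadratic map (cone invariance plus the classical expansion estimate off $(-\delta,\delta)$), and on the tail that leaves $R_0$, use (M1)--(M2) to get cone preservation with uniform doubling, concatenating the two via $e^{\lambda_0}\le 2$. The paper compresses the first step to a single phrase (``from the closeness of $f$ to the top quadratic map''), while you correctly isolate the bounded-distortion/shadowing transfer as the point where real work hides; the paper also notes explicitly that (b) is vacuous whenever the orbit leaves $R_0$, which your remark that the tail stays in $D_1$ implies but does not state.
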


\begin{proof}
If $z,fz,\cdots,f^{n-1}z\in R_0,$ then (a) (b) follow from the
closeness of $f$ to the top quadratic map. Otherwise, the orbit
splits into the part $z,fz,\cdots,f^{k-1}z$ $(k<n)$ in $R_0$, and
the rest out of $R_0$. (b) is vacuous because of $f^nz\notin
I(\delta)$. We have
 ${\rm slope}(Df^kv)\leq \sqrt{b}
\text{ and }\Vert Df^kv\Vert \geq \delta e^{\lambda_0 k}\Vert
v\Vert$. Combining these with (M2) we obtain (a).
\end{proof}

\subsection{Quadratic behavior}
In the next lemma we assume $\gamma$ is a {\it horizontal curve},
that is,  a $C^2$-curve such that the slopes of its tangent
directions are $\leq1/10$ and the curvature is everywhere $\leq
1/10$. For $z\in\gamma$, let $t(z)$ denote any unit vector tangent
to $\gamma$ at $z$. In addition, we assume there exists
$\zeta\in\gamma$ such that ${\rm slope}(Dft(\zeta))\geq
C\sqrt{b}$. Let $e$ denote any unit vector tangent to $f\gamma$ at
$f\zeta$. Split $Dft(z)=A(z)
\left(\begin{smallmatrix}1\\0\end{smallmatrix}\right)+B(z)e$. Let
us agree that $a\approx b$ indicates that $C^{-1}\leq a/b\leq C$
holds for some $C\geq1$.
\begin{lemma}{\rm (\cite{T1} Lemma 2.2.)}\label{quadratic}
For all $z\in\gamma\cap I(\delta),$ $|z-\zeta|\approx |A(z)|$
 and $|B(z)|\leq C\sqrt{b}$.
\end{lemma}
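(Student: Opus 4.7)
The plan is to compute both sides of the decomposition explicitly from the H\'enon-like form (\ref{form0}) together with (M3), and then read off $A(z)$ and $B(z)$ by elementary calculus along $\gamma$.

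For any horizontal unit tangent $t(z)=(t_1,t_2)$, the bounds in (M3) give
\[
(Dft(z))_1=-2a\,x(z)\,t_1(z)+O(b),\qquad (Dft(z))_2=O(b),
\]
uniformly in $z\in R_0$. The key preparatory observation is that $e$, being a unit tangent to $f\gamma$ at $f\zeta$, is parallel to $Dft(\zeta)$; combined with the slope hypothesis (which in particular guarantees $e_2\ne 0$, so that $\{\binom{1}{0},e\}$ is a basis) this forces
\[
A(\zeta)=0\qquad\text{and}\qquad B(\zeta)=\|Dft(\zeta)\|.
\]
Solving the $2\times 2$ system $Dft(z)=A(z)\binom{1}{0}+B(z)e$ gives
\[
B(z)=\frac{(Dft(z))_2}{e_2},\qquad A(z)=(Dft(z))_1-e_1 B(z).
\]
The slope condition $|e_2/e_1|\ge C\sqrt b$, together with $e_1^2+e_2^2=1$, forces $|e_2|\ge C\sqrt b$ whether the slope is of moderate size or very large. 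Hence $|B(z)|\le Cb/(C\sqrt b)\le C\sqrt b$, which is the second estimate; the same bound yields $|e_1 B(z)|\le C\sqrt b$, and therefore
\[
A(\gamma(\tau))=-2a\,x(\gamma(\tau))\,t_1(\gamma(\tau))+O(\sqrt b).
\]

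For the first estimate I parametrize $\gamma$ by arclength $\tau$, with $\zeta=\gamma(\tau_0)$. Since $|t_1|$ stays close to $1$, $|t_1'|\le 1/10$ (from the curvature bound), $|x|<\delta$ on $I(\delta)$, and (M3) gives $O(b)$ control on the derivatives of the $O(b)$ entries of $Df$, a direct differentiation of $A(\gamma(\tau))=(Dft)_1-(e_1/e_2)(Dft)_2$ yields
\[
\frac{dA}{d\tau}=-2a\,t_1(\gamma(\tau))^2+O(\delta)+O(\sqrt b),
\]
which for $\delta$ and $b$ small has magnitude $\approx 4$, uniformly bounded away from $0$. Because $t_1\ne 0$ along $\gamma$, the function $x\circ\gamma$ is strictly monotone in $\tau$, so $\gamma\cap I(\delta)$ is a single arc through $\zeta$ and $|z-\zeta|\approx|\tau-\tau_0|$. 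The mean value theorem on this arc, combined with $A(\zeta)=0$, yields the remaining $|A(z)|\approx |z-\zeta|$.

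The delicate point is that the additive $O(\sqrt b)$ error in the closed form of $A(z)$ would be fatal to the lower bound $|A(z)|\gtrsim|z-\zeta|$ at scales $|z-\zeta|\lesssim\sqrt b$. This is exactly why I prefer to run the lower bound through $dA/d\tau$ via MVT rather than by directly substituting the closed form for $A(z)$ and attempting to absorb the error: the derivative-based comparison is insensitive to the additive error term and remains uniform down to arbitrarily small scales.
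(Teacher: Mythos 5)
The paper does not present its own proof of this lemma; it is cited verbatim from [\cite{T1}]. So I evaluate your argument on its own terms, against the structure of (\ref{form0}) and the conventions set up in Section 2.

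Your proof is correct and is the natural elementary one: decompose $Dft$ in the basis $\{\binom{1}{0},e\}$, note that $A(\zeta)=0$ because $e$ is by definition parallel to $Dft(\zeta)$, bound $B$ using $B=(Dft)_2/e_2$ together with the smallness of $(Dft)_2$ and the slope hypothesis on $e$, and obtain the comparability $|A(z)|\approx|z-\zeta|$ through a uniform lower bound on $|dA/d\tau|$ combined with the mean value theorem. Your remark at the end about why one must argue through the derivative rather than through the closed-form expression for $A$ is exactly the right observation: the additive error $O(\sqrt b)$ in $A(z)=-2ax t_1+O(\sqrt b)$ destroys the lower bound at scales $|z-\zeta|\lesssim\sqrt b$, whereas the derivative estimate is an additive-error-free comparison.

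One small inaccuracy: the bounds $(Dft(z))_1=-2ax(z)t_1(z)+O(b)$ and $(Dft(z))_2=O(b)$ do not come from (M3) — that hypothesis only gives $\Vert\partial^i f\Vert\leq C_0$ and $|\det Df|\leq C_0b$, neither of which forces the off-(1,1) entries of $Df$ to be $O(b)$. What you actually need is the specific structure in (\ref{form0}), namely that $f=(1-ax^2,0)+b\,\Phi$ with $\Phi$ bounded and $C^4$, so that $Df=\bigl(\begin{smallmatrix}-2ax&0\\0&0\end{smallmatrix}\bigr)+O(b)$ in $C^3$-norm. You should cite (\ref{form0}) here, not (M3). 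This is the only place where a reader would have to pause; the rest of the computation, including the careful accounting of which error terms are $O(\delta)$, $O(b)$, and $O(\sqrt b)$ after dividing by $e_2$, is sound.
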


\begin{remark}\label{verti}
{\rm This lemma implies the following, that is often used in
what follows. 
A $C^2$-curve of the form $$\{(x(y),y)\colon |y|\leq
\sqrt{b},|x'(y)|\leq C\sqrt{b}, |x''(y)|\leq C\sqrt{b}\}.$$ is
called  a {\it vertical curve}. Any vertical curve near $f\zeta$
is tangent to $f\gamma$ and the tangency is quadratic, or else it
intersects $f\gamma$ exactly at two points. }
\end{remark}

\subsection{Most contracting directions}\label{contracting}
Some versions of results in this section were obtained in
\cite{BC91,MV93}. Our presentation follows \cite{WY01}. Let $M$ be
a $2\times2$ matrix. Denote by $e$ the unit vector
 (up to sign)
such that $\Vert Me\Vert\leq\Vert Mu\Vert$ holds for any unit
vector $u$. We call $e$, when it exists, the {\it most contracting
direction} of $M$.

For a sequence of matrices $M_1$, $M_2\cdots$, we use $M^{(i)}$ to
denote the matrix product $M_i\cdots M_2M_1$, and $e_i$ to denote
the mostly contracting direction of $M^{(i)}$. \medskip

\noindent{\bf Hypothesis for Sect.2.2.} The matrices $M_i$ satisfy
$|\det M_i|\leq Cb$ and $\Vert M_i\Vert\leq C_0$.
\begin{lemma}\label{fo}$($\rm \cite{WY01} Lemma 2.1.$)$
Let $i\geq 2$, and suppose that $\Vert M^{(i)}\Vert\geq\kappa^i$
and $\Vert M^{(i-1)}\Vert\geq\kappa^{i-1}$ for some $\kappa\geq
b^{1/10}$. Then $e_i$ and $e_{i-1}$ are well-defined, and satisfy
$$\Vert e_i\times
e_{i-1}\Vert\leq\left(\frac{Cb}{\kappa^2}\right)^{i-1}.$$
\end{lemma}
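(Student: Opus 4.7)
The plan is a single-shot SVD computation on $M^{(i)}$. To see that $e_i$ and $e_{i-1}$ are well defined, I note that $|\det M^{(i)}|\leq(Cb)^i$ by multiplicativity, while $\sigma_1(M^{(i)})\geq\kappa^i$ by hypothesis, so $\sigma_2(M^{(i)})\leq(Cb)^i/\kappa^i$. Since $\kappa\geq b^{1/10}$, the ratio $Cb/\kappa^2$ is strictly less than $1$ for small $b$ and the singular values of $M^{(i)}$ are strictly separated; the analogous bound for $M^{(i-1)}$ handles $e_{i-1}$.

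For the angle estimate, set $\theta=\angle(e_i,e_{i-1})$ and let $u_i=e_i^{\perp}$ be the most expanding direction of $M^{(i)}$. Since $M^{(i)}e_i$ and $M^{(i)}u_i$ are orthogonal (they are the left singular vectors scaled by $\sigma_2(M^{(i)})$ and $\sigma_1(M^{(i)})$), decomposing $e_{i-1}=(\cos\theta)e_i+(\sin\theta)u_i$ and applying $M^{(i)}$ gives, by Pythagoras,
$$\|M^{(i)}e_{i-1}\|^2=\cos^2\theta\,\sigma_2(M^{(i)})^2+\sin^2\theta\,\sigma_1(M^{(i)})^2\geq\sin^2\theta\cdot\kappa^{2i}.$$
Hence $\|e_i\times e_{i-1}\|=|\sin\theta|\leq\|M^{(i)}e_{i-1}\|/\kappa^i$. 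To bound the numerator I factor $M^{(i)}=M_iM^{(i-1)}$: the most-contracting direction property yields $\|M^{(i-1)}e_{i-1}\|=\sigma_2(M^{(i-1)})\leq(Cb)^{i-1}/\kappa^{i-1}$, and the operator norm bound $\|M_i\|\leq C_0$ then gives $\|M^{(i)}e_{i-1}\|\leq C_0(Cb)^{i-1}/\kappa^{i-1}$. Combining, I obtain $\|e_i\times e_{i-1}\|\leq (C_0/\kappa)(Cb/\kappa^{2})^{i-1}$, which is the asserted bound after absorbing the prefactor into the generic constant.

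The main obstacle — indeed the only one — is constant bookkeeping: the naive chain produces an extra factor of $\|M_i\|/\kappa\leq C_0/\kappa$ relative to the stated $(Cb/\kappa^2)^{i-1}$. Absorbing it into the generic $C$ is legitimate because $C$ may depend on the family $(f_a)$ and on the universal constant $C_0$ from (M3), and because in the applications $\kappa$ is bounded below by a quantity depending only on the dynamics, so the absorption is uniform in $i$. Apart from this, the argument contains no dynamical content and parallels the standard SVD computation in \cite{WY01}.
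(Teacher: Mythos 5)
Your SVD computation is the standard one and is sound up to the last line: you correctly reduce to
$\|e_i\times e_{i-1}\|=|\sin\theta|\leq \|M^{(i)}e_{i-1}\|/\sigma_1(M^{(i)})$, and the factorization $M^{(i)}=M_iM^{(i-1)}$ together with $\|M^{(i-1)}e_{i-1}\|=\sigma_2(M^{(i-1)})\leq |\det M^{(i-1)}|/\sigma_1(M^{(i-1)})$ gives exactly
$\|e_i\times e_{i-1}\|\leq (C_0/\kappa)\,(Cb/\kappa^2)^{i-1}$. The well-definedness argument ($\sigma_2/\sigma_1\leq (Cb/\kappa^2)^i<1$) is also fine.

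The gap is the final absorption. Under the hypothesis as stated, $\kappa$ is only bounded below by $b^{1/10}$, so the prefactor $C_0/\kappa$ can be as large as $C_0 b^{-1/10}\to\infty$, while the ``generic'' constant $C$ in this paper is by convention $b$-independent. For $i=2$ your bound is $\approx C_0Cb/\kappa^3$, whereas the claim requires $\leq C'b/\kappa^2$; the mismatch is $\kappa^{-1}$, and no choice of $C'$ absorbs it uniformly over $\kappa\in[b^{1/10},1]$ and $i\geq 2$. Put differently, $(C_0/\kappa)\leq (C'/C)^{i-1}$ must hold for all $i\geq2$, and for $i=2$ this forces a $\kappa$-independent lower bound on $\kappa$. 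Your observation that in the paper's actual invocations $\kappa$ is always $\geq\kappa_0^{1/3}$ or $\delta^{15}$, hence bounded below by a $b$-independent constant, is correct and would make the $C_0/\kappa$ harmless there — but that is not part of the lemma's hypothesis and cannot be invoked as a justification for absorbing the factor inside the generic $C$ in a statement that only assumes $\kappa\geq b^{1/10}$. Either the weaker bound $(C_0/\kappa)(Cb/\kappa^2)^{i-1}$ should be carried explicitly (it suffices for Corollaries \ref{focor1}–\ref{focor2} with an extra $\kappa^{-1}$ that one tracks), or the lemma should explicitly assume $\kappa$ bounded below by a $b$-independent constant.
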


\begin{cor}\label{focor1}{$($\rm \cite{WY01} Corollary 2.1.$)$}
If $\Vert M^{(i)}\Vert\geq\kappa^{i}$ for $1\leq i\leq n$, then:
\smallskip

\noindent{\rm (a)} $\Vert e_n-e_1\Vert\leq \frac{Cb}{\kappa^2}$;

\noindent{\rm (b)} $\Vert M^{(i)}e_n\Vert\leq
\left(\frac{Cb}{\kappa^2}\right)^i$ holds for $1\leq i\leq n$.
\end{cor}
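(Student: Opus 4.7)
The plan is to derive both parts from Lemma~\ref{fo} by a telescoping argument, combined with the elementary linear-algebra fact that if $e_i$ is the most contracting direction of $M^{(i)}$ and $f_i$ a unit vector orthogonal to it, then $M^{(i)}e_i$ and $M^{(i)}f_i$ are orthogonal, so $\|M^{(i)}e_i\|\cdot\|M^{(i)}f_i\|=|\det M^{(i)}|$ and $\|M^{(i)}f_i\|=\|M^{(i)}\|$. Abbreviate $\eta:=Cb/\kappa^2$; under $\kappa\geq b^{1/10}$ and $b$ small, $\eta\ll1$. A preliminary step is to fix the signs of each $e_j$ (only defined up to $\pm$) consistently so that $\theta_j:=\angle(e_{j-1},e_j)\in[0,\pi/2]$. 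This is possible since Lemma~\ref{fo} gives $\sin\theta_j=\|e_j\times e_{j-1}\|\leq\eta^{j-1}\ll1$, forcing exactly one of $\pm e_j$ to lie close to $e_{j-1}$ rather than to $-e_{j-1}$. Under this normalization, $\|e_j-e_{j-1}\|=2\sin(\theta_j/2)\leq\sqrt{2}\sin\theta_j\leq\sqrt{2}\,\eta^{j-1}$.

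Part (a) then follows by the triangle inequality and geometric summation:
$$\|e_n-e_1\|\leq\sum_{j=2}^n\|e_j-e_{j-1}\|\leq\sqrt{2}\sum_{j=2}^n\eta^{j-1}\leq\frac{\sqrt{2}\,\eta}{1-\eta}\leq\frac{Cb}{\kappa^2}$$
after absorbing constants into $C$.

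For part (b), the same telescope run between indices $i$ and $n$ yields $\|e_n-e_i\|\leq C\eta^i$. Writing $e_n=\alpha_i e_i+\beta_i f_i$ in the orthonormal basis $\{e_i,f_i\}$, one has $|\beta_i|=|\sin\angle(e_n,e_i)|\leq\|e_n-e_i\|\leq C\eta^i$ and $|\alpha_i|\leq1$, and hence
$$\|M^{(i)}e_n\|\leq\|M^{(i)}e_i\|+|\beta_i|\,\|M^{(i)}f_i\|\leq\frac{|\det M^{(i)}|}{\|M^{(i)}\|}+C\eta^i\|M^{(i)}\|\leq\Bigl(\frac{Cb}{\kappa}\Bigr)^i+\Bigl(\frac{CC_0\,b}{\kappa^2}\Bigr)^i.$$
Here I have used the standing hypothesis $\|M^{(i)}\|\geq\kappa^i$ together with $|\det M^{(i)}|=\prod|\det M_j|\leq(Cb)^i$ and the uniform bound $\|M^{(i)}\|\leq C_0^i$. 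In the working regime $\kappa\leq1$ (which is the only nontrivial case, since $\kappa^i\leq\|M^{(i)}\|\leq C_0^i$ bounds $\kappa$ uniformly, and an effective contraction requires $\kappa$ genuinely small), the first summand is dominated by the second, giving $\|M^{(i)}e_n\|\leq(Cb/\kappa^2)^i$ after redefining $C$. The only real bookkeeping subtlety, rather than a substantive obstacle, is the consistent choice of signs in the telescoping, which is precisely what the rapid decay in Lemma~\ref{fo} makes possible.
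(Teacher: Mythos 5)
Your proof is correct and follows the standard Wang--Young argument that the paper invokes by citation (the paper itself proves neither Lemma~\ref{fo} nor this corollary, referring both to \cite{WY01}). The telescoping of the angles $\angle(e_{j-1},e_j)$ controlled by Lemma~\ref{fo}, followed by decomposing $e_n$ in the singular basis $\{e_i,f_i\}$ of $M^{(i)}$ and using $\|M^{(i)}e_i\|\,\|M^{(i)}f_i\|=|\det M^{(i)}|$, is precisely the intended route. The one point worth making explicit rather than leaving implicit is the restriction $\kappa<1$ needed to absorb the $(Cb/\kappa)^i$ term into $(Cb/\kappa^2)^i$; this is harmless since all uses of the corollary in the paper take $\kappa\le\kappa_0^{1/4}<1$, but in a self-contained write-up it should be stated as a hypothesis rather than dismissed as a bookkeeping subtlety.
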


Next we consider for each $i$ a parametrized family of matrices
$M_i(s_1,s_2,s_3)$ such that $\Vert\partial^j \det
M_i(s_1,s_2,s_3)\Vert\leq C_0^ib$, and $|\partial^j
M_i(s_1,s_2,s_3)|\leq C_0^i$ for each $0\leq j\leq 3$. Here,
$\partial^j$ represents any one of the partial derivatives of
order $j$ with respect to $s_1,s_2$, or $s_3$.

\begin{cor}\label{focor2}$($\rm \cite{WY01} Corollary 2.2.$)$
Suppose that $\Vert M^{(i)}(s_1,s_2,s_3)\Vert\geq\kappa^i$ for
$1\leq i\leq n$. Then for $j=1,2,3$ and $2\leq i\leq n$,
\begin{equation}\label{focor21}
|\partial^j(e_{i}\times e_{i-1})|\leq
\left(\frac{Cb}{\kappa^{2+j}}\right)^{i-1},
\end{equation}
\begin{equation}\label{focor21}
\Vert\partial^j (M^{(i)}e_i)\Vert\leq
\left(\frac{Cb}{\kappa^{2+j}}\right)^{i}.
\end{equation}
\end{cor}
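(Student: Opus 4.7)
The plan is to differentiate the algebraic arguments underlying Lemma~\ref{fo} and Corollary~\ref{focor1}, inducting on $i$ and distributing $\partial^j$ via the Leibniz rule. A preliminary step is to establish, from the hypotheses $|\partial^j M_i|\leq C_0^i$ and $|\partial^j\det M_i|\leq C_0^i b$ together with the product rule, the uniform bounds $\Vert\partial^{j_1}M^{(i)}\Vert\leq C^i$ and $|\partial^{j_1}\det M^{(i)}|\leq C^i b^i$ for $0\leq j_1\leq 3$, after possibly enlarging $C$. Combined with $\Vert M^{(k)}\Vert\geq\kappa^k$, every partial derivative that falls on a factor of $\Vert M^{(k)}\Vert^{-1}$ contributes an extra cost of at most $\kappa^{-k}$ (modulo benign constants), which is the mechanism responsible for the extra $\kappa^{-j}$ appearing per step in the exponents of the claimed bounds.

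For the bound on $|\partial^j(e_i\times e_{i-1})|$ I would revisit the proof of Lemma~\ref{fo}: it produces an explicit rational expression for $e_i\times e_{i-1}$ in the entries of $M_i$ and of $M^{(i-1)}$, in which $\Vert M^{(i-1)}\Vert^2$ essentially appears as a denominator (the origin of the $\kappa^{-2(i-1)}$ weight in the undifferentiated estimate). Applying $\partial^j$ termwise and expanding by Leibniz yields a sum whose general summand is bounded by $(Cb)^{i-1}\Vert M^{(i-1)}\Vert^{-2}$ multiplied by at most $j$ additional factors of the form $\Vert\partial M^{(i-1)}\Vert\,\Vert M^{(i-1)}\Vert^{-1}\leq C^{i-1}\kappa^{-(i-1)}$. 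The total is therefore
\[
(Cb)^{i-1}\kappa^{-2(i-1)}\cdot\kappa^{-j(i-1)}=(Cb/\kappa^{2+j})^{i-1},
\]
after absorbing the $C^i$-type background into $C$ using the smallness of $b$.

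For the bound on $\Vert\partial^j(M^{(i)}e_i)\Vert$ I would differentiate the proof of Corollary~\ref{focor1}(b) itself. Since that proof controls $\Vert M^{(i)}e_i\Vert$ by combining the telescoping of $e_i$ through the increments $e_k-e_{k-1}$ (each bounded by Lemma~\ref{fo}) with the matrix products $M^{(i)}$, the differentiated version uses the first bound of the corollary to control derivatives of these increments, and the preliminary estimate $\Vert\partial^{j_1}M^{(i)}\Vert\leq C^i$ to control derivatives of the matrix factors. After the usual Leibniz bookkeeping, each resulting term is of order $(Cb/\kappa^{2+j})^i$, as desired. The main obstacle is verifying that the exponential $C^i$ growth produced by differentiating long matrix products is absorbed by the exponential decay $(Cb)^i$ before the $\kappa^{-(2+j)i}$ weight takes effect; this is precisely the role played by the smallness of $b$ relative to $\kappa^{2+j}$, which is ultimately enforced by $\kappa\geq b^{1/10}$ together with a sufficiently small choice of $b$. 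With this accounting in place, the differentiated versions of the identities from \cite{WY01} deliver Corollary~\ref{focor2}.
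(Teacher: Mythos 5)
The paper does not prove Corollary~\ref{focor2} itself; it is taken directly from Wang--Young \cite{WY01}, and your plan---differentiate the rational expressions underlying Lemma~\ref{fo} and Corollary~\ref{focor1} via the Leibniz rule, after the preliminary bounds $\Vert\partial^{j}M^{(i)}\Vert\leq C^{i}$ and $|\partial^{j}\det M^{(i)}|\leq C^{i}b^{i}$, so that each derivative landing on a $\Vert M^{(k)}\Vert^{-1}$ factor costs a further $\kappa^{-k}$---is indeed the route that \cite{WY01} follows, and the absorption of the $C^{i}$-type overhead into the base $Cb/\kappa^{2+j}$ by enlarging $C$ is legitimate. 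The one imprecision in your sketch is that Corollary~\ref{focor1}(b) bounds $\Vert M^{(i)}e_{n}\Vert$ while the second display of Corollary~\ref{focor2} concerns $M^{(i)}e_{i}$; a literal differentiation of \ref{focor1}(b) therefore gives $\partial^{j}(M^{(i)}e_{n})$, and one must either differentiate the singular-value identity $M^{(i)}e_{i}=\det M^{(i)}\,(M^{(i)\,T})^{-1}e_{i}$ directly, or note that the telescoping $M^{(i)}e_{i}=M^{(i)}e_{n}-\sum_{k>i}M^{(i)}(e_{k}-e_{k-1})$ together with the first inequality of Corollary~\ref{focor2} transfers the bound, since the series is geometric with ratio $Cb/\kappa^{2+j}\ll1$ and dominant term of order $(Cb/\kappa^{2+j})^{i}$. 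With that minor point filled in, your reconstruction is correct and coincides with the argument in the cited source.
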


Let $e_1(z)$ denote the most contracting direction of $Df(z)$ when
it makes sense. From the form of our map (\ref{form0}), $e_1(z)$
is defined for all $z\notin I(\sqrt{b})$. In view of [\cite{MV93}
pp. 21], we have
\begin{equation}\label{slo}
{\rm slope}(e_1)\geq C/\sqrt{b}\quad\text{and} \quad \Vert
\partial e_1\Vert\leq C\sqrt{b}.\end{equation}

We say $z$ is {\it $\kappa$-expanding up to time $n$}, or simply
{\it expanding}, if there exists a tangent vector $v$ at $z$ and
$\kappa\geq b^{1/10}$ such that for every $1\leq i\leq n$,
\begin{equation*}\label{expand}\Vert Df^iv\Vert\geq \kappa^i\Vert
v\Vert.\end{equation*} With a slight abuse of language, we also
say $v$ is $\kappa$-expanding up to time $n$. For $n\geq1$, let
$e_n(z)$ denote the most contracting direction of $Df^n(z)$ when
it makes sense. From Corollaries \ref{focor1}, \ref{focor2} and
(\ref{slo}) we get
\begin{cor}\label{e1}
If $z$ is $\kappa$-expanding up to time $n$, then ${\rm
slope}(e_n)\geq C/\sqrt{b}\text{ and }\Vert
\partial e_n\Vert\leq \frac{Cb}{\kappa^3}.$
\end{cor}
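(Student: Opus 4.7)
The plan is to reduce Corollary \ref{e1} to Corollaries \ref{focor1} and \ref{focor2} applied to the matrix sequence $M_i := Df(f^{i-1}z)$, viewed as smoothly depending on $(a,x,y)$, and then to telescope along the sequence of most contracting directions. With this choice $M^{(i)} = Df^i(z)$, so the $\kappa$-expanding hypothesis gives a tangent vector $v$ at $z$ with $\|M^{(i)}v\| \geq \kappa^i\|v\|$, hence $\|M^{(i)}\| \geq \kappa^i$ for all $1 \leq i \leq n$. The background bounds $\|M_i\| \leq C_0$ and $|\det M_i| \leq Cb$ are immediate from (M3).

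For the slope estimate, Corollary \ref{focor1}(a) delivers $\|e_n - e_1\| \leq Cb/\kappa^2 \leq Cb^{4/5}$, using $\kappa \geq b^{1/10}$. Since (\ref{slo}) already provides ${\rm slope}(e_1) \geq C/\sqrt{b}$ and both vectors are nearly vertical, this tiny perturbation is negligible on the level of slope, giving ${\rm slope}(e_n) \geq C/\sqrt{b}$.

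For the derivative bound I would view $M_i$ as the three-parameter family $M_i(a,x,y) = Df_a(f_a^{i-1}(x,y))$, identifying $(s_1,s_2,s_3)$ of Corollary \ref{focor2} with $(a,x,y)$. The smoothness hypothesis $|\partial^j M_i| \leq C_0^i$ for $0 \leq j \leq 3$ follows from a chain-rule expansion of $f_a^{i-1}$, using that (M3) bounds up to fourth derivatives of $f$ uniformly by $C_0$. Corollary \ref{focor2} with $j=1$ then gives
\[ |\partial(e_i \times e_{i-1})| \leq \left(\frac{Cb}{\kappa^3}\right)^{i-1} \qquad (2 \leq i \leq n). \]
Writing $\theta_i$ for the small angle between the unit vectors $e_{i-1}$ and $e_i$, one has $e_i \times e_{i-1} = \sin\theta_i$, and together with the size bound on $|\theta_i|$ coming from Corollary \ref{focor1} this inductively converts into $\|\partial(e_i - e_{i-1})\| \leq (Cb/\kappa^3)^{i-1}$. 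Telescoping
\[ \partial e_n = \partial e_1 + \sum_{i=2}^n \partial(e_i - e_{i-1}), \]
summing the geometric series whose ratio $Cb/\kappa^3$ is $\ll 1$ (so the sum is dominated by its first term), and combining with $\|\partial e_1\| \leq C\sqrt{b}$ from (\ref{slo}) yields the stated bound.

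The main obstacle I expect is the bookkeeping in the last step: converting the wedge-product derivative bound of Corollary \ref{focor2} into a clean norm estimate on $\partial(e_i - e_{i-1})$ requires tracking rotating orthonormal frames attached to each $e_{i-1}$, and the induction has to be set up so that error terms of the form $|\theta_i|\cdot\|\partial e_{i-1}\|$ do not spoil the geometric decay of the sum. A secondary but routine point is verifying $|\partial^j M_i|\leq C_0^i$ by iterating the chain rule, which is where property (M3) is essential.
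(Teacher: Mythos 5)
Your approach is the one the paper intends: take $M_i := Df(f^{i-1}z)$, note that $\kappa$-expansion immediately gives $\Vert M^{(i)}\Vert\ge\kappa^i$ (and (M3) supplies the determinant and size hypotheses), use Corollary~\ref{focor1}(a) to transport the slope bound from $e_1$, and use Corollary~\ref{focor2} with $j=1$ plus a telescoping sum to transport the derivative bound, anchored at (\ref{slo}). The bookkeeping worry you raise -- converting the bound on $\partial(e_i\times e_{i-1})$ into one on $\partial(e_i-e_{i-1})$ -- is a real step but benign: writing each $e_i=(\sin\phi_i,\cos\phi_i)$ one has $e_i\times e_{i-1}=\sin(\phi_i-\phi_{i-1})$, and the error terms of the form $|\theta_i|\cdot\Vert\partial e_{i-1}\Vert$ you anticipate carry the same geometric factor $(Cb/\kappa^2)^{i-1}$ from Lemma~\ref{fo}, so the decay of the series is not spoiled; your self-diagnosis there is accurate.

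There is, however, a point you slide over at the very end. The telescoping produces
\[
\Vert\partial e_n\Vert \;\le\; \Vert\partial e_1\Vert \;+\; \sum_{i=2}^{n}\Bigl(\tfrac{Cb}{\kappa^{3}}\Bigr)^{i-1}
\;\lesssim\; \sqrt{b}\;+\;\frac{b}{\kappa^{3}},
\]
and you simply declare this ``yields the stated bound.'' That absorption $C\sqrt{b}\le Cb\kappa^{-3}$ holds only if $\kappa\lesssim b^{1/6}$; since the hypothesis allows any $\kappa\ge b^{1/10}$ (indeed the corollary is later invoked with $\kappa\ge\delta^{15}$, a constant), the term $C\sqrt{b}$ from (\ref{slo}) can dominate, and then your computation only delivers $\Vert\partial e_n\Vert\le C\max(\sqrt{b},\,b\kappa^{-3})$. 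You should say this explicitly -- either flag that (\ref{slo}) is not sharp and a bound of order $b$ on $\Vert\partial e_1\Vert$ is what is really needed, or record the corollary with the $\max$; silently replacing $C\sqrt{b}+Cb\kappa^{-3}$ by $Cb\kappa^{-3}$ is a gap.
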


\subsection{Long stable leaves}\label{long}
 In the next
lemma, a $C^2$-distance $d_{C^2}$ between two vertical curves is
measured by regarding them as $C^2$-functions on
$[-\sqrt{b},\sqrt{b}]$.
\begin{lemma}\label{stable}{\rm(cf.\cite{MV93} Section 6.)}
Let $\kappa\geq \delta^{15}$. If $z$ is $\kappa$-expanding up to time
$n$, then for every $1\leq i\leq n$, the maximal integral curve
of $e_i$ through $z$ contains a vertical curve, denoted by $\Gamma_i(z)$. In
addition, for every $1< i\leq n$,
$d_{C^2}(\Gamma_i(z),\Gamma_{i-1}(z))\leq
\left(\frac{Cb}{\kappa^4}\right)^{i-1}$.
\end{lemma}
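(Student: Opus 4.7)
The plan is to construct each $\Gamma_i(z)$ as the integral curve of the vector field $e_i$ through $z$, written in the form $(x_i(y),y)$, and then to compare $\Gamma_i$ with $\Gamma_{i-1}$ via a Gronwall-type estimate on the difference of the two defining ODEs.

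For the existence step, I would first note that by Corollary~\ref{e1}, on any neighborhood of $z$ on which all the iterates $Df^j$ ($j\le n$) remain $\kappa$-expansive, the field $e_i$ is well-defined, has slope $\geq C/\sqrt{b}$, and satisfies $\|\partial e_i\|\leq Cb/\kappa^3$. Writing the maximal integral curve of $e_i$ through $z=(x_0,y_0)$ as a graph $y\mapsto(x_i(y),y)$, the function $x_i$ satisfies the ODE $x_i'(y)=e_i^1/e_i^2$ evaluated along the curve. The slope bound on $e_i$ immediately gives $|x_i'(y)|\leq\sqrt{b}/C$, and in particular the curve has horizontal displacement of order $b$ over the interval $|y-y_0|\leq\sqrt{b}$, so it stays in the region where $e_i$ is defined, and the parameter $y$ may be extended across an interval of length $2\sqrt{b}$. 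Differentiating the defining ODE yields $|x_i''(y)|\leq C\|\partial e_i\|\leq Cb/\kappa^3$; the choice $\kappa\geq\delta^{15}$ together with our convention that $b$ is small relative to $\delta$ makes this at most $C\sqrt{b}$, verifying that $\Gamma_i(z)$ is a vertical curve.

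For the $C^2$-distance bound, set $\phi(y)=x_i(y)-x_{i-1}(y)$, so $\phi(y_0)=0$, and write $F_j=e_j^1/e_j^2$. Then
\begin{equation*}
\phi'(y)=F_i(x_i(y),y)-F_{i-1}(x_{i-1}(y),y)=\bigl(F_i-F_{i-1}\bigr)(x_{i-1}(y),y)+\partial_x F_i(\xi,y)\,\phi(y).
\end{equation*}
Since $F_i-F_{i-1}=(e_i\times e_{i-1})/(e_i^2 e_{i-1}^2)$ and both $e_i^2,e_{i-1}^2$ are $\approx 1$, Lemma~\ref{fo} and its extension via Corollary~\ref{focor2} (with $j=0,1$) give $|F_i-F_{i-1}|\leq(Cb/\kappa^2)^{i-1}$ on the $\sqrt{b}$-neighborhood of $z$. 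Combined with $|\partial_x F_i|\leq Cb/\kappa^3$, a Gronwall estimate on $|y-y_0|\leq\sqrt{b}$ produces $|\phi|\leq C\sqrt{b}(Cb/\kappa^2)^{i-1}$ and $|\phi'|\leq C(Cb/\kappa^2)^{i-1}$. Differentiating $\phi'$ once more and splitting $F_i-F_{i-1}$ and $(x_i',x_{i-1}')$ differences symmetrically produces a linear combination of $\partial^j(e_i\times e_{i-1})$ for $j=1,2$, controlled by Corollary~\ref{focor2} as $(Cb/\kappa^{3})^{i-1}$ and $(Cb/\kappa^4)^{i-1}$ respectively. Taking the worst of these three bounds yields the claim $d_{C^2}(\Gamma_i,\Gamma_{i-1})\leq(Cb/\kappa^4)^{i-1}$.

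The main technical obstacle I anticipate is the second-derivative estimate: the expression for $\phi''$ contains terms of the form $\partial^2 F_i\cdot\phi$ and $\partial F_i\cdot\phi'$, which require that the higher-order smoothness of each individual $e_i$ be controlled, not just the smoothness of the differences $e_i-e_{i-1}$. The hypothesis $\|\partial^j M_i\|\leq C_0^i$ for $j\leq 3$ does give such control on $e_i$ (since $e_i$ depends smoothly on the singular value decomposition of the bounded matrix $M^{(i)}$), but the resulting constants grow exponentially with $i$ and must be shown to be swallowed by the small factor $(Cb/\kappa^2)^{i-1}$ coming from the difference estimate; this is exactly where the margin $\kappa\geq\delta^{15}$ and the smallness of $b$ given $\delta$ are used. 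The actual argument of this type is carried out in \cite{MV93}~Section~6, and I would follow that template.
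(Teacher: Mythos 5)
Your proposal is correct and tracks the argument the paper delegates to (Mora--Viana, Section~6): integrating the contractive direction field to obtain the leaves, and propagating the pointwise bounds of Lemma~\ref{fo} and Corollary~\ref{focor2} on $\partial^j(e_i\times e_{i-1})$ through a Gronwall estimate on $x_i-x_{i-1}$, with the second-order difference term $\partial^2(e_i\times e_{i-1})\lesssim(Cb/\kappa^4)^{i-1}$ supplying the dominant contribution to the $C^2$-distance. The only point that deserves an extra line in a full write-up is the bootstrap guaranteeing that the expansion hypothesis (and hence the definedness of $e_i$) persists along the whole integral curve over $|y-y_0|\le\sqrt b$; you gesture at this correctly, and the exponential-in-$i$ derivative constants you flag are indeed absorbed because $b\ll\kappa^4\le\delta^{60}$.
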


By a {\it long stable leaf of order $i$ through $z$} we mean the
curve $\Gamma_i(z)$ as in the statement.

\begin{remark}
{\rm In the construction of long stable leaves,
the relation between
the lengths of leaves
and the value of $\kappa$ is crucial \cite{MV93}.
In \cite{BV01}, long stable leaves of length $\approx 1/5$ are used.
To this end, they require $\kappa\geq e^{-20}$.
For our purpose, long stable leaves of length $\approx 2\sqrt{b}$ suffices.
Hence, $\kappa\geq\delta^{15}$ suffices.}
\end{remark}

\begin{lemma}\label{leaf}{\rm (cf.\cite{BV01} Proposition
2.4.)} Let $\kappa\geq\delta^{15}$. If $z$ is $\kappa$-expanding, then
there exists a vertical curve $\Gamma(z)$
through $z$ such that:

\noindent{\rm (a)} $|f^n\xi-f^n\eta|\leq\left(\frac{Cb}{\kappa}\right)^{n}$ for all $\xi$,
$\eta\in\Gamma(z)$ and $n\geq1$;

\noindent{\rm (b)} if $z_1,z_2$ are $\kappa$-expanding, then
${\rm angle}(t_{\Gamma}(\xi_1),t_{\Gamma}(\xi_2))\leq C\sqrt{b}|\xi_1-\xi_2|,$
where $t_{\Gamma}(\xi_i)$ denotes any unit vector tangent to
$\Gamma(z_i)$ at $\xi_i$, $i=1,2$.
\end{lemma}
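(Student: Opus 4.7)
The plan is to obtain $\Gamma(z)$ as the $C^2$-limit of the finite-order leaves $\Gamma_n(z)$ furnished by Lemma \ref{stable}, and then to derive (a) and (b) from estimates on $\Gamma_n(z)$ together with the Cauchy-type bound $d_{C^2}(\Gamma_n(z),\Gamma_{n-1}(z))\leq(Cb/\kappa^{4})^{n-1}$. Since $z$ is $\kappa$-expanding with no time restriction, $\Gamma_n(z)$ exists for every $n$, and with $b$ chosen small enough after $\delta$ so that $Cb/\kappa^{4}<1/2$, the sequence is summable and $\Gamma(z):=\lim_n\Gamma_n(z)$ is a vertical curve through $z$.

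For (a), given $\xi,\eta\in\Gamma(z)$, I approximate by $\xi_n,\eta_n\in\Gamma_n(z)$ with $|\xi-\xi_n|,|\eta-\eta_n|\leq(Cb/\kappa^{4})^n$ and parameterize the arc of $\Gamma_n(z)$ between them by arc length. Its tangent at each $\zeta$ is the most contracting direction $e_n(\zeta)$ of $Df^n(\zeta)$, so
$$|f^n\xi_n-f^n\eta_n|\leq 2\sqrt{b}\sup_{\zeta\in\Gamma_n(z)}\|Df^n(\zeta)e_n(\zeta)\|.$$
Writing $\|Df^n(\zeta)e_n(\zeta)\|=|\det Df^n(\zeta)|/\|Df^n(\zeta)\|\leq(C_0b)^n/\|Df^n(\zeta)\|$, the task reduces to propagating the lower bound $\|Df^n(z)\|\geq\kappa^n$ from $z$ to every $\zeta\in\Gamma_n(z)$; the leaf has length of order $\sqrt{b}$ and is nearly vertical, precisely the direction strongly contracted by $Df^n$, so small displacements along the leaf do not destroy the expanding direction of $Df^n$. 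This gives $\|Df^n(\zeta)e_n(\zeta)\|\leq(Cb/\kappa)^n$, and combining with the crude bound $|f^n\xi-f^n\xi_n|\leq C_0^n(Cb/\kappa^{4})^n$, absorbed for small $b$ into $(Cb/\kappa)^n$, yields (a). This step is the main obstacle, since a priori $Df^n$ could vary wildly on $\sqrt{b}$-scales when $n$ is large; the saving feature is the alignment of the leaf with the contracting direction.

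For (b), each $\Gamma_n(w)$ is by construction an integral curve of the field $\zeta\mapsto e_n(\zeta)$; by $C^1$-convergence, the tangent to $\Gamma(w)$ at $\xi$ equals $t_\Gamma(\xi)=\lim_n e_n(\xi)$, which depends only on $\xi$ and not on the base point $w$. Hence for $\xi_i\in\Gamma(z_i)$, Corollary \ref{e1} gives
$$\|t_\Gamma(\xi_1)-t_\Gamma(\xi_2)\|\leq\lim_n\|\partial e_n\|_\infty\,|\xi_1-\xi_2|\leq\frac{Cb}{\kappa^3}|\xi_1-\xi_2|.$$
Since both tangents have slope $\geq C/\sqrt{b}$, the angle between them is comparable to the horizontal component of their difference, and for $b$ small enough that $b/\kappa^3\leq\sqrt{b}$ (which is automatic once $b$ is chosen after $\delta$), this is bounded by $C\sqrt{b}\,|\xi_1-\xi_2|$, proving (b).
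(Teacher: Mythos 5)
Your overall approach — taking $\Gamma(z)$ as the $C^2$-limit of the finite-order leaves $\Gamma_n(z)$ and splitting the estimate into a main term on $\Gamma_n(z)$ plus a truncation error — is the right one. But as written the argument has two genuine gaps.

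First, in part (a) you need $\|Df^n(\zeta)e_n(\zeta)\|\leq(Cb/\kappa)^n$ at \emph{every} $\zeta$ on the order-$n$ leaf, not just at $\zeta=z$, and for that you must propagate the lower bound $\|Df^i(\zeta)\|\geq\kappa^i$ from $z$ to the other points of $\Gamma_n(z)$. You flag this as ``the main obstacle'' and appeal to the alignment of the leaf with the contracting direction, but that is only an intuition, not an argument. The actual proof (cf.\ Lemma~\ref{stable} and the way $\Gamma_i$ is built from $\Gamma_{i-1}$ in \cite{MV93,WY01,BV01}) is an induction on the order of the leaf in which one simultaneously establishes the contraction of $f^j\Gamma_i$ and the persistence of the $\kappa$-expansion. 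As stated your claim is circular: you use the strong contraction of the leaf to argue that the expansion persists, but you need the expansion to persist in order to know the leaf is strongly contracted.

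Second, the truncation error does not absorb the way you assert. From $|\xi-\xi_n|\leq(Cb/\kappa^4)^n$ you write $|f^n\xi-f^n\xi_n|\leq C_0^n(Cb/\kappa^4)^n$ and say this is absorbed into $(Cb/\kappa)^n$ for small $b$. But the ratio of those two quantities is $(C_0/\kappa^3)^n$ up to a constant base, which is \emph{independent of $b$} and blows up in $n$ as soon as $\kappa^3<C_0$ --- and here $\kappa$ can be as small as $\delta^{15}$, so $\kappa^3\ll C_0$. By the paper's convention the generic constant $C$ depends only on the family restricted to $[-2,2]^2$, so it cannot hide a factor $\delta^{-45}$. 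The repair is not to take $b$ smaller but to compare $\Gamma(z)$ with $\Gamma_m(z)$ for $m$ strictly larger than $n$ (say $m=2n$). Then the tangent error $\|t_\Gamma-e_m\|\le 2(Cb/\kappa^2)^m$ gives a boundary contribution $\leq C_0^n(Cb/\kappa^2)^{2n}=(C_0C^2b^2/\kappa^8)^n$, which is $\leq(Cb/\kappa^2)^n$ once $b\leq\kappa^6/(C_0C)$, a legitimate smallness requirement on $b$ after $\delta$; meanwhile Corollary~\ref{focor1}(b) still gives $\|Df^n(\zeta)e_m(\zeta)\|\leq(Cb/\kappa^2)^n$ for any $m\geq n$.

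Finally, in (b) you apply $\|\partial e_n\|\leq Cb/\kappa^3$ along the segment joining $\xi_1$ and $\xi_2$. This presupposes that $e_n$ is defined and $C^1$ on the whole segment, i.e.\ that every point of the segment is $\kappa$-expanding up to time $n$. That does not follow merely from $z_1,z_2$ being $\kappa$-expanding; one needs the expansion to persist on a full two-dimensional neighborhood, not just along the two leaves, and this deserves an argument (it is again the same propagation-of-expansion issue as in (a)).
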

We call a {\it long stable leaf through $z$} the curve
$\Gamma(z)$ as in the statement, and a {\it stable leaf}
any compact curve having some iterate contained in a long stable
leaf.

Let us record one consequence of Lemma \ref{leaf}.
Let $\Gamma(z_1)$, $\Gamma(z_2)$ be two long stable leaves
and $\xi_1,\eta_1\in\Gamma(z_1)$. Let $\xi_2$, $\eta_2$
denote the points in $\Gamma(z_2)$ whose $y$-coordinate
coincides with that of $\xi_1$ and $\eta_1$ correspondingly.
Gronwall's inequality gives \begin{equation}\label{leaf2}
|\xi_1-\xi_2|\leq e^{C\sqrt{b}}|\eta_1-\eta_2|.\end{equation}

\subsection{Recovering expansion}\label{recoverysec}
Let $\gamma$ be a horizontal curve and $n\geq M.$ We
say $\zeta\in\gamma$ is a
{\it critical approximation of order
$n$} on $\gamma$ if:

\noindent{\rm (i)} $\Vert Df^i(f\zeta)\Vert\geq 1/10$ for $1\leq
i\leq n$;

\noindent{\rm (ii)} $e_n(f\zeta)$ is tangent to $Dft(\zeta)$,
where $t(\zeta)$ is any unit vector tangent to $\gamma$ at
$\zeta$.
\medskip

\noindent{\it Notation.} For $z\in I(\delta)$ and $i\geq1$,
let $w_i(z)=Df^{i-1}(fz)
\left(\begin{smallmatrix}1\\0\end{smallmatrix}\right)$.
\medskip

We now introduce three conditions, which are taken as inductive
assumptions in the construction of the parameter set $\Delta$. Let
$\lambda:=\lambda_0/2$, where $\lambda_0$ is the one in Lemma
\ref{outside}. A critical approximation $\zeta$ of order $n$ on
$\gamma$ {\it has a good critical behavior} if:
\medskip

\noindent{(G1)} $\|w_{i}(\zeta)\|\geq e^{\lambda (i-1)}$ for
$1\leq i\leq 20n$;

\noindent{(G2)} $\|w_{j}(\zeta)\| \geq e^{-2\alpha
i}\|w_i(\zeta)\|$ for $1\leq  i<j\leq 20n$;

\noindent{(G3)} there exists a monotone increasing function
$\chi\colon[M,20n]\cap\mathbb N \circlearrowleft$ such that for
each $j\in[M,20n]$, $(1-\sqrt\alpha)j\leq \chi(j)\leq j$ and
$\|w_{\chi(j)}(\zeta)\|\geq \delta \|w_{i}(\zeta)\|$ holds for
$0\leq i< \chi(j)$.
\medskip

\noindent
{\bf Hypothesis for the rest of Sect.\ref{recoverysec}:} $\zeta$ is a
critical approximation of order $n$ on $\gamma$, with a
good critical behavior.
\medskip

For $M\leq k\leq 20n-1$, let
$$D_k(\zeta)=e^{-3\alpha k} \cdot\min_{1\leq i\leq k}\min_{i\leq j\leq k+1}
\frac{\|w_{j}(\zeta)\|^2}{\|w_{i}(\zeta)\|^3}.$$
Represent the long stable leaf of order $n$ through $f\zeta$
as a graph of a
function $\Gamma_n(f\zeta)=\{(x_n(y),y)\colon |y|\leq\sqrt{b}\}$.
Let $$V_k=\{(x,y)\colon |x-x_n(y)|\leq
D_k(\zeta)/2,|y|\leq\sqrt{b}\}.$$
Take a monotone increasing function
$\chi$ satisfying
condition (G3).
Let $v$ denote any nonzero vector tangent to
$\gamma$ at $z$. If $fz\in V_k\setminus V_{k+1}$, then we say
$v$ is in {\it admissible position relative to $\zeta$}. Define a
{\it bound period} $p=p(\zeta,z)$ by
\begin{equation*}p=\chi(k),\end{equation*}
and a {\it fold period} $q=q(\zeta,z)$ by
$$q=\min\left\{i\in[1,p)\colon|\zeta-z|
^{\beta} \cdot \|w_{j+1}(\zeta)\|\geq1\text{ for every } i\leq
j<p\right\},$$ where
\begin{equation}\label{theta}
\beta=\frac{2\log C_0}{\log 1/b}.\end{equation} It is easy to
check that $q$ is well-defined, by (G1-3) and the assumption on
$z$. If $fz\in V_{20n-1}$, then we say $v$ is in {\it critical
position relative to $\zeta$}.

\begin{prop}\label{recovery}{\rm (\cite{T1} Proposition 2.2.)}
Let $\gamma$, $\zeta$, $z$, $v$ be as above.

\noindent{\rm (i)} If $v$ is in admissible position relative to
$\zeta$ and $fz\in V_k\setminus V_{k+1}$, then:

{$\text{\rm (a)}$}
 $\log|\zeta-z|^{-\frac{3}{\log C_0}}\leq p\leq\log
|\zeta-z|^{-\frac{3}{\lambda}};$

{$\text{\rm (b)}$} $q\leq C\beta p$;

{$\text{\rm (c)}$} $|f^i\zeta-f^iz|\leq e^{-2\alpha p}$ for $1\leq
i\leq p$;

{$\text{\rm (d)}$} $|\zeta-z|\Vert v\Vert\leq\Vert Df^qv\Vert\leq
| \zeta-z|^{1-\beta}\Vert v\Vert;$

{$\text{\rm (e)}$} $\displaystyle{\Vert Df^pv\Vert\geq\Vert v\Vert
\cdot|\zeta-z|^{-1+\frac{\alpha}{\log C_0}} \geq e^{\frac{\lambda
p}{3}}} \Vert v\Vert;$

{$\text{\rm (f)}$} $\Vert  Df^pv\Vert\geq (\delta/10)\Vert
Df^iv\Vert$ for $0\leq i<p;$


\noindent{\rm (ii)} If $v$ is in critical position relative to
$\zeta$, then $\Vert Df^nv\Vert\leq e^{-8\lambda n}\Vert v\Vert$.
\end{prop}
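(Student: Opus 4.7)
The approach is the standard binding/recovery argument of Benedicks--Carleson, adapted to the two-dimensional setting with $\zeta$ playing the role of a one-dimensional critical point. The essential geometric input is Lemma~\ref{quadratic}: condition (ii) in the definition of a critical approximation forces the unit tangent $e$ to $f\gamma$ at $f\zeta$ to be parallel to $e_n(f\zeta)$, the most contracting direction of $Df^n(f\zeta)$. Writing $Dft(z)=A(z)\left(\begin{smallmatrix}1\\0\end{smallmatrix}\right)+B(z)e$ as in Lemma~\ref{quadratic} gives $|A(z)|\approx|\zeta-z|$ and $|B(z)|\le C\sqrt{b}$; iterating then produces the decomposition $Df^{i}t(z)=A(z)\,w_i(\zeta)+B(z)\,Df^{i-1}(f\zeta)\,e+\mathcal{E}_i$, where $\mathcal E_i$ is a chain-rule error controlled by the $C^1$-distance between the orbits of $z$ and $\zeta$ up to time $i$. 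By Corollary~\ref{focor1}(b) the middle term is bounded by $\sqrt{b}(Cb/\kappa^2)^{i-1}$ and is essentially negligible, so the analysis reduces to tracking $A(z)w_i(\zeta)$ against $\mathcal E_i$.

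First I would establish the orbit-closeness estimate (c) by induction on $i$: since $A$ vanishes at $\zeta$, integrating $Dft$ along $\gamma$ yields a leading quadratic bound $|fz-f\zeta|\lesssim|\zeta-z|^2$ in the horizontal direction plus a $\sqrt{b}|\zeta-z|$ piece in the $e$-direction. Pushing this forward by $Df^{i-1}(f\zeta)$ and absorbing the quadratic chain-rule error inductively, the estimate on $|f^i\zeta-f^iz|$ reduces to controlling $|\zeta-z|^2\|w_i(\zeta)\|$; the definition of $V_k$ combined with (G1)--(G3) forces this quantity to decay like $e^{-c\alpha p}$ for $i\le p$, closing the induction. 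Simultaneously, the sandwich $D_{k+1}(\zeta)/2<\operatorname{dist}(fz,\Gamma_n(f\zeta))\le D_k(\zeta)/2$ together with Remark~\ref{verti} (giving $\operatorname{dist}(fz,\Gamma_n(f\zeta))\approx|\zeta-z|^2$) pins down $k$ up to a factor $e^{O(\alpha k)}$; then $p=\chi(k)\in[(1-\sqrt\alpha)k,k]$, the trivial upper bound $\|w_i\|\le C_0^i$, and the lower bound $\|w_{k+1}\|\ge e^{\lambda k}$ from (G1) together yield the two inequalities of (a). Estimate (b) then follows by feeding (G1) into the defining inequality for $q$ and invoking the definition~\eqref{theta} of $\beta$.

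The derivative statements (d)--(f) are read off from the decomposition. During $[0,q)$ the two leading summands of $Df^iv$ have comparable size, so $Df^iv$ may rotate; at time $q$ the defining condition $|\zeta-z|^\beta\|w_{q+1}(\zeta)\|\ge1$ guarantees that $A(z)w_q(\zeta)$ dominates, which yields (d). Past $q$, combining $|A(z)|\approx|\zeta-z|\gtrsim D_k(\zeta)^{1/2}$ with the lower bound on $\|w_p(\zeta)\|$ forced by the definition of $D_k(\zeta)$ and (G3) produces (e); and (f) transcribes (G3) at $\chi(k)=p$ through the decomposition, once the $B$-term is absorbed via Corollary~\ref{focor1}(b). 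For part (ii), the condition $fz\in V_{20n-1}$ forces $|\zeta-z|$ so small that $|A(z)|\,\|w_n(\zeta)\|$ is beaten by $|B(z)|\,\|Df^{n-1}(f\zeta)e\|\le\sqrt{b}(Cb/\kappa^2)^{n-1}$, which is itself $\le e^{-8\lambda n}\|v\|$ for $b$ small. The hard part throughout will be the distortion bookkeeping needed to justify the decomposition: bounding $\mathcal E_i$ against the growing term $A(z)w_i(\zeta)$ uniformly for $i\le p$ requires (G1)--(G3) together with the derivative bound (M3) in an essential way, exactly as carried out in the parallel argument of \cite{T1}.
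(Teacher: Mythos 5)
Your proposal follows exactly the same route as the paper, which itself gives only a one-sentence sketch (split $Dfv$ along $\left(\begin{smallmatrix}1\\0\end{smallmatrix}\right)$ and the most contracting direction, iterate the two pieces separately, rejoin at the end of the fold period) and defers all details to [\cite{T1}, Proposition 2.2]. Your expansion of that sketch — using Lemma~\ref{quadratic} to write $Dft(z)=A(z)\left(\begin{smallmatrix}1\\0\end{smallmatrix}\right)+B(z)e$ with $e\parallel e_n(f\zeta)$, Corollary~\ref{focor1}(b) to suppress the $B$-term, conditions (G1)--(G3) to control $A(z)w_i(\zeta)$ against the chain-rule error, and the definitions of $V_k$, $\chi$ and $q$ to pin down $k$, $p$ and the fold time — is precisely the standard Benedicks--Carleson binding/recovery argument the paper is invoking.
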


A proof of this proposition follows the line that is now well
understood \cite{BC91,MV93,WY01}. We split $Dfv$ into the
direction of $\left(
\begin{smallmatrix}1\\0\end{smallmatrix}\right)$
and that of $e_n(fz)$, iterate them separately, and put
them together at the expiration of the fold period.

\section{Existence of binding points}
 To deal with returns to
the region $I(\delta)$, we look for suitable critical
approximations and use them as guides to keep further evolution in
track. Such critical approximations, if exists, are called {\it
binding points}. In this section we recall the procedure in \cite{T1} for
finding binding points.

\subsection{Creation of new critical approximations}\label{cre}
By a {\it $C^2(b)$-curve} we mean a $C^2$-curve such that the slopes
of all its tangent vectors are $\leq \sqrt{b}$ and the curvature
is everywhere $\leq \sqrt{b}$.
The next two lemmas are used to create new critical approximations around the
existing ones. For corresponding versions, see: \cite{BC91}
p.113, Lemma 6.1; \cite{MV93} Sect.7A, 7B; \cite{WY01} Lemma 2.10,
2.11.

\begin{lemma}\label{induce}
Let $\gamma$ be a $C^2(b)$-curve in $I(\delta)$ parameterized by
arc length and such that $\gamma(0)$ is a critical
approximation of order $n$. Suppose that:

\noindent {\rm (i)} $\gamma(s)$ is defined for $s\in[-
b^{\frac{n}{4}}, b^{\frac{n}{4}}]$;

\noindent {\rm (ii)} there exists $m\in[n/3,20n]$ such that $\Vert
Df^i(f\gamma(0))\Vert\geq 1$ for $1\leq i\leq m$.

\noindent There exists $s_0\in[- b^{\frac{n}{4}},
b^{\frac{n}{4}}]$ such that $\gamma(s_0)$ is a critical
approximation of order $m$ on $\gamma$. 
\end{lemma}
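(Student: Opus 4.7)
The strategy is an intermediate value argument. By hypothesis (ii), $f\gamma(0)$ is $1$-expanding up to time $m$, with $\kappa=1\gg b^{1/10}$, so Lemma~\ref{fo} and Corollaries~\ref{focor1}--\ref{e1} all apply at $f\gamma(0)$. In particular $e_m(f\gamma(0))$ exists, its slope exceeds $C/\sqrt b$, and $\|\partial e_m\|\leq Cb$. A telescoping estimate gives $\|e_n(f\gamma(0))-e_m(f\gamma(0))\|\leq\sum_{i=m+1}^{n}(Cb)^{i-1}\leq 2(Cb)^{m}\leq 2(Cb)^{n/3}$, so the critical-approximation condition at order $n$ almost provides the condition at order $m$ at $\gamma(0)$, up to an exponentially small error.

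To convert this into an exact alignment, I would consider the smooth scalar
\[
F(s)\ =\ Dft(\gamma(s))\times e_m(f\gamma(s)),
\]
which vanishes precisely when $\gamma(s)$ is a critical approximation of order $m$ on $\gamma$. Since $Dft(\gamma(0))$ is parallel to $e_n(f\gamma(0))$ and so has slope $\geq C/\sqrt b$, Lemma~\ref{quadratic} applies with $\zeta=\gamma(0)$ and $e=e_n(f\gamma(0))$, yielding $Dft(\gamma(s))=A(s)(1,0)+B(s)e$ with $|A(s)|\approx|s|$ (so $A\in C^2$ with $A(0)=0$ and $|A'(0)|\approx 1$) and $|B|\leq C\sqrt b$. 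Substituting,
\[
F(s)=A(s)\bigl(e_m(f\gamma(s))\bigr)_y+B(s)\bigl(e\times e_m(f\gamma(s))\bigr),
\]
where $(e_m)_y\approx 1$ by the slope bound, while $|e\times e_m(f\gamma(s))|\leq 2(Cb)^{n/3}+Cb|s|$ by the first paragraph and $\|\partial e_m\|\leq Cb$. Differentiating yields the two key estimates
\[
|F(0)|\leq C\sqrt b\,(Cb)^{n/3}\qquad\text{and}\qquad|F'(s)|\geq \tfrac12\ \text{on } [-b^{n/4},b^{n/4}].
\]

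The intermediate value theorem then produces a zero $s_0$ of $F$ with $|s_0|\leq 2|F(0)|\leq C\sqrt b\,(Cb)^{n/3}\ll b^{n/4}$ for $b$ sufficiently small, securing the tangency condition of a critical approximation of order $m$. The companion requirement $\|Df^i(f\gamma(s_0))\|\geq 1/10$ for $1\leq i\leq m$ follows by continuity from $\|Df^i(f\gamma(0))\|\geq 1$: with $|s_0|$ exponentially small in $n$ and the $C^1$-distortion across $i\leq 20n$ iterates controlled via (M3), the norms drift by $o(1)$.

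I expect the only real obstacle to be verifying the transversality $|F'|\geq\tfrac12$ uniformly on $[-b^{n/4},b^{n/4}]$. It rests on the order-one rotation rate of $Dft(\gamma(s))$ extracted from Lemma~\ref{quadratic} via $|A'|\approx 1$ dominating the $O(b)$ drift of $e_m\circ f\gamma$ from Corollary~\ref{e1}; everything else is bookkeeping of the exponentially small $(Cb)^{n/3}$ error against the polynomial threshold $b^{n/4}$, which is favorable once $b$ is small.
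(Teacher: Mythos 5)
The paper itself does not supply a proof of Lemma~\ref{induce}; it imports it, citing Benedicks--Carleson, Mora--Viana, and Wang--Young. Your reconstruction is essentially the standard argument from those sources: track the scalar $F(s)=Dft(\gamma(s))\times e_m(f\gamma(s))$, use Lemma~\ref{quadratic} to see that the unstable direction rotates at unit speed (through $|A'|\approx 1$) while Corollary~\ref{e1} shows that the contracting field $e_m$ drifts only at speed $O(b)$, and invoke the order-$n$ alignment plus the telescoping estimate of Lemma~\ref{fo} to see $|F(0)|$ is exponentially small in $n$; a sign change then produces $s_0$, and the norm condition carries over by a matrix-perturbation argument. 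This is correct in outline.

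Two housekeeping remarks. When $m<n$ (which is allowed, since $m\in[n/3,20n]$), your telescoping $\sum_{i=m+1}^{n}\Vert e_i\times e_{i-1}\Vert$ cannot be run with $\kappa=1$: hypothesis (ii) only supplies expansion up to time $m$, and for indices in $(m,n]$ one must fall back on property (i) of the order-$n$ critical approximation, which gives $\Vert Df^i\Vert\geq 1/10$, hence $\kappa=1/10$. This does not change the conclusion (the error stays $\leq(Cb)^{n/3}$ after absorbing $\kappa^{-2}$ into $C$), but the statement ``with $\kappa=1\gg b^{1/10}$'' as a blanket claim is not accurate. Secondly, $|A'(0)|\approx 1$ is not literally part of the statement of Lemma~\ref{quadratic}, which only gives $|A(z)|\approx|z-\zeta|$; the derivative bound follows because $A(0)=0$ forces $A'(0)=\lim A(s)/s$ to lie in $[c,C]\cup[-C,-c]$ and $A''$ is bounded, but it is worth saying so, since the whole transversality step rests on it. Finally, for $F$ to be well-defined and differentiable on the entire interval $[-b^{n/4},b^{n/4}]$ one must first check that $e_m(f\gamma(s))$ exists there, i.e.\ that expansion up to time $m$ survives the perturbation $|s|\leq b^{n/4}$; this is the same matrix-perturbation estimate you use at the end and should be stated before introducing $F'$, not after.
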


Next we consider two $C^2(b)$-curves
 $\gamma_1$, $\gamma_2$ in $I(\delta)$
parametrized by arc length, in a way that the $x$-coordinate of
$\gamma_1(0)$ coincides with that of $\gamma_2(0)$. Let
$t_\sigma(s)$ denote any unit vector tangent to $\gamma_\sigma$ at
$\gamma_\sigma(s)$, $\sigma=1,2$.

\begin{lemma}\label{update}
Let  $\gamma_1$, $\gamma_2$ be as above and suppose that:

\noindent {\rm (i)} $\gamma_1(s)$, $\gamma_2(s)$ are defined for
$s\in[-\varepsilon^{\frac{n}{2}},\varepsilon^{\frac{n}{2}}]$,
$\varepsilon\leq C_0^{-5}$;

\noindent {\rm (ii)} $\gamma_1(0)$ is a critical approximation of
order $n$ on $\gamma_1$ and
 $\Vert Df^i(f\gamma_1(0))\Vert\geq 1$ for
$1\leq i\leq n$;

\noindent {\rm (iii)} $|\gamma_1(0)-\gamma_2(0)|\leq
\varepsilon^n\text{ and } {\rm angle}(t_1(0), t_2(0))\leq
\varepsilon^n$.

{\noindent}There exists
$s_0\in[-\varepsilon^{\frac{n}{2}},\varepsilon^{\frac{n}{2}}]$ such that
$\gamma_2(s_0)$ is a critical approximation of order $n$ on
$\gamma_2$.
\end{lemma}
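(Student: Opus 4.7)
The plan is to define a scalar tangency function $F_2$ on $\gamma_2$ whose zeros correspond to condition~(ii) of the critical approximation definition, bound $|F_2(0)|$ by comparison with the known zero on $\gamma_1$, and then invoke the intermediate value theorem using a nondegeneracy estimate coming from Lemma~\ref{quadratic}. For $\sigma=1,2$ let $e_n^\sigma(s)$ denote the most contracting direction at $f\gamma_\sigma(s)$, and set
$$F_\sigma(s)=Df\,t_\sigma(s)\wedge e_n^\sigma(s),$$
with $\wedge$ the $2$-dimensional scalar cross product, so that $F_\sigma(s_0)=0$ if and only if $Dft_\sigma(s_0)\parallel e_n^\sigma(s_0)$. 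Hypothesis (ii) of the lemma gives $F_1(0)=0$, and I want to produce $s_0\in[-\varepsilon^{n/2},\varepsilon^{n/2}]$ with $F_2(s_0)=0$, together with the expansion condition (i).

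I would first bound $|F_2(0)|$ by expanding $F_2(0)-F_1(0)$ into contributions from $\gamma_2(0)-\gamma_1(0)$, $t_2(0)-t_1(0)$, and the corresponding variation of $e_n$. Hypothesis (iii) controls the first two by $\varepsilon^n$, (M3) gives $\|\partial Df\|\leq C_0$, and Corollary~\ref{e1}, applicable with $\kappa=1$ via hypothesis (ii), gives $\|\partial e_n\|\leq Cb$. The triangle inequality then yields $|F_2(0)|\leq C\varepsilon^n$.

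Next I would establish a lower bound $|F_2'(s)|\geq c>0$ on the whole interval. Since $Dft_1(0)\parallel e_n^1(0)$ has slope $\geq C/\sqrt{b}$ by~(\ref{slo}), and $Dft_2(0)$ differs from it by $O(\varepsilon^n)$, the slope of $Dft_2(0)$ itself exceeds $C\sqrt{b}$, so Lemma~\ref{quadratic} applies to the horizontal curve $\gamma_2$ with reference point $\gamma_2(0)$. Decomposing $Dft_2(s)=A_2(s)\left(\begin{smallmatrix}1\\0\end{smallmatrix}\right)+B_2(s)\hat{e}$ with $\hat{e}$ the unit tangent to $f\gamma_2$ at $f\gamma_2(0)$, the lemma yields $|A_2(s)|\approx|s|$ and $|B_2(s)|\leq C\sqrt{b}$. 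Because $e_n^2$ is nearly vertical by~(\ref{slo}) and drifts at rate $\leq Cb$, the function $F_2$ is a small perturbation of $A_2$; combined with the $|F_2(0)|$ bound, the intermediate value theorem produces $s_0$ with $F_2(s_0)=0$ and $|s_0|\leq C\varepsilon^n\ll\varepsilon^{n/2}$.

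It remains to verify the expansion condition (i) at $\gamma_2(s_0)$. Since $|\gamma_2(s_0)-\gamma_1(0)|\leq 2\varepsilon^n$, iterating the chain-rule bound $\|D^2 f^i\|\leq CC_0^{2i}$ gives
$$\bigl\|Df^i(f\gamma_2(s_0))-Df^i(f\gamma_1(0))\bigr\|\leq 2C_0^{2i+1}\varepsilon^n\leq 2C_0^{-3n+1}$$
for all $1\leq i\leq n$, where the last inequality uses $\varepsilon\leq C_0^{-5}$. For $n\geq M$ with $M$ large, this is at most $9/10$, so the strengthened hypothesis $\|Df^i(f\gamma_1(0))\|\geq 1$ in (ii) of the lemma (rather than the weaker $1/10$ demanded by the critical approximation definition) yields $\|Df^i(f\gamma_2(s_0))\|\geq 1/10$. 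The main obstacle is precisely this last perturbation estimate: over $n$ iterates the basepoint-amplification factor can be as large as $C_0^{2n+1}$, and the hypothesis $\varepsilon\leq C_0^{-5}$ is calibrated exactly so that this compounded error is tamed by the factor-of-$10$ safety margin in condition~(ii).
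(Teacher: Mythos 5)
The paper never writes out a proof of this lemma; it simply cites the analogous continuation lemmas in \cite{BC91} (p.~113, Lemma~6.1), \cite{MV93} (Sect.~7A, 7B) and \cite{WY01} (Lemma~2.11). Your proposal reconstructs essentially the argument in those references: zero a scalar tangency function $F_\sigma(s)=Dft_\sigma(s)\wedge e_n^\sigma(s)$, control $|F_2(0)|$ by the $\varepsilon^n$-closeness in (iii) together with $\Vert\partial Df\Vert\leq C_0$ and $\Vert\partial e_n\Vert\leq Cb/\kappa^3$, exhibit a nondegeneracy of $F_2$ via Lemma~\ref{quadratic}, and close with a Gronwall-type perturbation of the expansion estimate. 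This is the correct and standard route, and you correctly locate where $\varepsilon\leq C_0^{-5}$ is spent: taming $\Vert Df^i(f\gamma_2(s_0))-Df^i(f\gamma_1(0))\Vert\leq iC_0^{i+1}\cdot 2\varepsilon^n\leq 2nC_0^{-3n+1}$ so that the $\geq 1$ in (ii) survives to $\geq 1/10$.

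There are, however, two places where you assert more than you prove. First, your statement that ``$Dft_2(0)$ differs from $Dft_1(0)$ by $O(\varepsilon^n)$, so its slope exceeds $C\sqrt{b}$'' needs care when $\varepsilon^n\gg\Vert Dft_1(0)\Vert\approx\sqrt{b}$ (which does occur for $n$ close to $M$, since $b$ is chosen after $M$): in that regime the perturbation dominates and the slope of $Dft_2(0)$ is not obviously large. It still works out—the slope condition of Lemma~\ref{quadratic} is weak ($\geq C\sqrt{b}$, not $\geq C/\sqrt{b}$) and is satisfied on all of $I(\delta)$ in the model's setting—but as written the estimate is not justified from what you cite. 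Second, you do not argue that $e_n^2(s)$ (hence $F_2$) is defined on the whole $s$-interval you sweep with the IVT, nor that $|F_2'|$ is bounded below uniformly; your decomposition $F_2\approx A_2$ quietly assumes $\hat e$ is near-vertical and the $B_2'\cdot[\hat e\wedge e_n^2]$ term is negligible, which again fails at face value when $\varepsilon^n\gg\sqrt b$. In that regime one should instead use $|F_2(s)-A_2(s)|\leq C\sqrt b$ from Lemma~\ref{quadratic} directly, obtaining $|s_0|\leq C\sqrt b$, which is still $\leq C\varepsilon^n\ll\varepsilon^{n/2}$ since $\varepsilon^n>\sqrt b$ forces $b<\varepsilon^{2n}$. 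With that bifurcation supplied, your claimed $|s_0|\leq C\varepsilon^n$ holds in all cases and the rest of your verification goes through. So: the approach is the right one and leads to the result, but the nondegeneracy step needs the above case split to be airtight.
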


\subsection{Hyperbolic times}\label{ht}
Let
\begin{equation}\label{MN}\theta=\alpha^3,\quad\kappa_0=C_0^{-10}.
\end{equation}
Let $v$ be a tangent vector at $z$ and let $m\geq1.$
We say $v$ is {\it $r$-regular up to time $m$}
if for $0\leq i<m$,
$$\Vert Df^mv\Vert\geq r\delta\Vert Df^iv\Vert.$$
We say $\mu\in[0,m]$ is an  {\it $m$-hyperbolic time} of $v$ if $
Df^\mu v$ is $\kappa_0^{\frac{1}{2}}$-expanding up to time
$m-\mu$. Results related to the next lemma can be found in
[\cite{BC91} Lemma 6.6], [\cite{MV93} Lemma 9.1], [\cite{WY01}
Claim 5.1].
\begin{lemma}\label{htlem}{\rm ({\rm \cite{T1} Lemma 2.12};
Abundance of well-distributed hyperbolic
 times)}
 Let $m\geq \log(1/\delta)$ and
suppose that a tangent vector $v$ at $z$ is $1/100$-regular
up to time $m$. There exist $s\geq2$ and a sequence $\mu_1<
\mu_2<\cdots< \mu_s$ of $m$-hyperbolic times of $v$ such that:
\smallskip

\noindent{{\rm (a)}} $\Vert Df^{\mu_j}v\Vert$ is
$\kappa_0^\frac{1}{4}$-expanding up to time $m-\mu_j$;

\noindent{\text{\rm (b)}} $1/16\leq (m-\mu_{j+1})/(m-\mu_{j})\leq 1/4$
for $1\leq j\leq s-1$;

\noindent{{\rm (c)}}
 $0\leq \mu_1<m/2$ and
$m-\log(1/\delta)\leq\mu_s\leq m-\log(1/\delta)/2$.
\end{lemma}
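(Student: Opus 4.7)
The plan is to combine a Pliss-type abundance argument with a greedy extraction of geometrically spaced indices.

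\emph{Setup.} Introduce the shifted log-expansion sums
\[
\tilde S_j \;:=\; \log\Vert Df^j v\Vert \;-\; \tfrac{j}{4}\log\kappa_0, \qquad 0\le j\le m.
\]
Since $\kappa_0=C_0^{-10}<1$, the shift rate $-\tfrac14\log\kappa_0=\tfrac52\log C_0$ is positive. A direct unravelling shows that $\mu\in[0,m]$ satisfies the strong condition of (a)---that $Df^\mu v$ is $\kappa_0^{1/4}$-expanding up to time $m-\mu$---if and only if $\tilde S_\mu\le\tilde S_j$ for every $j\in(\mu,m]$; such $\mu$ is in particular an $m$-hyperbolic time in the sense of Section 2.2. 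Two features of $(\tilde S_j)$ drive the argument. The one-step bound $\tilde S_{j+1}-\tilde S_j\le H:=\tfrac72\log C_0$ follows from $\Vert Df\Vert\le C_0$. The $1/100$-regularity of $v$ translates into the tail bound
\[
\tilde S_m-\tilde S_i \;\ge\; \log(\delta/100)+\tfrac52(m-i)\log C_0 \qquad (0\le i\le m),
\]
so $\tilde S$ has a positive linear drift on any subinterval longer than a fixed multiple of $\log(1/\delta)$.

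\emph{Pliss-type claim.} For every $\mu_0\in[0,m)$, let $\mu^*=\mu^*(\mu_0)$ be the smallest index in $[\mu_0,m]$ minimizing $\tilde S$ on $[\mu_0,m]$. Then $\mu^*$ is a strong hyperbolic time in the sense above, and
\[
\mu^*-\mu_0 \;\le\; \tfrac27(m-\mu_0)+C_1\log(1/\delta),
\]
with $C_1=C_1(C_0)$. The hyperbolic-time property is built into the choice of $\mu^*$. For the quantitative bound combine $\tilde S_m-\tilde S_{\mu^*}\le H(m-\mu^*)$ (iterated slope), $\tilde S_m-\tilde S_{\mu^*}\ge\tilde S_m-\tilde S_{\mu_0}$ (minimality of $\mu^*$), and the tail bound at $\mu_0$; solving for $m-\mu^*$ yields the estimate.

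\emph{Greedy extraction.} Set $\mu_1:=\mu^*(0)$; by the claim and $m\ge\log(1/\delta)$, with $\delta$ small in the hierarchy $\alpha,M,\delta,b$ to absorb $C_1$, one has $\mu_1<m/2$, establishing the first half of (c). Given $\mu_j$ with $m-\mu_j$ exceeding a fixed multiple of $\log(1/\delta)$, set $\mu_{j+1}:=\mu^*\bigl(\mu_j+\lceil\tfrac34(m-\mu_j)\rceil\bigr)$; then $\mu_{j+1}-\mu_j\ge\tfrac34(m-\mu_j)$ by construction, while the claim yields $\mu_{j+1}-\mu_j\le\tfrac34(m-\mu_j)+\tfrac1{14}(m-\mu_j)+C_1\log(1/\delta)\le\tfrac{15}{16}(m-\mu_j)$. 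Therefore $(m-\mu_{j+1})/(m-\mu_j)\in[1/16,1/4]$, which is (b). Terminate at the first $s$ with $m-\mu_s\in[\log(1/\delta)/2,\log(1/\delta)]$: this completes (c) and $m\ge\log(1/\delta)$ forces $s\ge 2$. The main obstacle is that the additive $C_1\log(1/\delta)$ error is comparable to $m-\mu_j$ at the last few steps, so the landing position of $\mu_{s}$ must be tuned carefully; this is handled by exploiting the flexibility in the Pliss-type claim---replacing the coefficient $3/4$ in the definition of $\mu_{j+1}$ by some $c_j\in[3/4,15/16]$ in the terminal step---to land inside the required window, with all constants closing under the ordering $\alpha\ll\delta\ll b^{1/10}$ fixed in Section 2.2.
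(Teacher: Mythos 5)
Your Pliss-type reframing is correct: with $\tilde S_j = \log\Vert Df^j v\Vert - \tfrac{j}{4}\log\kappa_0$, an index $\mu$ satisfies (a) if and only if $\tilde S_\mu\le\tilde S_j$ for all $\mu<j\le m$, and such $\mu$ are a fortiori $m$-hyperbolic times since $\kappa_0^{1/4}>\kappa_0^{1/2}$. The one-step slope $\tilde S_{j+1}-\tilde S_j\le\tfrac72\log C_0$, the $1/100$-regularity tail bound, and the position estimate $\mu^*(\mu_0)-\mu_0\le\tfrac27(m-\mu_0)+\tfrac{2\log(100/\delta)}{7\log C_0}$ all close as you state, and the middle greedy steps with $c_j=3/4$ are sound whenever $m-\mu_j$ dominates $\log(1/\delta)$ by a fixed factor.

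The gap is in the termination. You propose to push $c_j$ up toward $15/16$ in the terminal step so that $m-\mu_s$ lands in $[\log(1/\delta)/2,\log(1/\delta)]$. But with $c_{s-1}\approx 15/16$ one has $m-\mu_0'\approx\tfrac{1}{16}(m-\mu_{s-1})$, and the Pliss bound only guarantees $m-\mu_s\ge\tfrac57(m-\mu_0')-C_1\log(1/\delta)\approx\tfrac{5}{112}(m-\mu_{s-1})-C_1\log(1/\delta)$, which is strictly below $\tfrac{1}{16}(m-\mu_{s-1})$; the lower half of condition (b) at $j=s-1$ is therefore not secured. The remark that the constants close ``for $\delta$ small'' does not rescue this: $C_1$ is $\delta$-independent, and in the extremal case $m\approx\log(1/\delta)$ the term $C_1\log(1/\delta)$ scales like both $m$ and the target window, so shrinking $\delta$ is neutral. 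The difficulty is intrinsic to the forward greedy: you control where the search interval starts but not where inside it the right-minimum falls, and the interval's multiplicative spread (about $7/5$ plus the additive error) competes with the width-$2$ window.

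A clean repair is to place the Pliss starting points backward rather than greedily forward. Set $n_j=\max\{0,\,m-\lceil 8^{\,s-j}\log(1/\delta)\rceil\}$ and $\mu_j=\mu^*(n_j)$; your position estimate then gives $m-\mu_j\in\bigl[(\tfrac57\cdot 8^{\,s-j}-C_1)\log(1/\delta),\;8^{\,s-j}\log(1/\delta)\bigr]$, so the ratios $(m-\mu_{j+1})/(m-\mu_j)$ automatically land in $\bigl[\tfrac{5}{56}-\tfrac{C_1}{8},\,\tfrac{7}{40-7C_1}\bigr]\subset[1/16,1/4]$ and $m-\mu_s\in[(\tfrac57-C_1)\log(1/\delta),\log(1/\delta)]$, which is inside the window once $C_1<3/14$ --- a constraint on $C_0$, not on $\delta$. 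This dispenses with any case analysis at the terminal step and with the forward induction entirely.
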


\subsection{Nice critical approximations}\label{critical}
Let $\zeta$ be a critical approximation
of order $n$ on a horizontal curve $\gamma$. We say $\zeta$
is {\it nice} if:
\smallskip

\noindent {(C1)} $\Vert Df^i(f\zeta)\Vert\geq 1$ for $1\leq
i\leq n$;

\noindent{(C2)} $f^{-i}\zeta\in [-2,2]\times[-\sqrt{b},\sqrt{b}]$
for $1\leq i\leq[\theta n]$;

\noindent{(C3)} let $u$ denote any unit vector at $f^{-[\theta n
]}\zeta$ such that $Df^{[\theta n]}u$ is tangent to $\gamma$. Then
$u$ is $\kappa_0^{\frac{1}{3}}$-expanding and $1/100$-regular,
both up to time $[\theta n]$.
\medskip

\noindent A nonzero vector $v$ is {\it in tangential position
relative to $\zeta$}
  if there exists a horizontal curve
which is tangent to both $v$ and $Df^{[\theta n]}u$.

\medskip

Let \begin{equation}N=\left[\frac{\log
1/\delta}{\theta}\right],\end{equation} where the square bracket
denotes the integer part. 
\medskip

\noindent{\bf Hypothesis for the rest of Sect.3:} $m$, $n$ are
integers with $m\geq\log(1/\delta)$, $n\geq N$, and:
\medskip

\noindent $\bullet$ each nice critical approximation $\zeta$ of
order $\leq n$ has a good critical behavior;

\noindent $\bullet$ a tangent vector $v$ at $z$ is $1/10$-regular
up to time $m$, and $f^mz\in I(\delta)$.
\medskip

\subsection{Binding procedure}\label{algo}
Under the above hypothesis, we describe how to choose a binding
point relative to which $Df^mv$ is in tangential position. In view
of Lemma \ref{htlem}, fix once and for all a sequence
$\mu_1<\mu_2<\cdots<\mu_{s}$ of $m$-hyperbolic times of $v$
satisfying
\begin{equation}\label{hip1} m-\mu_1\leq\theta n,\quad
\frac{1}{2}\log(1/\delta) \leq m-\mu_s\leq\log(1/\delta),\quad
\frac{1}{16}\leq\frac{m-\mu_{i+1}}{m-\mu_{i}}\ {\rm for}\ 1\leq
i<s.\end{equation} Correspondingly, fix once and for all a
sequence $n\geq n_1>\cdots>n_s>n_{s+1}>\cdots>n_{s_0}:=M$ of
integers such that
\begin{equation}\label{hip2}m-\mu_i=[\theta n_i]\ {\rm for}\ 1\leq i\leq
s,\end{equation}
\begin{equation}\label{hip3}
n_{i+1}=n_{i}-1\quad{\rm for }\ s\leq i< s_0.
\end{equation}

We construct a family of $C^2(b)$-curves tangent to $Df^mv$,
arranged in such an organized way that Lemmas \ref{induce} and
\ref{update} may be used inductively. This produces a critical
approximation on each of the $C^2(b)$-curves. We choose one of
them as a binding point. In this way we obtain the following
statement.

\begin{lemma}\label{zyunzo}$(${\rm \cite{T1} Proposition 3.1.}$)$
There exist $i\in[1,s]$ and a critical approximation $\zeta_i$ of
order $n_i$ such that $Df^mv$ is in tangential position relative
to $\zeta_i$.
\end{lemma}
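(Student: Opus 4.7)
I would run an inductive construction with $i$ decreasing from $s_0$ down to $1$, producing for each $i$ a pair $(\gamma_i,\zeta_i)$, where $\gamma_i$ is a $C^2(b)$-curve and $\zeta_i\in\gamma_i$ is a nice critical approximation of order $n_i$, arranged so that $f^{[\theta n_i]}\gamma_i$ passes near $f^m z$ with tangent direction close to that of $Df^m v$. Picking any $i\in[1,s]$ at the end then realizes $Df^m v$ in tangential position relative to $\zeta_i$ in the sense of Section \ref{critical}.

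\textbf{Base case and inductive step.} For $i=s_0$ the target order is $M=n_{s_0}$, and I invoke an already-established nice critical approximation of order $M$ (supplied by the preparatory construction of Section \ref{cre} and covered by the running hypothesis of good critical behavior), taking $\gamma_{s_0}$ to be a short $C^2(b)$-subcurve through it. For the step $i+1 \to i$, Lemma \ref{update} transfers $\zeta_{i+1}$ from $\gamma_{i+1}$ to a nearby $C^2(b)$-curve $\gamma_i$ whose base-point tangent direction is obtained by pulling back $Df^m v$ to $f^{\mu_i}z$, producing a critical approximation of order $n_{i+1}$ on $\gamma_i$. Then Lemma \ref{induce} raises its order to $n_i$: hypothesis (i) is the length bound, arranged by extending $\gamma_i$ to scale $\geq b^{n_i/4}$, and hypothesis (ii), $\|Df^j(f\gamma_i(0))\|\geq 1$ for $1\leq j\leq n_i$, is the transported form of (C1) reinforced by (G1) of good critical behavior applied to the nearby preapproximation.

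\textbf{Verification and main obstacle.} Niceness of $\zeta_i$ at each stage is then automatic from Lemma \ref{htlem}: the preimage vector at $f^{-[\theta n_i]}\zeta_i$ is a small perturbation of $Df^{\mu_i}v/\|Df^{\mu_i}v\|$, which is $\kappa_0^{1/2}$-expanding and $1/100$-regular up to time $m-\mu_i=[\theta n_i]$, so (C3) holds with a mild loss of exponent down to $\kappa_0^{1/3}$; (C2) follows from the fact that $f^{-[\theta n_i]}\zeta_i$ tracks an orbit that stays in the large compact rectangle by construction. Tangential position of $Df^m v$ relative to $\zeta_i$ is then immediate because the image curve $f^{[\theta n_i]}\gamma_i$ is itself a horizontal witness, tangent to $Df^{[\theta n_i]}u$ at $\zeta_i$ and nearly tangent to $Df^m v$ at $f^m z$ by the angle bounds carried through the update. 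The central obstacle is quantitative: for Lemma \ref{update} to apply at stage $i$, the new curve must sit within $\varepsilon^{n_{i+1}}$ of $\gamma_{i+1}$ in both position and tangent angle, and propagating such super-exponentially small closeness through all $s_0$ stages forces tight distortion control on each pullback block. This control is exactly what the ratio bound $(m-\mu_{i+1})/(m-\mu_i)\in[1/16,1/4]$ from Lemma \ref{htlem}(b), combined with (G2)--(G3) of good critical behavior, are designed to supply; threading these estimates stage by stage is the bulk of the technical work.
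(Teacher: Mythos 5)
Your toolkit is the right one---Lemmas \ref{induce}, \ref{update} and \ref{htlem} are exactly what drives the cited argument, and a recursion over the hyperbolic times $\mu_1<\cdots<\mu_s$ is the right skeleton---but the geometry of your $\gamma_i$ is inverted, and that breaks every application of the key lemmas. You place $\gamma_i$ so that $f^{[\theta n_i]}\gamma_i$ passes near $f^m z$; this situates $\gamma_i$, and hence the critical approximation $\zeta_i\in\gamma_i$, at $f^{\mu_i}z$. But $f^m z$ is the point required to lie in $I(\delta)$ by the standing hypothesis, whereas $f^{\mu_i}z$ is a hyperbolic time iterate, well away from the critical strip; Lemmas \ref{induce} and \ref{update} explicitly address $C^2(b)$-curves inside $I(\delta)$, and no critical approximation can sit out at $f^{\mu_i}z$. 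The sketch that follows the lemma in the paper does the opposite: it takes $l_i$ to be a short segment through $f^{\mu_i}z$ tangent to $Df^{\mu_i}v$ and pushes it forward, $\gamma_i:=f^{m-\mu_i}l_i$, landing on a $C^2(b)$-curve through $f^m z\in I(\delta)$ tangent there to $Df^m v$; the critical approximations are created on those curves. Your version also leaves Lemma \ref{update} inapplicable in the inductive step: hypothesis (iii) demands that the two curves' base points be $\varepsilon^n$-close, but $\gamma_{i+1}$ at $f^{\mu_{i+1}}z$ and $\gamma_i$ at $f^{\mu_i}z$ are at different time levels and cannot be close. In the paper's setup all $\gamma_i$ pass through the single point $f^m z$ with the same tangent, and the $\varepsilon^{n}$-closeness is available.

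There is also a logical gap at the end. You read the lemma as ``pick any $i\in[1,s]$''; but it is an existence claim, and the argument behind it is a conditional recursion. A critical approximation of order $n_{i-1}$ on $\gamma_{i-1}$, and tangential position of $Df^m v$ relative to it, is produced \emph{only} under the hypothesis that $Df^m v$ is in critical position relative to the order-$n_i$ one on $\gamma_i$: that criticality is precisely what feeds the $\varepsilon^{n_{i-1}}$-closeness that Lemma \ref{update}(iii) needs at the next scale. If the order-$n_i$ approximation is already in admissible position, the recursion stops, and the higher-order approximation need not exist. The resulting output is one specific $i$ (the first at which the position stops being critical, or $i=1$ if it never does), and the subsequent dichotomy after Definition \ref{bind}---case (a), $i_0=1$ with critical position, versus case (b), admissible position---is exactly the trace of that recursion. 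Your ``any $i$'' claim flattens that structure and would, if true, make the case analysis (and the need for a fold-period estimate in case (a)) superfluous.
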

\noindent{\it Sketch of the proof.} One way to find such $n_i$ and
$\zeta_i$ are described as follows. Let $l_i$ denote the straight
segment of length $\kappa_0^{3\theta n_i}$ centered at
$f^{\mu_i}z$ and tangent to $ Df^{\mu_i}v$. Then $\gamma_{i}
:=f^{\mu_i}l_i$ is a $C^2(b)$-curve extending to both sides around
$f^mz$ to length $\geq \kappa_0^{4\theta n_i}$. Lemma
\ref{induce}, Lemma \ref{update} and the hypothesis of $f$ allow
us to show the following: if $Df^mv$ is in critical position
relative to a critical approximation of order $n_i$ on $\gamma_i$,
then there exists a critical approximation of order $n_{i-1}$ on
$\gamma_{i-1}$ relative to which $Df^mv$ is in tangential
position. A recursive use of this argument yields the conclusion.
\qed

\begin{definition}\label{bind}
{\rm Let $i_0\in[1,s]$ denote the largest integer such that there
exists a critical approximation of order $n_{i_0}$ relative to
which $Df^mv$ is in tangential position. We call any such critical
approximation a {\it binding point for }
$Df^{m}v$.}\end{definition}


Let $\zeta$ denote any binding point for $Df^mv$. By the
definitions in Sect.\ref{recoverysec}, there are two mutually
exclusive cases:
\smallskip

\noindent {{\rm (a)}} $i_0=1$, and $f^mz$ is in critical
position relative to $\zeta$;
\smallskip

 \noindent {{\rm (b)}} $Df^mv$ is in
admissible position relative to $\zeta$.


In case (a), the contraction estimate in (ii) Proposition
\ref{recovery} is in place. In case (b), all the estimates in (i)
Proposition \ref{recovery} are in place: the loss of expansion
and regularity suffered from the return are recovered at the end
of the bound period.

In case (b), one can repeat the binding procedure in the following
manner. Write $m=m_1$. Let $p_1$ denote the bound period. (e,f)
Proposition \ref{recovery} implies that $v$ is $1/10$-regular up
to time $m_1+p_1$. Let $m_2\geq m_1+p_1$ denote the smallest such
that $f^{m_2}z\in I(\delta)$. By Lemma \ref{outside}, $v$ is
$1/10$-regular up to time $m_2$. Subsequently one may repeat the
binding procedure once again, replacing $m\to m_2$, $f^mz\to
f^{m_2}z$, $Df^mv\to Df^{m_2}v$.

In this way, one can (if (a) does not occur) define integers
$$m_1<m_1+p_1\leq m_2<m_2+p_2\leq m_3<\cdots$$
inductively as follows: for $k\geq1$, let $p_k$ be the bound
period of $f^{m_k}z$, and let $m_{k+1}$ be the smallest $j\geq
m_k+p_k$ such that $f^jz\in I(\delta)$. (Note that an orbit may
return to $I(\delta)$ during its bound periods, i.e. $(m_k)$ are
not the only return times to $I(\delta)$.) This decomposes the
orbit of $z$ into segments corresponding to time intervals
$(m_k,m_k+p_k)$ and $[m_k+p_k,m_{k+1}]$, during which we describe
the orbit of $z$ as being``bound" and ``free" states respectively;
$m_k$ are called times of {\it free returns}.

\begin{remark}\label{dig}
{\rm Let us consider the case where the above hypothesis is
satisfied for every $n\geq N$. Then, the binding procedure allows
us to keep in track the evolution of any complete orbit in $W^u$,
decomposing it into bound and free segments. However, this
procedure is not well-adapted to our phase-space construction in
later sections, because:
\medskip

\noindent$\bullet$ the choice of binding points relies only on the
individual orbit under consideration and

neglects a global information on $W^u$;

\noindent$\bullet$ Critical approximations eligible as binding
points are not unique.
\medskip

These issues will be resolved in Section 5, for parameters in
$\Delta$ constructed in the next section.}
\end{remark}

\section{Parameter exclusion}
In this section we construct the parameter set $\Delta$ in the
theorem, having $a^*$ as a full density point. The construction is
done by induction:
$\Delta=\bigcap_{n\geq0}\Delta_{n}$,
where
$\Delta_{n}$ is constructed at step $n$, excluding from $\Delta_{n-1}$
 all those undesirable parameters for which some critical
approximation may not have good critical behavior
 up to time $20n$.

\subsection{Critical approximations of $f_{a^*}$ are non-recurrent}
\label{errant}
The construction of $\Delta$ and a measure estimate of it closely follow
\cite{T1}, in which a positive
measure set of parameters was constructed corresponding to
H\'enon-like maps with nonuniformly hyperbolic behavior.
One key difference from \cite{T1}
is the assertion that $a^*$ is a full density point of
$\Delta$. A key ingredient for this is the next proposition,
which states that the orbit
of every critical approximation of $f_{a^*}$ is non-recurrent.

\begin{prop}\label{good1}For every critical approximation $\zeta$ of
$f_{a^*}$ of order $n$, $f_{a^*}^i\zeta\in \{(x,y)\in\mathbb
R^2\colon |x|\geq9/10\}$ holds for every $1\leq i<20n$.
\end{prop}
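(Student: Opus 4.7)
My approach is by contradiction: suppose there exist a critical approximation $\zeta$ of $f_{a^*}$ of order $n\geq M$ on a horizontal curve $\gamma$ and an index $i_0\in[1,20n)$ with $f_{a^*}^{i_0}\zeta\in I(9/10)$. The plan is to first localize $\zeta$ extremely close to the critical line $\{x=0\}$, then show that its forward orbit shadows the unique tangency orbit $\mathcal{O}_t$ of $f_{a^*}$ (which converges to $Q\approx(-1,0)$ and hence lies in $\{|x|\geq 9/10\}$ for all positive iterates), and finally derive a contradiction with $i_0<20n$.

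The first step is to show $|\zeta_x|\leq Cb$, which will immediately dispose of the cases $i=1,2$. Granting enough expansion along the orbit of $f\zeta$ (the standard ``good critical behavior'' accompanying the definition of critical approximations gives $\kappa$-expansion for some $\kappa\geq b^{1/10}$), Corollary~\ref{e1} yields $\mathrm{slope}(e_n(f\zeta))\geq C/\sqrt{b}$. Writing $Df\cdot t(\zeta)=(-2a^*\zeta_x+O(b),\,O(b))$ from~(\ref{form0}) and imposing the alignment with $e_n(f\zeta)$ demanded by condition~(ii) then forces $|\zeta_x|\leq Cb$. Consequently $f_{a^*}\zeta=(1,0)+O(b)$ and $f_{a^*}^2\zeta$ lies within $O(b+|a^*-2|)$ of the saddle $Q$, both comfortably in $\{|x|\geq 9/10\}$, so it remains to rule out $i_0\geq 3$.

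The main step is a shadowing argument tying $\zeta$ to $\mathcal{O}_t$. The hypothesis that $a^*$ is the \emph{first} bifurcation parameter provides a unique quadratic tangency orbit $\mathcal{O}_t$ between $W^u$ and $W^s(Q)$, whose tangency point $z_0$ itself lies in $I(Cb)$ (tangencies of $W^u$ with $W^s(Q)$ can only occur at tips of quadratic folds of $W^u$, which lie near $\{x=0\}$) and whose forward orbit converges to $Q$ along $W^s(Q)$. Lemma~\ref{stable} provides a long stable leaf $\Gamma_n(f\zeta)$, tangent to $\gamma$ at $\zeta$ by~(ii), approximating a true local stable manifold through $f\zeta$ up to $C^2$-error at most $(Cb/\kappa^4)^n$. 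For $a>a^*$, uniform hyperbolicity of $K_a$ excludes any such near-tangency in $I(Cb)$; continuity in the parameter as $a\downarrow a^*$ therefore forces the tangency captured by $\zeta$ to be rooted in $\mathcal{O}_t$, giving $|\zeta-z_0|\leq(Cb/\kappa^4)^n$. Uniform derivative bounds (M3) then propagate: $|f_{a^*}^i\zeta-f_{a^*}^iz_0|\leq \Lambda^i(Cb/\kappa^4)^n\leq(\Lambda^{20}Cb^{3/5})^n$ for $i<20n$, which is much smaller than $1/10$ once $b$ is sufficiently small. Since $f_{a^*}^iz_0\in\{|x|\geq 9/10\}$ for every $i\geq 1$, so is $f_{a^*}^i\zeta$, contradicting the assumption on $i_0$.

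The main obstacle is the bound $|\zeta-z_0|\leq(Cb/\kappa^4)^n$ tying the abstractly defined $\zeta$ to a concrete point on $\mathcal{O}_t$. This is the two-dimensional analogue of the Misiurewicz property and is precisely where the first-bifurcation assumption enters decisively: uniqueness of tangencies at $a^*$ rules out any ``competing'' critical-like structure and forces every critical approximation to sit on top of $\mathcal{O}_t$, just as at the Misiurewicz parameter $a=2$ in the quadratic family the critical orbit must land on the repelling fixed point.
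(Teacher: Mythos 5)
Your first step (showing $|\zeta_x|\leq Cb$ from the alignment condition and Corollary~\ref{e1}, hence $f\zeta$ near $(1,0)$ and $f^2\zeta$ near $Q$) is fine, and your overall strategy --- approximate $\zeta$ by a nearby ``genuine'' critical point whose forward orbit escapes $R_0$, then propagate the closeness for $i<20n$ using (M3) --- is the same strategy as the paper's. However, the central step of the shadowing argument contains a genuine gap.

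You anchor the argument to \emph{the} unique primary tangency orbit $\mathcal{O}_t$ between $W^u$ and $W^s(Q)$, and claim that ``uniqueness of tangencies at $a^*$ rules out any competing critical-like structure and forces every critical approximation to sit on top of $\mathcal{O}_t$.'' This is not correct. At $a^*$ there are countably many branches of $W^u$ stretching across $I(\delta)$ (see Figure 1 and the density of $W^u$ in $K$), hence countably many critical approximations, sitting on pairwise distinct $C^2(b)$-curves at positive mutual distance; they cannot all be $(Cb/\kappa^4)^n$-close to a single orbit. Moreover, your line ``for $a>a^*$, uniform hyperbolicity of $K_a$ excludes any such near-tangency in $I(Cb)$'' is false: for every $a$ near $2$, each $C^2(b)$-branch $\gamma$ of $W^u$ inside $I(\delta)$ has a fold tip where a long stable leaf is tangent to $\gamma$; such tangencies are perfectly compatible with hyperbolicity of $K_a$, since the fold tips are not in $K_a$. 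So there is nothing special at $a^*$ in the abundance of these tangency points --- what \emph{is} special is where they map.

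What the argument actually needs, and what the paper's proof supplies via the lenticular region $S$ and Lemma~\ref{sox}, is: on \emph{each} $C^2(b)$-curve $\gamma$ in $W^u$ stretching across $I(\delta)$ there is a unique ``critical point'' $\zeta_\gamma\in S$ (defined by tangency between a long stable leaf and $W^u$), and --- crucially --- any such $\zeta_\gamma$ lies in $S$, so $f\zeta_\gamma \in fS$, which by (M1) never re-enters $R_0$ and so stays in $\{|x|\geq 9/10\}$ forever. The escaping-orbit property comes from the geometric position of the fold tips relative to $W^s(Q)$ (they protrude to the escaping side of the $W^s(Q)$ parabola through $r$), not from $\zeta_\gamma$ lying on $W^s(Q)$ or on $\mathcal{O}_t$; most critical points are not on $W^s(Q)$. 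The $(Cb)^{n/4}$-distance between a critical approximation of order $n$ on $\gamma$ and $\zeta_\gamma$ is established in Lemma~\ref{sox} by combining the quadratic behavior in Lemma~\ref{quadratic} with $C^2$-closeness of $\Gamma_n(f\zeta)$ and $\Gamma(f\zeta)$ --- an argument internal to the branch $\gamma$, not a global one about $\mathcal{O}_t$. As stated, your proposal proves the statement only for the one critical approximation lying on the branch of $\mathcal{O}_t$, and the mechanism you invoke to cover the rest does not do so.
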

We postpone a proof of this proposition to Sect.\ref{proo}.

\subsection{Definition of parameter sets}\label{parameterset}

Choose sufficiently small $\varepsilon_0$ and $b$ so that for any
$f\in\{f_a\colon a\in [a^*-\varepsilon_0,a^*]\}$, any critical
approximation $\zeta$ of $f$ and $1\leq i<20N$,
$f^i\zeta\in\{(x,y)\in\mathbb R^2\colon |x|\geq9/10\}$. This
requirement is feasible by the fact that any critical
approximation is contained in $I(\sqrt{b})$. Set
$\Delta_{n}=[a^*-\varepsilon_0,a^*]$ for $1\leq n\leq N.$
\medskip

Let $n>N$, $a\in \Delta_{n-1}$ and suppose that every nice
critical approximation of $f_a$ of order $<n$ has a
good critical behavior. Let
$20(n-1)\leq m<20n$.
 We say a nice critical approximation $\zeta$ of $f_a$ of order
$\geq n$ satisfies $(G)_m$ if:
\smallskip

\noindent (i) there is an well-defined decomposition of the orbit
$w_1(\zeta),w_2(\zeta),\cdots,w_m(\zeta)$ into bound and free segments,
as described at the end of Sect.\ref{algo};

\noindent (ii) let $n_1-1<n_2-1<\cdots<n_s-1\leq m-1$ denote all the free
return times of $f\zeta$, with $z_{1},\cdots,z_{s}$ the
corresponding binding points. They are of order $<n$ and
\begin{equation}\label{br}
\sum_{i=1}^s\log|f^{n_i}\zeta-z_{i}|\geq -\alpha m.\end{equation}

For $n> N$, define $\Delta_{n}$ to be the set of all $a\in
\Delta_{n-1}$ for which every nice critical approximation of order
$\geq n$ satisfies $(G)_{20n-1}$. In other words,
\[\Delta_{n-1}\setminus \Delta_n=\left\{
\begin{array}{c}
a\in \Delta_{n-1}\colon \text{ $(G)_m$ fails
for some $20(n-1)\leq m<20n$} \\
\quad\quad\quad\quad\text{\quad and some nice critical approximation of
order $\geq n$ of $f_a$}
\end{array}
\right\}.\]

\begin{remark}
{\rm 
Let $\zeta$ be a critical approximation.
If $n-1$ is a free return time of $w_1(\zeta)$,
then for simplicity, let us call $n$ a free return time of $\zeta$. 
This terminology
is in a slight disagreement with that introduced at
the end of Sect.\ref{algo}.}
\end{remark}

The next proposition indicates that, for parameters in $\Delta_n$,
critical approximations of order $n$ can be used as binding
points, and thus allows us to proceed to the definition of
$\Delta_{n+1}$.
\begin{prop}\label{brprop}
Let $n>N$, $a\in \Delta_{n}$ and let $\zeta$ be a nice critical
approximation of order $\geq n$ of $f_a$. Then:

\noindent {\rm (a)} $\|w_i(\zeta)\|\geq e^{\lambda(i-1)}$ for
$1\leq i\leq 20n$;

\noindent {\rm (b)} $\|w_j(\zeta)\|\geq e^{-2\alpha i
}\|w_i(\zeta)\|$ for $1\leq i<j\leq 20n$;

\noindent {\rm (c)} if $\zeta$ is of order $n$, then it has a good
critical behavior.
\end{prop}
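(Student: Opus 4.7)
The plan is to prove (a), (b), (c) by induction on $n$, run in parallel with the inductive construction of $\Delta_n$. The induction hypothesis at stage $n-1$ reads: for every $a\in\Delta_{n-1}$, every nice critical approximation of $f_a$ of order $\leq n-1$ satisfies (a), (b), (c); in particular it has good critical behavior and is therefore eligible as a binding point when Proposition \ref{recovery} is invoked. The base case $n=N$ is immediate from the choice of $\varepsilon_0$ and $b$ in Sect.\ref{parameterset}: the orbit $f_a^i\zeta$ stays in $\{|x|\geq 9/10\}$ for $1\leq i<20N$, so Lemma \ref{outside}(a) applied with $\delta$ replaced by $9/10$ gives uniform hyperbolicity, from which (a) and (b) follow with ample slack, and (G3) holds with $\chi(j)=j$.

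For the inductive step, fix $n>N$, $a\in\Delta_n\subset\Delta_{n-1}$, and a nice critical approximation $\zeta$ of order $\geq n$. By the definition of $\Delta_n$ combined with the induction hypothesis, the condition $(G)_m$ holds for every $m\leq 20n-1$: the orbit of $w_1(\zeta)$ admits a well-defined bound/free decomposition with free returns $n_1<\cdots<n_s\leq 20n-1$ and binding points $z_1,\ldots,z_s$, each a nice critical approximation of order $<n$ which, by induction, has good critical behavior. Hence Proposition \ref{recovery} applies at each bound segment.

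The crucial quantitative step is to convert the cumulative budget (\ref{br}), $\sum_{n_i\leq m}\log|f^{n_i}\zeta-z_i|\geq -\alpha m$, into pointwise estimates. Applying (\ref{br}) at $m=n_k$ and using non-positivity of each summand yields $|f^{n_k}\zeta-z_k|\geq e^{-\alpha n_k}$; combined with Proposition \ref{recovery}(a) this forces $p_k\leq (3\alpha/\lambda)n_k$, and summing (again via (\ref{br})) produces total bound time $T_b\leq (3\alpha/\lambda)m$ up to time $m$. With these inputs, (a) follows by combining free expansion at rate $e^{\lambda_0}$ from Lemma \ref{outside} with bound expansion at rate $e^{\lambda/3}$ from Proposition \ref{recovery}(e), giving $\|w_m(\zeta)\|\geq e^{\lambda_0 T_f+(\lambda/3)T_b}=e^{\lambda_0 m-(5\lambda_0/6)T_b}\geq e^{(\lambda_0-5\alpha/2)m}\geq e^{\lambda(m-1)}$, valid once $\alpha$ is small enough relative to $\lambda=\lambda_0/2$. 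For (b), the ratio $\|w_j(\zeta)\|/\|w_i(\zeta)\|$ can drop below $1$ only when $j$ lies inside a bound period; Proposition \ref{recovery}(f) controls the drop within a single bound period by $\delta/10$, while across distinct bound periods the worst case is a factor $|f^{n_l}\zeta-z_l|^{1-\beta}\geq e^{-(1-\beta)\alpha n_l}$ multiplied by the intervening expansion furnished by (a); these combine to $\|w_j(\zeta)\|/\|w_i(\zeta)\|\geq e^{-2\alpha i}$ for $\alpha$ small and $\beta$ negligible via (\ref{theta}) and small $b$. For (G3) in (c), define $\chi(j)$ as the largest $k\leq j$ with $\|w_k(\zeta)\|\geq\delta\max_{0\leq i<k}\|w_i(\zeta)\|$; monotonicity is immediate, and the gap $j-\chi(j)$ is bounded by the length of the bound period containing $j$, which is $\leq (3\alpha/\lambda)j\leq\sqrt\alpha j$ once $\alpha\leq\lambda^2/9$.

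The main obstacle is the passage from the global budget (\ref{br}) to pointwise estimates compatible with (b) and (G3). The worst-case drop during a single fold period is $|f^{n_l}\zeta-z_l|^{1-\beta}$, and only the \emph{local} application of (\ref{br}) at $m=n_l$ (not at $m=20n$) converts this into a loss of order $e^{-2\alpha i}$. Once this bookkeeping is in place and the constants are chosen in the prescribed order---$\alpha$ first, so small that $\alpha\ll\lambda^2/9$ and $5\alpha/2<\lambda_0/2$; then $M,\delta$; finally $b$ small enough that $\beta=2\log C_0/\log(1/b)$ is negligible---all the multiplicative estimates close up and the induction propagates from $n-1$ to $n$.
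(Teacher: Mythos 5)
Your proposal for (a) and (b) runs along essentially the same lines as what the paper (deferring to [T1]) has in mind: induction in $n$, use $(G)_m$ for $m\leq 20n-1$ together with the inductive good-critical-behavior of the order-$<n$ binding points to make Proposition~\ref{recovery} applicable, and convert the cumulative budget (\ref{br}) into pointwise deviation bounds $|f^{n_k}\zeta-z_k|\geq e^{-\alpha n_k}$ and total bound time $T_b\leq(3\alpha/\lambda)m$. There is a small arithmetic slip ($\lambda_0 m - (5\lambda_0/6)T_b \geq (\lambda_0-5\alpha)m$, not $(\lambda_0-5\alpha/2)m$), but it is harmless. You also gloss over the case in which $m$ falls inside a fold period, where the estimate $\|w_m\|\geq e^{\lambda_0 T_f+(\lambda/3)T_b}$ is not directly usable and one has to absorb the one-step drop $|f^{n_l}\zeta-z_l|\geq e^{-\alpha n_l}$ into the surplus expansion coming from $\lambda_0>\lambda$; this is standard and not a real gap.

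The genuine gap is in your treatment of (c), specifically your claim that with $\chi(j)$ defined as the largest $k\leq j$ satisfying $\|w_k(\zeta)\|\geq\delta\max_{0\leq i<k}\|w_i(\zeta)\|$, one has ``the gap $j-\chi(j)$ is bounded by the length of the bound period containing $j$.'' That is not correct. If the free stretch between the end of the bound period preceding $n_l$ and the free return $n_l$ is short, say shorter than $(1/\lambda_0)\log(10/\delta)$, then $\|w_{n_l}(\zeta)\|$ need only dominate the earlier norms by the factor $\delta/10$ provided by Proposition~\ref{recovery}(f), not by $\delta$, and your $\chi(j)$ must retreat past $n_l$. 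This can happen repeatedly, so the backward excursion may traverse a whole chain of bound periods interleaved with short free stretches. The paper's sketch makes precisely this point: $\chi(j)$ is defined by a backward recursion through the free return times, stopping only at a return preceded by a long enough free stretch, and the estimate $(1-\sqrt\alpha)j\leq\chi(j)$ is obtained by comparing the length of the interval $[(1-\sqrt\alpha)j,j]$ that the chain would have to cover, made up mostly of bound iterates, against the global bound-time budget $T_b\leq C\alpha j$ furnished by $(G)$. The contradiction is between ``bound time in this window $\gtrsim\sqrt\alpha j$'' and ``total bound time $\lesssim\alpha j$,'' which is a genuinely multi-return counting argument. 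Your single-bound-period bound gives the right numerical exponent by coincidence but is not a valid justification, and it is exactly the step the proposition is designed to control; you need to replace it with the chain argument (or cite T1's Proposition~5.1 for it, as the paper does).
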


\begin{proof}
The proof is almost identical to that of [\cite{T1} Proposition
5.1]. Here we suppose $\zeta$ is of order $n$, and just give a
sketch of how to define the function $\chi$ in $(G3)$. For 
details, see \cite{T1}.


Let $j\in [M,20n]$ and $h_0:=j$. Define a finite sequence
$h_1>\cdots>h_{t(j)}$ of free return times of $\zeta$ inductively
as follows. Let $\hat h_{k+1}$ denote the largest free return time
before $h_k$, when it makes sense. Let $p_{k+1}$ denote the
corresponding bound period. If
\begin{equation}\label{sae}h_k-\hat h_{k+1}-p_{k+1}\leq
(1/\lambda_0)\log (10\delta),\end{equation} then let
$h_{k+1}=\hat h_{k+1}$. In all other cases, $h_{k+1}$ is
undefined, namely $k=t(j)$. Define $\chi(j)=h_{t(j)}.$ Obviously,
$\chi(j)\leq j$ holds.
If $(1-\sqrt{\alpha})j\leq \chi(j)$ did not hold, (\ref{sae})
 would imply that
the total number of bound iterates in the interval
$[(1-\sqrt{\alpha})j,j]$ were bigger than a constant multiple of
$\sqrt{\alpha}j$. While by condition (G), the total number of
bound states in the interval is smaller than a constant multiple
of $\alpha j$. If $\alpha$ is small, then these two estimates are
not compatible. \end{proof}

To estimate the measure of $\Delta_{n-1}\setminus \Delta_{n}$, we
first decompose it into a finite number of subsets, based on
certain combinatorics on itineraries of critical approximations.
We then estimate the measure of each subset separately, and unify
them at the end. In the next two subsections we introduce two
integral components of the combinatorics.

\subsection{Deep returns}
 Let $f\in\{f_a\colon a\in
\Delta_{n-1}\setminus\Delta_n\}$. Let $\zeta$ be a nice critical
approximation of $f$ of order $\geq n$. Let $\nu<20n$ be a free
return time of $\zeta$, with the binding point $z$. If $\nu$ is
not the first return time to $I(\delta)$, then let
$n_1<\cdots<n_{t}$ denote all the free return times of $\zeta$
before $\nu$. For $1\leq s\leq t$, let  $z_{s}$ denote 
the corresponding binding
point and $p_s$ the bound period. 
Write $n_{t+1}=\nu$ and $z_{t+1}=z$. We say $\nu$ is a
{\it deep return time}, if it is the first return time to
$I(\delta)$, or else for $1\leq s\leq t$,
\begin{equation}\label{inessential}
\sum_{j=s+1}^{t+1}2\log|f^{n_j}\zeta-z_{j}|\leq
\log|f^{n_s}\zeta-z_{s}|.
\end{equation}


For each
$n_s$, let
$$\sigma_{n_s}(\zeta)=\frac{|f^{n_s}\zeta-z_s|^{\frac{10}{9}}}
{\|w_{n_s}(\zeta)\|}.$$ For each
$i\in[1,\nu)\setminus\bigcup_{1\leq s\leq t}[n_s,n_s+p_s-1]$, let
$$\sigma_i(\zeta)=\frac{\|w_{i+1}(\zeta)\|}{\|w_i(\zeta)\|^2}.$$
 Define
$$\Theta_{\nu}(\zeta)=\kappa_0\cdot\left
[\sum_{i=1}^{\nu-1}\sigma_i(\zeta)^{-1}\right]^{-1}.$$ It is
understood that the sum runs over all $i$ such that $f^i\zeta$ is
free.

\begin{lemma}{\rm (\cite{T1} Lemma 5.2.)}\label{sample4}
For the above $f,\zeta,\nu,z$, if $\nu$ is a deep return time of
$\zeta$, then $$\|w_{\nu}(\zeta)\||\Theta_\nu(\zeta)|\geq
|f^\nu\zeta-z |^{\frac{1}{2}}.$$
\end{lemma}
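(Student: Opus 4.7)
The plan is to prove the equivalent form
$$
\sum_{i=1}^{\nu-1}\sigma_i(\zeta)^{-1}\ \leq\ \kappa_0\,\frac{\|w_\nu(\zeta)\|}{|f^\nu\zeta-z|^{1/2}},
$$
which, by the definition of $\Theta_\nu(\zeta)$, is the statement of the lemma. Let $n_1<\cdots<n_t$ denote the free return times of $\zeta$ strictly before $\nu$, with binding points $z_s$ and bound periods $p_s$, and set $n_{t+1}=\nu$, $z_{t+1}=z$. I split the left-hand side into the contributions of the free returns $n_1,\dots,n_t$ and of the remaining free iterates, and bound each piece separately.

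For $i$ free but not a return time, $\sigma_i^{-1}=\|w_i\|^2/\|w_{i+1}\|$. Lemma \ref{outside}(b), applied on each maximal free subsegment ending at the next free return, gives $\|w_i\|\leq e^{-\lambda_0(\nu-i)}\|w_\nu\|$, while the outside-$I(\delta)$ single-step bound gives $\|w_{i+1}\|/\|w_i\|\geq c\delta$. Multiplying these and summing the resulting geometric series yields $\sum_{i\text{ non-return free}}\sigma_i^{-1}\leq C\|w_\nu\|/\delta$, which is dominated by $\tfrac{1}{2}\kappa_0\|w_\nu\||f^\nu\zeta-z|^{-1/2}$ since at a deep return $|f^\nu\zeta-z|$ is much smaller than $\delta^2$.

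For each return term $\sigma_{n_s}^{-1}=\|w_{n_s}\|/|f^{n_s}\zeta-z_s|^{10/9}$, I apply Proposition \ref{recovery}(e) at $n_s$ to get $\|w_{n_s}\|\leq\|w_{n_s+p_s}\||f^{n_s}\zeta-z_s|^{1-\alpha/\log C_0}$, and bound $\|w_{n_s+p_s}\|$ above by $\|w_\nu\|$ divided by the total expansion from $n_s+p_s$ to $\nu$; the latter is at least $e^{\lambda_0 L_s}\prod_{s<s'\leq t}|f^{n_{s'}}\zeta-z_{s'}|^{-1+\alpha/\log C_0}$, where $L_s$ is the total free length after $n_s+p_s$, the geometric factor coming from Lemma \ref{outside}(b) on the free subsegments and the product factor from Proposition \ref{recovery}(e) at each later return. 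The exponent $10/9$ is calibrated so that the binding distance at the current return survives only as the small factor $|f^{n_s}\zeta-z_s|^{-1/9-\alpha/\log C_0}$. I then invoke the deep-return hypothesis (\ref{inessential}), rewritten as $|f^{n_s}\zeta-z_s|\geq\prod_{s'>s}|f^{n_{s'}}\zeta-z_{s'}|^{2}$, to replace this factor by $\prod_{s'>s}|f^{n_{s'}}\zeta-z_{s'}|^{-2/9-2\alpha/\log C_0}$. Merged with the bound-period factors $|f^{n_{s'}}\zeta-z_{s'}|^{1-\alpha/\log C_0}$ already present, this leaves a positive power $|f^{n_{s'}}\zeta-z_{s'}|^{7/9-O(\alpha)}\leq\delta^{7/9-O(\alpha)}$ at each intermediate $s'\leq t$, and the factor $|f^\nu\zeta-z|^{-2/9-O(\alpha)}$ at $s'=t+1$. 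Summing on $s$ produces a geometric series in $\delta$, yielding
$$
\sum_{s=1}^{t}\sigma_{n_s}^{-1}\ \leq\ C\,\|w_\nu\|\,|f^\nu\zeta-z|^{-2/9-O(\alpha)}\ \leq\ \tfrac{1}{2}\,\kappa_0\,\|w_\nu\|\,|f^\nu\zeta-z|^{-1/2},
$$
the last step using $5/18-O(\alpha)>0$ and the smallness of $|f^\nu\zeta-z|$.

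The main obstacle is the bookkeeping of exponents across several successive returns: one must propagate Proposition \ref{recovery}(e) through all the intermediate free returns and verify that after the deep-return substitution the surviving exponent $7/9-O(\alpha)$ at each intermediate $s'$ is strictly positive, so that the geometric sum in $\delta$ converges, and that the final exponent $5/18-O(\alpha)$ on $|f^\nu\zeta-z|$ remains positive for the chosen $\alpha$. The choice of the exponent $10/9$ in the definition of $\sigma_{n_s}$ is precisely what provides the margin above $1/2$ needed to absorb the $\alpha/\log C_0$ losses coming from Proposition \ref{recovery}(e).
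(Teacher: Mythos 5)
The split into non-return terms and return terms, and the use of the deep-return inequality to telescope the exponents across successive returns, is a sensible plan and is likely broadly the same as what is done in \cite{T1}. But there is a genuine gap in your treatment of the non-return free terms. You establish $\sum_{i\text{ non-return free}}\sigma_i^{-1}\leq C\|w_\nu\|/\delta$ and then assert that this is dominated by $\tfrac{1}{2}\kappa_0\|w_\nu\||f^\nu\zeta-z|^{-1/2}$ ``since at a deep return $|f^\nu\zeta-z|$ is much smaller than $\delta^2$.'' That last claim is false. If $\nu$ is the first return to $I(\delta)$ then it is a deep return by definition, $t=0$, and there is no constraint at all on $|f^\nu\zeta-z|$ beyond the trivial $|f^\nu\zeta-z|<2\delta$. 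Even when $t\geq1$, taking $s=t$ in the deep-return inequality gives only $|f^\nu\zeta-z|\leq|f^{n_t}\zeta-z_t|^{1/2}<(2\delta)^{1/2}$, nowhere near $\delta^2$. With only $|f^\nu\zeta-z|<2\delta$, the bound $C\|w_\nu\|/\delta$ would require $\delta\gtrsim\kappa_0^2/C^2$, which is the opposite of how the constants are chosen, so the domination fails.

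The root of the problem is your use of the crude single-step estimate $\|w_{i+1}\|/\|w_i\|\geq c\delta$ at every free non-return index. That estimate is sharp only when $f^i\zeta$ is close to $\partial I(\delta)$, and precisely at such indices the factor $\|w_i\|/\|w_\nu\|$ is not merely exponentially small in $\nu-i$ but exponentially small with an exponent that encodes the length of the ensuing excursion away from $I(\delta)$; that excursion has length of order $\log(1/\delta)$, which produces an extra factor of the form $\delta^{\lambda_0/\log 2}$ that nearly cancels the $1/\delta$. Absorbing the $1/\delta$ this way yields a bound of the form $C\delta^{-\epsilon}\|w_\nu\|$ with $\epsilon$ small (vanishing as $\lambda_0\to\log 2$), and against $|f^\nu\zeta-z|<2\delta$ this is then easily absorbed by $\kappa_0\|w_\nu\||f^\nu\zeta-z|^{-1/2}$ once $\delta$ is small. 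Your argument, by contrast, loses a full factor of $\delta$ and cannot be closed. Also note, as a minor point, that the exponent in ``$\|w_i\|\leq e^{-\lambda_0(\nu-i)}\|w_\nu\|$'' is too strong: during the bound periods between $i$ and $\nu$ the expansion rate is only of order $\lambda_0/6$ (Proposition \ref{recovery0}(e), with $\lambda=\lambda_0/2$), so the correct exponent is smaller; this does not affect the geometric-series structure but should be stated correctly.
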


\subsection{Position of nice critical approximations}\label{address}

For each $\mu\geq \theta M>1$, fix a subdivision of $\mathbb
R\times\{\sqrt{b}\}$ into right-open horizontals of equal length
$\kappa_0^{\mu}$. We label all of them intersecting $H:=[-2,2]
\times\{\sqrt{b}\}$ with
$l=1,2,3,\cdots$, from the left to the right. By a {\it
$\mu$-address} of a point $x$ on $H$ we mean the integer $l$ which
is a label of the horizontal containing $x$.

In general, let $\zeta$ be a nice critical approximation of order $n$.
The long stable leaf through $f^{-[\theta
n]}\zeta$ of order $[\theta n]$ intersects $H$ exactly at one
point. Let $A(\zeta)$ denote the $[\theta n]$-address of
the point of the intersection.

Let $\zeta$ be a nice critical approximation of order
$n\geq N$ on a horizontal curve $\gamma$. By definition, there
exists a tangent vector $u$ at $f^{-[\theta n]}\zeta$ for which
(C3) in Sect.\ref{critical} holds. Let $\mu$ be any $[\theta
n]$-hyperbolic time of $u.$ We call $\mu$ a {\it hyperbolic time
of $\zeta$.} The long stable leaf through $f^{\mu-[\theta
n]}\zeta$ of order $[\theta n]-\mu$ intersects $H$ exactly at one
point. Let $A(\zeta,\mu)$ denote the $([\theta n]-\mu)$-address of
the point of the intersection.

\subsection{Decomposition of the exclude parameter set
at step $n$}\label{dev} Fix positive integers $m\in[20(n-1),20n)$,
$s$, $t$, $R$. Fix the following combinatorics:
\smallskip

\noindent $\bullet$ sequences
$(\mu_1,\cdots,\mu_s),(x_1,\cdots,x_s)$ of $s$ positive integers;

\noindent $\bullet$ sequences $(\nu_1,\cdots,\nu_t)$, $(
n_1,\cdots,n_t)$, $(r_1,\cdots,r_t)$, $(y_1,\cdots,y_t)$ of $t$
positive integers. \medskip

Let $E_n(*)=E_n(m,s,t,R,\cdots)$ denote the set of all $a
\in\Delta_{n-1}\setminus\Delta_n$ for which there exists a nice
critical approximation $\zeta$ of $f_{a}=f$ of order $n'\geq n$
such that the following holds:

\smallskip

\noindent(Z1) $(G)_{m-1}$ holds, and $(G)_m$ fails;

\noindent (Z2) $\nu_1<\cdots<\nu_t= m$ are all the deep return
times in the first $m$ iterates of $\zeta$, with $z_1,\cdots, z_t$
the corresponding binding points;

\noindent(Z3) for each $k\in[1,t]$, the order of $z_k$ is $n_k<n$.
If $\nu_k<m$, then $|f^{\nu_k}\zeta-z_k|\in
[e^{-r_k},e^{-r_k+1})$. If $\nu_k=m$, which means $k=t$ and
$\nu_t=m$, then $r_t$ is defined as follows. If
$|f^{m}\zeta-z_t|>e^{-\alpha m}$, then $r_t$ is such that
$|f^{m}\zeta-z_t|\in [e^{-r_t},e^{-r_t+1})$ holds. Otherwise,
$r_t=\alpha m$;

\noindent(Z4) $\mu_1<\cdots<\mu_{s}$ is a minimal sequence of
hyperbolic times of $\zeta$ satisfying
\begin{equation}\label{samui}\frac{1}{2}\leq
\frac{[\theta n']-\mu_s}{\log(1/\delta)}\leq1,\quad [\theta
n']-\mu_1\geq\theta n,\quad \frac{1}{16}\leq\frac{[\theta
n']-\mu_{i+1}}{[\theta n']-\mu_{i}}\leq\frac{1}{4}\ {\rm for }\
1\leq i<s.\end{equation}
 Lemma \ref{htlem} ensures the existence of
such a sequence;

\noindent(Z5) $x_i=A(\zeta,\mu_i)$;

\noindent (Z6) $y_k=A(z_k)$.
\medskip

If $a\in E_n(*)$, then any nice critical approximation of $f_{a}$ of
order $\geq n$ for which (Z1-6) hold is called {\it
responsible} for $a$. The parameter set $E_n(*)$ is called an {\it $n$-class}.
By definition, any parameter excluded from $\Delta_{n-1}$ belongs
to some $n$-class. We estimate the measure of
$\Delta_{n-1}\setminus\Delta_n$ by estimating a contribution from
each $n$-class first, and then counting the total number of
$n$-classes.

\subsection{Digestive remarks on the combinatorics}
Let us remark on the meanings of the conditions in the
definition of $E_n(*)$. (Z1,Z2,Z3) are conditions
on the forward orbits of responsible critical approximations. 
(Z2) indicates that {\it we do exclusions
of parameters only at deep return times}. (Z4,Z5) are conditions
on the backward orbits of responsible critical approximations. 
(Z4) indicates that only the backward orbit segments of length 
comparable to $\theta n$ 
are taken into consideration. (Z6) is a condition on binding points at each deep 
return time.
(Z4,Z5,Z6) are used to deal with the following two problems intrinsic
to two-dimension.
\medskip

\noindent{\it $\bullet$ Infinitely many responsible critical
approximations.} The first problem is that critical approximations
responsible for a single parameter are far from unique, and even
infinite. All of them have to be taken into
consideration in the measure estimate of $E_n(*)$. 
(Z4, Z5) are used to deal with this problem. They allow us to
reduce our consideration to a finite number of parameter-dependent
orbits, called {\it deformations}, introduced in Sect.\ref{quas}.
\medskip

\noindent{\it $\bullet$ Infinitely many binding points.} Nice
critical approximations eligible as binding
points are far from unique, due to the very definition
of binding points in Sect.\ref{algo}. (Z6) allows us to deal with
this problem, with the help of deformations as well.
\medskip

\subsection{Full Lebesgue density at the first bifurcation parameter}
\label{sgre} We conclude that $\Delta$ has $a^*$ as a full Lebesgue
density point.
Let $|\cdot|$ denote the one-dimensional Lebesgue measure.
For a compact interval $I$ centered at $x$ and $r>0,$ let $ r\cdot
I$ denote the interval of length $r|I|$ centered at $x$. The main
step is a proof of the next
\begin{prop}\label{kuso}{\rm (Covering by intervals)}
Let $m\in[20(n-1),20n)$, $s$, $t$, $R$ be positive integers. For
any $n$-class $E_n(m,s,t,R,\cdots,)=E_n(*)$, for any
$\varepsilon\in(0,\varepsilon_0)$, $k\in[1,t]$, there exist
a finite number of pairwise disjoint intervals $\{J_{k,i}\}_{i}$
with the following properties:
\smallskip

\noindent{\rm (a)} $E_n(*)\cap[a^*-\varepsilon,a^*]\subset \bigcup
_{i}e^{-r_k/3}\cdot J_{k,i};$

\noindent{\rm (b)} if $t>1$, then for each $k\in[2,t]$ and
$J_{k,i}$ there exists $J_{k-1,j}$ such that ${J}_{k,i}\subset
2e^{-r_{k-1}/3}\cdot{J}_{k-1,j}$;

\noindent{\rm (c)} $\sum_i|J_{1,i}|\leq 3\varepsilon$.\end{prop}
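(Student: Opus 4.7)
The plan is to construct the intervals inductively on $k$, using the framework of \emph{deformations} hinted at in Sect.~4.6. Given a responsible critical approximation $\zeta$ of some $a\in E_n(*)$, one extends $\zeta$ to a parametrized family $\zeta_{a'}$ of critical approximations for $a'$ in a small neighborhood of $a$, depending smoothly on $a'$. Since condition (Z4) restricts the relevant pre-orbit segment to length $\approx\theta n$, and (Z5), (Z6) fix the prescribed addresses of both the pre-orbit endpoint and the binding points, only finitely many admissible deformations and binding-point assignments are associated with the class $E_n(*)$; these finitely many parametrized objects will replace the uncountable list of responsible approximations.

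The key technical input is the standard \emph{parameter-versus-space comparability} along each deformation: the parametric derivative $\partial_{a'} f_{a'}^{\nu_k}\zeta_{a'}$ has norm comparable to $\|w_{\nu_k}(\zeta_{a'})\|$ up to a bounded factor, and its direction is nearly aligned with the tangent of the unstable manifold at the image point. This is an adaptation of the Benedicks--Carleson argument to the present situation: Proposition 4.1(b) controls contributions from free iterates (the no-accumulation bound $\|w_j\|\geq e^{-2\alpha i}\|w_i\|$ prevents cancellations), while Proposition 2.2 controls contributions from bound periods between consecutive free returns. As a consequence, as $a'$ varies in a short parameter interval, the image $f_{a'}^{\nu_k}\zeta_{a'}$ traces a nearly straight curve with parametric speed comparable to $\|w_{\nu_k}(\zeta_{a'})\|$.

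For $k=1$, the set of parameters $a'\in[a^*-\varepsilon,a^*]$ satisfying $|f_{a'}^{\nu_1}\zeta_{a'}-z_1|\in[e^{-r_1},e^{-r_1+1})$ forms, along each fixed deformation, a union of intervals of total length comparable to $e^{-r_1}/\|w_{\nu_1}(\zeta_{a'})\|$; combining these over the finite list of deformations and extracting a maximal pairwise disjoint subcollection yields $\{J_{1,i}\}$. Property (a) is then immediate from the construction, with the factor $e^{-r_1/3}$ absorbing both the discrepancy between deformations that cover the same parameter and the mild $C^2$-distortion of the deformation. Property (c) follows because the $J_{1,i}$ sit inside an $O(e^{-r_1})$-thickening of $[a^*-\varepsilon,a^*]\cap\Delta_{n-1}$, and $r_1$ is large (since $f^{\nu_1}\zeta\in I(\delta)$ and $\delta$ is small), so the boundary excess is negligible compared to $\varepsilon$, giving the bound $3\varepsilon$. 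For the inductive step $k\to k+1$, within each $J_{k,i}$ I subdivide according to the position of $f_{a'}^{\nu_{k+1}}\zeta_{a'}$ relative to the binding point $z_{k+1}$; the deep-return condition (\ref{inessential}) is precisely what guarantees that the parametric speed at time $\nu_{k+1}$ dominates any cumulative contribution from the previous deep returns, so the resulting sub-intervals fit inside $2e^{-r_k/3}\cdot J_{k,i}$.

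The main obstacle is the parameter-versus-space comparability itself, together with its propagation across the deep return times. Between consecutive free returns, one must carefully track how the parametric derivative $\partial_{a'}$ evolves through the intervening bound periods: Proposition 2.2(d)--(e) supplies the phase-space estimates, but transferring them to the parameter requires the $C^2$-control on the deformation $\zeta_{a'}$ (itself obtained from Lemmas 3.1--3.2) and the deep-return condition (\ref{inessential}) to rule out accumulating cancellations between successive free returns. This two-dimensional adaptation of the Benedicks--Carleson ``binding-for-parameter'' argument is the technical heart of the construction, and is what ultimately justifies the $e^{-r_k/3}$ (rather than $e^{-r_k}$) expansion factor appearing in (a) and (b).
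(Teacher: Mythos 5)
Your overall framework — deformations $a'\mapsto\zeta_{a'}$, the parameter-versus-space comparability $\|\partial_{a'}f_{a'}^{\nu_k}\zeta_{a'}\|\approx\|w_{\nu_k}\|$, finitely many parametrized objects via (Z4)--(Z6) — matches the paper's strategy and is a sensible skeleton. But there are two substantive gaps.

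The more serious one is in your argument for \textrm{(c)}. You claim $\sum_i|J_{1,i}|\leq 3\varepsilon$ because ``the $J_{1,i}$ sit inside an $O(e^{-r_1})$-thickening of $[a^*-\varepsilon,a^*]$, and $r_1$ is large.'' This does not work. The depth $r_1$ of the first deep return is bounded below only by a constant ($\approx\log(1/\delta)$), independent of $\varepsilon$, so $e^{-r_1}$ is a fixed quantity that does \emph{not} become negligible relative to $\varepsilon$ as $\varepsilon\to 0$. Moreover your intervals have length $\approx e^{-r_1}/\|w_{\nu_1}\|$, and while Lemma \textrm{4.10} (\ref{good2}) forces $\nu_1\gtrsim n_0(\varepsilon)\approx|\log\varepsilon|$, the resulting bound $\|w_{\nu_1}\|\gtrsim\varepsilon^{-\lambda/(2\log C_0)}$ has an exponent far smaller than $1$, so the interval lengths need not be $O(\varepsilon)$. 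The paper's bound on individual interval lengths (Lemma \textrm{4.9}, \ref{compare}) is proved by a completely different mechanism: if $|J_{1,i}|>2\varepsilon$ then $a^*\in J_{1,i}$, whence the deformation extends to $a^*$ and one deduces $\zeta_{\nu_1}(a^*)$ is near $I(\delta)$ — contradicting Proposition \textrm{4.1} (\ref{good1}, non-recurrence of every critical approximation of $f_{a^*}$) via Claim \textrm{4.2} (\ref{trivial}). This is precisely where the ``first bifurcation = two-dimensional Misiurewicz'' property enters the measure estimate, and your proposal never uses it. Without it you cannot obtain full Lebesgue density at $a^*$; you would only get a positive-measure lower bound as in the strange attractor papers.

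The second gap concerns the scale of the $J_{k,i}$ and how disjointness and the nesting in \textrm{(b)} are actually obtained. You take $J_{1,i}$ to be the set where the return condition $|f^{\nu_1}\zeta-z_1|\in[e^{-r_1},e^{-r_1+1})$ holds, of length $\approx e^{-r_1}/\|w_{\nu_1}\|$; but then $e^{-r_1/3}\cdot J_{1,i}$ is much shorter than where the return condition holds, so (a) fails. The paper instead takes $J_{k,i}=J_{\nu_k}(a_{k,i},\zeta_{k,i})$ with half-width $\Theta_{\nu_k}(\zeta_{k,i})$, which by Lemma \textrm{4.3} (\ref{sample4}) satisfies $\Theta_{\nu_k}\gtrsim e^{-r_k/2}/\|w_{\nu_k}\|$ — strictly larger than your scale, so shrinking by $e^{-r_k/3}$ still covers the return set. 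The disjointness of the $J_{1,i}$ and the inclusion $J_{k,i}\subset 2e^{-r_{k-1}/3}\cdot J_{k-1,j}$ then rely on the notion of a \emph{critical parameter} (the unique $c_0$ where $\zeta_{\nu_k}(c_0)$ crosses $z(c_0)$) and the comparison between the ``fast'' speed of $\zeta_{\nu_k}(a)$ from Proposition \textrm{4.4} (\ref{mato}) and the ``slow'' speed of the binding-point deformation $z(a)$ from Lemma \textrm{4.5} (\ref{atsui}); the deep-return condition (\ref{inessential}) alone, which you invoke, is not enough to replace this mechanism.

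In short: the deformation framework and the comparability heuristic are right, but the central argument for \textrm{(c)} — and hence for the full-density conclusion — is missing, and the intervals are taken at the wrong scale for \textrm{(a)} to hold under a $e^{-r_k/3}$ shrinkage.
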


This sort of covering originates in the works Tsujii
\cite{T93a,T93b}, and has been used in \cite{T1} for the
construction of positive measure set of parameters corresponding
to maps with nonuniformly hyperbolic behavior. For our purpose
we need to develop
it further.
\medskip

Proposition \ref{kuso} gives $|E_n(*)\cap
[a^*-\varepsilon,a^*]|\leq 3\varepsilon e^{-\frac{1}{3} R},$ where
$R = r_1 + r_2 \cdots + r_t$. To conclude, we need to count the
number of all feasible $n$-classes. The counting argument in \cite{T1}
shows $$\sharp((\mu_1,x_1),\cdots,(\mu_s,x_s))\leq
C^{-\theta n}$$ and
$$\sharp(\nu_1,\cdots,\nu_t)\sharp(r_1,\cdots,r_t)
\sharp(n_1,\cdots,n_t)\sharp(y_1,\cdots,y_t)\leq e^{\tau(\delta)
n+C\theta\alpha^{-1} R},$$ where  $\tau(\delta)\to0$ as
$\delta\to0$. [\cite{T1} Lemma 5.3] gives $r_1+\cdots+r_t\geq
\alpha m/2.$ Taking contributions from all $n$-classes into
consideration,
\begin{align*}
|(\Delta_{n-1}\setminus \Delta_{n})\cap[a^*-\varepsilon,a^*]|
&\leq\varepsilon\sum_{m,s,t}\sum_{R\geq\alpha m/2}
\sum_{r_1+\cdots+r_t=R} |E_n(*)\cap[a^*-\varepsilon,a^*]|\\&\leq
\varepsilon e^{\tau(\delta)n}\sum_{R\geq\alpha n}
\exp\left(-\frac{R}{6}\right) \leq\varepsilon e^{-\alpha n/8}.
\end{align*}

\begin{figure}[t]
\begin{center}
\input{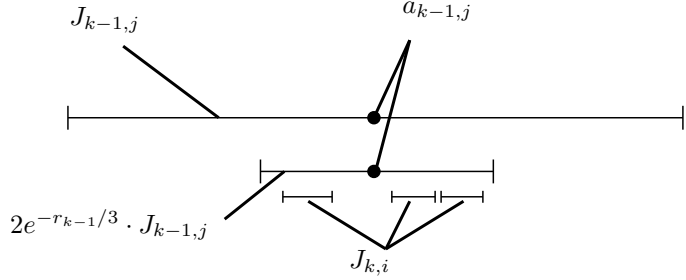}
\caption{Organization of $J_{k,i}$-intervals}
\end{center}
\end{figure}

Let
\begin{equation}\label{n0}
n_0(\varepsilon)=\frac{1}{2\log C_0}\log\left(\frac{
2\varepsilon}{\kappa_0}\right).
\end{equation}
The next lemma indicates that no parameter is deleted from
$[a^*-\varepsilon,a^*]$ at least up to step $[n_0(\varepsilon)/20]$, namely
$[a^*-\varepsilon,a^*]\subset\Delta_n$ holds for every $0\leq
n\leq [n_0(\varepsilon)/20].$

\begin{lemma}\label{good2}
Let $a_0\in[a^*-\varepsilon,a^*]$, and let $\zeta_0$ be a nice critical
approximation of $f_{a_0}$ of order $\xi$. Then $f_{a_0}^{n}\zeta_0\notin
I(\delta)$ holds for every $1\leq
n<\min\left(n_0(\varepsilon),20\xi\right)$.
\end{lemma}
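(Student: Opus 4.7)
The plan is to transfer the non-recurrence statement at $a^*$ from Proposition \ref{good1} to nearby parameters $a_0$, using continuous dependence of the family $(f_a)$ on $a$ together with the global derivative bound from (M3). The first step is to continue $\zeta_0$ to a nearby critical approximation of $f_{a^*}$. Let $\gamma$ be the horizontal curve on which $\zeta_0$ is a critical approximation of order $\xi$, parameterised by arclength $s$. The defining condition, that $e_\xi^a(f_a\zeta)$ be tangent to $Df_a t(\zeta)$, is a scalar equation $F(a,s)=0$. By Corollary \ref{focor2} and (\ref{slo}), $e_\xi^a$ is jointly $C^1$ in $(a,s)$ with controllable derivatives, so $F$ is smooth. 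Its $s$-derivative at $(a_0,\zeta_0)$ is nondegenerate thanks to the quadratic behaviour in Lemma \ref{quadratic}, combined with the near-verticality of $e_\xi^a$ given by (\ref{slo}). The implicit function theorem then produces a critical approximation $\zeta^*$ of $f_{a^*}$ of order $\xi$ (on $\gamma$, or a small perturbation of it) with $|\zeta^*-\zeta_0|\leq K\varepsilon$ for a constant $K$ depending only on the iteration parameters.

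Next I would compare the orbits $z_i=f_{a_0}^i\zeta_0$ and $z_i^*=f_{a^*}^i\zeta^*$. Writing
\[
z_{i+1}-z_{i+1}^*=[f_{a_0}(z_i)-f_{a_0}(z_i^*)]+[f_{a_0}(z_i^*)-f_{a^*}(z_i^*)],
\]
the first bracket is bounded by $C_0|z_i-z_i^*|$ and the second by $C_0\varepsilon$ via (M3). A Gronwall-type induction yields $|z_i-z_i^*|\leq K'C_0^{2i}\varepsilon$, where the exponent $2i$ absorbs both the iteration factor and the implicit-function constant from Step 1. Proposition \ref{good1} gives $|x(z_i^*)|\geq 9/10$ for $1\leq i<20\xi$, so $|x(z_i)|\geq 9/10-K'C_0^{2i}\varepsilon$. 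The constants in the definition (\ref{n0}) of $n_0(\varepsilon)$ are calibrated so that this right-hand side exceeds $\delta$ throughout $i<n_0(\varepsilon)$; hence $z_i\notin I(\delta)$ for $1\leq i<\min(n_0(\varepsilon),20\xi)$, as required.

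The main obstacle is the continuation in Step 1: verifying that the implicit function theorem applies with constants that do not consume more parameter space than $\varepsilon$ allows. Specifically, one needs an $a$-dependent analogue of Corollary \ref{focor2} to bound $\partial_a F$, together with a quantitative lower bound on $|\partial_s F|$ extracted from the quadratic structure in Lemma \ref{quadratic}, valid uniformly in $a\in[a^*-\varepsilon,a^*]$. Once these are in place, the orbit comparison in Step 2 and the conclusion in Step 3 are routine.
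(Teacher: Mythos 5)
Your proposal takes a genuinely different route from the paper's, but it has gaps that matter. The paper proves Lemma~\ref{good2} by induction on $n$: assuming $f_{a_0}^i\zeta_0\notin I(\delta)$ for $1\leq i\leq k-1$, the return at time $k$ is free, which licenses Proposition~\ref{mato}. That proposition supplies an integer $m$ (not $\xi$!), a quasi critical approximation $\zeta$ of order $m$, and a controlled deformation $a\mapsto\zeta(a)$ on $I_m(a_0)$; the inequality $|J_k(a_0,\zeta_0)|\geq\kappa_0C_0^{-2k}\geq 2\varepsilon$ then forces $a^*\in I_m(a_0)$, so $\zeta(a^*)$ makes sense as a quasi critical approximation of $f_{a^*}$, to which Claim~\ref{trivial} and Proposition~\ref{good1} apply. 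The induction and the freedom to drop to a smaller, $k$-adapted order $m$ are essential, and both are absent from your argument.

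Concretely, your Step~1 tries to continue $\zeta_0$ (of order $\xi$) directly in $a$ via an implicit function theorem. Two problems. First, the IFT as you set it up solves only the tangency condition (ii) in the definition of critical approximation; condition (i), namely $\|Df_{a}^{i}(f_a\zeta(a))\|\geq 1/10$ for $1\leq i\leq\xi$, is not delivered by the IFT and cannot be verified for $a=a^*$ without already knowing where that forward orbit goes --- exactly the circularity the paper's notion of ``quasi critical approximation'' (controlled by its \emph{backward} orbit of length $[\theta m]$, not its forward orbit) is designed to avoid. Second, the order $\xi$ can be arbitrarily large compared to $n_0(\varepsilon)$, so the derivative estimates you would need for the IFT --- or equivalently a Gronwall bound on the backward comparison of length $[\theta\xi]$ --- are not uniform over $[a^*-\varepsilon,a^*]$. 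The paper sidesteps this by replacing $\zeta_0$ with a quasi critical approximation of a smaller order $m$ matched to the time scale $k$, whose deformation window $I_m(a_0)$ is guaranteed to contain $a^*$ precisely because $k<n_0(\varepsilon)$. You flag the quantitative IFT issue yourself, but you do not resolve it, and the curve $\gamma$ you keep fixed is tied to the dynamics of $f_{a_0}$, which is another obstruction the deformation formalism is built to handle via the backward-pulled-back curve $f_a^{[\theta m]}l$. In short: your Step~1 is where the proof needs to change course, and the tool you would need to make it go through is essentially the deformation machinery of Section~4.7 that the paper already invokes.
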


Therefore
\begin{align*}
|\Delta\cap[a^*-\varepsilon,a^*]|&=|\Delta_0\cap[a^*-\varepsilon,a^*]|
-\sum_{n=1}^\infty
|\left(\Delta_{n-1}\setminus\Delta_n\right)\cap[a^*-\varepsilon,a^*]|\\
&=\varepsilon-\sum_{n>[n_0(\varepsilon)/20]}
|\left(\Delta_{n-1}\setminus\Delta_n\right)\cap[a^*-\varepsilon,a^*]|
\geq\varepsilon\left(1- \sum_{n>[n_0(\varepsilon)/20]} e^{-\alpha
n}\right).\end{align*} Since $n_0(\varepsilon)\to\infty$ as
$\varepsilon\to0,$ we obtain
$\lim_{\varepsilon\to0}\varepsilon^{-1}
|\Delta\cap[a^*-\varepsilon,a^*]|=1$ as desired. \qed

\subsection{Parameter dependence of nice critical
approximations}\label{quas} The rest of this section is entirely
devoted to the proof of Proposition \ref{kuso} and Lemma
\ref{good2}. A key ingredient is a {\it deformation of a quasi
critical approximation}, developed in [\cite{T1} Section 4,5] for
dealing with the parameter dependence of nice critical approximations.

We begin by relaxing the definition of nice critical
approximations as follows.
Let $\zeta$ be a critical approximation of order
$n$ on a horizontal curve $\gamma$. 
Let $u$ denote any unit vector at $f^{-[\theta n]}\zeta$
such that $Df^{[\theta n]}u$ is tangent to $\gamma$.
We say $\zeta$ is a {\it quasi critical
approximation of order $n$ on $\gamma$} if
$u$ is $\kappa_0^{\frac{1}{2}}$-expanding
up to time $[\theta n]$. 
\medskip

\noindent{\bf Hypothesis for the rest of Sect.\ref{quas}:}
Let $\hat a\in [a^*-\varepsilon_0,a^*]$.
Write $f$ for $f_{\hat a}$. Let $\gamma$ be a $C^2(b)$-curve in
$I(\delta)$. Let $\zeta$ be a quasi critical approximation of
order $n$ on $\gamma$, with $u$ the same meaning as above. Assume:
\smallskip

$\bullet$ $\Vert Df^i(f\zeta)\Vert\geq 1$ for $1\leq i\leq
n$;

 $\bullet$  $u$ is $\kappa_0^{\frac{1}{3}}$-expanding
and $\delta/160$-regular, both up to time $[\theta
n]$.
\medskip

 Let $r$ denote the point of intersection between
$H$ and the long stable leaf of order
$[\theta n]$ through $\xi$. Let $l\subset H$ denote the horizontal
of length $2\kappa_0^{3\theta n}$ centered at $r$. By [\cite{T1}
Lemma 4.1], $f_{\hat a}^{[\theta n]}l$ is a $C^2(b)$-curve, and
there exists a quasi critical approximation of order $n$ on it,
denoted by $\zeta(\hat a)$ for which $|\zeta-\zeta(\hat
a)|\leq(Cb)^{\frac{\theta n}{4}}$ holds.
In addition,
this picture persists, for a small variation of
parameters within the interval
\begin{equation}\label{continu}
I_{n}(\hat a)=[\hat a-\kappa_0^{n},\hat a+
\kappa_0^{n}].\end{equation} By [\cite{T1} Lemma 4.2], for all
$a\in I_{n}(\hat a)$, $f_a^{[\theta n]}l$ is a $C^2(b)$-curve.
By [\cite{T1} Proposition 4.1], there exists a quasi critical
approximation of order $n$ of $f_a$ on it, which we denote by
$\zeta(a)$.

\begin{definition}
{\rm The map $a\in I_n(\hat a)\to\zeta(a)$ is called a {\it
deformation} of $\zeta$.}
\end{definition}

The next lemma states that the ``speed" of the deformation as $a$
sweeps $I_n(\hat a)$ is
uniformly bounded. We use `` $\cdot$ " to denote the differentiation
with respect to $a$.
\begin{lemma}\label{atsui}{\rm (\cite{T1} Proposition 4.2.)}
The deformation $a\in I_n(\hat a)\to \zeta(a)$ of $\zeta$ is $C^3$ and for all
$a\in I_n(\hat a)$,
$\max\left(\Vert\dot\zeta(a)\Vert,\Vert\ddot\zeta(a)\Vert\right)
\leq\kappa_0^{10\log\delta}.$
\end{lemma}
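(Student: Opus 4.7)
The plan is to apply the implicit function theorem to a scalar equation characterizing $\zeta(a)$. Parametrize $l$ by arc length, set $\gamma(a,s)=f_a^{[\theta n]}(l(s))$, let $t(a,s)$ denote a unit tangent to $s\mapsto\gamma(a,s)$, and let $e(a,s)=e_n(f_a\gamma(a,s))$. The defining property of $\zeta(a)=\gamma(a,s(a))$ as a quasi critical approximation of order $n$ is $Df_a\,t\parallel e$, equivalently the vanishing of
\begin{equation*}
G(a,s):=Df_a(t(a,s))\times e(a,s).
\end{equation*}
Existence of $s(a)$ on $I_n(\hat a)$ is granted by [\cite{T1} Proposition 4.1]; the $C^3$ regularity of $a\mapsto\zeta(a)$ will follow from the fact that $G$ is $C^3$ (because $f_a$ is $C^4$ by (M3)) together with the implicit function theorem. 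The explicit derivative bounds will come from $\dot s=-(\partial_sG)^{-1}\partial_aG$ and its analogue for $\ddot s$, fed into the chain-rule expansion of $\zeta(a)=\gamma(a,s(a))$.

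The heart of the proof is a lower bound $|\partial_sG|\geq c(\delta)>0$, independent of $n$. Using the $A$-$B$ splitting of Lemma \ref{quadratic}, $G=A(z)\cdot(e_1\times e)$ with $z=\gamma(a,s)$ and $e_1=(1,0)$. Because $e$ is nearly vertical---by (\ref{slo}) together with Corollary \ref{focor1} applied to the $\kappa_0^{1/2}$-expanding orbit of $u$---the factor $|e_1\times e|$ is bounded below by a constant. Meanwhile $|A(z)|\approx|z-\zeta|$ by Lemma \ref{quadratic}, so $A$ crosses zero transversely in $s$ with a slope bounded below independently of $n$. The variation of $e$ with $s$ contributes only $\|\partial_s e\|\leq Cb/\kappa_0^{3/2}$ by Corollary \ref{e1}, which is absorbed into the error term for $b$ small.

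For the upper bounds on $|\partial_a^j\partial_s^kG|$ with $j+k\leq 3$, Corollary \ref{focor2} applied to the parametrized family $Df_a^n$ (with $(a,s)$ playing the role of the parameters) together with (M3) yields a bound of the form $C_0^{O([\theta n])}$. This superficial $n$-growth cancels in the ratio $\dot s=-\partial_aG/\partial_sG$: both numerator and denominator carry a common leading factor of size $\|Df_a^{[\theta n]}\|$, whose ratio is $n$-independent up to a polynomial factor in $\delta^{-1}$. Feeding these estimates into the chain rule for $\dot\zeta,\ddot\zeta$ and using the analogous cancellation for $\ddot s$ produces the uniform bound $\max(\|\dot\zeta\|,\|\ddot\zeta\|)\leq\kappa_0^{10\log\delta}$, provided $b$ is sufficiently small relative to $\delta$ and $\alpha$. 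The main obstacle is the careful bookkeeping of the competing $\kappa_0$, $C_0$ and $b$ exponents through three orders of differentiation so that this cancellation is rigorously established for both $\dot s$ and $\ddot s$; this delicate accounting is essentially the content of [\cite{T1} Proposition 4.2], whose strategy I would follow.
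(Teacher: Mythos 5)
Your implicit-function-theorem framework---encoding condition (ii) for a critical approximation as $G(a,s)=0$ with $G(a,s)=Df_a\,t(a,s)\times e_n(f_a\gamma(a,s))$---is a sensible starting point, and the $C^3$ regularity of $a\mapsto s(a)$ follows once transversality $|\partial_s G|>0$ is established. But there is a genuine gap in the central derivative bound, and it lies not in $\dot s$ but in $\dot\zeta$ itself. Granting your claim that $\dot s=-\partial_a G/\partial_s G$ is $n$-independent up to a power of $\delta^{-1}$, you must still control
\[
\dot\zeta(a)=\partial_a\gamma(a,s(a))+\partial_s\gamma(a,s(a))\,\dot s(a),
\]
and \emph{both} $\partial_a\gamma$ and $\partial_s\gamma$ carry a factor of size $\Vert Df_a^{[\theta n]}\Vert$: the second because $l$ is arc-length parametrized in $H$ so that $\partial_s\gamma=Df_a^{[\theta n]}t_l$, the first because the parameter derivative of a long composition is dominated by its earliest steps, which are then amplified by the remaining $\approx[\theta n]$ expanding iterates. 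A merely bounded $\dot s$ therefore gives $\Vert\dot\zeta\Vert$ of order $\Vert Df^{[\theta n]}\Vert$, not the asserted $\kappa_0^{10\log\delta}$. What is missing is a second, geometric cancellation: one has to show that $\partial_a\gamma$ is, to leading order, tangent to $\gamma$ (both $\partial_a\gamma$ and $\partial_s\gamma$ lie essentially in the image of the expanding cone under $Df^{[\theta n]}$), so the huge tangential component of $\partial_a\gamma$ merely reparametrizes $s$ and the critical approximation slides along with it; only the normal component and the bounded direct $a$-dependence of $f_a$ drive the net motion of $\zeta(a)$. That alignment, not a ``common factor'' cancellation in the ratio $\partial_a G/\partial_s G$, is the real content of [\cite{T1}, Proposition 4.2], and your sketch does not supply it.

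A smaller point: you invoke Lemma \ref{quadratic} to factor $G=A(z)(e_1\times e)$, but the vector $e$ in that lemma is the unit tangent to $f\gamma$ at $f\zeta$ (roughly horizontal), not the most contracting direction $e_n$ appearing in your $G$ (roughly vertical). The splitting $Df t=A e_1+Be$ of Lemma \ref{quadratic} therefore does not give the factorization you write, and the lower bound on $|\partial_s G|$ needs its own argument---a vertical/horizontal split against $e_n$ together with the quadratic behavior near $\zeta$---rather than a verbatim appeal to that lemma.
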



\subsection{Evolution of critical curves}
In the next proposition we assume 
$\hat a\in\Delta_{n-1}$, $0<\nu<20n$ and $\hat\zeta$ is a nice critical
approximation of $f_{\hat a}$ of order $\geq n$, for which 
$(G)_{\nu-1}$ holds and $f_{\hat a}^\nu\hat\zeta$ is free. 
Define
$$J_\nu(\hat a,\hat\zeta)=[\hat a-\Theta_\nu(\hat\zeta),
\hat a+\Theta_\nu(\hat\zeta)].$$

\begin{prop}\label{mato}{\rm [\cite{T1} Section 5]}
There exist an integer $m$ and
a quasi critical approximation $\zeta$ of order $m$ of $f_{\hat a}$
such that:

\noindent {\rm (i)} $|f^{\nu}_{\hat a}\hat\zeta-f^{\nu}_{\hat a}\zeta|\leq
(Cb)^{\frac{1}{2}\theta \nu}$;

\noindent {\rm (ii)} for the deformation $a\in I_m(\hat a)\to \zeta(a)$ of
 $\zeta$, write $f_a^\nu\zeta(a)=\zeta_\nu(a).$
Then:

{\rm (a)} $J_{\nu}(\hat\zeta,\hat a)\subset I_m(\hat a)$:

{\rm (b)} the set $\{\zeta_\nu(a)\colon a\in J_\nu(\hat a,\hat\zeta)\}$
is a horizontal curve;

{\rm (c)} $ \Vert\zeta_\nu(a)-\zeta_\nu(b)\Vert \approx \Vert
w_{\nu}(\hat\zeta)\Vert|a-b|\ll1$ for all $a,b\in
J_{\nu}(\hat a,\hat\zeta)$.

\end{prop}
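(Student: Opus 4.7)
The plan is to construct $\zeta$ as a quasi critical approximation whose order $m$ is chosen just large enough that the basic deformation interval $I_m(\hat a)=[\hat a-\kappa_0^m,\hat a+\kappa_0^m]$ contains $J_\nu(\hat a,\hat\zeta)$, and then to iterate the deformation $a\mapsto\zeta(a)$ forward $\nu$ times, tracking the evolution of the speed and acceleration. Concretely, I would first declare $m$ to be the smallest integer with $\kappa_0^m\leq\Theta_\nu(\hat\zeta)$, which already yields (ii)(a) by definition of $J_\nu(\hat a,\hat\zeta)$. Since $(G)_{\nu-1}$ holds, the free/bound decomposition of the orbit of $\hat\zeta$ through time $\nu$ is available, and in particular each $\|w_i(\hat\zeta)\|$ is controlled by (G1--G3); the definitions of $\sigma_i(\hat\zeta)$ and $\Theta_\nu(\hat\zeta)$ then translate into a polynomial lower bound on $m$ in terms of $\nu$, in particular $m$ is comparable to $\nu$ (up to an $\alpha$-dependent multiplicative factor).

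Next, to produce $\zeta$ itself, I would apply Lemma \ref{update} (repeatedly, if necessary) around $\hat\zeta$ on the horizontal curve hosting $\hat\zeta$, using the smoothness of $f$ and the fact that $\hat\zeta$ is already a nice critical approximation of some order $\geq n$; this yields a quasi critical approximation $\zeta$ of order $m$ with $|\zeta-\hat\zeta|\leq(Cb)^{m/4}$. Because $\hat\zeta$ and $\zeta$ are super-exponentially close and both lie on horizontal curves with matching tangent directions at scale $(Cb)^{m/4}$, iterating by Lemma \ref{outside} on free segments and Proposition \ref{recovery}(c) on bound segments gives $|f_{\hat a}^{\nu}\hat\zeta-f_{\hat a}^{\nu}\zeta|\leq(Cb)^{\theta\nu/2}$ after accounting for the total expansion of $f_{\hat a}^\nu$ over time $\nu<20n\leq m/\theta$, establishing (i).

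For (ii)(b) and (ii)(c), I would differentiate $\zeta_\nu(a)=f_a^\nu\zeta(a)$ twice in $a$. The chain rule gives
\[
\dot\zeta_\nu(a)=\sum_{i=0}^{\nu-1}Df_a^{\nu-1-i}(\partial_a f_a)(f_a^i\zeta(a))+Df_a^\nu\dot\zeta(a),
\]
and the second derivative has the analogous form. Here Lemma \ref{atsui} controls $\dot\zeta(a),\ddot\zeta(a)$ by $\kappa_0^{10\log\delta}$, which is negligible compared to the expansion $\|Df_a^\nu\|$ along the orbit. The dominant contribution comes from the $i=0$ term of $\dot\zeta_\nu(a)$, whose norm is comparable to $\|w_\nu(\hat\zeta)\|$: this is the familiar identification ``$\partial_a f^\nu\approx w_\nu$'' for H\'enon-like families, and the remaining terms are smaller by (G2). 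To derive (c), one then checks that on the interval $J_\nu(\hat a,\hat\zeta)$, the trajectories $f_a^i\zeta(a)$ stay exponentially close to $f_{\hat a}^i\hat\zeta$ so that $\|\dot\zeta_\nu(a)\|\approx\|w_\nu(\hat\zeta)\|$ uniformly, which integrates to (c); a parallel estimate for $\ddot\zeta_\nu$ together with (c) yields bounded slope and curvature after rescaling arc length, confirming (b).

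The main obstacle is the uniform control of $\dot\zeta_\nu(a)$ and $\ddot\zeta_\nu(a)$ across bound periods on $J_\nu(\hat a,\hat\zeta)$: during a bound period one cannot simply invoke Lemma \ref{outside}, and the splitting scheme of Proposition \ref{recovery}, along with the good critical behavior (G1--G3) and the deep-return control (\ref{inessential}), must be used to absorb the possibly large local derivatives. The choice of $\Theta_\nu(\hat\zeta)$ is precisely designed so that $|a-\hat a|\cdot\|w_\nu(\hat\zeta)\|$ stays small for $a\in J_\nu(\hat a,\hat\zeta)$, which is what decouples the variations of $\dot\zeta(a)$ from the variations of $\zeta_\nu(a)$ across the interval and underlies the ``$\approx$'' in (c).
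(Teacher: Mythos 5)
The paper does not actually give a proof of Proposition \ref{mato}: it is stated with a pointer to [\cite{T1} Section 5], so there is no in-paper argument to compare against. With that caveat, two points in your plan would fail as written.

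First, your choice of $m$ is backwards. You declare $m$ to be ``the smallest integer with $\kappa_0^m\leq\Theta_\nu(\hat\zeta)$'' and assert this ``already yields (ii)(a).'' But $I_m(\hat a)=[\hat a-\kappa_0^m,\hat a+\kappa_0^m]$ and $J_\nu(\hat a,\hat\zeta)=[\hat a-\Theta_\nu(\hat\zeta),\hat a+\Theta_\nu(\hat\zeta)]$ are both centered at $\hat a$, so $J_\nu(\hat a,\hat\zeta)\subset I_m(\hat a)$ holds precisely when $\Theta_\nu(\hat\zeta)\leq\kappa_0^m$, i.e. the reverse of your inequality. With your choice you get $I_m(\hat a)\subsetneq J_\nu(\hat a,\hat\zeta)$, and the deformation is not even defined over the whole interval $J_\nu(\hat a,\hat\zeta)$. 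You need $m$ to be the \emph{largest} integer with $\kappa_0^m\geq\Theta_\nu(\hat\zeta)$, and then you must also verify separately that this $m$ is still large enough for (i) to hold (i.e.\ $m\gtrsim\theta\nu$), which is a non-trivial compatibility check that your plan elides.

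Second, the claim that ``the dominant contribution comes from the $i=0$ term of $\dot\zeta_\nu(a)$'' is not right for the form of map (\ref{form0}). One has $(\partial_a f_a)(x,y)=(-x^2,0)+b\,\partial_a\Phi$, and $\zeta(a)$ lies in $I(\delta)$ so its $x$-coordinate is $O(\delta)$; hence $(\partial_a f_a)(\zeta(a))$ is $O(\delta^2)$ small, and the $i=0$ term is suppressed. The dominant contribution comes from applying $Df_a^{\nu-1-i}$ to $(\partial_a f_a)(f_a^i\zeta(a))$ with $i\geq1$, where $f_a\zeta(a)\approx(1,0)$ gives $(\partial_a f_a)(f_a\zeta(a))\approx(-1,0)$ and the forward expansion then produces $\|w_\nu(\hat\zeta)\|$. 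This is exactly the reason the usual identification $\|\partial_a f_a^\nu\|\approx\|w_\nu\|$ requires one to sum the full series and invoke (G2), not just inspect the $i=0$ term. Fixing both points your overall route --- build $\zeta$ near $\hat\zeta$ via Lemma \ref{update}, use Lemma \ref{atsui} to make $\dot\zeta,\ddot\zeta$ negligible, and derive (b), (c) from first and second derivatives of $\zeta_\nu(a)$ together with the splitting scheme of Proposition \ref{recovery} across bound periods --- is plausible and is what one expects from [\cite{T1} Section 5].
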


\subsection{Proof of Proposition \ref{kuso}.}\label{ve}
We choose each $J_{k,i}$ so that it has the form
$J_{k,i}=J_{\nu_k}(a_{k,i},\zeta_{k,i})$, where $a_{k,i}\in
E_n(*)\cap[a^*-\varepsilon,a^*]$ and $\zeta_{k,i}$ is some
responsible critical approximation of $f_{a_{k,i}}$. In what
follows we describe how to choose $(a_{k,i},\zeta_{k,i})_{i}$.
\medskip

\noindent{\it The first level.} Start with $k=1$. We describe how to choose
$(a_{1,i},\zeta_{1,i})_{i}$ such that (a) holds with $k=1$. First,
choose arbitrary $a_{1,1}\in E_n(*)\cap[a^*-\varepsilon,a^*]$. Let
$\zeta_{1,1}$ denote any responsible critical approximation of
$f_{a_{1,1}}$. We show
\begin{equation}\label{dc1} E_n(*)\cap (J_{1,1}\setminus
e^{-r_1/3} J_{1,1})=\emptyset.\end{equation} If $ J_{1,1}$ covers
$E_n(*)$, then the desired inclusion follows. Otherwise, choose
$a_{1,2}\in E_n(*)-{J}_{1,1}$. We claim that
\begin{equation}\label{dc2}
J_{1,1}\cap{J}_{1,2}=\emptyset.\end{equation} If
$J_{1,1}\cap{J}_{1,2}$ covers $E_n(*)$, then the desired inclusion
follows. Otherwise, choose $a_{1,3}\in E_n(*)-{J}_{1,1}\cup
J_{1,2}$. Repeat this. As the length of these intervals are
uniformly bounded from below, there must come a point when the
inclusion is fulfilled.

Below we sketch the proofs of (\ref{dc1}) and (\ref{dc2}). To ease
notation, write $a_i:=a_{1,i}$, $\zeta_i:=\zeta_{1,i}$ and
$J_i=J_{1,i}$, $i=1,2$.
\medskip

\noindent{\it Sketch of the proof of (\ref{dc1}).}  Choose an
integer $m$, a quasi critical approximation $\zeta$ of $f_{a_{1}}$
of order $m$, and its deformation $a\in I_m(a_{1}) \to\zeta(a)$
for which the conclusions of Proposition \ref{mato} hold up to
time $\nu_1$. In fact, (Z4), (Z5) allow us to choose such a
deformation so that the following holds:

\smallskip

\noindent $\bullet$
$|f_{a_{1}}^{\nu_1}\zeta_{1}-f_{a_{1}}^{\nu_1}\zeta(a_{1})|\leq
e^{-r_1};$

\noindent $\bullet$ if $a\in  J_{1}\cap E_n(*)$ and $x$ is {\it any}
responsible critical approximation of $f_a$, then
$|f_a^{\nu_1}x-f_{a}^{\nu_1}\zeta(a)|\ll e^{-r_1}.$
\smallskip

\noindent The second item states that, although responsible
critical approximations for a single parameter $a$ are not unique,
all of their positions at time $\nu_1$ are well-approximated by
that of $f_{a}^{\nu_1}\zeta(a)$. \medskip

Now, let $z_1$ denote the binding point of order $n_1$ for
$f_{a_{1}}^{\nu_1}\zeta_{1}$ and let $a\in I_{n_1}(a_1)\to
z_1(a)$ denote its deformation. (Z6) allows us to show that this
deformation satisfies:
\smallskip

\noindent $\bullet$ $|z_1-z_1(a_1)|\leq e^{-r_1}$;

\noindent $\bullet$ if $a\in  J_{1}\cap E_n(*)$ and $x$ is {\it any}
responsible critical approximation of $f_a$, with $y$ a binding
point for $f_a^{\nu_1}x$, then $|y-z_1(a)|\ll e^{-r_1}.$
\smallskip

\noindent The second item states that, although binding points are
not unique, they are well approximated by $z_1(a)$.

These four conditions altogether imply (\ref{dc1}). To see this, suppose
that this is not the case and let $a\in J_{1,1}\setminus
e^{-r_1/3}\cdot J_{1,1}$, $a\in E_n(*)$. Let $x$ denote {\it any}
critical approximation responsible for $a$. Let $y$ denote {\it
any} binding point for $f_a^{\nu_1}x$. The triangle inequality
gives
\begin{align*}|f_a^{\nu_1}x-y|&\geq|f_a^{\nu_1}\zeta(a)-
f_{a_1}^{\nu_1}\zeta(a_1)|-
|f_{a}^{\nu_1}\zeta(a)-f_{a}^{\nu_1}x|-
|f_{a_1}^{\nu_1}\zeta(a_1)-f_{a_1}^{\nu_1}\zeta_1|\\
&-|f_{a_1}^{\nu_1}\zeta_1-z_1| -|z_1-z_1(a)|-|z_1(a)-y|,
\end{align*}
where, for the last term, $z_1(a)$ makes sense, because of
$J_{1}\subset I_{n_1}(a_1)$. On the first term, Proposition
\ref{mato} and Lemma \ref{sample4} give
$$|f_{a_{1}}^{\nu_1}\zeta(a_{1})-
f_{a}^{\nu_1}\zeta(a)|\approx\|w_{\nu_1}(\zeta_{1})\|\cdot|a_{1}-a|
\gg e^{-r_1}.$$ The remaining four terms are $\leq e^{-r_1}$. It
follows that $|f_a^{\nu_1}x-y|\gg e^{-r_1}$. This yields a
contradiction to the assumption that $x$ is responsible for $a$.
Hence $a\notin E_n(*)$ holds and we get (\ref{dc1}).\medskip

\noindent{\it Sketch of the proof of (\ref{dc2}).} In the
discussions to follow, we need to introduce {\it critical
parameters} \cite{T1}. For the purpose of this
we make the following assumption and observation. Let
$\hat a\in E_n(*)$ and let $\hat\zeta$ denote any critical
approximation responsible for $a$. Let $z$ denote any binding
point for $f_{\hat a}^{\nu_k}\hat\zeta$, and let $a\in
I_{n_k}(\hat a)\to z(a)$ denote its deformation. Take an integer
$m$, a quasi critical approximation $\zeta$ of $f_{\hat a}$ of
order $m$, and its deformation $a\in I_{m}(\hat a)\to\zeta(a)$ for
which the conclusions of Proposition \ref{mato} hold up to time
$\nu_k$. The ``speed" of $z(a)$ as $a$ sweeps the interval
$I_{n_k}(\hat a)$ is bounded from above by in Lemma \ref{atsui}.
On the other hand, the ``speed" of $\zeta_{\nu_k}(a)$ as $a$
sweeps the interval $J_{\nu_k}(\hat a,\hat\zeta)$ is much
faster, by Proposition \ref{mato}. From the proposition,
$J_{\nu_k}(\hat a,\hat\zeta)\subset I_{n_k}(\hat a)$ holds. Hence,
the comparison of the speeds and Lemma \ref{sample4} together
imply that there exists a unique parameter $c_0\in
e^{-r_k/3}\cdot J_{\nu_k}(\hat a,\hat\zeta)$ such that the
$x$-coordinate of $\zeta_{\nu_k}(c_0)$ coincides with that of
$z(c_0)$.

\begin{definition}{\rm The $c_0$ is called
a {\it critical parameter in $J_{\nu_1}(\hat
a,\hat\zeta)$}.}
\end{definition}

A proof of (\ref{dc2}) is outlined as follows. Let $c_0,c_0'$
denote the critical parameters in $J_{1,1}$, $J_{1,2}$
respectively. Suppose that (\ref{dc2}) does not hold. Then, from a
distortion argument, $|J_{1,1}|\approx |J_{1,2}|$ follows. As
$a_{1,2}\notin J_{1,1}$, this implies $c_0\neq c_0'$. In addition,
it is possible to extend the domain of definition of the
deformation of $\zeta_{1,1}$ to the larger interval $J_{1,1}\cup
J_{1,2}$, so that all the above properties of the deformation
continue to hold. As $a_{1,2}\notin J_{1,1}$, the argument used in
the proof of (\ref{dc1}) gives $a_{1,2}\notin E_n(*)$. This is a
contradiction. Hence (\ref{dc2}) holds.
\medskip

\noindent{\it From level $k-1$ to $k$.} 
Having chosen $(a_{k-1,i},\zeta_{k-1,i})_i$ and the corresponding
intervals $(J_{k-1,i})_i$,
we choose $(a_{k,j},\zeta_{k,j})_j$ as follows. For each
$J_{k-1,i}$, in the same way as the proof of (\ref{dc1}) it is
possible to choose a finite number of parameters
$a_{k,1},a_{k,2},\cdots$ in $E_n(*)\cap[a^*-\varepsilon,a^*]\cap
e^{-r_{k-1}/3}\cdot{J}_{k-1,i}$ such that the corresponding
intervals $J_{k,1},J_{k,2},\cdots$ are pairwise disjoint and
altogether cover $E_n(*)\cap e^{-r_{k-1}/3}\cdot{J}_{k-1,i}.$ Now
the issue is to show the inclusion
$\bigcup_j{J}_{k,j}
\subset 2e^{-r_{k-1}/3}
 \cdot{J}_{k-1,i}.$
This is a consequence of the fact that the center $a_{k,j}$ of
each $J_{k,j}$ belongs to $e^{-r_{k-1}/3}\cdot{J}_{k-1,i}$, and
any $J_{k,j}$ does not contain the critical parameter in
$J_{k-1,i}$. \qed
\smallskip

\begin{lemma}\label{compare}
For every $i$, $\Theta_{\nu_1}(\zeta_{1,i})\leq2\varepsilon.$
\end{lemma}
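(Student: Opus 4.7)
The plan is to argue by contradiction, exploiting the ``Misiurewicz-type'' non-recurrence at $a=a^*$ provided by Proposition \ref{good1} against the fact that $\zeta_{1,i}$ does return to $I(\delta)$ at time $\nu_1$ when $a=a_{1,i}$. Suppose $\Theta_{\nu_1}(\zeta_{1,i})>2\varepsilon$ for some $i$. Since $a_{1,i}\in[a^*-\varepsilon,a^*]$, one has $0\le a^*-a_{1,i}\le\varepsilon<\Theta_{\nu_1}(\zeta_{1,i})$, hence $a^*\in J_{1,i}=J_{\nu_1}(a_{1,i},\zeta_{1,i})$.

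I would invoke Proposition \ref{mato} with $\hat a=a_{1,i}$, $\hat\zeta=\zeta_{1,i}$ and $\nu=\nu_1$, obtaining an integer $m$, a quasi critical approximation $\zeta$ of $f_{a_{1,i}}$ of order $m$, and a deformation $a\in I_m(a_{1,i})\to\zeta(a)$. Since $J_{1,i}\subset I_m(a_{1,i})$ by Proposition \ref{mato}(ii)(a), $\zeta_{\nu_1}(a):=f_a^{\nu_1}\zeta(a)$ is defined both at $a=a_{1,i}$ and at $a=a^*$. By Proposition \ref{mato}(i), $\zeta_{\nu_1}(a_{1,i})$ lies within $(Cb)^{\theta\nu_1/2}$ of $f_{a_{1,i}}^{\nu_1}\zeta_{1,i}\in I(\delta)$, so its $x$-coordinate has absolute value less than $1/2$. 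On the other hand $\zeta(a^*)$ is a critical approximation of $f_{a^*}$ of order $m$; by the construction in [\cite{T1}, Section 5] this $m$ is comparable to the order of $\hat\zeta=\zeta_{1,i}$, so together with $\nu_1<20n$ (from (Z1)) and the order of $\zeta_{1,i}$ being at least $n$, one has $20m>\nu_1$. Proposition \ref{good1} then forces $\zeta_{\nu_1}(a^*)\in\{|x|\ge 9/10\}$, whence
$$\|\zeta_{\nu_1}(a^*)-\zeta_{\nu_1}(a_{1,i})\|\ge 9/10-1/2=2/5.$$

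The contradiction comes directly from Proposition \ref{mato}(ii)(c): applied to $a^*,a_{1,i}\in J_{1,i}$ it gives $\|\zeta_{\nu_1}(a^*)-\zeta_{\nu_1}(a_{1,i})\|\approx\|w_{\nu_1}(\zeta_{1,i})\|\cdot|a^*-a_{1,i}|\ll 1$; quantitatively, bounding the sum defining $\Theta_{\nu_1}$ from below by a term involving the largest free iterate before $\nu_1$ yields $\|w_{\nu_1}(\zeta_{1,i})\|\Theta_{\nu_1}(\zeta_{1,i})\le\kappa_0 C_0^2=C_0^{-8}$, well below $2/5$. The main obstacle I foresee is checking the compatibility of the order $m$ from Proposition \ref{mato} with the hypothesis of Proposition \ref{good1}, which requires unpacking the construction of the quasi critical approximation in [\cite{T1}, Section 5].
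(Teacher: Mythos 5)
Your proposal is essentially the same contradiction argument as the paper's: assume $\Theta_{\nu_1}(\zeta_{1,i})$ is large so that $a^*\in J_{1,i}$, use Proposition \ref{mato} to deform and conclude $\zeta_{\nu_1}(a^*)$ is forced simultaneously to lie near $I(\delta)$ and far from it. However, you flag the right worry and then leave it unresolved: $\zeta(a^*)$ is only a \emph{quasi} critical approximation of $f_{a^*}$ (not a bona fide critical approximation), so Proposition \ref{good1} does not directly apply to it, and the ``$20m>\nu_1$'' compatibility issue you raise is indeed a problem if you try to apply Proposition \ref{good1} to $\zeta(a^*)$ itself. The paper sidesteps both issues through Claim \ref{trivial}: a quasi critical approximation of $f_{a^*}$ of order $m$ lies within $(Cb)^{\theta m/2}$ of a genuine critical point $z$ of $f_{a^*}$ (in the sense of Sect.\ 4.9), whose orbit escapes $R_0$ and hence never returns to $I(\delta)$ at \emph{any} positive time. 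Combining Proposition \ref{good1} with Claim \ref{trivial} gives $\zeta_{\nu_1}(a^*)\in\{|x|\geq 4/5\}$ with no constraint relating $m$ and $\nu_1$. Without Claim \ref{trivial}, the step where you write ``Proposition \ref{good1} then forces $\zeta_{\nu_1}(a^*)\in\{|x|\geq 9/10\}$'' has the gap you yourself anticipated.
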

As the intervals $(J_{1,i})_i$ are pairwise disjoint and intersect
$[a^*-\varepsilon, a^*]$, Lemma \ref{compare} gives
$\sum_i|J_{1,i}|\leq 3\varepsilon$. This proves (c).
\medskip

It is left to prove Lemma \ref{compare}. We use the following
which can be proved by slightly extending the arguments in
Sect.\ref{errant} and using the definition of quasi critical
approximations.
\begin{claim}\label{trivial}
Let $\zeta$ be a quasi critical approximation of order $n$ of
$f_{a^*}$. There exists a critical point $z$ of $f_{a^*}$ such
that $|\zeta-z|\leq (Cb)^{\frac{1}{2}\theta n}.$
\end{claim}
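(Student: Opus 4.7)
My plan is to build a Cauchy sequence $(\zeta_k)_{k\geq 0}$ of critical approximations of $f_{a^*}$ with strictly increasing orders $m_0 = n < m_1 < m_2 < \cdots$, starting from $\zeta_0 = \zeta$, and to take $z := \lim_k \zeta_k$ as the required critical point. The distance bound then comes almost entirely from the first jump, supplied by Lemma \ref{induce}: since $\theta = \alpha^3$ is tiny, one has $b^{n/4}\leq (Cb)^{\theta n/2}$ for small $b$, so the sharper $b^{n/4}$-bound from Lemma \ref{induce} is more than enough.

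For the initialization, the key observation is that a quasi critical approximation is, by its very definition, in particular a critical approximation of order $n$; Proposition \ref{good1} therefore applies to $\zeta$ directly at $a^*$, yielding $f_{a^*}^i\zeta \in \{|x|\geq 9/10\}$ for every $1\leq i < 20n$. By Lemma \ref{outside}, this forces $\|Df_{a^*}^i(f_{a^*}\zeta)\|\geq \delta e^{\lambda_0 i}$ over this range; the finitely many iterates with $i<\lambda_0^{-1}\log(1/\delta)$ are absorbed by direct inspection in the strongly expanding region near the turning value, giving $\|Df^i_{a^*}(f_{a^*}\zeta)\|\geq 1$ for all $1\leq i\leq 20n-1$. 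The ambient curve underlying $\zeta$ is $\gamma_0 = f_{a^*}^{[\theta n]}(l)$ where $l\subset H$ has length $2\kappa_0^{3\theta n}$; by the $\kappa_0^{1/2}$-pullback expansion built into the quasi critical hypothesis, $\gamma_0$ has arclength at least $2\kappa_0^{7\theta n/2}$, which for small $b$ far exceeds $b^{n/4}$. Thus both hypotheses of Lemma \ref{induce} are met with $m = 20n-1$, producing $\zeta_1\in\gamma_0$ of order $m_1:=20n-1$ with $|\zeta_1 - \zeta|\leq b^{n/4}$.

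For the inductive step, $\zeta_1$ is itself a critical approximation of $f_{a^*}$, so Proposition \ref{good1} applies again in its original form (no extension needed), giving non-recurrence up to time $20m_1$; a further application of Lemma \ref{induce} yields $\zeta_{k+1}$ of order $m_{k+1}:=20m_k-1$ with $|\zeta_{k+1}-\zeta_k|\leq b^{m_k/4}$, the underlying subcurve of $\gamma_0$ about $\zeta_k$ still being of ample length since $2b^{n/4} + b^{m_k/4}\ll \kappa_0^{7\theta n/2}$. Since $m_k\sim 20^k n$, the sequence $(\zeta_k)$ is Cauchy, converging to a point $z\in\gamma_0$ that is by construction a limit of critical approximations of unbounded order, hence a critical point of $f_{a^*}$. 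The distance bound is then
\[
|\zeta - z| \;\leq\; \sum_{k\geq 0} b^{m_k/4} \;\leq\; 2\, b^{n/4} \;\leq\; (Cb)^{\theta n/2}.
\]
The main obstacle I expect is cleanly verifying that the iteration does not stall at some finite stage, i.e.\ that each $\zeta_k$ inherits both the non-recurrence needed to invoke Lemma \ref{induce} again and an ambient $C^2(b)$-curve of adequate length. The former is exactly where the special role of $a^*$ enters (Proposition \ref{good1} holds uniformly for critical approximations of every order, unlike at generic nearby parameters), and the latter is what the hint about ``using the definition of quasi critical approximations'' formalises: the $[\theta n]$-long pullback expansion built into $\zeta$ provides the macroscopic curve $\gamma_0$ on which the entire inductive construction unfolds.
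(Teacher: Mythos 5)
Your proof is logically sound and produces an object that does the work the claim is used for, but it diverges from the paper's intended route in a way that creates a gap between what you construct and what the claim literally asserts.

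The notion of ``critical point of $f_{a^*}$'' in Claim~\ref{trivial} is the temporary one introduced in the proof of Proposition~\ref{good1} (Sect.~\ref{proo}): a point $\zeta$ lying on a $C^2(b)$-curve \emph{in $W^u(Q)$}, contained in the lenticular domain $S$, with the long stable leaf through $f\zeta$ tangent to $W^u(Q)$. The whole point of that definition is that, at $a^*$, such a $\zeta$ lies in $S$, hence $fS\subset D_1$ by (M1), and the orbit escapes $R_0$ permanently. The paper's hinted proof (``slightly extending the arguments in Sect.~\ref{errant}'') is precisely the machinery already displayed in the proof of Proposition~\ref{good1}: pull $f^{-[\theta n]}\zeta$ across to $W^u(Q)$ via the long stable leaf, push forward to obtain a $C^2(b)$-curve $f^m\gamma\subset W^u(Q)$, apply Lemma~\ref{update} to produce a critical approximation $\bar z$ of order $n$ there, then Lemmas~\ref{sox} and \ref{c2} to find a genuine critical point $\zeta''$ on $W^u(Q)$ within $(Cb)^{\theta n/4}$. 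The extra expansion encoded in the quasi-critical hypothesis is exactly what lets one repeat this transfer for points like $\zeta(\hat a)$ that were never built inside $W^u$.

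Your construction, by contrast, bootstraps with Lemma~\ref{induce} directly on $\gamma_0=f_{a^*}^{[\theta n]}(l)$, which is a curve through $I(\delta)$ but \emph{not} a subset of $W^u(Q)$. Your limit point $z\in\gamma_0$ is therefore not a critical point in the sense used here, and the final step ``hence a critical point of $f_{a^*}$'' is not justified by the paper's definitions. What \emph{is} true of your $z$ --- and what you leave implicit --- is the escaping property $f_{a^*}^i z\in\{|x|\geq 9/10\}$ for all $i\geq 1$, obtained by passing $\text{Proposition}~\ref{good1}$ to the limit along the $\zeta_k$. That property, not membership of $W^u$, is what the two downstream applications (Lemmas~\ref{compare} and \ref{good2}) actually exploit, so your argument can be salvaged by stating and proving this property explicitly for $z$ rather than invoking the label ``critical point.'' As written, however, there is a gap: you assert a conclusion ($z$ is a critical point of $f_{a^*}$) that your construction does not deliver, because your ambient curve never returns to $W^u(Q)$, which is the defining locus in the paper's definition.
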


\noindent{\it Proof of Lemma \ref{compare}.} Take an integer $m$,
a quasi critical approximation $\zeta$ of $f_{a_{1,i}}$ of order
$m$, and its deformation $a\in I_m(a_{1,i})\to\zeta(a)$ for which
the conclusions of Proposition \ref{mato} hold up to time $\nu_1$.
If $|J_{1,i}|> 2\varepsilon,$ then $a^*\in J_{1,i}$ holds, because
of $a_{1,i}\in[a^*-\varepsilon,a^*]$. Then $\zeta(a^*)$ makes
sense and we have
$|f_{a_{1,i}}^{\nu_1}\zeta_{1,i}-\zeta_{\nu_1}(a^*)|\leq
|f_{a_{1,i}}
^{\nu_1}\zeta_{1,i}-\zeta_{\nu_1}(a_{1,i})|+|\zeta_{\nu_1}(a_{1,i})
-\zeta_{\nu_1}(a^*)|\ll1.$ As $\nu_1$ is a return time, $f_{
a_{1,i}}^{\nu_1}\zeta_{1,i}\in I(\delta)$ holds. It follows that
$\zeta_{\nu_{1}}(a^*)$ is near $I(\delta)$. On the other hand,
Proposition \ref{good1} and Claim \ref{trivial} together imply
$\zeta_{\nu_1}(a^*)\in\{(x,y)\colon |x|\geq 4/5\}$. A
contradiction arises. \qed

\subsection{Proof of Lemma \ref{good2}.} 
We argue by induction on $n$. 
Let $1<k\leq \min\left(n_0(\varepsilon),20\xi\right)$
and assume $f_{a_0}^i\zeta_0\notin I(\delta)$ for
$1\leq i\leq k-1$. 
Then $f_{a_0}^{k}\zeta_0$ is free.
Take an integer $m$, a quasi critical approximation $\zeta$ of
$f_{a_0}$ of order $m$, and its deformation $a\in I_m(\hat a)\to
\zeta(a)$ for which the conclusions of Proposition \ref{mato} hold
up to time $k$. The definition of $J_k(a_0,\zeta_0)$
and (\ref{n0}) give
$$|J_k(a_0,\zeta_0)|\geq \kappa_0C_0^{-2k}\geq
2\varepsilon.$$ As $a_0\in[a^*-\varepsilon,a^*]$, $a^* \in
J_k(a_0,\zeta_0)$ holds. Hence, $\zeta(a^*)$
makes sense and we have
$|f_{a_0}^k\zeta_0-\zeta_k(a^*)|\leq|f_{a_0}^k\zeta_0-\zeta_k(a_0)|+
|\zeta_k(a_0)-\zeta_k(a^*)|\ll1.$ Proposition \ref{good1} and
Claim \ref{trivial} give $\zeta_k(a^*)\notin \{(x,y)\colon |x|\leq
4/5\}$. Hence $f_{a_0}^{k}\zeta_0\notin I(\delta)$ holds, 
recovering the assumption of the induction. \qed

\subsection{Proof of Proposition \ref{good1}}\label{proo}
Write $f$ for $f_{a^*}$. Let $r$ denote the point of the quadratic
tangency near $(0,0)$. Let $S$ denote the lenticular compact
domain in $I(\delta)$ bounded by the segment in $W^u$ and the
parabola in $W^s(Q)$ containing $r$ (cf. Figure 1). By (M1), all
points in $fS$ do not return to $R_0$ under positive iteration,
and thus they are expanding. By Proposition \ref{fo}, $fS$ is
foliated by long stable leaves. Note that the leaf through $fr$
contains the boundary of $R_0$.

Temporarily, let us adopt the following definition. Let $\gamma$
be a $C^2(b)$-curve in $W^u(Q)$ stretching across $I(\delta)$. We
say $\zeta\in \gamma$ is a {\it critical point on $\gamma$} if
$z\in S$, and the long stable leaf through $fz$ is tangent to
$W^u(Q)$ at $fz$. For the proof of Proposition \ref{good1}, we
approximate any critical approximation by a critical point. Since
the orbit of every critical point do not return to $R_0$, the
conclusion of the proposition follows.

\begin{lemma}\label{sox}
Let $\gamma$ be a $C^2(b)$-curve in $W^u(Q)$ stretching across
$I(\delta)$. There exists a unique critical point on $\gamma.$ In
addition, for every $n\geq M$ there exists a critical
approximation of order $n$ on $\gamma$ within the distance
$(Cb)^\frac{n}{4}$ from the critical point.
\end{lemma}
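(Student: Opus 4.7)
The plan is to reduce both claims of the lemma to finding unique zeros of strictly monotonic scalar functions on $\gamma$, via the quadratic folding captured by Lemma \ref{quadratic}.

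\emph{Setup.} Since every forward orbit from $fS$ avoids $R_0$, Lemma \ref{outside} makes such orbits uniformly $\kappa_0$-expanding with $\kappa_0$ close to $2$, so by Lemma \ref{stable} the set $fS$ is foliated by long stable leaves; let $e_\star(w)$ denote the unit tangent direction to the leaf through $w\in fS$. Corollaries \ref{focor1}(a) and \ref{focor2} then yield the uniform $C^1$ bound $\|e_n-e_\star\|_{C^1(fS)}=O((Cb)^n)$.

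\emph{Unique critical point.} Choose $\zeta_0\in\gamma$ where ${\rm slope}(Dft(\zeta_0))$ is maximal; this is unique by the quadratic form of $f$ on $I(\delta)$ and the near-horizontality of $\gamma$. Apply Lemma \ref{quadratic} in the basis $\{(1,0)^T, e_\star(fz)\}$ to decompose $Dft(z)=A(z)(1,0)^T+B(z)e_\star(fz)$ on $\gamma\cap I(\delta)$, obtaining $|A(z)|\approx|z-\zeta_0|$ and $|B(z)|\leq C\sqrt{b}$. Since the leading term of $A$ is $-2a^*$ times the $x$-coordinate of $z$ (from (\ref{form0})), $A$ is strictly monotonic with $|A'|\geq c>0$. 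The critical-point condition $Dft(\zeta)\parallel e_\star(f\zeta)$ reads exactly $A(\zeta)=0$, giving a unique root $\zeta_\infty\in\gamma$. The membership $\zeta_\infty\in S$ follows because $\gamma\subset W^u(Q)$ and the $W^u$-segment bounding $S$ is the local branch of $W^u(Q)$ through $\zeta_\infty$.

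\emph{Critical approximations and distance estimate.} Define $A_n$ analogously using $e_n(fz)$ in place of $e_\star(fz)$. The $C^1$ estimate above transfers to $\|A_n-A\|_{C^1}=O((Cb)^n)$, so $|A_n'|\geq c/2$ and $A_n$ has a unique zero $\zeta_n\in\gamma$; this yields condition (ii) of the definition of a critical approximation of order $n$. Condition (i), $\|Df^i(f\zeta_n)\|\geq 1/10$ for $1\leq i\leq n$, follows from uniform expansion along the orbit of $f\zeta_\infty\in fS$ (Lemma \ref{outside}) together with $C^0$ continuity of $Df^i$, using that $\zeta_n$ lies extremely close to $\zeta_\infty$. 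The mean value theorem gives
$$|\zeta_n-\zeta_\infty|\leq\frac{|A_n(\zeta_\infty)|}{c/2}=O((Cb)^n),$$
which is well within the advertised $(Cb)^{n/4}$ tolerance.

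\emph{Main obstacle.} The delicate step is upgrading the $C^0$ asymptotic $|A|\approx|z-\zeta_0|$ from Lemma \ref{quadratic} into a uniform derivative lower bound $|A'|\geq c>0$ on $\gamma\cap I(\delta)$; this requires exploiting the explicit quadratic form of $f_{a^*}$ together with the slope/curvature bounds built into the definition of a $C^2(b)$-curve. A secondary technical point is confirming $\zeta_\infty\in S$, which should follow from the uniqueness just established and the geometric description of $S$ as the lenticular domain bounded by a specific $W^u$-segment and a $W^s(Q)$-parabola through $r$.
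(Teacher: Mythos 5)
Your route is genuinely different from the paper's. The paper argues geometrically: the long stable leaves through $fS$ form a foliation, and by Remark~\ref{verti} each leaf near $f\zeta$ either crosses $f\gamma$ at exactly two points or is quadratically tangent to it; if two distinct leaves were both tangent they would have to intersect, which the foliation forbids, so there is a unique tangent leaf, and its $f$-preimage on $\gamma$ is the critical point $\zeta$. For the order-$n$ statement the paper replays the same picture with the order-$n$ leaves: taking $z\in\gamma$ with $|z-\zeta|=b^{n/4}$, the Hausdorff bound $(Cb)^n$ between $\Gamma_n(f\zeta)$ and $\Gamma(f\zeta)$ together with Lemma~\ref{quadratic} puts $\Gamma_n(fz)$ at horizontal distance $\approx b^{n/2}$ from $\Gamma_n(f\zeta)$, so $f^{-1}\Gamma_n(fz)$ cuts $\gamma$ at two points within $(Cb)^{n/4}$ of $\zeta$ and the order-$n$ tangency is trapped between them. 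You instead encode both tangency conditions as zeros of the scalar functions $A$, $A_n$ in the moving basis $\{(1,0)^T,e_\star(fz)\}$, which is a clean idea and would in fact yield a sharper bound $O((Cb)^n)$ if carried through.

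However the crux, $|A'|\geq c>0$ on $\gamma\cap I(\delta)$, is exactly where the proposal is incomplete, as you yourself flag. It is not a consequence of Lemma~\ref{quadratic}: that lemma decomposes $Dft$ in a \emph{fixed} basis $\{(1,0)^T,e\}$, with $e$ tangent to $f\gamma$ at the single point $f\zeta$, and supplies only the $C^0$ estimate $|A(z)|\approx|z-\zeta|$. Switching to the near-vertical moving basis $e_\star(fz)$ changes both coefficients and brings in $\partial e_\star$-terms, so the derivative bound has to be established by redoing the quadratic computation using the explicit form~(\ref{form0}) together with the estimate $\Vert\partial e_n\Vert\leq Cb/\kappa^3$ from Corollary~\ref{e1}. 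This is doable but is precisely the work the paper's foliation-based argument sidesteps: there, uniqueness comes for free from disjointness of leaves, with no monotonicity needed. Two smaller inaccuracies: the $C^1$ bound $\Vert e_n-e_\star\Vert_{C^1}=O((Cb)^n)$ does not come from Corollary~\ref{focor1}(a), which only compares $e_n$ to $e_1$, nor from Corollary~\ref{focor2}, which is about parameter derivatives; one gets it by telescoping Lemma~\ref{fo} and invoking Corollary~\ref{e1}. And you notice but do not actually verify $\zeta_\infty\in S$; the paper gets this for free by restricting from the outset to leaves on the $S$-side of $\Gamma(fr)$.
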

\begin{proof}
By Remark \ref{verti}, any long stable leaf at the right of
$\Gamma(fr)$ intersects $f\gamma$ at two points, or else it is
tangent to $f\gamma$ and the point of tangency is quadratic. There
exists only one leaf for which the latter holds, for otherwise two
distinct leaves intersect each other, a contradiction to the
remark below Lemma \ref{leaf}. The pull-back of the point of
tangency is a critical point on $\gamma,$ denoted by $\zeta$.
Hence, the first statement holds.

Take $z\in\gamma$ with $|\zeta-z| =b^{\frac{n}{4}}$, and write
$fz=(x_0,y_0)$. Represent the two long stable leaves as graphs of
functions on $[-\sqrt{b},\sqrt{b}]$: $\Gamma_n(z)=\{(x(y),y)\}$
and $\Gamma_n(f\zeta)=\{(\tilde x(y),y)\}$. Since the Hausdorff
distance between $\Gamma_n(f\zeta)$ and $\Gamma(f\zeta)$ is $\leq
(Cb)^{n}$, Lemma \ref{quadratic} gives $|x(y_0)-\tilde
x(y_0)|=|x_0-\tilde x(y_0)|\approx b^{\frac{n}{2}}$. Since $e_n$
is Lipschitz, it follows that $|x(y)-\tilde x(y)|\approx
Cb^{\frac{n}{2}}$ for all $y \in[-\sqrt{b},\sqrt{b}]$. Hence
$f^{-1}\Gamma_n(fz)$ intersects $\gamma$ at two points within
$(Cb)^{\frac{n}{4}}$ from $\zeta$. This and Remark \ref{verti}
together imply the second statement.
\end{proof}


Let $\zeta_0$ denote the critical point which is closest to $Q$ in
the Riemannian distance in $W^u(Q)$. Let $G$ denote the segment in
$W^u(Q)$ with endpoints $Q$, $f\zeta_0$. A proof of the next lemma
is given in Appendix A.1.
\begin{lemma}\label{c2}
For every $n\geq0$, any component of $f^nG\cap I(\delta)$ is a
$C^2(b)$-curve.
\end{lemma}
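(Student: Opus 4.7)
The plan is to exploit the $f$-invariance of $W^u(Q)$ to reduce the lemma to a structural statement about $W^u(Q)\cap I(\delta)$. Since $f(Q)=Q$ and $W^u(Q)$ is $f$-invariant, a routine induction on $n$ identifies $f^n G$ as the segment of $W^u(Q)$ from $Q$ to $f^{n+1}\zeta_0$. In particular $f^n G\subset W^u(Q)$, so it will suffice to show that every component of $W^u(Q)\cap I(\delta)$ is a $C^2(b)$-curve.

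The key observation is that every \emph{fold} of $W^u(Q)$, i.e.\ point at which the tangent to $W^u(Q)$ fails to lie in the unstable cone, must lie outside $I(\delta)$. Indeed, by the definition of critical point used in Lemma~\ref{sox}, such a fold is of the form $f\eta$ with $\eta$ a critical point on $W^u(Q)$, so $\eta\in S\subset I(\sqrt{b})$ and $|x_\eta|\leq\sqrt{b}$. Using $f(x,y)=(1-ax^2,0)+b\,\Phi$ with $a$ near $2$, this places $f\eta$ within $O(\sqrt{b})$ of $(1,0)$, hence outside $I(\delta)$ for $\delta$ fixed and $b$ sufficiently small. Consequently, any component $\gamma$ of $W^u(Q)\cap I(\delta)$ contains no fold, so its tangent lies in the unstable cone throughout, giving the slope bound $|\mathrm{slope}|\leq\sqrt{b}$.

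For the curvature estimate, I would write $\gamma$ as a graph $y=h(x)$ over an interval in $(-\delta,\delta)$, and exploit the $f$-invariance of $W^u(Q)$ together with the closeness of $f$ to the horizontal map $(x,y)\mapsto(1-ax^2,0)$. A standard graph-transform / $C^2$-section argument in the spirit of the Unstable Manifold Theorem, using the bounds (M3) on $\partial^i f$ together with the exterior uniform expansion from Lemma~\ref{outside}, should yield $|h''|\leq C\sqrt{b}$, hence curvature $\leq\sqrt{b}$. An alternative is to induct on $n$: $C^2(b)$-curves persist under $f$-iterates that remain outside $I(\delta)$ (because Lemma~\ref{outside} keeps tangent slopes $\leq\sqrt{b}$, and curvature evolves under $Df$ with bounded second derivatives), and each passage of $f^n G$ through $I(\delta)$ is a single fold-free arc whose $C^2(b)$-regularity is inherited from the exterior pieces.

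The main obstacle is the curvature bound. The slope bound reduces to the tidy algebraic fact that folds lie outside $I(\delta)$, but curvature could in principle grow as the segment $f^n G$ threads through $I(\delta)$ many times. It is essential to combine the invariance of $W^u(Q)$ with the exterior hyperbolicity of Lemma~\ref{outside}, and to verify that the tangent direction at the entry and exit of each component of $W^u(Q)\cap I(\delta)$ across $\partial I(\delta)$ is uniformly horizontal, so that no curvature blow-up is produced when the curve crosses into the critical strip.
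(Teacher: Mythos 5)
Your reduction via $f$-invariance of $W^u(Q)$, and the observation that images of critical points (hence folds of $W^u(Q)$) lie near $x\approx 1$ and so outside $I(\delta)$, is sound and does give the slope bound. But the curvature bound, which you yourself flag as the crux, is where the argument actually lives, and the sketch you give for it does not close the gap.

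A graph-transform argument relying on (M3) and Lemma~\ref{outside} alone cannot work: Lemma~\ref{outside} provides uniform expansion only while the orbit stays outside $I(\delta)$, yet the segment $f^nG$ re-enters $I(\delta)$ many times, and the derivative along $W^u(Q)$ drops near each return before it recovers. A naive iteration of the curvature would then accumulate uncontrolled contributions from these dips. The ingredient the paper actually uses is a \emph{regularity} estimate at every return time, namely item (c) of Sublemma~\ref{kuya}: $\Vert Df^{n_i}t(z)\Vert\geq(\delta/10)\Vert Df^{j}t(z)\Vert$ for $0\leq j<n_i$. This feeds into the curvature formula of [\cite{T1}~Lemma~2.4], whose terms involve ratios $\Vert Df^{j}t(z)\Vert^3/\Vert Df^{n}t(z)\Vert^3$, and it is precisely what keeps those ratios bounded so the geometric series of $(Cb)^i$ terms converges to something $\leq\sqrt{b}$.

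That regularity estimate in turn rests on a decomposition of the orbit of $z\in G$ into bound and free segments, which is possible because the orbits of all critical points on $W^u(Q)$ escape $R_0$ (the Misiurewicz feature at $a^*$), so the recovery estimates of Proposition~\ref{recovery0} are available for genuine critical points. Your proposal never invokes the bound/free decomposition or the Misiurewicz non-recurrence property, and without them the curvature bound cannot be established in the critical strip. So the structure is right in outline, but you are missing the central mechanism: it is not that the curve never folds inside $I(\delta)$, but that each return to $I(\delta)$ occurs at a time of $\delta/10$-regularity, which is what tames the curvature recursion.
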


We are in position to finish the proof Proposition \ref{good1}. If
$|f^{-[\theta n]}\zeta-fr|\leq1/10$, then let $m=[\theta n]-1$.
Otherwise, let $m=[\theta n].$ Then $f^{-m}\zeta$ is expanding.
Let $z$ denote the point of intersection between the long stable
leaf of order $m$ through $f^{-m}\zeta$ and $G$. It is possible to
take a curve $\gamma$ in $G$ extending both sides around $z$ to
length $b^{\frac{m}{3}}.$ For otherwise the contraction along the
long stable leaf gives $f^mQ\in I(\delta)$, a contradiction
because $Q$ is a fixed point and $Q\notin I(\delta)$. By the
definition of $m$, $\gamma$ avoids the turn near $f\zeta_0$, and
hence is $C^2(b)$. Then $f^m\gamma$ is a $C^2(b)$-curve extending
both sides around $f^mz$ to length $\geq b^{\frac{m}{2}}$. By
Lemma \ref{update}, there exists a critical approximation $\bar z$
of order $n$ on $f^m\gamma$ such that
 $|\zeta-\bar z|\leq(Cb)^{\frac{\theta n}{4}}$ holds.
By  Lemma \ref{sox}and Lemma \ref{c2}, there exist a
$C^2(b)$-curve $\gamma'$ in $W^u$ containing $f^m\gamma$ and
stretching across $I(\delta)$, and a critical point $\zeta''$ on
$\gamma'$ such that $|\bar z-\zeta''|\leq (Cb)^\frac{\theta
n}{4}$. It follows that $|f^i\zeta-f^i\zeta''|\leq
(Cb)^\frac{\theta n}{5}$ for $1\leq i<20n$. As the orbit of
$\zeta''$ is out of $R_0$, the claim holds. \qed
\medskip

\noindent{\bf Standing hypothesis for the rest of the paper:}
$f\in\{f_a\colon a\in\Delta\cap(a^{**},a^*]\}$. Here,
$a^{**}$ is the one defined in Introduction.

\section{Dynamics on the unstable manifold}
In this section we develop a one-dimensional analysis on the
unstable manifold $W^u$. In Sect.\ref{setc}, we define a {\it
critical set} $\mathcal C$ in $W^u$, as a set of accumulation
points of critical approximations, and use it as a spine to structure
the dynamics.
 Each element of $\mathcal C$ is called a {\it critical
point}. In Sect.\ref{rec}, \ref{cripa} we prove some key estimates
on critical points. In Sect.\ref{geometry} we identify a geometric
structure of $W^u$ near the critical set.
\medskip

\noindent{\it Notation.} For $z\in W^u$, let $t(z)$ denote any
unit vector tangent to the unstable manifold at $z.$ The
boundaries of $R_0$ in $W^u$ is called {\it unstable sides}, and
denoted by $\partial R_0.$ Let $\partial R_n:=f^n(\partial R_0)$.

\subsection{The critical set}\label{setc}
In the case $W^u=W^u(Q)$, fix a fundamental domain $\mathcal F$ in
$W^u_{\rm loc}(Q)$. For $z\in\mathcal F$, define a sequence
$n_1<n_1+p_1\leq n_2<n_2+p_2\leq n_3<\cdots$ inductively as
follows: $n_1$ is the smallest such that $f^{n_1}z\in I(\delta)$
and $p_1$ is the bound period of $f^{n_1}z$; $n_{k}\geq
n_{k-1}+p_{k-1}$ is the smallest such that $f^{n_k}z\in
I(\delta)$, and $p_k$ is the bound period of $f^{n_k}z$. From the
fact that $Q$ is a fixed saddle, it follows that this sequence is
defined indefinitely, or else there exists an integer $m$ such
that $Df^mt(z)$ is in critical position relative to critical
approximations of arbitrarily high order. If the latter case
occurs, we let $f^mz\in\mathcal C$. Since each such point is
isolated in $W^u$, $\mathcal C$ is a countable set. In the case
$W^u=W^u(P)$, $\mathcal C$ is constructed in the same way, with
$Q$ replaced by $P$.

\begin{prop}\label{criticaset}
For each $\zeta\in\mathcal C$ we have:

\smallskip

\noindent{{\rm (a)}} $\|w_{n}(\zeta)\|\geq e^{\lambda (n-1)}$ for
$n\geq1$;

\noindent{{\rm (b)}} $\|w_{j}(\zeta)\| \geq e^{- 2\alpha
i}\|w_i(\zeta)\|$ for $1\leq i<j$;

\noindent{{\rm (c)}} there exists a monotone increasing function
$\chi\colon\mathbb N \circlearrowleft$ such that for each $n$,
$(1-\sqrt\alpha)n\leq\chi(n)\leq n$ and
$\|w_{\chi(n)}(\zeta)\|\geq \delta \|w_{k}(\zeta)\|$ for $1\leq k<
\chi(n)$;

\noindent{{\rm (d)}} the long stable leaf through $f\zeta$ is
tangent to $W^u$ at $f\zeta$ and the tangency is quadratic.
\end{prop}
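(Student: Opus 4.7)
The plan is a limit argument, exploiting the fact that each $\zeta \in \mathcal{C}$ is, by its very definition in Section 5.1, accumulated by critical approximations of arbitrarily high order. First I would produce for each $\zeta \in \mathcal{C}$ a sequence $(\zeta_n)$ of nice critical approximations lying on $W^u$ itself, of orders $n \to \infty$ and with $|\zeta_n - \zeta| \to 0$. The construction of $\mathcal{C}$ furnishes phase-space critical approximations $\tilde\zeta_n$ relative to which $f\zeta$ falls in critical position at stage $20n-1$; to transplant these onto $W^u$ I would apply Lemma \ref{update}, comparing the horizontal curves on which the $\tilde\zeta_n$ live with local $C^2(b)$-arcs of $W^u$ through $\zeta$ (the latter obtained by iterating a small piece of $W^u$ near $f^{-[\theta n]}\zeta$, in the same spirit as the proof of Proposition \ref{good1}). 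This should yield $\zeta_n \in W^u$ with $|\zeta - \zeta_n| \leq (Cb)^{n/4}$.

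For assertions (a)--(c), Proposition \ref{brprop} applied to each $\zeta_n$ gives the good critical behavior conditions (G1)--(G3) up to time $20n$. By the uniform $C^3$ bounds in (M3) and the convergence $\zeta_n \to \zeta$, for every fixed $j$ one has $w_j(\zeta_n) \to w_j(\zeta)$, and the inequalities in (G1) and (G2) pass to the limit, yielding (a) and (b). For (c), let $\chi_n \colon [M,20n] \cap \mathbb{N} \to \mathbb{N}$ be the function provided by Proposition \ref{brprop} for $\zeta_n$. For each $j \geq M$, the value $\chi_n(j)$ lies in the finite set $[(1-\sqrt\alpha)j, j] \cap \mathbb{N}$, so a standard diagonal extraction yields a subsequence along which $\chi_n(j)$ stabilizes for every $j$. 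The limiting function $\chi \colon \mathbb{N} \circlearrowleft$ is monotone and inherits $\|w_{\chi(n)}(\zeta)\| \geq \delta \|w_k(\zeta)\|$ from the approximating $\chi_n$.

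For assertion (d), once (a) is established, $f\zeta$ is $e^\lambda$-expanding, and Lemma \ref{leaf} provides a long stable leaf $\Gamma(f\zeta)$. By the definition of a critical approximation, $e_n(f\zeta_n)$ is tangent to $Df\, t(\zeta_n)$, i.e., to $f(W^u)$ at $f\zeta_n$. By Corollary \ref{e1}, the $C^1$ bound on $e_n$ together with $\zeta_n \to \zeta$ ensures that $e_n(f\zeta_n)$ converges to the tangent direction of $\Gamma(f\zeta)$ at $f\zeta$, while $Df\, t(\zeta_n) \to Df\, t(\zeta)$ by continuity. Passing to the limit yields tangency of $\Gamma(f\zeta)$ to $W^u$ at $f\zeta$. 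The quadratic nature is then immediate from Remark \ref{verti}: the local piece of $W^u$ near $\zeta$ is a horizontal curve (since $\zeta \in I(\delta)$ and tangents along $W^u$ have slope $\leq \sqrt{b}$), so its image under $f$ meets a vertical curve either transversally at two points or tangentially with quadratic contact, and the tangential alternative has just been established.

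The main obstacle I anticipate is the first step, namely producing critical approximations on $W^u$ itself that converge to $\zeta$. The approximations coming from the construction of $\mathcal{C}$ live on horizontal curves a priori detached from $W^u$, and transplanting them requires verifying that sufficiently long $C^2(b)$-subarcs of $W^u$ pass close to the relevant long stable leaves. This hinges on a $C^2$-geometric control of $W^u$ near turns analogous to Lemma \ref{c2}. Once that geometric step is secured, the remainder reduces to continuity of finite orbit segments and the already-established Proposition \ref{brprop}.
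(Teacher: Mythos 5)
Your overall strategy — approximating $\zeta$ by a sequence of critical approximations of increasing order and passing to the limit — is the same as the paper's, and your limit arguments for (a), (b), (c) would indeed work once you have such a sequence with $\zeta_n\to\zeta$ and good critical behavior up to time $20n$. However, you have built in an unnecessary and potentially troublesome complication: you insist on first transplanting the approximations onto $W^u$ via Lemma~\ref{update}. The paper does not do this, and it does not need to. The very definition of $\mathcal C$ in Section~5.1 (namely that $Df^m t(z)$ is in critical position relative to critical approximations $\zeta_{m_\ell}$ of arbitrarily high order) already hands you a sequence of critical approximations converging to $\zeta$, each of which has good critical behavior by Proposition~\ref{brprop}, since $a\in\Delta$. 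The norms $\|w_j(\cdot)\|$ depend only on the point (through $w_j(\zeta)=Df^{j-1}(f\zeta)\bigl(\begin{smallmatrix}1\\0\end{smallmatrix}\bigr)$), not on the curve on which the approximation lives, so (G1)--(G3) pass to the limit directly without ever needing approximations on $W^u$. Your transplanting step, by contrast, requires the $C^1$-closeness hypothesis (iii) of Lemma~\ref{update} between the auxiliary horizontal curves and a local arc of $W^u$; you acknowledge you have not verified this, and you call it the main obstacle — in effect you have introduced a genuine gap that the original proof sidesteps entirely.

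For (d) your argument also takes a more circuitous route. You pass to limits of the most contracting directions $e_n(f\zeta_n)$, which again hinges on having $\zeta_n$ on $W^u$ so that $Df\,t(\zeta_n)\to Df\,t(\zeta)$. The paper instead observes directly that, since $Df^m t(z)$ is in critical position relative to approximations of arbitrarily high order, part~(ii) of Proposition~\ref{recovery} yields $\|Df^n t(\zeta)\|\leq e^{-8\lambda n}\|t(\zeta)\|$ for $n$ along an unbounded sequence, hence $t(\zeta)$ is exponentially contracted, hence $t(f\zeta)$ coincides with the contracting direction tangent to $\Gamma(f\zeta)$, and the quadratic nature follows from Remark~\ref{verti}. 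This contraction argument is shorter and, crucially, it never requires any critical approximation to lie on $W^u$. You should therefore drop the transplanting step entirely; once you do, your limit arguments for (a)--(c) go through as written, and (d) follows more cleanly from Proposition~\ref{recovery}(ii) than from convergence of the $e_n$ fields.
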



\begin{proof}
By definition, for each $\zeta\in\mathcal C$ there exists a
strictly increasing sequence $m_1<m_2<\cdots$ of integers and a
sequence $\zeta_{m_1},\zeta_{m_2},\cdots$ of critical
approximations with good critical behavior, such that
$\zeta_{m_\ell}$ is of order $m_\ell$, and
$\zeta_{m_\ell}\to\zeta$ as $\ell\to\infty$. (a) (b) (c) are
direct consequences of this convergence. By the definition of
$\mathcal C$ and (ii) in Proposition \ref{recovery}, $t(\zeta)$ is
contracted exponentially by positive iterations. Thus $t(f\zeta)$
is tangent to $\Gamma(f\zeta)$. By Remark \ref{verti},
this tangency is quadratic, and (d) holds. 
\end{proof}




\subsection{Recovering expansion}\label{rec}
In this and the next subsection we assume that $\zeta\in\mathcal
C$ is on a horizontal curve $\gamma$ in $I(\delta)$, namely,
$\Gamma(f\zeta)$ is tangent to $f\gamma$ at $f\zeta$. We state a
version of Proposition \ref{recovery} which is proved similarly.
The difference is that $\zeta$ is no longer an approximation and a
``genuine'' critical point, and thus the estimates are available
entirely on $\gamma.$

As before, write $\Gamma(f\zeta)=\{(x(y),y)\colon
|y|\leq\sqrt{b}\}$, and for each $k\geq M$, let $V_k=\{(x,y)\colon
|x-x(y)|\leq D_k(\zeta)/2,|y|\leq\sqrt{b}\}.$
 If $fz\in V_k\setminus V_{k+1},$ define a {\it bound
period} $p=p(\zeta,z)$ by $$p=\chi(k),$$ and a {\it fold period}
$q=q(\zeta,z)$ by $$q=\min\left\{i\in[1,p)\colon|\zeta-z| ^{\beta}
\cdot \|w_{j+1}(\zeta)\|\geq1\text{ for } i\leq j<p\right\}.$$

\begin{prop}\label{recovery0}
 Let $z\in \gamma\setminus\{\zeta\}$ and let $t(z)$ denote any
unit vector tangent to $\gamma$ at $z$. Then:

{\rm (a)}
 $p\leq\log
|\zeta-z|^{-\frac{3}{\lambda}};$

{\rm (b)} $q\leq C\beta p$;

{\rm (c)} $|f^i\zeta-f^iz|\leq e^{-2\alpha p}$ for $1\leq i\leq
p;$

{\rm (d)} $|\zeta-z|\leq\Vert Df^qt(z)\Vert\leq |
\zeta-z|^{1-\beta};$

{\rm (e)} $\displaystyle{\Vert
Df^pt(z)\Vert\geq|\zeta-z|^{-1+\frac{\alpha}{\log C_0}} \geq
e^{\frac{\lambda p}{3}}};$

{\rm (f)} $\Vert  Df^pt(z)\Vert\geq (\delta/10)\Vert
Df^it(z)\Vert$ for $0\leq i<p;$

{\rm (g)} $\|Df^it(z)\|\approx |\zeta-z|\|w_i(\zeta)\|$
 for $q\leq i\leq p$;

{\rm (h)} $\|Df^it(z)\|<1$
 for $1\leq i\leq q$.
\end{prop}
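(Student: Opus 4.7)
The plan is to adapt the proof of Proposition \ref{recovery} to the present setting, using Proposition \ref{criticaset}(a)-(d) in place of the corresponding hypotheses on critical approximations. Indeed Proposition \ref{criticaset}(a)-(c) give (G1)-(G3) for $\zeta$ with $20n$ replaced by $+\infty$, and Proposition \ref{criticaset}(d) provides precisely the quadratic tangency of $\Gamma(f\zeta)$ with $f\gamma$ at $f\zeta$ needed for Lemma \ref{quadratic} to apply on all of $\gamma$, rather than only up to the prescribed order $n$ as in the critical-approximation setting. Since these are the sole inputs used in the proof of Proposition \ref{recovery}, the existing argument transports essentially verbatim to deliver (a)-(f).

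The underlying mechanism is the splitting
\[
Df\,t(z)=A(z)\left(\begin{smallmatrix}1\\0\end{smallmatrix}\right)+B(z)\,e_p(fz),\qquad |A(z)|\approx|\zeta-z|,\quad |B(z)|\le C\sqrt{b},
\]
provided by Lemma \ref{quadratic}, followed by separate iteration of the two components by $Df^{i-1}$. The horizontal component evolves as $A(z)w_i(\zeta)$ up to a Taylor error of order $|\zeta-z|^{2}\|w_i(\zeta)\|$, while the $e_p$-component is contracted at the rate from Corollary \ref{focor1}(b) and drifts in direction at the rate from Corollary \ref{focor2}. The precise definitions and estimates on $p$ and $q$ are then combinatorial consequences of (G1)-(G3) exactly as in \cite{T1}, and the splitting is reassembled at time $p$ once the horizontal component has grown past the stable component.

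For the new items, (g) is the sharp two-sided estimate in the ``recovered'' regime $q\le i\le p$: by definition of $q$, the inequality $|\zeta-z|^{\beta}\|w_{i+1}(\zeta)\|\ge 1$ forces the horizontal contribution $|A(z)|\|w_i(\zeta)\|\approx|\zeta-z|\|w_i(\zeta)\|$ to dominate the contracted $e_p$-contribution, yielding $\|Df^i t(z)\|\approx|\zeta-z|\|w_i(\zeta)\|$. For (h), during the fold period $1\le i\le q$, the definition of $q$ together with Proposition \ref{criticaset}(b) gives $|\zeta-z|\|w_i(\zeta)\|\le|\zeta-z|^{1-\beta}e^{2\alpha p}<1$ provided $\delta$ is small (using $p=O(\log|\zeta-z|^{-1})$ from (a)), while the $e_p$-component is of size $O(\sqrt{b})$; adding these shows $\|Df^i t(z)\|<1$.

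The main obstacle is essentially bookkeeping: tracking constants in the splitting carefully enough to produce the two-sided estimates in (d) and (g) simultaneously with the one-sided estimates in (a)-(c),(e),(f),(h), and ensuring that the Taylor errors and the $e_p$-drift picked up from Corollary \ref{focor2} never overwhelm the leading horizontal term in the regime $q\le i\le p$. Conceptually the argument is in fact simpler than for Proposition \ref{recovery}: because $\zeta$ is a genuine critical point, Lemma \ref{quadratic} applies exactly at $f\zeta$ and the admissibility hypothesis of Proposition \ref{recovery} is automatic once the bound and fold periods are defined via $V_k$.
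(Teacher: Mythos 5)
Your proposal is correct and matches the paper's own treatment, which simply observes that Proposition \ref{recovery0} is proved similarly to Proposition \ref{recovery}, replacing hypotheses (G1)--(G3) and the quadratic tangency estimate by the corresponding facts (a)--(d) of Proposition \ref{criticaset}, and carrying through the same splitting of $Df\,t(z)$ into horizontal and contracting components, iterating separately and recombining at the fold period. Your supplementary derivation of items (g) and (h) from the definition of $q$ and Proposition \ref{criticaset}(b) is the right argument; the only cosmetic slip is that the factor arising from (G2) is $e^{2\alpha q}$ (or even $e^{2\alpha i}$) rather than $e^{2\alpha p}$, though the weaker bound still suffices since $q<p=O(\log|\zeta-z|^{-1})$.
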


\subsection{Critical partitions}\label{cripa}
Using the family $(V_k)_k$ of vertical strips, we construct a {\it
critical partition} of $\gamma$ as follows. By Remark \ref{verti},
$\gamma\cap f^{-1}(V_k\setminus V_{k+1})$ consists of two
components, one at the right $\zeta$ and the other at the left.
For simplicity, let us denote both by $\gamma_k$. If $f\gamma_{k}$
does not intersect the vertical boundary of $V_k$, then we take
$\gamma_{k}$ together with the adjacent $\gamma_{k+1}$. We cut
each $\gamma_{k}$ into $[e^{3\alpha k}]$-number of curves of equal
length, and denote them by $\gamma_{k,s}$ $(s=1,2,\cdots)$.

A proof of the next lemma is given in Appendix A.2.
\begin{lemma}\label{boun}
For each $\gamma_{k,s}$ we have:

\noindent{\rm (a)} $f^{\chi(k)}\gamma_{k,s}$ is a $C^2(b)$-curve
of length
 $\geq e^{-4\alpha k}$;

\noindent{\rm (b)} 
For all
$\xi,\eta\in\gamma_{k,s}$,
$$\log\frac{\Vert Df^{\chi(k)}t(\xi)\Vert}{
\Vert Df^{\chi(k)}t(\eta)\Vert}\leq
C|f^{\chi(k)}\xi-f^{\chi(k)}\eta|^{C\alpha}.$$
\end{lemma}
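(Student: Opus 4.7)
The plan is to handle part (a) using the quadratic tangency structure at $\zeta$, and then reduce part (b) to a standard distortion computation along orbits that shadow $f^i\zeta$ during the bound period.

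For part (a), the key input is Proposition \ref{criticaset}(d): $\Gamma(f\zeta)$ is tangent to $f\gamma$ at $f\zeta$ with a quadratic tangency. Combined with Lemma \ref{quadratic}, this tells us that the horizontal separation of $fz$ from $\Gamma(f\zeta)$ grows like $|z-\zeta|^2$, so for $z\in\gamma_k$ one has $|\zeta-z|\approx\sqrt{D_k(\zeta)}$, and since the rate of change of the horizontal position of $fz$ along $\gamma$ is $\approx|z-\zeta|$, a Jacobian computation yields $|\gamma_k|\approx\sqrt{D_k(\zeta)}$ and hence $|\gamma_{k,s}|\approx e^{-3\alpha k}\sqrt{D_k(\zeta)}$. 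Proposition \ref{recovery0}(g) then gives $\|Df^{\chi(k)}t(z)\|\approx|\zeta-z|\cdot\|w_{\chi(k)}(\zeta)\|$, so, modulo the distortion of part (b),
\[|f^{\chi(k)}\gamma_{k,s}|\approx e^{-3\alpha k}D_k(\zeta)\|w_{\chi(k)}(\zeta)\|.\]
Unpacking $D_k(\zeta)=e^{-3\alpha k}\min_{i,j}\|w_j(\zeta)\|^2/\|w_i(\zeta)\|^3$ and using the good critical behavior (Proposition \ref{criticaset}(a)--(c)), most crucially $\|w_{\chi(k)}\|\ge\delta\|w_i\|$ for $i<\chi(k)$ together with $\|w_j\|\ge e^{-2\alpha i}\|w_i\|$ for $j\ge i$, reduces the required bound $|f^{\chi(k)}\gamma_{k,s}|\ge e^{-4\alpha k}$ to a routine comparison of exponents (given $\alpha$ small). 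The $C^2(b)$-property follows from the standard splitting of $Dft(z)$ into a horizontal component and a component along $e_n(f\zeta)$: during the fold period $[0,q]$ the latter dominates, but by the very definition of $q$ the horizontal component overtakes at $i=q$, and for $q\le i\le\chi(k)$ dictates both slope and curvature of the image; Corollaries \ref{focor1}, \ref{focor2} together with Lemma \ref{stable} then force the slope and curvature of $f^{\chi(k)}\gamma_{k,s}$ to be $\le\sqrt b$.

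For part (b), I would telescope
\[\log\frac{\|Df^{\chi(k)}t(\xi)\|}{\|Df^{\chi(k)}t(\eta)\|}=\sum_{i=0}^{\chi(k)-1}\log\frac{\|Df\,t_i(\xi)\|}{\|Df\,t_i(\eta)\|},\]
with $t_i(z):=Df^it(z)/\|Df^it(z)\|$, and bound each term in phase-variable terms. By Proposition \ref{recovery0}(c), the orbits of $\xi$ and $\eta$ shadow that of $\zeta$ within $e^{-2\alpha i}$ on $[1,\chi(k)]$, keeping them well away from $I(\delta)$ apart from the single return at time $0$. This makes each single-step bound Lipschitz in $|f^i\xi-f^i\eta|$ outside the fold window, with only a Hölder loss during the fold period $[0,q]$ arising from the quadratic tangency at $\zeta$. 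Using the expansion estimate $\|Df^{\chi(k)-i}t_i\|\gtrsim e^{\lambda(\chi(k)-i)/C}$ (deduced from Proposition \ref{criticaset}(a),(b) together with the recovery estimates on the non-fold portion) to write $|f^i\xi-f^i\eta|\lesssim|f^{\chi(k)}\xi-f^{\chi(k)}\eta|\cdot e^{-\lambda(\chi(k)-i)/C}$, the telescoped sum collapses to a geometric series dominated by a small power of $|f^{\chi(k)}\xi-f^{\chi(k)}\eta|$, yielding the claimed $C\alpha$-Hölder bound.

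The main obstacle I expect is the distortion control during the fold period $[0,q]$: there $\|Df^it(z)\|<1$ (Proposition \ref{recovery0}(h)) and the orbit sits closest to the quadratic tangency at $\zeta$, so only a Hölder (not Lipschitz) single-step bound is available. This is salvaged by $q\le C\beta\chi(k)$ (Proposition \ref{recovery0}(b)) with $\beta=2\log C_0/\log(1/b)$ arbitrarily small in $b$, which lets the fold contribution be absorbed into the final $C\alpha$-exponent after choosing the constants in the hierarchy $\alpha,M,\delta,b$ already fixed in Section 2.2.
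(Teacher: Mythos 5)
Your plan for part (a) follows essentially the same route as the paper. Two remarks on details you leave implicit. First, the paper begins by proving the separation
\(e^{-3\alpha k}D_k(\zeta)\le D_{k+1}(\zeta)\le e^{-3\alpha}D_k(\zeta)\);
this is what makes the rough lower bound you write (in effect $D_k-D_{k+1}\ge D_k(1-e^{-3\alpha})$) legitimate, and is also what justifies the convention of merging a $\gamma_k$ with its neighbor when $f\gamma_k$ does not meet the vertical boundary of $V_k$. Second, you do not need ``modulo the distortion of part (b)'' for the length estimate: Proposition~\ref{recovery0}(g) already gives $\|Df^{\chi(k)}t(z)\|\approx |\zeta-z|\,\|w_{\chi(k)}(\zeta)\|$ pointwise, so the image length follows by integrating along $\gamma_{k,s}$ without invoking any distortion bound.

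For part (b), your argument runs differently from the paper, and there is a genuine gap. The paper does not telescope from scratch; it verifies the geometric adaptedness condition
\({\rm length}(\gamma_{k,s})\le d(\gamma_{k,s},\zeta)^{1+C\alpha}\),
which is exactly what the subdivision of $\gamma_k$ into $[e^{3\alpha k}]$ equal-length pieces is engineered to produce, and then cites a ready-made distortion lemma ([T1, Lemma~5.12]). This condition is what controls the single dangerous term in the telescope --- the return at $i=0$, where $\log\frac{\|Dft(\xi)\|}{\|Dft(\eta)\|}\approx\log\frac{|\zeta-\xi|}{|\zeta-\eta|}$ and the quadratic singularity sits --- by making $|\xi-\eta|$ negligible compared with $|\xi-\zeta|$. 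You never state or use this comparison. Instead you propose to ``salvage'' the fold period with $q\le C\beta\chi(k)$ and $\beta$ small in $b$; but the shortness of $[0,q]$ does not by itself make the accumulated distortion there small, since the per-step ratio at $i=0$ is governed by the closeness of $\xi,\eta$ to $\zeta$, not by $b$. Without the length-to-distance comparison for $\gamma_{k,s}$ (and the length lower bound from (a) feeding into it), the ``geometric series'' does not close: the $i=0$ term does not decay, and the claimed $C\alpha$-H\"older bound in $|f^{\chi(k)}\xi-f^{\chi(k)}\eta|$ does not follow.
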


\begin{center}
\begin{figure}
\input{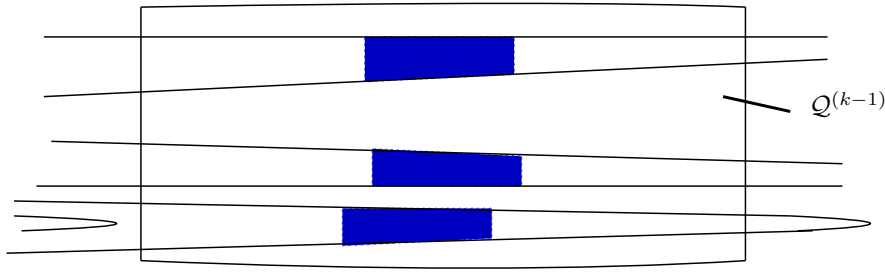}\label{criticalset}
\caption{The relation between $\mathcal C^{(k-1)}$ and $\mathcal C^{(k)}$.
The shaded regions are components of $\mathcal C^{(k)}$.}
\end{figure}
\end{center}

\subsection{Geometry of critical regions.}\label{geometry}
We identify a geometric structure of critical regions,
close the one depicted in (\cite{WY01} Sect.1.2). 
Let $\mathcal C^{(0)}=
\{(x,y)\in R_0 \colon |x|\leq\delta\}$.
 \begin{prop}\label{geo} There exists a nested sequence $\mathcal C^{(0)}\supset\mathcal
C^{(1)} \supset\mathcal C^{(2)}\supset\cdots$ such that the
following holds for $k=0,1,2,\cdots$:
\smallskip

\noindent {\rm (S1)} $\mathcal C^{(k)}$ has a finite number of
components called $\mathcal Q^{(k)}$ each one of which is
diffeomorphic to a rectangle. The boundary of $\mathcal Q^{(k)}$
is made up of two $C^2(b)$-curves of $\partial R_k$ connected by
two vertical lines: the horizontal boundaries are
$\approx\min(2\delta,\kappa_0^{k})$ in length, and the Hausdorff
distance between them is $\mathcal O(b^{\frac{k}{2}})$;
\smallskip

\noindent{\rm (S2)} On each horizontal boundary $\gamma$ of each
component $\mathcal Q^{(k)}$ of $\mathcal C^{(k)}$, there is a
critical point located within $\mathcal O(b^{\frac{k}{4}})$ of the
midpoint of $\gamma$.
\smallskip

\noindent{\rm (S3)} $\mathcal C^{(k)}$ is related to $\mathcal
C^{(k-1)}$ as follows: $\mathcal Q^{(k-1)}\cap R_{k}$ has at most
finitely many components, each of which lies between two $C^2(b)$
subsegments of $\partial R_{k}$ that stretch across $\mathcal
Q^{(k-1)}$ as shown in FIGURE 7. Each component of
$\mathcal Q^{(k-1)}\cap R_{k}$ contains exactly one component of
$\mathcal C^{(k)}$.
\smallskip

\noindent{\rm (S4)} Let $\Xi^{(k)}$ denote the set of critical
points on the horizontal boundaries of $\bigcup_{j=0}^k\mathcal
C^{(j)}.$ Then $\mathcal C=\bigcup_{k\geq0}\Xi^{(k)}.$
\end{prop}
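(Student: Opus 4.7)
The plan is to construct the nested sequence $\mathcal{C}^{(k)}$ by induction on $k$, using at each step the way $\partial R_k = f(\partial R_{k-1})$ reenters the components of $\mathcal{C}^{(k-1)}$. The base case $k=0$ is essentially definitional: $\mathcal{C}^{(0)}=R_0\cap\{|x|\le\delta\}$ has finitely many rectangular components whose horizontal boundaries are $C^2(b)$-subsegments of $\partial R_0\subset W^u$ of length $\approx 2\delta$, and Lemma \ref{sox} places a critical point on each one. The Hausdorff-distance statement in (S1) is vacuous at $k=0$, and (S3) is vacuous.

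For the inductive step, fix a component $\mathcal{Q}^{(k-1)}$ of $\mathcal{C}^{(k-1)}$. Away from the critical points of $\Xi^{(k-1)}$ sitting on $\partial R_{k-1}$, the uniform hyperbolicity of Lemma \ref{outside} preserves $C^2(b)$-regularity under one more iterate of $f$; near each critical point $\zeta\in\Xi^{(k-1)}$, the quadratic tangency of $W^u$ with a long stable leaf (Proposition \ref{criticaset}(d)) creates a quadratic fold whose two branches emerge as new $C^2(b)$-subsegments of $\partial R_k$. Collecting all $C^2(b)$-subsegments of $\partial R_k$ that stretch across $\mathcal{Q}^{(k-1)}$ and pairing consecutive ones yields a finite family of thin strips; cutting each strip at $x$-distance $\approx\min(2\delta,\kappa_0^k)/2$ from its center defines the components of $\mathcal{C}^{(k)}$ inside $\mathcal{Q}^{(k-1)}$, which gives (S3) and the rectangular shape of (S1). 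The Hausdorff-distance bound $\mathcal{O}(b^{k/2})$ comes from the fact that the two $C^2(b)$-subsegments bounding each strip are shadowed by a common long stable leaf for the final $k$ iterates, so Lemmas \ref{stable} and \ref{leaf} give $C^2$-closeness of order $(Cb/\kappa^4)^k = \mathcal{O}(b^{k/2})$ for $\kappa\ge\delta^{15}$ and $b$ small.

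For (S2), on each level-$k$ horizontal boundary $\gamma$ -- a $C^2(b)$-curve in $W^u$ stretching across the $\kappa_0^k$-strip -- I would invoke a scale-adapted version of the argument of Lemma \ref{sox}: the long stable leaves of order $\ge k$ restricted to a neighborhood of $\gamma$ are Hausdorff-close by $(Cb)^{k/4}$, and $\gamma$ is quadratically tangent to the limit leaf (Remark \ref{verti}) at a unique interior point $\zeta$, which by the construction in Sect.\ref{setc} belongs to $\mathcal{C}$. The quadratic geometry then places $\zeta$ within $\mathcal{O}(b^{k/4})$ of the midpoint of $\gamma$. For (S4), the inclusion $\bigcup_k\Xi^{(k)}\subset\mathcal{C}$ is immediate because each point of $\Xi^{(k)}$ arises as the quadratic tangency of $\gamma$ with long stable leaves of arbitrarily high order, hence as a limit of critical approximations produced by Lemma \ref{sox}. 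Conversely, any $\zeta\in\mathcal{C}$ lies on $W^u$ and the nesting $\mathcal{Q}^{(k)}\subset\mathcal{Q}^{(k-1)}$ together with (S3) localizes $\zeta$ on the horizontal boundary of some level-$k$ component for all sufficiently large $k$, so $\zeta\in\Xi^{(k)}$.

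The main obstacle will be the quantitative control at each inductive step: verifying that only finitely many $C^2(b)$-subsegments of $\partial R_k$ stretch across $\mathcal{Q}^{(k-1)}$ and that consecutive such subsegments genuinely lie within Hausdorff distance $\mathcal{O}(b^{k/2})$. Both require combining the uniform hyperbolicity of Lemma \ref{outside} with the bound-period analysis of Proposition \ref{recovery0} to track how the orbit of $\partial R_0$ decomposes into bound and free portions up to time $k$. The standing hypothesis $a\in\Delta\cap(a^{**},a^*]$ enters crucially here, since only for such parameters do all critical approximations have the good critical behavior required for the binding procedure of Sect.\ref{algo} to apply uniformly along $W^u$.
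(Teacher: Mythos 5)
Your plan gets the broad shape right --- an induction driven by how $\partial R_k$ re-enters the components of $\mathcal{C}^{(k-1)}$, with quadratic folds near critical points creating new $C^2(b)$-boundaries --- but it misses the technical backbone of the paper's proof, and the gaps are not ``details to chase down'' but the actual content.

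First, the induction you describe proceeds as if every $C^2(b)$-subsegment of $\partial R_k$ that threads $\mathcal{Q}^{(k-1)}$ is produced by one application of $f$ to a fold at a critical point of $\Xi^{(k-1)}$. This ignores \emph{bound segments}: connected arcs of $\partial R_k$ that are currently shadowing the forward orbit of a critical point and may wander through $I(\delta)$ during their bound period. These arcs are not $C^2(b)$ and can in principle enter $\mathcal{Q}^{(k-1)}$. The paper's Step 1 of the induction (Lemma \ref{bo} and Corollary \ref{oo}) is devoted exactly to ruling this out: one shows each bound segment $B$ in $\partial R_j$ is $e^{-2\alpha l}$-close to $f^l\zeta$ for some $\zeta\in\Xi^{(j-1)}$ with $f^l\zeta$ free, and deduces $B\cap\mathcal{C}^{(i)}=\emptyset$ for $\alpha j\le i<j$. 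Without this, the strips bounded by ``consecutive $C^2(b)$-subsegments'' would not be clean rectangles. To even run that argument, the paper needs an auxiliary bootstrap: the induction goes in blocks from step $2^mN$ to $2^{m+1}N$, carrying along the hypothesis that every $\zeta\in\mathcal{C}$ is controlled up to time $2^{m+1}N$ by $\Xi^{([2^m\theta N])}$ (a definition the paper makes precisely so that the binding points needed in Lemma \ref{bo} are already available). The block structure and the control assertion are essential, and your proposal does not address either.

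Second, your explanation of the $\mathcal{O}(b^{k/2})$ Hausdorff bound --- that the two bordering $C^2(b)$-subsegments are "shadowed by a common long stable leaf" --- is not how the paper argues and is not obviously available: the two arcs are unstable (pieces of $W^u$), not stable leaves, and do not a priori shadow a single stable leaf for $k$ iterates. The paper instead uses an area argument in Step 3: if $L(x)$ is the gap, then $|\gamma_1'-\gamma_2'|\le L^{1/2}$ (else the arcs cross), giving ${\rm area}(\mathcal{Q}^{(j)})\ge L^{5/3}(x)/2$; combined with ${\rm area}(\mathcal{Q}^{(j)})\le{\rm area}(R_j)\le(Cb)^j$ this forces $L(x)\le b^{j/2}$.

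Third, for (S2) you invoke ``a scale-adapted version of Lemma \ref{sox}.'' That lemma is stated and proved only for $f_{a^*}$, where every critical point's forward orbit escapes $R_0$ and long stable leaves are globally supplied by (M1). For $f_a$ with $a\in\Delta$ that machinery is not available, and the existence of a genuine critical point (an element of $\mathcal{C}$) on a given free segment stretching across $\mathcal{Q}^{(k-1)}$ must be proved as a Cauchy-limit of nice critical approximations whose orders grow along a carefully chosen hyperbolic-time schedule; this is the content of Lemma \ref{crea}, which you would need to rediscover.

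In short: the proposal has the right silhouette but is missing (i) the bound-segment exclusion argument, (ii) the doubling/bootstrap structure of the induction and the ``controlled up to time $n$'' notion for critical orbits, (iii) the correct mechanism for the Hausdorff bound, and (iv) a valid construction of critical points on new horizontal boundaries for parameters in $\Delta$. These are the four load-bearing parts of the paper's proof of Proposition \ref{geo}.
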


The rest of this section is entirely devoted to an inductive proof
of (S1), (S2), (S3). (S4) is a direct consequence of this. In
Section \ref{log}, we first describe a structure of the induction,
to make clear how to proceed from one to the next step. In Section
\ref{indu0} we treat an initial step of the induction. In Section
\ref{proce} we treat a generic step.

\subsection{Structure of induction}\label{log}
(S1), (S2) for $k=0$ are trivial. (S3) for $k=0$ is an empty
condition. Let us say that {\it $\partial R_0$ is controlled up to
time $0$ by $\Xi^{(0)}$.} Using the critical partition in
Sect.\ref{cripa}, we assign to all points in $\partial R_0\cap
I(\delta)$ their binding points in $\Xi^{(0)}$ and bound periods.
This makes sense to refer to points in $\partial R_1$ as being
free or bound.

\begin{definition}\label{indu1}
{\rm Let $j\geq1$ and assume:
\smallskip

\noindent{$(I)_{j-1}$:} \ (S1-3) hold for $0\leq k\leq j-1$, and
$\partial R_0$ is controlled up to time $j-1$ by $\Xi^{(j-1)}$.
\smallskip

\noindent Under this assumption, we say:
\smallskip

\noindent $\bullet$ a segment in $\partial R_{j}$ is a {\it free
segment} if all points on it are free;

\noindent $\bullet$ a maximal free segment in $\partial R_{j}$ is
a free segment in $\partial R_{j}$ which is not contained in any
other free segment in $\partial R_{j}$;

\noindent $\bullet$ a bound segment in $\partial R_{j}$ is any
connected component of $\partial R_{j}\setminus\{\text{maximal
free segment in $\partial R_{j}$}\}.$}
\end{definition}

In the sequel we need two curvature-related estimates.
\begin{lemma}\label{feer}
Any free segment in $\partial R_{j}$ is a $C^2(b)$-curve.
\end{lemma}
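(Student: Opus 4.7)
I will prove Lemma \ref{feer} by induction on $j$, operating inside the inductive hypothesis $(I)_{j-1}$ that supplies a coherent bound/free decomposition of $\partial R_i$ for all $i \leq j-1$. The base case $j=0$ is immediate: $\partial R_0$ lies in $W^u(Q)$ (resp.\ $W^u(P)$), and standard invariant manifold theory for $b$ close to $0$ gives that this local unstable manifold is a graph over the $x$-axis whose slope and curvature are $O(b)$, hence $C^2(b)$.

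For the inductive step, fix a maximal free segment $\sigma \subset \partial R_j$ and consider its preimage $f^{-1}\sigma \subset \partial R_{j-1}$. I split into two cases according to the bound/free status of $f^{-1}\sigma$ at time $j-1$. \emph{Case B (preimage bound):} then $j$ is the first time $\sigma$ becomes free after the end of a bound period, so each point of $\sigma$ lies in some image $f^{\chi(k)}(\gamma_{k,s})$ of a critical partition element from Sect.~\ref{cripa}, and $\sigma$ is a concatenation of finitely many such pieces. Each piece is $C^2(b)$ directly by Lemma \ref{boun}(a). Because all these abutting pieces lie on the single smooth curve $\partial R_j$, tangent directions (and curvatures) automatically match across junctions, so the piecewise $C^2(b)$ bounds upgrade to a $C^2(b)$ bound on $\sigma$.

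\emph{Case A (preimage free):} the inductive hypothesis applied at time $j-1$ says that $f^{-1}\sigma$ is $C^2(b)$. Moreover $f^{-1}\sigma \cap I(\delta) = \emptyset$: otherwise its points would be at a free return, and by the convention recorded at the end of Sect.~\ref{algo} they would be bound at time $j$, contradicting the freeness of $\sigma$. With $f^{-1}\sigma \subset R_0 \setminus I(\delta)$, the slope bound ${\rm slope}(t(\xi)) \leq \sqrt{b}$ for $\xi \in \sigma$ follows from Lemma \ref{outside}(a) with $n=1$, and the curvature bound follows from the evolution equation for the second derivative of a graph under $f$: the form (\ref{form0}) combined with $|x|\geq\delta$ gives the horizontal expansion $|-2ax+O(b)| \geq 2a\delta - O(b)$, against which the small terms coming from (M3) and the already-$C^2(b)$ preimage produce a curvature still bounded by $\sqrt{b}$.

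The only real obstacle is organizational: one must check that these two cases exhaust the possibilities for a maximal free segment, i.e.\ that within a single maximal free $\sigma$ one cannot have some points whose predecessor at time $j-1$ is free and others whose predecessor is bound. This follows from the coherence of the critical partition guaranteed by $(I)_{j-1}$, which ensures that the bound/free status on $\partial R_{j-1}$ is constant on each maximal free or maximal bound segment and that the decomposition refines cleanly under $f^{-1}$. Once this partition-level consistency is in place, the analytic content reduces to Lemma \ref{boun}(a) in Case B and to a standard one-step $C^2(b)$-preservation estimate in Case A.
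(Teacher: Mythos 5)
Your argument is a genuinely different route from the paper's. The paper proves the lemma directly (not by induction on $j$): it observes that at a free time the backward derivatives satisfy $\Vert Df^{-n}(z)t(z)\Vert\leq C/\delta$ for all $n>0$, a consequence of the $1/10$-regularity maintained through the binding procedure, and then feeds this global bound into the $n$-step curvature evolution formula of [\cite{T1}, Lemma~2.4], whose geometric series in $(Cb)^i$ collapses immediately to $\leq\sqrt{b}$ when initialized on $W^u_{\rm loc}$. Your version instead inducts on $j$, splits according to the bound/free status of the preimage, invokes Lemma~\ref{boun}(a) for the ``bound period just ended'' case, and propagates $C^2(b)$ one step at a time outside $I(\delta)$ otherwise. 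The analytic content of each of your cases is sound --- the one-step curvature recursion $\kappa_{\mathrm{new}}\leq Cb(\kappa_{\mathrm{old}}+1)/(2a\delta)^3$ does indeed have a stable fixed point $\leq\sqrt{b}$ once $b\ll\delta^6$, and Lemma~\ref{boun}(a) handles the post-bound-period reset --- so your inductive organization can be made to work, at the cost of more bookkeeping than the paper's one-shot global estimate.

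There is, however, a genuine gap in the way you articulate the case exhaustion. You assert that ``within a single maximal free $\sigma$ one cannot have some points whose predecessor at time $j-1$ is free and others whose predecessor is bound,'' and attribute this to a ``coherence'' supplied by $(I)_{j-1}$. This is false, and no amount of coherence removes it: two nearby points $z_1,z_2\in\partial R_0$ can both be free at time $j$ while $z_1$'s last bound period expired strictly before $j$ (so $z_1$ is also free at $j-1$) and $z_2$'s bound period expires exactly at $j$ (so $z_2$ is bound at $j-1$). In that case $f^jz_1$ and $f^jz_2$ lie in the same maximal free segment of $\partial R_j$, but $f^{-1}$ sends one to a free point and the other to a bound point of $\partial R_{j-1}$. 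The claim as written is therefore wrong, not merely unproved. The repair is small: apply Case~A and Case~B to the respective subarcs of $\sigma$ --- each \emph{point} of $\sigma$ unambiguously falls into one case or the other --- and then appeal to continuity of the slope and curvature along the $C^2$-curve $\partial R_j$ to conclude the bound holds on all of $\sigma$. But that is a different argument from the one you gave, and as written your third paragraph would not survive scrutiny. You may also wish to be explicit that the horizontal expansion in Case~A is only $\geq 2a\delta - O(b)$ per step (which can be less than $1$ when $\delta$ is small), so the one-step curvature update is not contractive in the naive sense --- it works only because the inhomogeneous term is $O(b/\delta^3)\ll\sqrt b$, which is precisely what the paper's global-sum argument makes transparent.
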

\begin{proof}
Let $\gamma$ be a free segment in $\partial R_j$. Then $1\geq
C\delta\Vert Df^{-n}(z)t(z)\Vert$ holds for all $z\in\gamma$ and
$n>0$. Hence, the curvature of $\gamma$ is $\leq\sqrt{b}$, by the
 curvature
estimate in
[\cite{T1} Lemma 2.4] and the boundedness of the curvature of
$W^u_{\rm loc}$. The inequality for $n=-1$ implies that the slopes
of the tangent directions of $\gamma$ are $\leq\sqrt{b}$.
\end{proof}

\begin{lemma}\label{curv}
For any free segment $\gamma$ and $n\geq0$, the curvature of
$f^{-n}\gamma$ is everywhere $\leq 5^{3n}$.
\end{lemma}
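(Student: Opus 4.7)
The plan is to proceed by induction on $n$. The base case $n=0$ is immediate from Lemma \ref{feer}: any free segment $\gamma$ is a $C^2(b)$-curve, so its curvature is at most $\sqrt{b} \leq 1 = 5^0$.

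For the inductive step, assume the curvature of $\sigma := f^{-(n-1)}\gamma$ is bounded pointwise by $5^{3(n-1)}$. Write $\tilde\sigma := f^{-1}\sigma = f^{-n}\gamma$, and parametrize $\sigma$ by arc length so that $|\sigma'|\equiv 1$ and $|\sigma''|=\kappa_\sigma$. The standard curvature transformation formula under a diffeomorphism gives
\[
\kappa_{\tilde\sigma}
\;\leq\; \frac{\bigl|\,Df^{-1}\sigma' \,\times\, D^2 f^{-1}(\sigma',\sigma')\,\bigr| \;+\; |\det Df^{-1}|\,\kappa_\sigma}{|Df^{-1}\sigma'|^{3}}.
\]

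The quantitative inputs I will use are:
\textbf{(i)} uniform upper bounds on $\|Df^{-1}\|$, $\|D^2 f^{-1}\|$, and $|\det Df^{-1}|$ on the relevant compact region, which follow from (M3) together with $|\det Df|\approx b$ on $D_1\cup R_0\cup fR_0$;
\textbf{(ii)} a uniform lower bound on $|Df^{-1}\sigma'|$. Item (ii) is the delicate one. Since $\sigma\subset W^u$, the vector $\sigma'$ lies along the unstable direction at $\sigma(s)$, that is, the direction of the large singular value of $Df$; hence $Df^{-1}\sigma'$ lies along the direction of the \emph{small} singular value of $Df^{-1}$, yielding $|Df^{-1}\sigma'|\geq \|Df\|^{-1}\geq C_0^{-1}$ by (M3). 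Substituting (i) and (ii) into the formula produces an inequality of the form $\kappa_{\tilde\sigma}\leq A + B\cdot \kappa_\sigma$ with constants $A,B$ depending only on $(f_a)$ restricted to $[-2,2]^2$. Choosing the base $5^3 = 125$ large enough (relative to $A$ and $B$) so that $A + B\cdot 5^{3(n-1)}\leq 5^{3n}$ for all $n\geq 1$ closes the induction.

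The main obstacle is the lower bound $|Df^{-1}\sigma'|\geq C_0^{-1}$ in step (ii). If $\sigma'$ were aligned with the strongly expanded direction of $Df^{-1}$ (equivalently, the strongly contracted direction of $Df$, i.e.\ the stable direction transverse to $W^u$), then the denominator of the curvature formula could collapse like $O(b)$ and the inductive bound would fail. It is precisely the fact that $\sigma = f^{-(n-1)}\gamma\subset W^u$\,---\,so that $\sigma'$ is tangent to the unstable manifold throughout the backward evolution\,---\,that prevents this collapse and makes the simple geometric estimate $|Df^{-1}\sigma'|\geq \|Df\|^{-1}$ available at every step.
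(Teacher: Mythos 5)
Your approach does not work, and the gap is in item (i): the quantities $\|Df^{-1}\|$, $\|D^{2}f^{-1}\|$ and $|\det Df^{-1}|$ are \emph{not} uniformly bounded on the relevant region. Condition (M3) gives $\|\partial^{i}f\|\leq C_{0}$ and $|\det Df|\leq C_{0}b$, so for the $2\times 2$ Jacobian one has $|\det Df^{-1}|=1/|\det Df|\gtrsim 1/b$, $\|Df^{-1}\|=\|Df\|/|\det Df|\gtrsim 1/b$, and differentiating the inversion formula produces $\|D^{2}f^{-1}\|\gtrsim b^{-3/2}$. These blow up as $b\to 0$, i.e.\ they are not ``generic constants depending only on $(f_{a})$ restricted to $[-2,2]^{2}$.'' Plugging them into your recursion gives $\kappa_{\tilde\sigma}\leq A+B\kappa_{\sigma}$ with $A\gtrsim b^{-3/2}C_{0}^{2}$ and $B\gtrsim b^{-1}C_{0}^{3}$, both $\gg 125$. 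The base of the target inequality is fixed at $5^{3}=125$, you have no freedom to enlarge it, and already the step $n=1$ yields $\kappa_{-1}\lesssim b^{-3/2}$, not $125$. The induction does not close. Your ``delicate'' point (ii) is actually a triviality that holds for \emph{every} unit vector: $\|Df^{-1}v\|\geq 1/\|Df\|\geq C_{0}^{-1}$ is just the minimum singular value of $Df^{-1}$; it does not use $\sigma\subset W^{u}$ at all and it does not save the argument, because the problem is the unbounded numerator (the $\|D^{2}f^{-1}\|$ and $|\det Df^{-1}|$ terms), not the denominator.

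The underlying issue is an asymmetry you have not used: curvature propagation for these strongly dissipative maps is benign \emph{forward} (where $|\det Df|\approx b$ is tiny and $\|D^{2}f\|\leq C_{0}$ is bounded) and pathological \emph{backward}. The paper's proof does not iterate the curvature one backward step at a time. It observes that if $f^{-n}z$ is free then the curvature is $\leq\sqrt b$ directly from Lemma \ref{feer}; otherwise it goes back to the most recent free return time $m<-n$ (where the curvature is again $\leq\sqrt b$) and applies the \emph{forward} curvature formula [\cite{T1} Lemma 2.4] over the $-n-m$ steps from time $m$ to time $-n$. That formula carries \emph{positive} powers of $Cb$, and the only thing to control is the derivative ratio $\|Df^{-n-i}t(z)\|/\|Df^{-n}t(z)\|$, for which $\|Df^{-n-i}t(z)\|\leq 10/\delta$ (the orbit is free at time $0$ and bound/free decomposition controls the history) and $\|Df^{-n}t(z)\|\geq 5^{-n}$ (a trivial one-step bound) give $\leq 10\cdot 5^{n}\delta^{-1}$. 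This is where $5^{3n}$ comes from. To repair your argument you would have to replace the crude one-step backward recursion by exactly this kind of forward estimate anchored at the preceding free return, so that only bounded quantities ($|\det Df|$, $\|D^{2}f\|$) appear.
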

\begin{proof}
 For $z\in\gamma$, let $\kappa_{-n}(z)$ denote the curvature of $f^{-n}\gamma$
at $f^{-n}z$. If $f^{-n}z$ is free, then
$\kappa_{-n}(z)\leq\sqrt{b}$, by Lemma \ref{feer}. Otherwise, let
$m<-n$ denote the largest integer such that $f^{m}z$ is a free
return. [\cite{T1} Lemma 2.4] and $\kappa_{m}(z)\leq \sqrt{b}$
give
\begin{align*}\kappa_{-n}(z)&\leq\sqrt{b}(Cb)^{-n-m}
\frac{\Vert Df^{m}t(z)\Vert^3} {\Vert Df^{-n}t(z)\Vert^3}
+\sum_{i=1}^{-n-m}(Cb)^i\frac{\Vert Df^{-n-i}t(z)\Vert^3}{\Vert
Df^{-n}t(z)\Vert^3}.\end{align*} Since $z$ is free, $\Vert
Df^{-n-i}t(z)\Vert\leq 10/\delta$, and thus for $1\leq i\leq-n-m$,
$$\frac{\Vert Df^{-n-i}t(z)\Vert}{\Vert Df^{-n}t(z)\Vert}
\leq 10\cdot 5^{n}\delta^{-1}.$$ Replacing all these in the above
inequality, we obtain $\kappa_{-n}(z)\leq 5^{3n}$.
\end{proof}

\begin{definition}\label{cos}
{\rm Suppose that (S1-3) hold for every $0\leq k\leq j$. We say
{\it $\partial R_0$ is controlled up to time $j$ by $\Xi^{(j)}$},
if for any maximal free segment $\gamma$ in $\partial R_{j}$ there
exist a horizontal curve $\tilde\gamma$ which contains $\gamma$
and a critical point $\zeta\in\Xi^{(j)}$ on $\tilde\gamma$.}
\end{definition}

At step $j-1$ of the induction, we show the implication
$(I)_{j-1}\Longrightarrow (I)_j.$ Then, for all points in
$\partial R_j\cap I(\delta)$ which are free, we assign their
binding points as follows. For a maximal free segment $\gamma$ in
$\partial R_j$, take $(\tilde\gamma,\zeta)$ as in Definition
\ref{cos}. We use $\zeta$ as a common binding point for points in
$\gamma\cap I(\delta)$. Their bound periods are given by
considering the critical partition of $\tilde\gamma$. This makes
sense to refer to points in $\partial R_{j+1}$ as being free or
bound.

\subsection{From step $0$ to step $N$}\label{indu0}
Let $1\leq j\leq N$ and suppose $(I)_{j-1}$. The bound parts of
$\partial R_j$ do not come back to $\mathcal C^{(0)}$, and
$\partial R_j\cap I(\delta)$ consists of $C^2(b)$ curves, each of
which admits a critical point. Define $\mathcal
C^{(j)}=R_j\cap\mathcal C^{(0)}$. $(I)_j$ obviously holds.

\subsection{From step $2^mN$ to $2^{m+1}N$}
\label{proce} The same argument cannot be continued indefinitely,
because bound segments return to $I(\delta)$. To deal with these
returns, we need the help of critical points.

\begin{lemma}\label{mega}
For each $\zeta\in \mathcal C$ there exist positive integers
$n_1<n_1+p_1\leq n_2<n_2+p_2\leq n_3<\cdots$ such that, for each
$n_l$, $f^{n_l}\zeta\in I(\delta)$, and there exists a critical
approximation $\hat z_{l}$ relative to which $w_{n_l}(\zeta)$ is
in admissible position, with $|f^{n_l}\zeta-\hat z_{l}|\geq e^{-
\alpha n_l}.$
\end{lemma}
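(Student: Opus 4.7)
The plan is to apply the binding procedure of Section \ref{algo} directly to the orbit of $\zeta$, and then extract the distance estimate by transferring the inductive hypothesis $(G)_m$ from the approximating critical approximations via a limit argument.

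First, by the definition of $\mathcal C$ in Section \ref{setc}, there is a sequence of nice critical approximations $\zeta_m$ of order $m$ with $\zeta_m \to \zeta$. Proposition \ref{criticaset}(a--c) supplies for $(w_i(\zeta))$ exactly the exponential growth, bounded recurrence and slow-recurrence hypotheses used in Section \ref{algo}, so the binding procedure applies to $w_1(\zeta)$ at each time $f^n\zeta \in I(\delta)$. Since $\zeta_m$ has arbitrarily many free returns in $[1,20m-1]$ as $m\to\infty$ and these stabilize on initial segments, the returns of $\zeta$ are infinite in number. Let $m_1 < m_1 + p_1 \leq m_2 < \cdots$ denote the resulting free return times and bound periods, and let $z_k$ be the binding point attached to $w_{m_k}(\zeta)$.

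Second, I would transfer $(G)_{20m-1}$ from $\zeta_m$ to $\zeta$. By the construction of $\Delta$ in Section \ref{parameterset}, for every $m$,
\[
\sum_{i\,:\,n_{i,m} \leq 20m-1} \log|f^{n_{i,m}}\zeta_m - z_{i,m}| \geq -\alpha(20m-1).
\]
The bound $|\zeta - \zeta_m| \leq (Cb)^{m/4}$ from Lemma \ref{sox} together with the uniform Lipschitz constant $C_0$ from (M3) gives $|f^j\zeta - f^j\zeta_m| \leq (Cb)^{m/4}C_0^j$, which is negligible for $j \leq 20m-1$ once $b$ is small enough. Hence on this initial segment the free return times and binding points of $\zeta_m$ coincide with those of $\zeta$, and in the limit
\[
\sum_{k\,:\,m_k \leq 20m-1} \log|f^{m_k}\zeta - z_k| \geq -20\alpha m \qquad \text{for every } m.
\]

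Third, since each summand is non-positive, the $k$-th term alone is bounded in absolute value by the whole sum; taking $m = \lceil m_k/20 \rceil$ yields $-\log|f^{m_k}\zeta - z_k| \leq 20\alpha m \leq \alpha m_k + 20\alpha$. After absorbing the additive constant into $\alpha$ (or discarding finitely many initial returns), this becomes $|f^{m_k}\zeta - z_k| \geq e^{-\alpha m_k}$. The same distance bound rules out the critical position for $w_{m_k}(\zeta)$ relative to $z_k$: by the quadratic tangency of Remark \ref{verti} and Lemma \ref{quadratic}, critical position would force $|f^{m_k}\zeta - z_k|$ to be on the order of $D_{20\mathrm{ord}(z_k)-1}^{1/2}$, which is exponentially smaller. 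Setting $n_l = m_l$ and $\hat z_l = z_l$ completes the proof.

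The main obstacle will be making the limit argument of step two precise: although the orbits of $\zeta$ and $\zeta_m$ agree on $[0,20m-1]$ to exponentially small error, the binding point selected by the delicate algorithm of Section \ref{algo} at each free return depends in a subtle way on the orbit, so one must verify that the $z_{k,m}$ produced for $\zeta_m$ is still admissible for $\zeta$ at time $m_k$ and that the choice stabilises as $m\to\infty$. This reduces to checking that the competing candidates in Lemma \ref{zyunzo} persist through the perturbation, which is where the gap $(Cb)^{m/4} \ll C_0^{-20m}$ (valid under the standing smallness of $b$) is used.
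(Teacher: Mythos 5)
Your approach shares the core idea with the paper's proof---both exploit the convergence $\zeta_{m_\ell}\to\zeta$ and carry the estimates satisfied by each $\zeta_{m_\ell}$ (guaranteed for parameters in $\Delta$) over to the limit---but the two proofs differ in how the binding points $\hat z_l$ are produced, and the paper's choice sidesteps the obstacle you flag at the end.

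The paper defines each $\hat z_l$ as a \emph{subsequential limit}: since $I(\delta)$ is open, the first return time $n_1$ of $\zeta$ coincides with that of $\zeta_{m_\ell}$ for all large $\ell$; the binding points $z_{m_\ell}$ chosen for $f^{n_1}\zeta_{m_\ell}$ and their bound periods lie in a bounded set, so a diagonal subsequence converges, and $\hat z_1$, $p_1$ are taken to be the limits; the inductive step is identical. The distance bound $|f^{n_1}\zeta-\hat z_1|\ge e^{-\alpha n_1}$ then falls out automatically because each $|f^{n_1}\zeta_{m_\ell}-z_{m_\ell}|\ge e^{-\alpha n_1}$ (each summand in (\ref{br}) is nonpositive, so every single term satisfies the bound) and the inequality passes to the limit. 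You instead run the binding procedure of Section~\ref{algo} directly on $\zeta$, obtaining some $z_k$, and then attempt to import the inequality from $\zeta_m$ by claiming that the binding points of $\zeta$ and $\zeta_m$ ``coincide'' on the initial segment. This is the genuine gap: the binding points produced for $f^{n_l}\zeta_m$ and for $f^{n_l}\zeta$ are a priori \emph{different} critical approximations, living on curves that depend on the respective orbits, so before your sums can be compared you must control $|z_{k,m}-z_k|$ as well as $|f^{n_l}\zeta-f^{n_l}\zeta_m|$. Lemma~\ref{update} supplies the estimate, but this is exactly the work you defer in your closing paragraph, and it is not optional on your route (the term in your sum involves $z_k$, not $z_{k,m}$). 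The paper's route of defining $\hat z_l$ as a limit eliminates this step outright, since the lemma only asserts \emph{existence} of some admissible critical approximation with the stated distance, not that it must be the output of the binding algorithm applied to $\zeta$. Two smaller points: your choice $m=\lceil m_k/20\rceil$ does not guarantee $m_k\le 20m-1$ when $20\mid m_k$; use $m=\lceil(m_k+1)/20\rceil$. And to your credit, your third step spells out how the distance bound is extracted from $(G)_m$, a derivation the paper leaves entirely implicit.
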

The integers $n_1,n_2,\cdots$ are called {\it free return times}
of $\zeta$.
\begin{proof}
We argue by induction. First, let $n_1=\min \{n>0\colon
f^n\zeta\in I(\delta)\}$. As $I(\delta)$ is open, $n_1=\min
\{n>0\colon f^n\zeta_{m_\ell}\in I(\delta) \}$ holds for all
sufficiently large $\ell$. Let $z_{m_\ell}$ denote the binding
point for $f^{n_1}\zeta_{m_\ell}$, with a bound period
$p_{m_\ell}$. Passing to subsequences, we may assume that both
converge as $\ell\to\infty$. Define $\hat z_{1}$, $p_{1}$ to be
the corresponding limits.

Given $(n_k, \hat z_k, p_k)$, define $n_{k+1}=\min \{n\geq
n_k+p_k\colon f^n\zeta\in I(\delta) \}.$ Passing to subsequences
again, we may assume that $f^{n_{k+1}}\zeta_{m_{\ell'}}$ is a free
return to $I(\delta)$, with a binding point $z_{m_{\ell'}}$ and a
bound period $p_{m_{\ell'}}$, both converging as $\ell\to\infty$.
Define $\hat z_{k+1}$, $p_{k+1}$ to be the corresponding limits.
\end{proof}

\begin{definition}
{\rm Let $\zeta\in\mathcal C$, with $n_1,n_2,\cdots$ and $\hat
z_{1},\hat z_2,\cdots$ as in Lemma \ref{mega}. We say $\zeta$ is
{\it controlled up to time $n$ by $\Xi^{(k)}$} if, for each
$n_l\leq n$ there exists $z_l\in\Xi^{(k)}$ such that $|z_{l}-\hat
z_l| =\mathcal O(b^{\frac{\theta \xi}{5}})$, where $\xi$ is the
order of $\hat z_l$. Such $z_l$ is called a {\it binding point}
for $\zeta$.}
\end{definition}


Clearly, every $\zeta\in\mathcal C$ is controlled up to time $2N$
by $\Xi^{([\theta N])}$. To proceed from step $2^mN$ to step
$2^{m+1}N$, it suffices to show
\begin{lemma}\label{contor}
Let $m\geq0$. Suppose that $(I)_{2^mN}$ holds, and that every
$\zeta\in\mathcal C$ is controlled up to time $2^{m+1}N$ by
$\Xi^{([2^{m}\theta N])}$. Then:
\smallskip

\noindent{\rm (a)} {\rm (I)$_{k}$} holds for $2^m N<k\leq
2^{m+1}N$;

\noindent{\rm (b)} every $\zeta\in\mathcal C$ is controlled up to
time $2^{m+2}N$ by $\Gamma^{([2^{m+1}\theta N])}$.
\end{lemma}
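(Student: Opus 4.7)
The plan is a step-by-step induction on $k$ through $(2^mN,2^{m+1}N]$ for (a), with (b) following at the end from the richness of the $\Xi^{(k)}$'s produced en route. At each step $k$ I assume $(I)_{k-1}$ together with the control of every $\zeta\in\mathcal C$ up to time $2^{m+1}N$ by $\Xi^{([2^m\theta N])}$, and use the critical partition of Sect.\ref{cripa} to assign binding points from $\Xi^{(k-1)}$ to the free returns of $\partial R_0$ up to time $k-1$. This produces a well-defined bound/free decomposition of $\partial R_{k-1}$, hence of $\partial R_k$ after one further application of $f$; Lemma \ref{feer} guarantees that the free segments of $\partial R_k$ are $C^2(b)$-curves.

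To construct $\mathcal C^{(k)}$ I intersect each component $\mathcal Q^{(k-1)}$ of $\mathcal C^{(k-1)}$ with $R_k$: the $C^2(b)$-subsegments of $\partial R_k$ that stretch across $\mathcal Q^{(k-1)}$ from one vertical side to the other cut it into horizontal strips, which (after the mild regrouping described in Sect.\ref{cripa}) become the new components $\mathcal Q^{(k)}$, realizing (S3). The horizontal-length bound $\min(2\delta,\kappa_0^k)$ in (S1) comes from Proposition \ref{recovery0}(e) together with the definition of the vertical strips $V_k$ in Sect.\ref{recoverysec}, while the Hausdorff distance $\mathcal O(b^{k/2})$ between the two horizontal boundaries follows from Lemma \ref{leaf}: each boundary closely tracks a long stable leaf and is therefore contracted backward at rate $(Cb/\kappa)^k$ along the orbit. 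For (S2), Lemma \ref{sox} places a critical point on each horizontal boundary within $(Cb)^{k/4}$ of its midpoint.

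The delicate step is verifying that $\partial R_0$ is controlled up to time $k$ by $\Xi^{(k)}$: for any maximal free segment $\gamma\subset\partial R_k\cap I(\delta)$, I must exhibit a horizontal curve containing $\gamma$ together with a critical point in $\Xi^{(k)}$ on it. The natural candidate is the horizontal boundary of the unique component of $\mathcal C^{(j)}$ (with $j\leq k$ minimal) carrying $\gamma$, and I expect the main obstacle to be the compatibility between the dynamical stratification of $\partial R_k$ by bound/free status and the geometric stratification by $\{\mathcal Q^{(j)}\}_{j\leq k}$. Proposition \ref{recovery0}(g),(h) should show that the endpoints of $\gamma$ arise exactly where the backward orbit enters a fold period against an earlier critical point, and by the construction of the vertical sides of the $\mathcal Q^{(j)}$'s as preimages of the boundaries of the strips $V_k$ from Sect.\ref{recoverysec}, such a transition lies on one of them, closing the argument.

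For part (b), given $\zeta\in\mathcal C$ with new free returns $n_l\in(2^{m+1}N,2^{m+2}N]$ and binding approximations $\hat z_l$ of orders $\xi_l$ from Lemma \ref{mega}, Lemma \ref{sox} furnishes a critical point $\zeta_l^*\in\mathcal C$ on the same horizontal curve as $\hat z_l$ with $|\zeta_l^*-\hat z_l|\leq (Cb)^{\xi_l/4}=\mathcal O(b^{\theta\xi_l/5})$. It then suffices to show $\zeta_l^*\in\Xi^{([2^{m+1}\theta N])}$, i.e.\ that the horizontal curve of $\hat z_l$ sits as a horizontal boundary of some $\mathcal Q^{(j)}$ with $j\leq[2^{m+1}\theta N]$. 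This follows by tracing the binding procedure of Sect.\ref{algo} backward: $\hat z_l$ lives on a $C^2(b)$-curve $f^{\mu_i}l_i$ obtained from a transversal $l_i$ at a hyperbolic time $\mu_i$ with $n_l-\mu_i=[\theta n_i]\leq[2^{m+1}\theta N]$, and this curve coincides with a horizontal boundary constructed at step $\leq n_i\leq[2^{m+1}\theta N]$ of the induction in~(a).
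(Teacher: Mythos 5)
Your overall structure (induct on $k$ in $(2^mN,2^{m+1}N]$, build $\mathcal C^{(k)}$ by intersecting $\mathcal Q^{(k-1)}$ with $R_k$, then check controllability) matches the paper's, but several of the load-bearing steps are missing or misattributed.

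First, the paper's entire Step~1 is absent. Before one can say that each component of $\mathcal Q^{(k-1)}\cap R_k$ is ``bounded by two $C^2(b)$ subsegments of $\partial R_k$ that stretch across,'' one must know that the \emph{bound} segments of $\partial R_k$ do not enter $\mathcal C^{(k-1)}$ and cut the free segments short. This is precisely Lemma~\ref{bo} and Corollary~\ref{oo}: a bound segment $B$ is shadowed by $f^l\zeta$ for some $\zeta\in\Xi^{(k-1)}$, and since $\zeta$ is controlled, $B$ is forced a distance $\geq e^{-2\alpha l}$ from any $\mathcal C^{(i)}$ with $i\geq\alpha k$. Without this, (S3) is not justified and the whole induction stalls.

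Second, you invoke Lemma~\ref{sox} both to place a critical point on each new horizontal boundary (your (S2)) and to produce $\zeta_l^*$ in part~(b). But Lemma~\ref{sox} lives in Sect.~\ref{proo}, is formulated and proved only for $f_{a^*}$, and uses the \emph{ad hoc} notion of ``critical point'' as a point $\zeta\in S$ whose image has a quadratic tangency between a long stable leaf and $W^u(Q)$ — a characterization available there only because at $a^*$ the whole orbit of $fS$ leaves $R_0$. For a generic $a\in\Delta$ there is no such $S$, and the critical set $\mathcal C$ is defined as a limit of critical approximations. The correct tool is Lemma~\ref{crea}, which manufactures a Cauchy sequence of nice critical approximations $\zeta_0,\zeta_1,\dots$ of orders $m_0<m_1<\cdots$ on the free segment using Lemma~\ref{induce}, Lemma~\ref{htlem} and the curvature bound of Lemma~\ref{curv}; it is this construction — not $f_{a^*}$ geometry — that produces critical points on the new boundaries.

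Third, the Hausdorff-distance bound $\mathcal O(b^{k/2})$ in (S1) is not obtained by following one boundary backward along a stable leaf: the two horizontal boundaries are both segments of $W^u$, and Lemma~\ref{leaf} only bounds forward contraction along a single leaf, not the separation of two unstable curves. The paper instead argues via area: (S1) at the previous step gives $L^{1/2}(x)\leq (Cb)^{(k-1)/4}$, a local uniform lower bound $L(y)\geq L(x)/2$ on a subinterval of size $L^{2/3}(x)$, hence $\mathrm{area}(\mathcal Q^{(k)})\geq L^{5/3}(x)/2$, and this contradicts $\mathrm{area}(R_k)\leq(Cb)^k$ unless $L(x)<b^{k/2}$. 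That argument is not present in your sketch.

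Finally, the controllability check (Definition~\ref{cos}) is acknowledged but left as ``I expect \dots should show \dots'' — no argument is offered — whereas the paper completes it using the smallness of bound segments established in Step~1 to find the horizontal curve $\tilde\gamma$. In part~(b) you assert that $\hat z_l$'s curve ``coincides with a horizontal boundary constructed at step $\leq n_i$''; this is essentially the statement to be proved, not a derivation. The paper instead projects along a long stable leaf to a point $x\in\partial R_0$, proves by contradiction (the Claim in Sect.~\ref{proce}) that $f^{[\theta\xi]}x$ is free, observes via Lemma~\ref{bo} that its maximal free segment stretches across $\mathcal Q^{([\theta\xi]-1)}$, and only then invokes Lemma~\ref{update}, Lemma~\ref{induce} and (S2) to compare $\hat z_l$ with a point of $\Xi^{([2^{m+1}\theta N])}$.
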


\noindent{\it Proof of {\rm (a)}.} Assume $(I)_{j-1}$ for some
$2^mN< j\leq 2^{m+1}N$. Then $\Xi^{(j-1)}$ makes sense. We prove
$(I)_{j}$ in three steps.
\smallskip

\noindent{\it Step 1: Treatment of bound segments in $\partial
R_j$.} Let $d$ denote the Hausdorff distance.
\begin{lemma}\label{bo}
Let $B$ be a bound segment in $\partial R_j$. There exist $N<l<j$
and $\zeta\in\Xi^{(j-1)}$ such that $f^{l}\zeta$ is free and
$d(f^{l}\zeta,B)\leq e^{-2\alpha l}$. 
\end{lemma}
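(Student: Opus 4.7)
\textit{Plan.} The strategy is to track the bound segment $B$ back to the critical point in $\Xi^{(j-1)}$ that is currently shadowing it, by identifying its outermost active bound period. By the induction hypothesis $(I)_{j-1}$ and Definition \ref{indu1}, a bound segment $B\subset \partial R_j$ arises from a maximal free segment $F\subset \partial R_t$ (for some $t<j$) that falls into $I(\delta)$ at time $t$ and binds to some critical point $\zeta\in\Xi^{(t-1)}\subset\Xi^{(j-1)}$, with bound period $p>j-t$. I will set $l:=j-t$.

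For the distance estimate I apply Proposition \ref{recovery0}(c) to $\zeta$ and any preimage $w\in F$: $|f^{t+i}w-f^i\zeta|\leq e^{-2\alpha p}$ for $1\leq i\leq p$. Specializing to $i=l$ and using $p\geq l$ gives $|f^jw-f^l\zeta|\leq e^{-2\alpha l}$ uniformly in $w\in F$. Since $f^{j-t}F\supset B$, this yields the desired Hausdorff bound $d(f^l\zeta,B)\leq e^{-2\alpha l}$.

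If $f^l\zeta$ is already free in the binding hierarchy of $\zeta$ itself and $l>N$, the proof is complete. Otherwise I descend in the hierarchy: should $l$ fall inside a bound period $[s,s+q)$ of $\zeta$'s own orbit, associated to a deeper critical point $\zeta'\in\Xi^{(l-1)}\subset\Xi^{(j-1)}$, I replace $(\zeta,l)$ by $(\zeta',l-s)$; the triangle inequality together with another application of Proposition \ref{recovery0}(c) propagates the shadowing estimate. The successive errors, being proportional to $e^{-2\alpha l_i}$ for a strictly decreasing sequence $l_1>l_2>\cdots$, form a geometrically convergent series dominated by a constant multiple of $e^{-2\alpha l_{\mathrm{final}}}$, which is absorbed by a small adjustment of $\alpha$. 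The condition $l>N$ is preserved because the descent is stopped the first moment $l$ would drop below $N$: at that point one is still in the no-return regime of Section \ref{indu0}, in which no binding has yet been triggered and the iterate is automatically free.

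The main obstacle is the combinatorial bookkeeping of this descent --- verifying that at each level the accumulated error remains bounded by $e^{-2\alpha l}$ for the current $l$, and that the descent always terminates at a free iterate with $l>N$ without exhausting the hierarchy. This relies on the geometric decay of successive bound-period lengths (via the constants chosen in Sect.\ref{critical}) and on the scale separation $j>2^mN\gg N$, which guarantees that the outermost bound period itself has length comparable to or exceeding $N$ before any descent is required.
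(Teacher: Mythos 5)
Your strategy coincides with the paper's: trace $B$ back through the binding hierarchy, compare successive iterates of the relevant critical points via Proposition~\ref{recovery0}(c), and control the accumulated error by the triangle inequality and the geometric decay of successive bound periods. This is precisely the paper's descent chain $z_0\mapsto z_1\mapsto\cdots\mapsto z_s$, where $z_0$ is a critical point in $f^{-n_0}B$, each $z_{i+1}$ is the binding point of $z_i$ at its enclosing free return, and the estimate $\sum_k 2e^{-2\alpha p_k}\leq 3e^{-2\alpha p_s}\leq 3e^{-2\alpha l}$ follows from $p_i<\frac{4\alpha}{\lambda}p_{i-1}$ (itself a consequence of the inductively assumed control of each $z_i$). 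Two points in your write-up, however, need repair. First, your justification of $l>N$ --- ``the descent is stopped the first moment $l$ would drop below $N$'' --- misstates the stopping rule. The process terminates at the \emph{first} $i$ for which $f^{l_i}\zeta_i$ is free; one has no license to halt earlier to protect $l>N$, and since a critical point is automatically free on its first $\mathcal{O}(N)$ iterates, the terminal $l$ could \emph{a priori} land below $N$ at the final descent step. The conclusion $N<l$ must come from the quantitative constraints ($l<p_s$ and the sizes of the $n_i$), not from the rule you state. Second, the bound you obtain is $3e^{-2\alpha l}$ rather than $e^{-2\alpha l}$; your remedy of ``a small adjustment of $\alpha$'' does not work as written, since $\alpha$ is a constant fixed once and for all before the induction --- the factor $3$ must instead be absorbed into the constant appearing in Proposition~\ref{recovery0}(c), as the paper implicitly does.
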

\begin{proof}
We define a sequence $z_0,\cdots,z_s$ in $\Xi^{(j-1)}$ and a
sequence $n_0,\cdots,n_s$ of positive integers inductively as
follows. By the definition of bound segments, there exists
$0<n_0\leq k$ such that $f^{-n_0}B$ contains a critical point in
$\Xi^{(j-n_0)}$, denoted by $z_0$. If $f^{n_0}z_0$ is bound, let
$n_1<n_0$ denote the free return time of $z_0$ with bound period
$p_1$, such that $n_1<n_0<n_1+p_1$. Let $z_1$ denote the
corresponding binding point, which is in $\Xi^{([\theta
n_1])}\subset\Xi^{(j-1)}$ by the assumption of induction. If
$f^{n_0-n_1}z_1$ is bound, then
 let $n_2<n_0-n_1$ denote the free return time of $z_1$ with bound period
$p_2$, such that $0<n_2<n_0-n_1<n_2+p_2$. Let $z_2$ denote the
binding point, which is in $\Xi^{([\theta
n_2])}\subset\Xi^{(j-1)}$, and so on.

We must reach some $n_s$ and $z_s$ such that
$f^{n_0-n_1-\cdots-n_s}z_{s}$ is free. By the inductive
assumption, each $z_i$ is controlled up to time $k-1$. Hence, for
each $i=1,\cdots,s$ we have $p_{i}<\frac{4\alpha}{\lambda}
p_{i-1}$. We have
\begin{align*} d(B,f^{n_0-n_1-\cdots-n_s}z_{s})\leq&d(B,
f^{n_0}z_0)+|f^{n_0}z_0-f^{n_0-n_1}z_1|\\
&+\sum_{k=2}^s|f^{n_0-n_1-\cdots-n_{k-1}}z_{k-1}-f^{n_0-n_1-
\cdots-n_k}z_k|\\
\leq& \sum_{k=0}^s 2e^{-2\alpha p_k}\leq 3e^{-2\alpha p_s}\leq
3e^{-2\alpha(n_0-n_1-\cdots-n_s)},
\end{align*}
where we have used (c) Proposition \ref{recovery0} for the second
inequality. As $z_{s-1}$ is bound at time
$n_0-n_1-\cdots-n_{s-1}$,
 $n_0-n_1-\cdots-n_{s-1}<p_s$ holds.
 Hence $n_0-n_1-\cdots-n_{s}< p_s$ and the last inequality holds.
Take $l=n_0-n_1-\cdots-n_s$ and $\zeta=z_s$. The argument shows
$N<l$.
\end{proof}

\begin{cor}\label{oo}
For any bound segment $B$ in $\partial R_{j}$ and
 $\alpha j\leq i<j$, $B\cap \mathcal C^{(i)}=\emptyset$.
\end{cor}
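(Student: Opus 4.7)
The approach is a direct proof by contradiction: combine the ``tracking'' of the bound segment by a free critical orbit (Lemma \ref{bo}) with the geometric structure of $\mathcal{C}^{(i)}$ and the free-return distance lower bound (Lemma \ref{mega}).

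Suppose, for contradiction, that there is some $\alpha j\leq i<j$ and a point $p\in B\cap \mathcal{C}^{(i)}$. Apply Lemma \ref{bo} to produce an integer $N<l<j$ and a critical point $\zeta\in\Xi^{(j-1)}$ with $f^l\zeta$ free and $d(f^l\zeta,B)\leq e^{-2\alpha l}$; in particular $|f^l\zeta-p|\leq e^{-2\alpha l}$. Since $p$ lies in a component $\mathcal{Q}^{(i)}$, properties (S1)--(S2) put a critical point $\zeta''\in \mathcal{C}$ on a horizontal boundary of $\mathcal{Q}^{(i)}$ within distance $\lesssim \kappa_0^i$ of $p$ (horizontal width $\approx\kappa_0^i$, vertical Hausdorff distance $O(b^{i/2})$, midpoint-to-critical error $O(b^{i/4})$). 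Therefore
\[
|f^l\zeta-\zeta''|\leq \kappa_0^i+e^{-2\alpha l}.
\]
Because $\zeta''\in I(\sqrt{b})$ and the right-hand side is tiny, $f^l\zeta\in I(\delta)$, so the free iterate $f^l\zeta$ is in fact a free return of the orbit of $\zeta$.

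Now I would invoke Lemma \ref{mega} at this free return: the binding critical approximation $\hat z_l$ of $f^l\zeta$ satisfies $|f^l\zeta-\hat z_l|\geq e^{-\alpha l}$. The key step is to argue that $\hat z_l$ may be chosen close to the critical point $\zeta''$ identified above (within $O(b^{\theta i/5})$, using that $\mathcal{C}$ is built as the accumulation set of critical approximations and the binding procedure of Section \ref{algo} produces an approximation essentially adapted to whichever critical region $f^l\zeta$ has fallen into). Granting this, the triangle inequality gives
\[
e^{-\alpha l}\;\leq\;|f^l\zeta-\hat z_l|\;\leq\;|f^l\zeta-\zeta''|+|\zeta''-\hat z_l|\;\lesssim\;\kappa_0^i+e^{-2\alpha l}.
\]
Since $\kappa_0=C_0^{-10}$ satisfies $\log(1/\kappa_0)=10\log C_0>1$, and $i\geq \alpha j>\alpha l$, we get $\kappa_0^i\leq \kappa_0^{\alpha l}=e^{-\alpha l\log(1/\kappa_0)}\ll e^{-\alpha l}$, while also $e^{-2\alpha l}\ll e^{-\alpha l}$. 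This contradicts the displayed inequality for $l\geq N$ large enough, proving $B\cap \mathcal{C}^{(i)}=\emptyset$.

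The main obstacle is the justification that the approximation $\hat z_l$ produced by Lemma \ref{mega} can be aligned with the critical point $\zeta''$ sitting in $\mathcal{Q}^{(i)}$: the binding procedure gives only \emph{some} tangentially-positioned approximation, and different choices can coexist, so one has to argue that at the distance scale $\kappa_0^i+e^{-2\alpha l}$ this approximation must be near $\zeta''$ (relying on the fact that critical points are isolated accumulation points of approximations and using the inductive control of $\zeta$ by $\Xi^{([2^m\theta N])}$ already assumed). Once this is in hand, the remainder is a comparison of scales, using $\log(1/\kappa_0)>1$ and $i\geq\alpha j>\alpha l$.
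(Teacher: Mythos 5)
Your strategy (apply Lemma \ref{bo}, then use Lemma \ref{mega} together with a scale comparison) is the same toolkit the paper uses, and your observation that the contradiction hypothesis $B\cap\mathcal C^{(i)}\neq\emptyset$ automatically forces $f^l\zeta\in I(\delta)$ is a neat way to absorb the case distinction that the paper handles separately. However, the step you flagged as ``the main obstacle'' is not merely unproved --- it is impossible, and the argument cannot close as written.

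Let me be concrete. From your Steps 3--4 you have $|f^l\zeta-\zeta''|\leq \kappa_0^i+e^{-2\alpha l}$, which is $\ll e^{-\alpha l}$ (as you confirm at the end). But Lemma \ref{mega} asserts $|f^l\zeta-\hat z_l|\geq e^{-\alpha l}$. Combined, the triangle inequality gives
\[
|\hat z_l-\zeta''| \;\geq\; |f^l\zeta-\hat z_l|-|f^l\zeta-\zeta''| \;\geq\; e^{-\alpha l}-\bigl(\kappa_0^i+e^{-2\alpha l}\bigr),
\]
which is of order $e^{-\alpha l}$, not $O(b^{\theta i/5})$. In other words, $\hat z_l$ and $\zeta''$ are \emph{necessarily} far apart: $\hat z_l$, by the very content of Lemma \ref{mega}, is a \emph{shallow} binding point living on the boundary of a component $\mathcal Q^{(k-1)}$ whose width $\approx\kappa_0^{k-1}$ must be $\gtrsim e^{-\alpha l}$ (so $k\lesssim \alpha l/\log(1/\kappa_0)$), whereas $\zeta''$ sits on $\partial\mathcal Q^{(i)}$ at level $i\geq\alpha j$, many orders deeper. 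Because the two live at widely separated levels of the nested hierarchy (S1)--(S3), they cannot coincide or even be close, and your Step 8 then reads $e^{-\alpha l}\leq \kappa_0^i+e^{-2\alpha l}+O(e^{-\alpha l})$, which is not a contradiction.

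The paper's proof avoids exactly this trap. It does not try to match the binding approximation with the deep-level critical point. Instead it works with the true binding point $z\in\Xi^{([\theta l])}$ and computes $d(B,z)\geq |f^l\zeta-z|-\mathrm{diam}(B)\geq e^{-\alpha l}-6e^{-2\alpha l}\geq e^{-2\alpha l}$; that is, $B$ is pushed \emph{away} from $z$. Since $z$ is chosen (via the structure underlying Lemma \ref{mega} and the proof of Lemma \ref{contor}(b)) on the boundary of the component $\mathcal Q^{(k-1)}$ that encloses $f^l\zeta$, and since $B$ has diameter $\ll e^{-\alpha l}$ and $\mathcal Q^{([\alpha l])}$ has width $\kappa_0^{[\alpha l]}\ll e^{-2\alpha l}$, the separation $d(B,z)\geq e^{-2\alpha l}$ rules out $B$ reaching $\mathcal C^{([\alpha l])}\supset\mathcal C^{(i)}$. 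If you want to salvage a proof-by-contradiction version, the correct move is to argue that the contradiction hypothesis places the largest $k$ with $f^l\zeta\in\mathcal C^{(k)}$ far too deep for the binding lower bound $|f^l\zeta-\hat z_l|\geq e^{-\alpha l}$ to survive, rather than to identify $\hat z_l$ with $\zeta''$.
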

\begin{proof}
Take $l<j$ and $\zeta\in\Xi^{(j-1)}$ such that the conclusion of
Lemma \ref{bo} holds. If $f^l\zeta\in I(\delta)$, then let
$z\in\Xi^{([\theta l])}$ denote the binding point. We have
$d(B,z)\geq|f^l\zeta-z|- {\rm diam}(B)\geq e^{-\alpha l}-
6e^{-2\alpha l}\geq e^{-2\alpha l}.$ This implies $B\cap \mathcal
C^{([\alpha l])}=\emptyset$, and the claim holds. If
$f^l\zeta\notin I(\delta)$, then let $O=(0,0)$. If $l$ is large so
that $d(B,O)\geq|f^l\zeta-O|- {\rm diam}(B)\geq \delta-
2e^{-2\alpha l}\geq\delta/2$ holds, then the claim follows,
because $j>0$. If $l$ is so small that the last inequality does
not hold, then $f^l\zeta$ is near $fI(\delta)$, which is away from
$I(\delta)$.
\end{proof}

\noindent{\it Step 2: Construction of $\mathcal C^{(j)}$.}
 Let $\mathcal
Q^{(j-1)}$ denote any component of $\mathcal C^{(j-1)}$ which
intersects $\partial R_j$. By Corollary \ref{oo}, bound segments
in $\partial R_j$ do not intersect $\mathcal C^{(j-1)}$. Hence,
each component of $\mathcal Q^{(j-1)}\cap R_j$ is bounded by two
free segments stretching across $\mathcal Q^{(j-1)}$. The next
lemma ensures that it is possible to construct $\mathcal C^{(j)}$
so that (S2) (S3) hold.

\begin{lemma}\label{crea}
For any free segment $\gamma$ in $\partial R_j$ stretching across
$\mathcal Q^{(j-1)}$, there exists a critical point on $\gamma$
within $\mathcal O(b^{\frac{j}{4}})$ of the midpoint of $\gamma$.
\end{lemma}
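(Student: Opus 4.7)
The plan is to adapt the existence/uniqueness argument of Lemma~\ref{sox} to the local setting inside $\mathcal{Q}^{(j-1)}$, and then extract a quantitative bound on the position of the critical point by comparing $\gamma$ to the horizontal boundary of $\mathcal{Q}^{(j-1)}$. First, by Lemma~\ref{feer} the free segment $\gamma$ is a $C^2(b)$-curve, and since $\gamma \subset \mathcal{Q}^{(j-1)} \subset \mathcal{C}^{(0)} \subset I(\delta)$, the curve $\gamma$ passes through the critical region, so that $f\gamma$ develops a near-vertical fold whose tangent directions satisfy the slope hypothesis required for Remark~\ref{verti}. In particular, among long stable leaves meeting a neighborhood of the vertex of $f\gamma$, each leaf either intersects $f\gamma$ transversally at two points or is tangent to $f\gamma$ quadratically at one point. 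A continuity / intersection-parity argument sweeping the parameter that indexes the leaves (exactly as in the proof of Lemma~\ref{sox}, using the remark below Lemma~\ref{leaf} to exclude multiple tangencies) forces exactly one leaf in this family to be tangent to $f\gamma$, and its pull-back by $f^{-1}$ is the desired critical point $\zeta \in \gamma$.

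For the quantitative estimate I would compare $\gamma$ with the horizontal boundary $\beta$ of $\mathcal{Q}^{(j-1)}$ that carries a critical point $\zeta_0 \in \Xi^{(j-1)}$, provided by the inductive hypothesis (S2). By (S1) both $\gamma$ and $\beta$ are $C^2(b)$-curves stretching across $\mathcal{Q}^{(j-1)}$ at Hausdorff distance $\mathcal{O}(b^{(j-1)/2})$. Applying $f$, the bounded-derivative hypothesis (M3) preserves this Hausdorff distance up to a constant factor. By (d) of Proposition~\ref{criticaset}, $\Gamma(f\zeta_0)$ is tangent to $W^u$, hence to $f\beta$, at $f\zeta_0$ with a quadratic tangency. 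A perturbation of $f\beta$ by $\mathcal{O}(b^{(j-1)/2})$ in Hausdorff distance moves the tangent point to a nearby leaf by at most $\mathcal{O}(b^{(j-1)/4})$: this is the standard square-root spreading associated to a quadratic tangency, which can be made rigorous using Lemma~\ref{quadratic} applied to the splitting $Dft = A\bigl(\begin{smallmatrix}1\\0\end{smallmatrix}\bigr) + Be$ along both curves, together with the Lipschitz/$C^2$-closeness of the stable foliation provided by Lemma~\ref{stable} and Corollary~\ref{focor1}. Pulling back by $f$ (which is locally non-degenerate where the tangency takes place) yields $|\zeta - \zeta_0| \leq \mathcal{O}(b^{(j-1)/4})$. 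Combined with the inductive bound from (S2) placing $\zeta_0$ within $\mathcal{O}(b^{(j-1)/4})$ of the midpoint of $\beta$, and the trivial fact that the midpoints of $\gamma$ and $\beta$ differ by $\mathcal{O}(b^{(j-1)/2})$ (bounded by the vertical thickness of $\mathcal{Q}^{(j-1)}$), the triangle inequality produces the required $\mathcal{O}(b^{j/4})$ control.

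The main obstacle is the quadratic spreading step: $f\gamma$ and $f\beta$ are not parallel translates but genuinely distinct folded curves, so one must argue that the coefficient $A(z)$ in the splitting of Lemma~\ref{quadratic} along $f\gamma$ differs from that along $f\beta$ by $\mathcal{O}(b^{(j-1)/2})$ uniformly, and that the second-derivative (opening) of the parabola is uniformly bounded below. Once this non-degeneracy is in place, the inversion of $A(\zeta)=0$ for $\gamma$ in terms of the same equation for $\beta$ is an implicit-function computation yielding the $\sqrt{\cdot}$ loss and the final $\mathcal{O}(b^{j/4})$ position estimate.
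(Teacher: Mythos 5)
Your proposal finds, by a tangency argument modelled on Lemma~\ref{sox}, a point $\zeta\in\gamma$ at which a long stable leaf is tangent to $f\gamma$; but being a tangency point is \emph{not} the definition of a critical point in $\mathcal C$ in this section of the paper. The definition in Sect.~5.1 (and its characterization in Proposition~\ref{criticaset}) requires $\zeta$ to arise as an accumulation point of \emph{nice} critical approximations with good critical behavior --- in particular the expansion and regularity data (a)--(c) of Proposition~\ref{criticaset} must hold, which are parameter-dependent and in no way follow from the mere existence of a quadratic tangency. You are implicitly borrowing the purely geometric ``temporary definition'' of critical point that the paper adopts only inside the proof of Proposition~\ref{good1}, i.e.\ only for the special map $f_{a^*}$; for a generic $a\in\Delta$ that identification is not available. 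Thus, after your step 1 there is no verification that $\zeta\in\mathcal C$, and the lemma is not proved.

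The paper's actual argument fixes exactly this: using the closeness of the two horizontal boundaries of $\mathcal Q^{(j-1)}$ it applies Lemma~\ref{update} to seed a critical \emph{approximation} of order $j$ on $\gamma$, and then inductively applies Lemma~\ref{induce}, selecting the successive orders via hyperbolic times (Lemma~\ref{htlem}) and using that $\gamma$ is a free segment to verify niceness (C1)--(C3) at each step. The resulting Cauchy sequence of nice critical approximations, of orders $m_0=j<m_1<\cdots$ with $m_{i+1}\in[5m_i/4,20m_i)$ and $|\zeta_i-\zeta_{i+1}|\le(Cb)^{m_i/2}$, converges to a point which by construction \emph{is} a critical point in $\mathcal C$, and the position bound falls out of the first approximation being within $\mathcal O(b^{j/3})$ of the midpoint. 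Note also that Lemma~\ref{update} alone only transfers critical approximations up to an order comparable to $j$, so you cannot shortcut the inductive growth of order via Lemma~\ref{induce}; it is precisely this step that makes $\zeta$ land in $\mathcal C$. Your second and third paragraphs (the quadratic-spreading comparison with $\beta$) would give a position estimate for the tangency point, but they are moot while the membership $\zeta\in\mathcal C$ is unestablished.
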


\begin{proof}
By the closeness and the disjointness of the boundaries of
$\mathcal Q^{(j-1)}$, their tangent directions are close enough,
for Lemma \ref{update} to yield a critical approximation $\zeta_0$
of order $m_0:=j$ on $\gamma$, within $\mathcal
O(b^{\frac{j}{3}})$ of the midpoint of $\gamma$.

We inductively construct a sequence $\zeta_0,\zeta_1,\cdots,$ of
nice critical approximations on $\gamma$, of order
$m_0<m_1<\cdots$, such that: (a) $m_{i+1}\in[5m_i/4,20m_i)$; (b)
$|\zeta_i-\zeta_{i+1}|\leq (Cb)^{\frac{m_i}{2}}$. The limit of the
sequence $(\zeta_i)_i$ is a critical point with the desired
property.

Given $\zeta_i$ of order $m_i$ for some $i\geq0$, $\zeta_{i+1}$ is
constructed as follows. Let $\mu_1<\mu_2< \cdots$ denote an
infinite sequence of positive integers such that $\mu_{j+1}\leq
16\mu_{j}$ for $j=1,2,\cdots,$ and $\Vert
Df^{k-\mu_j}t(\zeta_i)\Vert\geq\kappa_0^{\frac{1}{4}(\mu_j-k)}\quad\text{for
} 0\leq k\leq\mu_j.$ Lemma \ref{htlem} ensures the existence of
such a sequence. Given $j(i)$ such that $\mu_{j(i)}\leq 20\theta
m_i<\mu_{j(i)+1}$, define $m_{i+1}$ to be the smallest such that
$[\theta m_{i+1}]=\mu_{j(i)}$. We have $\theta m_{i+1}\geq
\mu_{j(i)+1}/16\geq 5\theta m_i/4.$ (a) allows us to use Lemma
\ref{induce}, to create a critical approximation of order
$m_{i+1}$, denoted by $\zeta_{i+1}$. (b) is a consequence of Lemma
\ref{induce}.

Since $\gamma$ is a free segment,
$$|f^{-[\theta m_{i+1}]}\zeta_{i}-f^{-[\theta
m_{i+1}]}\zeta_{i+1}|\leq 10\delta (Cb)^{\frac{m_i}{2}}.$$ Lemma
\ref{curv}
implies, 
for $1\leq j\leq[\theta m_{i+1}]$,
$$\Vert Df^jt(f^{-[\theta m_{i+1}]}\zeta_{i+1})\Vert\geq\frac{1}{2}\kappa^{\frac{j}{4}}\geq
\kappa^{\frac{j}{3}}.$$ In other words, $t(f^{-[\theta
m_{i+1}]}\zeta_{i+1})$ is $\kappa_0^{\frac{1}{3}}$-expanding up to
time $[\theta m_{i+1}]$. Also, it is $1/10$-regular up to time
$[\theta m_{i+1}]$, because $\gamma$ is a free segment.
Consequently, (C3) in Sect.\ref{critical} holds and $\zeta_{i+1}$
is a nice critical approximation of order $m_{i+1}$ on $\gamma$.
This completes the construction of $(\zeta_i)_i$ and also the
proof of Lemma \ref{crea}.
\end{proof}

\noindent{\it Step 3: Verification of (I$)_j$.} To show the
assertion on the Hausdorff distance in (S1), we regard the
horizontal boundaries of the component of $\mathcal C^{(j-1)}\cap
R_j$ containing $\mathcal Q^{(j)}$ as graphs of functions
$\gamma_1$, $\gamma_2$ defined on an interval $I$ of length
$2\kappa_0^{j-1}$. Let $L(x)=|\gamma_1(x)-\gamma_2(x)|.$ (S1$)$
gives $L^{\frac{1}{2}}(x)\leq (Cb)^{\frac{j-1}{4}}<{\rm
length}(I).$ Moreover $|\gamma_1'(x)-\gamma_2'(x)|\leq
L^{\frac{1}{2}}(x)$ holds, for otherwise $\gamma_1$ intersects
$\gamma_2$. By this and the $C^2(b)$-property, $L(y)\geq L(x)
-(L^{\frac{1}{2}}(x)-C\sqrt{b}|x-y|)|x-y|$ holds for $x,y\in I$,
which is $\geq L(x)/2$ provided  $|x-y|\leq L^{\frac{2}{3}}(x)$.
Hence, ${\rm area}(\mathcal Q^{(j)})\geq L^{\frac{5}{3}}(x)/2$
holds. If $L(x)\geq b^{\frac{j}{2}},$ then ${\rm area}(\mathcal
Q^{(j)})\geq b^{\frac{5j}{6}}/2$, which yields a contradiction to
${\rm area}(\mathcal Q^{(j)})<{\rm area}(R_j)\leq (Cb)^j$.
\medskip

We show that $\partial R_0$ is controlled up to time $j$. Let
$\gamma$ denote any maximal free segment in $\partial R_j$
intersecting $I(\delta)$. We indicate how to choose the horizontal
curve $\tilde\gamma$.

If $\gamma\cap\mathcal Q^{(j-1)}\neq\emptyset$, then $\gamma$
stretches across a component $\mathcal Q^{(j-1)}$, and there
exists a critical point on $\gamma$, by Lemma \ref{crea}. In this
case, we take $\tilde\gamma=\gamma$. If $\gamma\cap\mathcal
Q^{(k-1)}=\emptyset$, let $k_0<k-1$ denote the largest such that
$\mathcal C^{(k_0)}\cap \gamma\neq\emptyset.$ Let $\mathcal
Q^{(k_0)}$ denote the component intersecting $\gamma.$ Let
$\mathcal Q^{(k_0+1)}$ denote any component of $\mathcal
C^{(k_0+1)}$ in $\mathcal Q^{(k_0)}$. Since the bound segments are
small, there exists a horizontal curve $\tilde\gamma$ which
contains $\gamma$ and a critical point on $\tilde\gamma$.
\medskip

\noindent{\it (Proof of (b)).} Let $\zeta\in\mathcal C$,
$2^{m+1}N<n_l\leq 2^{m+2}N$ and suppose that $n_l$ is a free
return time of $\zeta$. Let $\hat z_l$ denote the binding point of
order $\xi$, as in Lemma \ref{mega}. If $f^{-[\theta \xi]}\hat
z_l\notin fI(\delta)$, then the long stable leaf of order $[\theta
\xi]$ through $f^{-[\theta \xi]}\hat z_l$ intersects $\partial
R_0$ at one point, which we denote by $x$. Otherwise, the long
stable leaf of order $[\theta \xi]-1$ through $f^{-[\theta
\xi]+1}\hat z_{l}$ intersects $\partial R_0$ at one point, which
we denote by $x$. In either of the two cases,
$|f^{[\theta\xi]}x-\hat z_l|\leq(Cb)^{\theta \xi}$, and
$$\xi\leq C\alpha n_l<2^{m+1}N.$$
\begin{claim}
$f^{[\theta \xi]}x$ is free.
\end{claim}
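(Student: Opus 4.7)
The plan is to deduce freeness of $f^{[\theta\xi]}x$ by shadowing the forward orbit of $x$ with the backward orbit of the nice critical approximation $\hat z_l$, and inheriting the free state from the latter. First, I would make the shadowing precise: because $x$ lies on a long stable leaf of order $[\theta\xi]$ (or $[\theta\xi]-1$ in the second case) through $f^{-[\theta\xi]}\hat z_l$, Lemma \ref{stable} gives $|f^i x - f^{i-[\theta\xi]}\hat z_l|\le (Cb)^i$ for $0\le i\le[\theta\xi]$. The endpoint case $i=[\theta\xi]$ is already recorded in the text, and for $i\ge 1$ this error is negligible against both $\delta$ and the scale $b^{\theta\xi/5}$ that controls binding-point approximation.

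Next, I would analyze the backward orbit $f^{-[\theta\xi]}\hat z_l,\ldots,\hat z_l$ as a trajectory carrying a well-defined free/bound decomposition that terminates in a free state at $\hat z_l$. Nice critical approximations of order $\xi$ satisfy conditions (C2) and (C3), so the tangent vector at $f^{-[\theta\xi]}\hat z_l$ is $\kappa_0^{1/3}$-expanding and $1/100$-regular up to time $[\theta\xi]$. By the hypothesis of Lemma \ref{contor}, $\hat z_l$ is within $\mathcal O(b^{\theta\xi/5})$ of a binding point $z_l\in\Xi^{([2^m\theta N])}$, and since $\xi<2^{m+1}N$ the entire free/bound structure along this orbit may be read off from the already-constructed $(\Xi^{(k)})_{k\le[2^m\theta N]}$ via the induction hypothesis. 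Any bound period triggered along the segment has length controlled by Proposition \ref{recovery}(a), so is strictly dominated by the number of remaining iterates up to $\hat z_l$, and therefore none extends all the way to $\hat z_l$; the latter, being a critical approximation rather than an interior bound point, is free.

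The last step is to transfer this structure to the orbit of $x$. At each free return time $i\in(0,[\theta\xi]]$ of the shadowed orbit to $I(\delta)$, the shadowing error $(Cb)^i$ is exponentially smaller than both $\delta$ and the scales separating the candidate binding points, so $f^i x$ itself returns at the same step, the same element of $\Xi^{([2^m\theta N])}$ may be taken as its binding point, and the bound periods agree with those of the shadowed orbit via Proposition \ref{recovery}. Each such period expires before time $[\theta\xi]$ by the previous paragraph, and therefore $f^{[\theta\xi]}x$ is free. The main obstacle will be the two-scale bookkeeping: one must check that the stable-leaf error $(Cb)^i$ and the $\mathcal O(b^{\theta\xi/5})$ binding-point approximation combine so as to keep all bound-period lengths within a controlled multiplicative perturbation of those of the shadowed orbit, so that no period gets stretched past $[\theta\xi]$; the deformation machinery of Section \ref{quas} should suffice to show that nearby returns to $I(\delta)$ select the same $\Xi^{(\cdot)}$-binding points.
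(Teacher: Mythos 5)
Your strategy---shadow the backward orbit of $\hat z_l$ and inherit its free state---is not the one the paper uses, and as formulated it has a genuine gap. The freeness asserted in the Claim is the freeness of the point $f^{[\theta\xi]}x\in\partial R_{[\theta\xi]}$ in the sense of Definition \ref{indu1}, i.e.\ it is determined by how the whole curve $\partial R_0$ was cut into bound and free segments during the inductive construction of $(I)_j$, with common binding points assigned per maximal free segment (Definition \ref{cos}). You attempt to read this classification off from the orbit $f^{-[\theta\xi]}\hat z_l,\dots,\hat z_l$, but that orbit does not lie on $\partial R_0$, so the $\partial R_j$ decomposition is not even defined along it; the orbit-wise binding procedure of Sect.\ \ref{algo} applies, but it is precisely the discrepancy between that procedure and the $\Xi^{(\cdot)}$-structure that Remark \ref{dig} flags and that Section 5 is designed to repair. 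You never argue that the binding points assigned inductively to the free segments of $\partial R_j$ through which $f^ix$ passes coincide with the binding points you would pick for the shadowed orbit, nor that the resulting bound periods coincide. The final appeal to the deformation machinery of Section \ref{quas} does not help here: it controls dependence on the \emph{parameter} $a$, not dependence on the initial point in phase space.

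The paper's own argument is shorter and sidesteps all of this by contradiction. If $f^{[\theta\xi]}x$ were bound, it lies in a bound segment $B\subset\partial R_{[\theta\xi]}$, and since $(I)_{[\theta\xi]}$ has already been established (part (a) of Lemma \ref{contor}, noting $[\theta\xi]\ll 2^{m+1}N$), Lemma \ref{bo} produces $N<l<[\theta\xi]$ and $z\in\Xi^{([\theta\xi]-1)}$ with $f^lz$ free and $d(f^lz,B)\le e^{-2\alpha l}$. Since $|f^{[\theta\xi]}x-\hat z_l|\le (Cb)^{\theta\xi}$, everything is collapsed onto a small neighborhood of the critical approximation $\hat z_l$; the binding point $z'$ for $f^lz$ is then forced to lie on the same horizontal curve as the critical point approximating $\hat z_l$, contradicting the uniqueness of the critical point on a horizontal curve in the controlled structure. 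If you want to salvage your direct approach you would need, at a minimum, to prove that the $\partial R_j$ binding assignments are shadow-invariant at every return---which is essentially the content of $(I)_j$ itself and would make the argument circular at this stage of the induction.
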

\begin{proof}
Suppose the contrary. Let $B$ denote the bound segment containing
$f^{[\theta \xi]}x$, which is in $\partial R_{[\theta\xi]}$. By
Lemma \ref{bo}, $B\subset I(\delta)$ and there exists
$l<[\theta\xi]$, $z\in\Xi^{([\theta\xi]-1)}$ such that $f^lz$ is
free and $d(f^lz,B)\leq e^{-2\alpha l}$. Let $z'$ denote the
binding point for $f^lz$. It follows that $\zeta$ and $z'$ lie on
the same horizontal curve, a contradiction.
\end{proof}
Let $\gamma$ denote the maximal free segment containing
$f^{[\theta \xi]}x$. Lemma \ref{bo} implies that $\gamma$
stretches across $\mathcal Q^{([\theta \xi]-1)}$. By the
assumption of induction, there exists $z_l\in\Xi^{([\theta
\xi])}\subset\Xi^{([2^{m+1}\theta N])}$, located within $\mathcal
O(b^{\frac{[\theta\xi]}{4}})$ of the midpoint of
$\gamma\cap\mathcal Q^{([\theta\xi]-1)}$. By Lemma \ref{update},
there exists a critical approximation $z$ of order $\xi$ on
$\gamma$ such that $|f^{[\theta \xi]}x-z|= \mathcal O(
b^{\frac{\theta \xi}{2}})$. Lemma \ref{induce} implies
$|z_l-z|\leq (Cb)^{\frac{\theta \xi}{5}}$. Hence
 $|\hat z_l-z_l|\leq|\hat z_l-f^{[\theta\xi]}x|+
 |f^{[\theta\xi]}x-z|+|z-z_l|=\mathcal O(b^{\frac{\theta
 \xi}{5}}),$
 which means that $\zeta$ is controlled up to time $n_l$ by $\Xi^{([2^{m+1}\theta
N])}$. This completes the proof of Proposition \ref{geo}.

\subsection{Binding points in $\mathcal C$}
The following statement was obtained from the proof of Proposition
\ref{geo}.
\begin{cor}\label{cap}
For all $z\in\partial R_0\setminus\bigcup_{n\geq0}f^{-n}\mathcal
C$ there exist a sequence of integers $0\leq n_1<n_1+p_1\leq
n_2<n_2+p_2\leq\cdots$ and a sequence $\zeta_{1},\zeta_{2},\cdots$
of critical points such that for each $n_l$ we have: $f^{n_l}z\in
I(\delta)$; there exists a horizontal curve $\tilde\gamma$ which
contains the maximal free segment containing $f^{n_l}z$, and a
critical point $\zeta_{l}$ on $\tilde\gamma$; $p_l$ is the bound
period from the critical partition of $\tilde\gamma$.
\end{cor}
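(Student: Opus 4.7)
The corollary is essentially an unpacking of the inductive control property established throughout the proof of Proposition~\ref{geo}. My plan is to extract from that construction a sequence $(n_l, p_l, \zeta_l)_{l\geq 1}$ tied to the forward orbit of a single fixed $z$, applying the control statement (Definition~\ref{cos}) at each free return.

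First I would define $n_1 = \min\{n \geq 0 : f^n z \in I(\delta)\}$; if no such $n$ exists the sequence is vacuously empty, so assume $n_1 < \infty$. By Proposition~\ref{geo}, the property $(I)_{n_1}$ holds, so $\partial R_0$ is controlled up to time $n_1$ by $\Xi^{(n_1)}$. Hence the maximal free segment $\gamma \subset \partial R_{n_1}$ containing $f^{n_1} z$ is contained in some horizontal curve $\tilde\gamma_1$ admitting a critical point $\zeta_1 \in \Xi^{(n_1)} \subset \mathcal{C}$. The hypothesis $z \notin f^{-n_1}\mathcal{C}$ forces $f^{n_1} z \neq \zeta_1$, and the critical partition of $\tilde\gamma_1$ described in Sect.~\ref{cripa} then assigns to $f^{n_1} z$ a well-defined finite bound period $p_1$ via Proposition~\ref{recovery0}.

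Given $(n_l, p_l, \zeta_l)$, I would set $n_{l+1} = \min\{n \geq n_l + p_l : f^n z \in I(\delta)\}$ and repeat the argument verbatim. The only subtle point is that $f^{n_{l+1}} z$ must lie on a \emph{maximal free} segment of $\partial R_{n_{l+1}}$ rather than in a bound segment inherited from an earlier binding. This is built into the scheme of Sect.~\ref{log}: during the interval $(n_l, n_l + p_l)$ the iterate $f^n z$ is declared bound via $\tilde\gamma_l$, and the definition of $n_{l+1}$ places $f^{n_{l+1}} z$ strictly past this expiration and at the first subsequent return to $I(\delta)$; by parts (e)--(f) of Proposition~\ref{recovery0} the orbit has by then recovered expansion and is free by convention, so the control property at time $n_{l+1}$ applies and furnishes $\tilde\gamma_{l+1}, \zeta_{l+1}, p_{l+1}$.

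The main work, which has already been done in the proof of Proposition~\ref{geo}, is verifying that the inductive scheme $(I)_{j-1} \Rightarrow (I)_j$ actually closes up and that the bound segments stay away from deeper critical components (Corollary~\ref{oo}). Once that is in hand, the corollary is a notational consequence: the hypothesis $z \notin \bigcup_{n \geq 0} f^{-n}\mathcal{C}$ precisely excludes the one degenerate case of the orbit hitting a critical point (which would yield an infinite bound period via Proposition~\ref{criticaset}(d)), so every $p_l$ is finite and the construction continues indefinitely.
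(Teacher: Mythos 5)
Your proposal is correct and matches the paper's intent: the paper states only that the corollary ``was obtained from the proof of Proposition \ref{geo},'' and your unpacking---using the inductive control $(I)_j$ and Definition \ref{cos} at each free return, with the hypothesis $z\notin\bigcup_{n\geq0}f^{-n}\mathcal C$ ruling out the degenerate infinite bound period---is precisely the intended reconstruction. (One small imprecision: the freeness of $f^{n_{l+1}}z$ follows from the time-interval convention of Sect.\ref{log} rather than from Proposition \ref{recovery0}(e)--(f), which concern expansion rather than the bookkeeping; but this does not affect the substance of the argument.)
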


We use this corollary to resolve the problems mentioned in Remark
\ref{dig}, on the ambiguities of binding points. From this point
on, we call each $\zeta_l$ a {\it binding point} for the orbit of
$z$, and refer to $f^iz$ as {\it bound} if $n_l<i<n_l+p_l$ for
some $n_l$. Otherwise, we refer to $f^iz$ as {\it free}.



\section{The measure of $W^u\cap K^+$}
Let $|$ $\cdot$ $|$ denote the arc length measure on $W^u$ (we will also
denote by $|$ $\cdot$ $|$ the two-dimensional Lebesgue measure, but never
for both things simultaneously).
The aim of this section is to prove
\begin{prop}\label{zero}
$|W^u\cap K^+|=0.$
\end{prop}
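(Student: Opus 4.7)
The plan is to combine an expansion estimate for orbits on $W^u$ with a large-deviation bound on the depths of close returns to the critical set $\mathcal{C}$, with the fact (Proposition \ref{good1}) that at $a^{*}$ critical points are non-recurrent serving as the ingredient that makes the scheme work for $a$ close to $a^{*}$.

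First I would use the binding procedure of Corollary \ref{cap} to decompose the forward orbit of any $z\in W^{u}\setminus\bigcup_{n\ge 0}f^{-n}\mathcal{C}$ (the excluded set is countable, hence of zero arc length) into free and bound segments, assigning to each free return time $n_\ell$ of $z$ to $I(\delta)$ a binding point $\zeta_\ell\in\mathcal{C}$ and a bound period $p_\ell$. Combining Lemma \ref{outside} along the free stretches with Proposition \ref{recovery0} on the recovery through a bind--free cycle, I expect the estimate
\begin{equation*}
\log\Vert Df^{n}t(z)\Vert \;\ge\; \frac{\lambda}{3}\, n \;-\; C\sum_{\ell\,:\,n_\ell\le n}\log|f^{n_\ell}z-\zeta_\ell|^{-1}
\end{equation*}
to hold up to a uniform additive constant. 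Hence, if the cumulative return depth is $o(n)$, the tangent on $W^{u}$ expands at a definite exponential rate, and by the bounded distortion inherent in the critical partition of Section \ref{cripa}, an arc of length $\ell_{0}\Vert Df^{n}t(z)\Vert^{-1}$ around $z$ is stretched by $f^{n}$ to an arc of fixed length $\ell_{0}$; if $z\in K^{+}$ this stretched arc must sit inside $R_{0}$, which caps $\Vert Df^{n}t(z)\Vert$ from above by a constant.

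Next I would carry out a phase-space transcription of the parameter exclusion of Section 4. Using the critical partition $\{\gamma_{k,s}\}$ of Section \ref{cripa} and Lemma \ref{boun}, the piece $\gamma_{k,s}$ has arc length $\lesssim e^{-4\alpha k}$, so the measure of $\{z\in\gamma\cap I(\delta):|f z-\zeta|\in[e^{-k},e^{-k+1})\}$ is controlled by $e^{-k}$. Summing over all possible itineraries of depths $(k_{1},\ldots,k_{t})$ at consecutive free returns with $\sum k_{\ell}\ge\alpha n$, and counting the itineraries exactly as in Section \ref{dev}--\ref{sgre} (where the counting factor is dominated by $e^{\tau(\delta)n+C\theta\alpha^{-1}R}$ with $\tau(\delta)\to 0$), yields the large-deviation bound
\begin{equation*}
\Bigl|\Bigl\{z\in W^{u}\colon \sum_{\ell\,:\,n_\ell\le n}\log|f^{n_\ell}z-\zeta_\ell|^{-1}\ge\alpha n\Bigr\}\Bigr|\;\le\;C e^{-c\alpha n}.
\end{equation*}
The role of the first bifurcation enters here: Proposition \ref{good1} and the continuity of $\mathcal{C}$ in $a$ (via deformations of critical approximations in Section \ref{quas}, cf.\ Lemma \ref{good2}) force the first return time $n_{1}$ of $\zeta\in\mathcal{C}$ and of any orbit near $\zeta$ to tend to $\infty$ as $a\to a^{*}$, so that the expansion accumulated before the binding machinery even starts is already $\ge e^{\lambda n_{1}}$, eliminating any initial deficit.

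Combining the two steps, every $z\in W^{u}\cap K^{+}$ falls in the exceptional set of the large-deviation estimate from time $n$ onward, so $|W^{u}\cap K^{+}|\le Ce^{-c\alpha n}$ for every $n$, and letting $n\to\infty$ gives $|W^{u}\cap K^{+}|=0$. The main obstacle I expect is the second paragraph: the combinatorial counting of itineraries on $W^{u}$ must absorb the same two ambiguities (infinitely many eligible binding points, non-unique decomposition) that were dealt with in the parameter setting of Section 4, and this requires the organized geometry of $W^{u}$ relative to $\mathcal{C}$ built in Proposition \ref{geo} to be used in a genuinely uniform way along the entire orbit.
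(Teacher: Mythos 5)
Your scheme has a genuine gap at the point where you try to convert exponential expansion along $W^u$ into a contradiction with $z\in K^+$. The claim that ``if $z\in K^{+}$ this stretched arc must sit inside $R_{0}$, which caps $\Vert Df^{n}t(z)\Vert$ from above by a constant'' is false: $z\in K^+$ only forces the \emph{orbit of $z$} to stay in $R_0$, not an arc of fixed length around each $f^nz$. On the H\'enon attractor (Benedicks--Carleson parameters, say) Lebesgue-typical points of $W^u$ have $\Vert Df^nt(z)\Vert\to\infty$ and nonetheless a bounded forward orbit; the unstable manifold simply folds back on itself. So $z\in W^u\cap K^+$ does not force the cumulative return depth to be of order $n$, and your large-deviation bound, even if established, does not cover $W^u\cap K^+$ at all. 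The ``phase-space transcription of the parameter exclusion'' controls a genuinely different set.

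What is missing is an honest escape mechanism: a statement that once an arc of $W^u$ has grown to a definite scale, a definite \emph{fraction} of it leaves $R_0$. This is exactly Lemma~\ref{intersec} (a $C^2(b)$-curve in $W^u$ stretching across a component of $I(2\delta)\setminus I(\delta)$ meets $K^+$ in at most a $\sigma$-fraction, $\sigma<1$), and this is where the first bifurcation actually enters the proof of Proposition~\ref{zero}: for $a$ in $(a^{**},a^*]$ the manifold $W^u(P)$ is unbounded, one fixes finitely many backward iterates of a vertical segment of $W^s$ whose $W^u$-preimages escape, and by the Inclination Lemma the picture persists uniformly. Proposition~\ref{good1} is not used here; its role (long initial stretch without returns) is confined to Section~4. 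The paper's proof of Proposition~\ref{zero} is organized as follows: (i) the ``growth to a fixed scale'' Lemma~\ref{escape}, whose proof does carry out a large-deviation estimate for a stopping time in the style you sketch; (ii) iteration of it to obtain Corollary~\ref{escap}, with bounded distortion at each stopping time; (iii) application of Lemma~\ref{intersec} at each stopping time to remove a fixed proportion of $K^+$; (iv) conclusion that no point of $\omega_0\cap K^+$ is a Lebesgue density point, contradicting the Lebesgue density theorem. Your counting heuristics fit within step~(i), but the proposal supplies no substitute for steps~(iii)--(iv), and without them the argument cannot close.
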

The main step in the proof of this proposition is to show the next

\begin{lemma}{\rm (Growth to a fixed size)}\label{escape}
Let $\omega_0$ be an element of a critical partition constructed
in Section \ref{rec}, or a free segment not intersecting
$I(\delta)$ and stretching across one of the components of
$I(2\delta)\setminus I(\delta)$. If $\omega_0\cap K^+$ has
positive Lebesgue measure, there exist a collection $\mathcal Q$
of pairwise interior-disjoint curves in $\omega_0$ and a stopping
time function $S\colon\mathcal Q\to\mathbb N$ such that:
\smallskip

\noindent{\rm (a)} for a.e. $z\in \omega_0\cap K^+$, there exists
$\omega\in\mathcal Q$ containing $z$;

\noindent{\rm (b)} for each $\omega\in\mathcal Q$,
$f^{S(\omega)}\omega$ is a free segment not intersecting
$I(\delta)$ and stretching across one of the components of
$I(2\delta)\setminus I(\delta)$. The distortion of
$f^{S(\omega)}|\omega$ is uniformly bounded;

\noindent{\rm (c)} there exists $c>0$ depending only on the length
of $\omega_0$ such that for $n>0$,
\begin{equation}\label{exptail}|\{S>n\}|\leq ce^{-C n}.\end{equation}
Here, $\{S>n\}$ denotes the union of all $\omega\in\mathcal Q$
such that $S(\omega)>n$.
\end{lemma}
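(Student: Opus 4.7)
The plan is to follow the standard Benedicks--Carleson growth-to-large-scale scheme, adapted to the two--sided setting inside $R_0$, building $\mathcal{Q}$ inductively as a stopping-time tree of subcurves of $\omega_0$. First I would set up an itinerary-controlled decomposition of $\omega_0$: at each step $n$, any surviving subcurve $\omega\subset \omega_0$ is a $C^2(b)$-curve (in fact a horizontal curve in $f^n$-coordinates) such that $f^n\omega$ is either (i) free and strictly contained in $I(2\delta)\setminus I(\delta)$ or (ii) free and inside $I(\delta)$ or (iii) in a bound state inherited from a previous free return. In case (i), if $f^n\omega$ is not yet long enough to stretch across a component of $I(2\delta)\setminus I(\delta)$, then by Lemma \ref{outside} the next iterate expands it by a factor $\geq \delta e^{\lambda_0}$, so after finitely many iterates it stretches across and we place it in $\mathcal{Q}$ with $S(\omega)=n$. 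In case (ii), I would subdivide $f^n\omega$ according to the critical partition of Section \ref{cripa} relative to the binding critical point $\zeta$ (provided by Corollary \ref{cap}), yielding pieces $\omega_{k,s}$ indexed by $(k,s)$ with fold/bound periods $q\le p=\chi(k)$; by Lemma \ref{boun}(a) each $f^{p}\omega_{k,s}$ is a $C^2(b)$-curve of length $\geq e^{-4\alpha k}$, and by (e),(f) of Proposition \ref{recovery0} the free period then resumes with recovered expansion. In case (iii) we just carry the piece forward without stopping. The procedure terminates at time $S(\omega)$ once situation (i) is realized with the piece stretching across $I(2\delta)\setminus I(\delta)$.

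Property (a) is then automatic: a point $z\in \omega_0\cap K^+$ either lies in some $\omega\in\mathcal{Q}$ (stopped at a finite time) or else its orbit lies forever in the bound/free binding scheme, but in the latter case a standard measure-zero argument using Lemma \ref{boun}(b) and Proposition \ref{recovery0}(e) rules out a positive-measure collection. For property (b), the stretched-across condition is built into the stopping rule, and uniformly bounded distortion follows from the classical $C^2(b)$ distortion lemma, applied piecewise: during free stretches Lemma \ref{outside} and the $C^2(b)$-property give summable geometric estimates; during bound periods, Proposition \ref{recovery0}(c),(g) comparing $\|Df^i t(z)\|$ with $|\zeta-z|\cdot\|w_i(\zeta)\|$ gives the Koebe-type distortion bound via Lemma \ref{boun}(b), whose total contribution across the finitely many bound blocks up to time $S(\omega)$ is uniformly bounded.

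The exponential tail (c) is the main obstacle and requires the delicate large-deviation argument in the spirit of \cite{BC91,BY93}, combined with the first-bifurcation continuity at $a^*$. The measure of $\{S>n\}$ is controlled by summing, over all admissible itineraries of free returns $n_1<\cdots<n_t\le n$ with drops $r_j=-\log|f^{n_j}\zeta-z_j|$, the measure of the corresponding cylinder in $\omega_0$. Distortion from (b) plus the quadratic geometry of Lemma \ref{quadratic} give that the measure of each cylinder is bounded by $Ce^{-(r_1+\cdots+r_t)}$ times $|\omega_0|$, while the combinatorial count of itineraries with a given total depth $R=\sum r_j$ grows at most like $e^{\tau(\delta) n + C\theta\alpha^{-1} R}$ as in Section \ref{sgre}. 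The key point, inherited from parameter-side arguments via the standing hypothesis $f_a$ with $a\in \Delta$, is the lower bound $R\geq \alpha n/C$ on itineraries surviving to time $n$: without it the tail is not exponentially small. For surviving pieces this is forced by the fact that if $R$ were too small compared to $n$, then the bound-plus-free expansion estimates would already have driven $f^n\omega$ to stretch across $I(2\delta)\setminus I(\delta)$, contradicting $S(\omega)>n$. Combining these three ingredients (cylinder measure, combinatorial count, forced depth) yields $|\{S>n\}|\le c\,e^{-Cn}$. The continuity argument from $a^*$ enters to guarantee that the constants $c,C$ can be chosen uniformly for $a\in\Delta\cap(a^{**},a^*]$, because at $a=a^*$ every critical approximation is non-recurrent (Proposition \ref{good1}), so the relevant recurrence statistics are uniformly controlled in a one-sided neighborhood of $a^*$.
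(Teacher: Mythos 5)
Your overall plan---a Benedicks--Carleson stopping-time tree partitioning $\omega_0$, stopping when a piece reaches the large scale of $I(2\delta)\setminus I(\delta)$, with the tail estimate (c) driven by a large-deviation count---matches the structure of the paper's proof. But there are two genuine misalignments in the ingredients you use for part (c), and one of them is a real gap.

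First, you import the wrong combinatorial count. You estimate the number of admissible itineraries by $e^{\tau(\delta)n + C\theta\alpha^{-1}R}$, citing Section~\ref{sgre}. That count arises in the \emph{parameter-exclusion} argument and enumerates the abstract $n$-class data $(\mu_i,x_i,\nu_j,n_j,r_j,y_j)$, which includes backward-orbit addresses of critical approximations; none of that combinatorial structure is present in the phase-space decomposition here. The actual count at this stage is far simpler: once you fix the essential-return structure, the number of itineraries with $s$ free returns and $\sum p_i = R$ is $\leq 2^s\binom{R}{s}$, and $2^s\binom{R}{s}\leq e^{\beta(\delta)R}$ with $\beta(\delta)\to0$. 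Likewise, your per-cylinder measure bound $Ce^{-\sum r_j}$ should instead come from the shrinking ratios $|\omega_{i+1}|/|\omega_i|\leq |f^{n_{i+1}}\omega_{i+1}|/|f^{n_{i+1}}\omega_i|$ (bounded distortion via Lemma~\ref{izo}), with $|f^{n_i}\omega_i|\leq e^{-\lambda p_i}$ from the critical partition and $|f^{n_i+p_i}\omega_i|\geq e^{-4\alpha p_i}$ from Lemma~\ref{boun}, giving $|\omega|\leq e^{-(\lambda-3\alpha)R}$ with $R=\sum p_i$. The lower bound $R\geq \lambda n/40$ is then exactly your ``forced depth'' intuition, made precise by bounding free-orbit lengths between returns via $\delta e^{\lambda(n_{i+1}-n_i-p_i)/3}e^{-4\alpha p_i}\leq 3$ (Claim~\ref{space0}).

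Second, the closing appeal to continuity from $a^*$ and Proposition~\ref{good1} is misplaced in this lemma. Lemma~\ref{escape} is an assertion about a single fixed $f\in\{f_a\colon a\in\Delta\}$ and its constants $c,C$ come from the fixed $\lambda,\alpha,\delta$---no parameter continuity or non-recurrence statement is needed or used. The continuity-from-$a^*$ argument enters only in the next step (Lemma~\ref{intersec}), where one needs a \emph{topological} statement that a $C^2(b)$-curve stretching across a component of $I(2\delta)\setminus I(\delta)$ must hit $W^s(Q)$ with a definite proportion; that is where non-confinement of $W^u(P)$ for $a\in(a^{**},a^*)$ and the Inclination Lemma are invoked. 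Conflating these two steps obscures which ingredient actually makes the exponential tail work: it is the internal bound/free expansion mechanism of $f$ alone, not the proximity to $a^*$.
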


A large part of this section is devoted to the proof of Lemma
\ref{escape}. In Section \ref{srb} we define and describe the
combinatorics of the partition $\mathcal Q$ and the stopping time
$S$. In Section \ref{dre} we estimate the size of a curve with a
given combinatorics, and combine it with a counting argument, and
prove Lemma \ref{escape}.
 In Section \ref{sbrs} we show that stable manifolds with "good shapes"
are more or less dense. Combining this topological result with
Lemma \ref{escap} we complete the proof of Proposition \ref{zero}.

\subsection{Combinatorial structure}\label{srb}
Let $\omega_0$ be a free segment in $W^u$ as in Lemma
\ref{escape}. For each $n\geq0$, considering $n$-iterates we
construct a partition $\mathcal P_n$ of $\omega_0$, and its subset
$\mathcal Q_n$. Each element of $\mathcal P_n$ is a countable
union of elements of $\mathcal P_{n+1}$. Each element of $\mathcal
Q$ is an element of some $\mathcal Q_n$. If $\omega\in\mathcal
Q\cap\mathcal Q_n$, then $S(\omega)=n$ holds.

If $\omega_0$ is an element of a critical partition, let $p_0$
denote the bound period. Otherwise, namely $\omega_0\cap
I(\delta)=\emptyset$, let $p_0=0$. Let $n_1=\min\{n\geq p_0\colon
f^n\omega\cap I(\delta)\neq\emptyset\}$. For every $0\leq n< n_1$,
set $\mathcal P_n=\{\omega_0\}$, the trivial partition of
$\omega_{0}.$

Let $n\geq n_1$. Given $\omega\in\mathcal P_{n-1},$ $\mathcal
P_{n}|\omega$ is defined as follows. The $n$ is either {\it
cutting time} or {\it non-cutting time} of $\omega$. If $n$ is a
cutting time of $\omega$, $f^n\omega$ is cut into pieces. A
pull-back of this partition defines $\mathcal P _{n}|\omega$. If
$n$ is a {\it non-cutting time of $\omega$}, let $\mathcal
P_{n}|\omega=\{\omega\}$.

We precisely describe when $n$ is a cutting or non-cutting time of
$\omega$. If $f^n\omega\cap I(\delta)=\emptyset$, or $f^n\omega$
is bound, then $n$ is a {\it non-cutting time of $\omega$}. If
$f^n\omega\cap I(\delta)\neq\emptyset$ and $f^n\omega$ is free,
Let $\gamma$ denote the horizontal curve given by Corollary
\ref{cap}. Namely,$\gamma$ contains $f^n\omega$, and there exists
a critical point on $\gamma$. There are two mutually exclusive
cases:

\begin{itemize}
\item $\omega_0$ contains at least one element of the critical
partition $\{\gamma_{k,s}\}$ of $\gamma$. In this case, $n$ is a
{\it cutting time} of $\omega_0$. We cut $\omega\cap I(\delta)$
into pieces, by intersecting it with the elements of
$\{\gamma_{k,s}\}$. The partition elements containing the boundary
of $\omega\cap I(\delta)$ are taken together with the adjacent
ones, so that all the resultant elements contains exactly one
element of $\{\gamma_{k,s}\}$. If the component of
$\omega\setminus I(\delta)$ is $\geq\delta$ in length, then we
treat it as an element of our partition of $\omega$. Otherwise, we
take it together with the adjacent $\gamma_{k,s}$. Lemma
\ref{boun} goes through for each partition element, because it is
a union of at most $\log 2/(3\alpha)$- number of elements of the
critical partition of $\gamma$. This follows from (\ref{compte}).

\item  $\omega_0$ contains no element of
$\{\gamma_{k,s}\}$. In this case, $n$ is a non-cutting time of
$\omega$.
\end{itemize}

Let $\mathcal P_n'$ denote the collection of all elements of
$\mathcal P_n$ intersecting $K^+$.
\begin{lemma}\label{izo}
If $\omega\in\mathcal P_{n-1}'$ and $f^n\omega$ is free, then for
all $\xi,\eta\in\omega$,
$$\log\frac{\Vert Df^nt(\xi)\Vert}{\Vert Df^n
t(\eta)\Vert}\leq \frac{C}{\delta}|f^n\xi-f^n\eta|^{C\alpha}.$$ If
$f^n\xi,f^n\eta\in I(2\delta)$, then the factor $\delta$ can be
dropped.
\end{lemma}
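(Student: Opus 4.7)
The plan is a standard bounded distortion argument in the Benedicks--Carleson--Mora--Viana style. I would telescope $\log\|Df^n t(\xi)\|/\|Df^n t(\eta)\|$ along the orbit, grouping iterates into free and bound blocks and treating the two kinds of contributions separately. Bound blocks are handled by Lemma \ref{boun}(b). Free blocks are estimated via the one-step distortion bound $|\partial_x \log\|Df\,t\|| \leq C/|x|$, which follows from the form (\ref{form0}); outside $I(\delta)$ this gives a contribution of at most $C/\delta$. The crucial input for closing the sum is the exponential growth of arc lengths between consecutive free times, supplied by Lemma \ref{outside} and Proposition \ref{recovery0}(e).

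First I would let $\nu_1 < \nu_2 < \cdots < \nu_s < n$ denote the successive free return times of $\omega$ to $I(\delta)$, in the sense of Corollary \ref{cap}, with bound periods $p_k$. Since $\omega \in \mathcal P_{n-1}'$ was never cut, at each $\nu_k$ the curve $f^{\nu_k}\omega$ lies inside a bounded union (of cardinality $\leq \log 2/(3\alpha)$) of consecutive elements $\gamma_{k_j,s_j}$ of the critical partition of the horizontal curve $\gamma_k$ supplied by Corollary \ref{cap}. Applying Lemma \ref{boun}(b) elementwise gives a total bound-period contribution of order
\begin{equation*}
\sum_{k=1}^s \log\frac{\|Df^{p_k}t(f^{\nu_k}\xi)\|}{\|Df^{p_k}t(f^{\nu_k}\eta)\|} \leq C\sum_{k=1}^s |f^{\nu_k+p_k}\xi - f^{\nu_k+p_k}\eta|^{C\alpha},
\end{equation*}
while the free iterates in $[\nu_k+p_k,\nu_{k+1})$ (with $\nu_0+p_0:=0$, $\nu_{s+1}:=n$) yield
\begin{equation*}
\sum_{i\text{ free}}\log\frac{\|Df\,t(f^i\xi)\|}{\|Df\,t(f^i\eta)\|} \leq \frac{C}{\delta}\sum_{i\text{ free}}|f^i\xi - f^i\eta|.
\end{equation*}

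The core step is to collapse both sums to the terminal distance $|f^n\xi - f^n\eta|^{C\alpha}$. For this I would invoke Proposition \ref{recovery0}(e) at the end of each bound period and Lemma \ref{outside} along free segments to obtain, for any two free times $i < j \leq n$, a geometric comparison $|f^i\omega| \leq Ce^{-\lambda(j-i)/3}|f^j\omega|$. Both sums then become geometric in $n-i$ and are dominated by their terminal terms; that $|f^n\xi-f^n\eta| \leq |f^n\omega|$ is small lets the linear remainder $|f^n\xi-f^n\eta|/\delta$ be absorbed into the H\"older term $C\delta^{-1}|f^n\xi-f^n\eta|^{C\alpha}$, yielding the claimed bound.

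The main obstacle is the circularity inherent in any such distortion argument: the arc-length comparison just used requires an a priori bounded distortion, namely $|f^i\xi - f^i\eta| \leq 2|f^i\omega|$ at each free time. I would resolve this by induction on $n$, using the estimate at step $n-1$ as the provisional control, so that the improved constants produced at step $n$ close the loop. For the sharper statement when $f^n\xi, f^n\eta \in I(2\delta)$, I would observe that the last free block ends inside $I(2\delta)$: the one-step distortion at its final iterate is bounded by $C|\cdot|$ rather than $(C/\delta)|\cdot|$ (since only the \emph{passage} through $I(2\delta) \setminus I(\delta)$ forced the $\delta^{-1}$), and the earlier free iterates in that block are controlled by the exponential factor $e^{\lambda p_s/3}$ coming from the preceding bound period, which absorbs the singular $\delta^{-1}$ in the geometric summation.
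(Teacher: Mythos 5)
Your overall architecture is the same as the paper's: telescope $\log(\|Df^n t(\xi)\|/\|Df^n t(\eta)\|)$, split iterates into bound and free blocks, use Lemma \ref{boun}(b) for the bound contribution and a one-step $\log$-derivative bound of order $1/\delta$ for the free contribution, then collapse both sums to the terminal distance via an exponential arc-length comparison between free times. The paper does exactly this and writes the comparison explicitly as (\ref{choy}). So the two proofs largely coincide in plan.

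However, the ``main obstacle'' you flag is not there. You claim the arc-length comparison at a free time $k$ requires an a priori bounded-distortion input and therefore needs a closing induction on $n$. It does not: the paper obtains (\ref{choy}) directly by applying Proposition \ref{recovery0} to each bound segment and Lemma \ref{outside} to each free segment to get the pointwise lower bound $\|Df^{n-k}(z)t(z)\|\geq\delta e^{\lambda(n-k)/3}$ \emph{for every} $z\in f^k\omega$, and then uses that $f^k\omega$ and $f^n\omega$ are $C^2(b)$-curves to convert this uniform pointwise expansion into a length comparison by integration. No distortion hypothesis on $\omega$ enters, so the induction you propose is an unnecessary scaffold that obscures rather than strengthens the argument. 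You should delete that paragraph and replace it with the direct derivation of (\ref{choy}).

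For the refined statement when $f^n\xi,f^n\eta\in I(2\delta)$, the paper simply invokes Lemma \ref{outside}(b): when the orbit ends inside (or near) $I(\delta)$, the $\delta$ in the expansion lower bound, and hence in (\ref{choy}), disappears. Your alternative explanation — that the final one-step distortion estimate gains a factor because it happens inside $I(2\delta)$, with the earlier free iterates absorbed by $e^{\lambda p_s/3}$ — misidentifies the source of the $\delta^{-1}$: it is not the single last-step distortion factor but the $\delta^{-1}$ in the arc-length comparison (\ref{choy}) that must be removed, and that is precisely what Lemma \ref{outside}(b) does. Your argument as written does not touch the $\delta^{-1}$ coming from (\ref{choy}), so the last assertion would not follow from it.
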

\begin{proof}
Let $k<n$ and suppose that $f^k\omega$ is free. The time
interval $[k,n]$ is decomposed into bound and free segments.
Applying Proposition \ref{recovery0} to each bound segment and
Lemma \ref{outside} to each free segment, we have $\Vert
Df^{n-k}(z)t(z)\Vert\geq \delta e^{\frac{\lambda}{3}(n-k)}$ for
all $z\in f^{k}(\omega)$. Since $f^k\omega$ and $f^n\omega$
are $C^2(b)$, it then follows that
\begin{equation}\label{choy}|f^k\xi-f^k\eta|\leq
\delta^{-1}e^{-\frac{\lambda}{3}(n-k)}|f^n\xi-f^n\eta|.\end{equation}

Let $n_1<\cdots<n_s<n_{s+1}:=n$ denote all the free returns in the
first $n$-iterates of $\omega$, with $p_j$ the corresponding bound
period. By Lemma \ref{boun} and (\ref{choy}), $$\sum_{j=0}^s
\log\frac{\Vert Df^{p_j}t(f^{n_j}\xi)\Vert} {\Vert
Df^{p_j}t(f^{n_j}\eta)\Vert}\leq\sum_{j=0}^s
|f^{n_j+p_j}\xi-f^{n_j+p_j}\eta|^{C\alpha}\leq
\frac{C}{\delta^{C\alpha}}|f^n\xi-f^n\eta|^{C\alpha}.$$ By Lemma
\ref{outside}, $f^i\omega$ is a $C^2(b)$-curve outside of
$I(\delta)$, for $n_j+p_j\leq i<n_{j+1}$. Hence
\[\sum_{j=0}^{s}\sum_{i=n_j+p_j}^{n_{j+1}-1}\log
\frac{\Vert Dft(f^{i}\xi)\Vert}{\Vert Df t(f^{i}\eta)\Vert}
\leq\frac{C}{\delta}\sum_{j=0}^{s}\sum_{i=n_j+p_j}^{n_{j+1}-1}
|f^i\xi-f^i\eta|\leq\frac{C}{\delta}|f^n\xi-f^n\eta|.\] These two
inequalities yield the desired one. The last assertion follows
from (b) Lemma \ref{outside}.
\end{proof}

\subsection{Large deviation}\label{dre}
Let us say that $\omega\in\mathcal P_{n}'$ is an {\it escaping
element} if (i) $\omega\cap K^+\neq\emptyset$; (ii) $n$ is a
cutting time of the element of $\mathcal P_{n-1}'$ containing
$\omega$, and $f^n\omega\cap I(\delta)=\emptyset$. By
construction, if $\omega\in\mathcal P_{n}'$ is an escaping
element, then $f^n\omega$ is a free segment, not intersecting
$I(\delta)$ and stretching across one of the components of
$I(2\delta)\setminus I(\delta)$.

 Let
$\mathcal Q_n$ denote the collection of all escaping elements of
$\mathcal P_n'$ which are not contained in escaping elements in
$\bigcup_{0\leq k\leq n}\mathcal P_k'$. Define $\mathcal
Q=\bigcup_n\mathcal Q_n$. Define a stopping time function $S\colon
\mathcal Q\to\mathbb N$ by $S(\omega)=n$ for each
$\omega\in\mathcal Q_n$. Clearly, $\mathcal
P_n'\setminus\bigcup_{0\leq k\leq n}\mathcal
Q_k=\{\omega\in\mathcal Q\colon S(\omega)>n \}$ holds. We show
(\ref{exptail}). By construction, this would imply that the
elements of $\mathcal Q$ altogether cover $\omega_0\cap K^+$ up to
a set of zero Lebesgue measure, as desired in (a).

Let $n_1>0$ denote the cutting time of $\omega_0$. It is finite,
and depends only on the length of $\omega_0$. This implies that,
for $n\geq n_1$, any $\omega\in\mathcal
P_n'\setminus\bigcup_{0\leq k\leq n}\mathcal Q_k$ has an
well-defined itinerary that is described as follows. There exist a
sequence of integers $0< n_1<\cdots<n_s\leq n$ called {\it
essential free returns}, and an associated sequence
$\omega_{1}\supset\cdots\supset \omega_{s}\supset\omega$ such that
$\omega_{i}$ is the element of $\mathcal P_{n_i}'$ containing
$\omega,$ and $n_i$ is a cutting time of $\omega_{i-1}$, with
 $f^{n_i}
\omega_{i}\subset I(2\delta)$. Let $\zeta_{i}\in\mathcal C$ denote
the binding point for $f^{n_i}\omega_{i-1}$. Let $p_i$ denote the
bound period. By an {\it itinerary} of $\omega$ we mean the
sequence $(n_1,\pm p_1,\zeta_{1}),(n_2,\pm p_2,\zeta_{2}),
\cdots,(n_s,\pm p_s,\zeta_{s}),$ where $+,-$ indicates whether
$f^{n_i}\omega_{i}$ is at the right or left of $\zeta_{i}$.

From this point on we assume
\begin{equation}\label{stb}n\geq 2n_1.\end{equation}
By construction, $f^{n_{i}}\omega_{{i}}$ and
$f^{n_i+p_i}\omega_{i}$ are free segments.
 The following estimates are used in the proof:
$$|f^{n_{i}}\omega_{{i}}|\leq
e^{-\lambda p_{i}}\text{ and }|f^{n_i+p_i}\omega_{i}|\geq
e^{-4\alpha p_i}.$$ The first one follows from the definition of
the critical partition. The second one is from Lemma \ref{boun}.
Let $n_{s+1}>n$ denote the cutting time of $\omega_{s}$.
\begin{claim}\label{space0}
$n_{i+1}-n_i-p_i\leq \frac{20p_i}{\lambda}$ for every $1\leq i\leq
s$.
\end{claim}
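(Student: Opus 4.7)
The plan is to pit an exponential lower bound on the length growth of $f^k\omega_i$ over $[n_i+p_i,\,n_{i+1}]$ against a uniform upper bound on $|f^{n_{i+1}}\omega_i|$, and take logarithms. The starting length $|f^{n_i+p_i}\omega_i|\geq e^{-4\alpha p_i}$ is the second of the two length estimates recorded immediately before the claim, while the fact that $f^{n_{i+1}}\omega_i$ is free and hence a $C^2(b)$-curve (Lemma~\ref{feer}) inside the bounded region $R_0$ will bound its length by an absolute constant $C'$.

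The main step is the intermediate expansion estimate. On $[n_i+p_i,\,n_{i+1}]$ the curve $f^k\omega_i$ is free except during bound periods triggered by non-essential returns to $I(\delta)$, that is, returns that do not cut $\omega_i$, either because the segment is too short to contain an element of the critical partition or because $f^k\omega_i$ is declared bound. Applying Lemma~\ref{outside} on each free piece and Proposition~\ref{recovery0}(e),(f) across each bound piece, and using $\lambda_0=2\lambda>\lambda/3$, we will obtain
\[
\|Df^{n_{i+1}-n_i-p_i}\,t(z)\|\ \geq\ \delta\, e^{(\lambda/3)(n_{i+1}-n_i-p_i)}
\]
for every $z\in f^{n_i+p_i}\omega_i$. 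Since $f^{n_{i+1}}\omega_i$ is free, Lemma~\ref{izo} provides bounded distortion of $f^{n_{i+1}-n_i-p_i}$ along $f^{n_i+p_i}\omega_i$, and combining with the lower bound on $|f^{n_i+p_i}\omega_i|$ yields
\[
C'\ \geq\ |f^{n_{i+1}}\omega_i|\ \geq\ C^{-1}\delta\, e^{(\lambda/3)(n_{i+1}-n_i-p_i)-4\alpha p_i}.
\]
Taking logarithms gives $\tfrac{\lambda}{3}(n_{i+1}-n_i-p_i)\leq \log(CC'/\delta)+4\alpha p_i$. Since every bound period has length at least $M$, and $M$ is chosen after $\delta$, the additive constant $\log(CC'/\delta)$ will be absorbed into an arbitrarily small multiple of $p_i$; together with the smallness of $\alpha$ this yields $n_{i+1}-n_i-p_i\leq (20/\lambda)p_i$ with room to spare.

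The delicate point is the bookkeeping during the non-essential bound periods hidden inside the ``free interval'' $[n_i+p_i,n_{i+1}]$, during which $\|Df^k t(z)\|$ momentarily dips. What saves the argument is Proposition~\ref{recovery0}(e),(f): whatever loss of expansion occurs inside a bound period is recovered by its end, so the product of derivatives over all free and bound pieces retains the overall rate $\lambda/3$, at the cost of the harmless $\delta$-prefactor absorbed above.
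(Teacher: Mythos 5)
Your proof follows essentially the same strategy as the paper's: pit the lower bound $|f^{n_i+p_i}\omega_i|\geq e^{-4\alpha p_i}$, propagated by the expansion rate $\lambda/3$ across the free-plus-nonessential-bound interval $[n_i+p_i,n_{i+1}]$, against the upper bound on $|f^{n_{i+1}}\omega_i|$ (the paper uses $3$ where you use $C'$), then take logarithms and absorb the residual $\log(1/\delta)$-size constant into a multiple of $p_i$. The identification of the intermediate piece as a mix of free iterates (Lemma~\ref{outside}) and bound periods that regain their loss (Proposition~\ref{recovery0}(e),(f)) is exactly the mechanism the paper invokes.

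The one misstep is in the very last absorption step, and it involves two inaccuracies. First, the paper fixes the constants in the order $\alpha,M,\delta$ (Sect.~2.2), so $M$ is chosen \emph{before} $\delta$, not after, as you claim. Second, the additive constant $\log(CC'/\delta)$ is not absorbed into an \emph{arbitrarily small} multiple of $p_i$: the relevant fact is $p_i\geq \frac{\log(1/\delta)}{2\log 2}$ (a return into $I(\delta)$ forces the binding distance $\leq\delta$, hence a bound period of this length), which converts $\log(1/\delta)$ into the \emph{fixed} multiple $2\log 2\cdot p_i$. The final constant therefore works out to roughly $\frac{3}{\lambda}(2\log 2+5\alpha)\approx \frac{4.2}{\lambda}$, well under $\frac{20}{\lambda}$, so the claim survives — but because the target constant $20$ is generous, not because the additive term is negligible. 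If $M$ really were chosen after $\delta$ as you assert, the lower bound $p_i\geq M$ would indeed make the ratio $\log(1/\delta)/p_i$ small; since the paper's ordering is the reverse, that route is unavailable, and one must use the $\log(1/\delta)$ lower bound on $p_i$ instead.
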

\begin{proof}
Since $f^{n_{i+1}}\omega_{i}$ is also a free segment, in view of
Proposition \ref{recovery0} and Lemma \ref{outside} we have $3\geq
|f^{n_{i+1}}\omega_{i}|\geq \delta
e^{\frac{\lambda}{3}(n_{i+1}-n_i-p_i)}e^{-4\alpha p_i}.$
Rearranging gives $n_{i+1}-n_i-p_i\leq
\frac{3}{\lambda}\left(\log(1/\delta)+5\alpha p_i\right)
\leq\frac{20}{\lambda}p_i,$ where the last inequality follows from
$p_i\geq\frac{\log1/\delta}{2\log2}$.
\end{proof}
Summing the above
 inequality over all $1\leq i\leq s$ and then using (\ref{stb}), we have
\begin{equation}\label{Re}
n\leq \frac{40}{\lambda}\sum_{i=1}^sp_i.\end{equation} Write
$\omega=\omega_{s+1}$. Since $f^{n_{i+1}}\omega_{{i+1}}\subset
I(2\delta)$, the factor $\delta$ in Lemma \ref{izo} can be dropped
and
$$|\omega|\leq|\omega_s|=|\omega_{1}|\frac{|\omega_{2}|}{|\omega_{1}|}\cdots
\frac{|\omega_{s}|}{|\omega_{{s-1}}|}\leq 2^{s-1}\prod_{i=1}^{s-1}
\frac{|f^{n_{i+1}}\omega_{{i+1}}|}{|f^{n_{i+1}}\omega_{{i}}|} \leq
e^{-(\lambda-3\alpha)(p_1+\cdots+p_{s-1})}e^{-\lambda p_s}\leq
e^{-(\lambda-3\alpha)R},$$ where $R=\sum_{i=1}^s p_i$, which is
$\geq \frac{\lambda n}{40}$ by (\ref{Re}).
 Hence
$$\sum_{\omega\in\mathcal
P_n'\setminus\bigcup_{0\leq k\leq n}\mathcal
Q_k}|\omega|=\sum_{R}\sum_{\stackrel{\omega}{ p_1+\cdots+p_s=R}}|
\omega|\leq \sum_{R}\sum_s2^s\begin{pmatrix}
R\\s\end{pmatrix}e^{-(\lambda-3\alpha) R}\leq\sum_{R\geq \lambda
n/40} e^{-(\lambda-4\alpha) R}.$$ For the last inequality we have
used $s/R\leq C/\log(1/\delta)$ and
 $\left(\begin{smallmatrix}R\\s\end{smallmatrix}\right)\leq
e^{\beta(\delta)R}$, where $\beta(\delta)\to0$ as $\delta\to0,$
which follows from Stirling's formula for factorials. This
completes the proof of (\ref{exptail}) and that of Lemma
\ref{escape}. \qed

As a corollary we obtain

\begin{cor}{\rm (Abundance of stopping times)}\label{escap}
Let $\omega$ be an element of some critical partition. If
$\omega\cap K^+$ has positive Lebesgue measure, there exist a
sequence $\mathcal Q^{(1)},\mathcal Q^{(2)},\cdots$ of collections
of pairwise interior-disjoint curves in $\omega$, and a sequence
of stopping time functions $S_1,S_2\cdots,$ $S_k\colon\mathcal
Q^{(k)}\to\mathbb N$ such that:

\smallskip

\noindent {\rm (a)} for a.e. $z\in \omega\cap K^+$ there exists a
sequence $\omega^{(1)}\supset \omega^{(2)}\supset\cdots$ of curves
such that $\omega^{(k)}\in\mathcal Q^{(k)}$ for each $k\geq1$ and
 $\{z\}=\bigcap_{k\geq1}\omega^{(k)}$;

\noindent {\rm (b)}
 $0<S_1(\omega^{(1)})<S_2(\omega^{(2)})<\cdots$, and
$\log\frac{\Vert Df^{S_k(\omega^{(k)})}t (\xi)\Vert}{\Vert
Df^{S_k(\omega^{(k)})}t(\eta)\Vert}\leq C\delta^{-1}$ for all
$\xi,\eta\in \omega^{(k)}$;

\noindent {\rm (c)} $f^{S_k}\omega^{(k)}$ is a $C^2(b)$-curve,
stretching across one of the components of $I(2\delta)\setminus
I(\delta)$.
\end{cor}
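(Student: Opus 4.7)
The plan is to iterate Lemma~\ref{escape}. Set $\mathcal Q^{(1)}=\mathcal Q$ and $S_1=S$ given by Lemma~\ref{escape} applied to $\omega$, whose hypothesis is met since $\omega$ is an element of a critical partition. For the inductive step, suppose $\mathcal Q^{(k)}$ and $S_k$ are already constructed. For each $\omega^{(k)}\in\mathcal Q^{(k)}$ with $|\omega^{(k)}\cap K^+|>0$, the image $f^{S_k(\omega^{(k)})}\omega^{(k)}$ is, by Lemma~\ref{escape}(b), a free segment avoiding $I(\delta)$ and stretching across a component of $I(2\delta)\setminus I(\delta)$---exactly the second admissible form of the initial curve in Lemma~\ref{escape}. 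Apply Lemma~\ref{escape} to this segment, obtaining a partition into pairwise interior-disjoint curves with an associated stopping time $T$; pull each such curve back by $f^{-S_k(\omega^{(k)})}$ to obtain elements of $\mathcal Q^{(k+1)}$ contained in $\omega^{(k)}$, and set $S_{k+1}=S_k+T$. Since the new cutting operation requires the fresh image to first return to $I(\delta)$ and then escape, $T\ge 1$ and hence $S_k<S_{k+1}$.

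For (a), Lemma~\ref{escape}(a) guarantees at each stage that the newly constructed elements cover $\omega^{(k)}\cap K^+$ up to a Lebesgue null set. Taking the countable union of these null sets over all $k$ and all parent pieces, for a.e.\ $z\in\omega\cap K^+$ there is a nested sequence $\omega^{(1)}\supset\omega^{(2)}\supset\cdots$ with $z\in\omega^{(k)}\in\mathcal Q^{(k)}$ for every $k$. To conclude $\bigcap_k\omega^{(k)}=\{z\}$, observe that $f^{S_k}\omega^{(k)}$ has length bounded by an absolute constant (it stretches across a fixed component of $I(2\delta)\setminus I(\delta)$), while decomposing the orbit $0\to S_k$ into free and bound segments and applying Proposition~\ref{recovery0}(e) and Lemma~\ref{outside} yields $\|Df^{S_k}t(z)\|\ge\delta e^{\lambda S_k/3}$. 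Since $S_k\to\infty$, the length $|\omega^{(k)}|$ tends to zero.

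For the distortion bound in (b), decompose
\[
\log\frac{\|Df^{S_k}t(\xi)\|}{\|Df^{S_k}t(\eta)\|}=\sum_{j=0}^{k-1}\log\frac{\|Df^{S_{j+1}-S_j}t(f^{S_j}\xi)\|}{\|Df^{S_{j+1}-S_j}t(f^{S_j}\eta)\|}
\]
and apply Lemma~\ref{izo} to each summand on the curve $f^{S_j}\omega^{(j+1)}\subset f^{S_j}\omega^{(j)}$; this bounds each term by $(C/\delta)|f^{S_{j+1}}\xi-f^{S_{j+1}}\eta|^{C\alpha}$. Backward contraction from time $S_k$ to time $S_{j+1}$ along the expanding orbit then gives $|f^{S_{j+1}}\omega^{(k)}|\le C\delta^{-1}e^{-\lambda(S_k-S_{j+1})/3}$, so the resulting geometric series sums to at most $C/\delta$ uniformly in $k$. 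Part (c) is then immediate from Lemma~\ref{escape}(b) together with Lemma~\ref{feer}.

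The main obstacle is ensuring that the distortion does \emph{not} accumulate with the number of resets. The argument above succeeds because, at each level, the distortion contribution is controlled by the \emph{future} diameter of the piece at its stopping time, and these diameters shrink geometrically backwards thanks to the very expansion that also forces $|\omega^{(k)}|\to 0$; the same mechanism therefore underlies both (a) and (b).
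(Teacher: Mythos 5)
Your proposal is correct and follows essentially the same route as the paper: iterate Lemma~\ref{escape}, set $S_{k+1}=S_k+T$ where $T$ is the new stopping time on $f^{S_k}\omega^{(k)}$, and invoke Lemma~\ref{izo} for the distortion. The paper's proof is terse here (``The bounded distortion follows from Lemma~\ref{izo}.'' and the singleton property of $\bigcap_k\omega^{(k)}$ is left unmentioned); you fill in precisely the two points a careful reader would want: the telescoping sum $\sum_j \log(\|Df^{S_{j+1}-S_j}t(f^{S_j}\xi)\|/\|Df^{S_{j+1}-S_j}t(f^{S_j}\eta)\|)$ together with the backward-contraction estimate $|f^{S_{j+1}}\omega^{(k)}|\lesssim e^{-\lambda(S_k-S_{j+1})/3}$ to show the distortion contributions decay geometrically and do not accumulate across resets, and the expansion bound $\|Df^{S_k}t\|\gtrsim \delta e^{\lambda S_k/3}$ to show $|\omega^{(k)}|\to 0$. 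These are exactly the observations needed and your reasoning for them is sound.
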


\begin{proof}
Let $\mathcal Q$, $S$ be as in Lemma \ref{escape}, replacing
$\omega_0$ by $\omega$, and define $\mathcal Q_1=\mathcal Q$,
$S_1=S.$ Given $\mathcal Q_k$ and $S_{k}$, for each
$\omega\in\mathcal Q_k$ define a partition $\mathcal Q'$ of
$f^{S_{k}(\omega)}\omega$ and a stopping time function
$S'\colon\mathcal Q'\to\mathbb N$, replacing $\omega_0$ in Lemma
\ref{escape} by $f^{S_{k}(\omega)}\omega$. This defines $\mathcal
Q_{k+1}$ in the obvious way. For $\omega'\in\mathcal Q_{k+1}$,
define
$S_{k+1}(\omega')=S_{k}(\omega)+S'(f^{S_{k}(\omega)}\omega'),$ and
so on. The bounded distortion follows from Lemma \ref{izo}.
\end{proof}

\subsection{Proof of Proposition \ref{zero}}\label{sbrs}
The next lemma relies on a continuity argument within a small
parameter range containing the first bifurcation parameter $a^*$,
and is not valid for the parameter ranges treated in
 \cite{BC91,MV93,WY01}.
\begin{lemma}\label{intersec}
There exist $\varepsilon_1\in(0,a^*-a^{**})$ and
 $\sigma\in(0,1)$ such that for any $a\in [a^*-\varepsilon_1,
a^*]$ and any $C^2(b)$-curve $\gamma$ in $W^u$ stretching across
one component of $I(2\delta)\setminus I(\delta)$, $|\gamma\cap
K^+|\leq\sigma|\gamma|$.
\end{lemma}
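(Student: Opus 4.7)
I will argue by contradiction, using a compactness argument on the curves $\gamma$ together with the upper semi-continuity of $K_a^+$ in the parameter $a$. The dynamical input specific to the first bifurcation is that $W^u(a^*)$ is globally unbounded, with a portion lying outside $R_0$; combined with the non-recurrence of critical approximations at $a^*$ (Proposition~\ref{good1}), this forces sufficiently long forward iterates of any $C^2(b)$-curve in $W^u(a^*)$ to reach the exterior of $R_0$.

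Suppose the conclusion fails: pick sequences $a_n\in(a^{**},a^*]$ with $a_n\to a^*$ and $C^2(b)$-curves $\gamma_n\subset W^u(a_n)$, each stretching across one component of $I(2\delta)\setminus I(\delta)$, with $|\gamma_n\cap K^+(a_n)|/|\gamma_n|\to 1$. The $C^2(b)$ property imposes uniform bounds on slopes and curvatures, and the $\gamma_n$ all lie in $\overline{I(2\delta)}$, so by Arzela--Ascoli a subsequence converges in $C^2$ to a $C^2(b)$-curve $\gamma^*\subset W^u(a^*)$ that still stretches across one component of $I(2\delta)\setminus I(\delta)$; in particular $|\gamma_n|\to|\gamma^*|$.

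The heart of the argument is to produce an integer $n_0\ge 1$ and an open arc $V\subset\gamma^*$ of positive length with $f_{a^*}^{n_0}(V)\cap R_0=\emptyset$. Iterate $\gamma^*$ under $f_{a^*}$: by Lemma~\ref{outside} the length of $f_{a^*}^n\gamma^*$ grows geometrically while this curve stays outside $I(\delta)$, and every return to $I(\delta)$ is governed by the binding procedure of Section~\ref{algo} and Proposition~\ref{recovery0}, which restores regularity, exponential growth and distortion bounds at the end of each bound period. The non-recurrence provided by Proposition~\ref{good1} excludes any pathological self-accumulation, so $|f_{a^*}^n\gamma^*|\to\infty$; since $W^u(a^*)$ inherits an unbounded tail lying outside $R_0$ from the unbounded $W^u(a)$ for $a\in(a^{**},a^*)$ (by continuous dependence of invariant manifolds on the parameter), for some finite $n_0$ the curve $f_{a^*}^{n_0}\gamma^*$ must meet $R_0^c$. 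Openness of $R_0^c$ and continuity of $f_{a^*}^{n_0}$ then pull back to give the desired open arc $V\subset\gamma^*$.

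Setting $c=|V|/2>0$, joint continuity of $(a,z)\mapsto f_a^{n_0}z$ together with $\gamma_n\to\gamma^*$ in $C^2$ and openness of $R_0^c$ yields, for all large $n$, an arc $V_n\subset\gamma_n$ with $|V_n|\ge c$ and $f_{a_n}^{n_0}(V_n)\cap R_0=\emptyset$. By the claim at the start of Section~2.1 (identifying $K_a^+\cap R_0$ with $\bigcap_{k\ge 0}f_a^{-k}R_0$), together with the uniform hyperbolicity on $D_0$ (which gives $|K_a^+\cap D_0|=0$), we conclude that $V_n\cap K^+(a_n)$ has zero length, hence $|\gamma_n\cap K^+(a_n)|\le |\gamma_n|-c$, contradicting $|\gamma_n\cap K^+(a_n)|/|\gamma_n|\to 1$. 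The main obstacle is the escape step at $a^*$: since the tangency at $a^*$ is a single quadratic contact, the ``escape window'' of $W^u$ that is open for $a\in(a^{**},a^*)$ could degenerate at $a^*$, and extracting a genuine open escaping arc requires combining global unboundedness of $W^u(a^*)$ with the expansion and distortion control of Sections~2--5 to ensure iterates of $\gamma^*$ actually traverse the unbounded tail rather than being trapped near the tangency.
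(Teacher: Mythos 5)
Your proposal takes a genuinely different route from the paper (compactness plus contradiction at $a^*$, versus the paper's direct perturbation argument), but it has a real gap at precisely the step you flag as "the main obstacle", and that gap is not inessential. The inference from $|f_{a^*}^n\gamma^*|\to\infty$ to the existence of an escaping arc is false as stated: a connected arc of $W^u$ whose iterated length grows without bound can stay inside the compact region $R_0$ indefinitely by folding, exactly as happens in a horseshoe. Expansion, binding, and non-recurrence of critical approximations control regularity and length growth of $f^n\gamma^*$; none of them control the global position of that curve relative to $\partial R_0$, so they cannot by themselves produce the open arc $V$ with $f^{n_0}_{a^*}(V)\cap R_0=\emptyset$.

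The mechanism the paper uses, and which is what is actually needed, is topological rather than metric: a finite collection of \emph{vertical} pieces $l_0,\dots,l_{N_0}\subset W^s(P)$ stretching across $R_0$ is constructed by pulling back $W^s_{\mathrm{loc}}(P)$, and the placement of these pieces (so that every $C^2(b)$-curve stretching across a component of $I(2\delta)\setminus I(\delta)$ hits one of them in its middle third) is guaranteed by the elementary 1D fact that the preimages of the repelling fixed point $1/2$ of $x\mapsto 1-2x^2$ are dense in $[-1,1]$ and bounded away from $0$. Once $\gamma$ transversally crosses some $l_i\subset W^s(P)$ at a definite interior point, the Inclination Lemma, together with $W^u(P)\not\subset[-2,2]^2$ (the meaning of $a>a^{**}$), forces a fixed fraction of $\gamma$ near that crossing to shadow the unbounded tail of $W^u(P)$ and leave $R_0$; this gives the $\sigma$ uniformly, and robustness of the whole picture in $a$ supplies $\varepsilon_1$. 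In your proposal you mention the unbounded tail and the Inclination Lemma in passing, but you never connect $\gamma^*$ to $W^s(P)$ by a transverse intersection, which is the only thing the Inclination Lemma operates on; without that, the escape step does not go through.

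A secondary issue is the compactness step itself: a $C^2$-limit of $C^2(b)$-curves $\gamma_n\subset W^u(a_n)$ lies only in $\overline{W^u(a^*)}$, which is a lamination rather than a single leaf, so identifying $\gamma^*$ as an arc of $W^u(a^*)$ needs justification. This could likely be patched (the transverse-crossing-of-$W^s(P)$ argument does not actually require $\gamma^*$ to be a genuine unstable leaf), but by contrast the missing escape step cannot be repaired without importing essentially the paper's argument, at which point the compactness reduction buys nothing.
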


We finish the proof of Proposition \ref{zero} assuming the
conclusion of the lemma. Assume $|W^u\cap K^+|>0$. Then one can
choose an element $\omega$ of some critical partition for which
$|\omega\cap K^+|>0$ holds. By Corollary \ref{escap} and Lemma
\ref{intersec}, for a.e. $z\in \omega\cap K^+$ there exists an
arbitrarily small neighborhood of $z$ in $W^u$ in which the set of
points which eventually escape from $R_0$ has a definite
proportion. It follows that $z$ is not a Lebesgue density point of
$\omega\cap K^+$.
 This yields a contradiction to
the Lebesgue density theorem.
\medskip

It is left to prove Lemma \ref{intersec}. The following elementary
observation is used, on the quadratic map
$g_2\colon[-1,1]\circlearrowleft$, $g_2(x)=1-2x^2$: $1/2$ is a
repelling fixed point, and the set of preimages $\bigcup_{n\geq0}
g_2^{-n}(1/2)$ is dense in $[-1,1]$, not containing $0$.

By a vertical curve we mean a curve such that the slopes of its tangent
directions are $\gg1$.
Let $l_0\subset W^s(Q)$ denote the segment in $W^u(P)$ which contains
$P$ and stretches across $R_0$. Clearly, $l_0$ is a vertical curve.
Iterating $l_0$ backward,
it is possible to choose
an integer $N_0$ independent of $b$, and to define a sequence
$l_{0},l_{1},\cdots,l_{N_0}$ of vertical curves in $W^s(P)$
which stretch across $R_0$, and with the property that
any $C^2(b)$-curve as in the statement of the lemma intersects
one of them in its middle third.
This
picture persists for all $a\in(a^{**},a^*)$ sufficiently close to
$a^*$. By the definition of $a^{**}$, $W^u(P)$ is not contained in
$[-2,2]^2$. By Inclination lemma, the conclusion holds. \qed

\section{Dynamics of Lebesgue typical points}
In this last section we show $\bigcap_{n\geq0} f^{-n}R_0$ has zero
Lebesgue measure, and completes the proof of the theorem. The main
step is a statistical argument, which enables us to show that {\it
the occurrence of infinitely many close returns is improbable}.
This sort of argument has been successfully undertaken by
Benedicks and Viana \cite{BV01} in the attractor context. We adapt
it to our non-attracting context, with the help of the geometric
structure of critical regions in Proposition \ref{geo}. In
addition, we dispense with any assumption on the Jacobian, which
was assumed in \cite{BV01,WY01}.

As a preliminary step, in Sect.\ref{controlp} we construct a
family long stable leaves near each critical point. In
Sect.\ref{shrink}, using these leaves we define a certain region,
and introduce {\it close return time}, as a kind of a first return 
time to this region. In Sect.\ref{clodec} we show
that the theorem follows from Proposition \ref{zeromeasure}, which
states that the occurrence of infinitely many close return times
is improbable.

For the proof of Proposition \ref{zeromeasure}, based on
preliminary geometric constructions in Sect.\ref{partition},
\ref{symb}, we construct in Sect.\ref{itin} an infinite nested
sequence $\Omega_0\supset\Omega_1\supset\cdots$. Each $\Omega_k$
is decomposed into rectangles, bordered by stable leaves and
pieces of $W^u$ and denoted by $R_{i_0\cdots i_k}$.  The sequence
$(i_0,\cdots,i_k)$ records the recurrent behavior of the iterates
of the rectangle to the critical set. Combining these geometric
ingredients with key analytic estimates in
Sect.\ref{uns},\ref{aread}, we complete the proof of Proposition
\ref{zeromeasure} in Sect.\ref{ch}.


\subsection{Construction of long stable leaves}\label{controlp}
For the purpose of stating the next proposition, we introduce a
{\it truncated distance $d_{\mathcal C}(\cdot)$ to $\mathcal C$}
as follows. Let $z\in W^u\setminus \bigcup_{n>0}f^n\mathcal C$ and
suppose that $z$ is free. If $z\notin I(\delta)$, then let
$d_{\mathcal C}(z)=1$. Otherwise, let $\zeta\in\mathcal C$ denote
the binding point for $z$ and let $d_{\mathcal C}(z)=|z-\zeta|$.
If $z$ is bound, then $d_{\mathcal C}(z)$ is undefined. For a free
segment $\omega$, let $d_{\mathcal
C}(\omega)=\min_{z\in\omega}d_{\mathcal C}(z)$.

The next proposition indicates the existence of a family of long
stable leaves near each critical value. In addition, these leaves
have a slow recurrence property to $\mathcal C$.

\begin{prop}\label{controlpoint}{\rm (Long stable leaves through slowly
recurrent points)} Let $\zeta$ be a critical point on a free
segment $\gamma$. For each element $\omega_0$ of the critical
partition of $\gamma$ there exists $z\in \omega_0$ such that
$d_{\mathcal C}(f^{n}z) \geq e^{-5\alpha n}$ holds for every $n>0$
such that $f^nz$ is free. In addition, the long stable leaf
through $fz$ exists.
\end{prop}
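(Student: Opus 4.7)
The plan is to produce $z$ by excluding from $\omega_0$ those points whose forward orbit approaches $\mathcal{C}$ too quickly, and then invoke Lemma \ref{leaf}. For each integer $n\geq 1$ set
\[
B_n=\{\,z\in\omega_0:\ f^nz\text{ is free and }d_{\mathcal{C}}(f^nz)<e^{-5\alpha n}\,\}.
\]
The goal is to prove $\bigl|\bigcup_{n\geq 1}B_n\bigr|<|\omega_0|$, so that any surviving $z\in\omega_0\setminus\bigcup_{n\geq 1}B_n$ automatically satisfies the slow-recurrence condition stated in the proposition. Existence of the long stable leaf through $fz$ will then follow from the expansion rate along the orbit of $fz$ forced by slow recurrence.

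To estimate $|B_n|$, I would refine $\omega_0$ inductively by the iterated critical partition of Section \ref{srb}: a point $z\in\omega_0$ can contribute to $B_n$ only if at time $n$ it lies in a surviving piece $\omega^{(n)}\subset\omega_0$ whose image $f^n\omega^{(n)}$ is a free $C^2(b)$-curve lying in an element of the critical partition of some horizontal curve through a critical point $\zeta_n\in\mathcal{C}$. By Corollary \ref{escap}(b) the distortion of $f^n$ on $\omega^{(n)}$ is uniformly bounded, so
\[
\frac{|B_n\cap\omega^{(n)}|}{|\omega^{(n)}|}\ \leq\ C\,\frac{\bigl|\,f^n\omega^{(n)}\cap B(\zeta_n,e^{-5\alpha n})\,\bigr|}{|f^n\omega^{(n)}|}\ \leq\ C\,e^{-5\alpha n}\,|f^n\omega^{(n)}|^{-1}.
\]
Lemma \ref{boun}(a), applied at the most recent bound-period recovery, gives $|f^n\omega^{(n)}|\geq e^{-4\alpha n}$, while a counting argument based on Proposition \ref{geo} bounds the number of surviving $\omega^{(n)}$'s at time $n$ by $e^{\tau(\alpha)n}$, with $\tau(\alpha)\to 0$ as $\alpha\to 0$. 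Choosing $\alpha$ small thus yields $|B_n|\leq C|\omega_0|e^{-\alpha n/2}$, a geometrically summable estimate whose sum is strictly less than $|\omega_0|$ once one uses that the first free return time of $\omega_0$ is at least $M$.

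For any surviving $z$, Proposition \ref{recovery0}(e) gives at each free return time $n_k$ of $z$ a post-bound expansion of $t(z)$ by at least $e^{5\alpha n_k(1-\alpha/\log C_0)}$; the associated bound period has length $\leq(15\alpha/\lambda)n_k$ by Proposition \ref{recovery0}(a); and Lemma \ref{outside} contributes expansion at rate $\geq e^{\lambda_0}$ on free stretches. Combining these yields $\|Df^nt(fz)\|\geq\kappa^n$ for all $n\geq 1$ with $\kappa\geq e^{\lambda/4}$, far above the threshold $\delta^{15}$ required by Lemma \ref{leaf}; that lemma then produces the long stable leaf through $fz$, completing the argument. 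The main technical obstacle will be the counting of critical points at small scales entering $\tau(\alpha)$: Proposition \ref{geo} only asserts finiteness of components of $\mathcal{C}^{(k)}$, so the quantitative bound $\tau(\alpha)\to 0$ requires a careful inductive argument that exploits the horizontal-scale bound $\kappa_0^k$ of $\mathcal{C}^{(k)}$ in (S1) together with the single-component nesting in (S3), so that in any $e^{-5\alpha n}$-neighborhood only the $\Xi^{(k)}$ with $k\lesssim \alpha n/\log(1/\kappa_0)$ are visible and their cardinalities grow sufficiently slowly.
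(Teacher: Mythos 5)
Your strategy differs fundamentally from the paper's and leaves a genuine gap in precisely the place you flag yourself. You try to exclude the bad set $\bigcup_n B_n$ by a measure/counting estimate and then invoke a large-deviation style bound $|B_n|\le C|\omega_0|e^{-\alpha n/2}$, whose proof rests on an unproven cardinality bound $\sharp\{\text{surviving pieces at time }n\}\le e^{\tau(\alpha)n}$ with $\tau(\alpha)\to0$. You explicitly call this ``the main technical obstacle'' and sketch that it ``requires a careful inductive argument,'' but you do not give it, and it is not a minor detail: counting critical approximations accumulating at small scales is exactly the sort of two-dimensional subtlety that makes the H\'enon analysis hard. Without it, the sum $\sum_n|B_n|$ has no reason to be $<|\omega_0|$.

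The paper avoids measure exclusion and counting entirely. Its Step~1 is a greedy inductive construction: one maintains a nested sequence $\omega_0\supset\omega_1\supset\omega_2\supset\cdots$ with $\omega_k\in\mathcal P_{n_k}$ such that the slow-recurrence bound holds along the whole orbit up to time $n_k$. At the next cutting time $n_{k+1}$ the image $f^{n_{k+1}}\omega_k$ is a free segment of length $\ge e^{-4\alpha n_{k+1}}$ (Lemma \ref{boun}(a) plus Lemma \ref{outside}), which is much longer than $2\,e^{-5\alpha n_{k+1}}$. So one simply picks an element $\omega_{k+1}\in\mathcal P_{n_{k+1}}|\omega_k$ whose image avoids the $e^{-5\alpha n_{k+1}}$-ball around the new critical point. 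No measure comparison and no counting is needed at all -- it is a Cantor-type construction in which the available room always dwarfs the forbidden neighbourhood. Your proposal also does not address how $d_{\mathcal C}$ is controlled at the \emph{inessential} free returns inside $(n_k+p_k,n_{k+1})$; the paper handles these via Sublemma \ref{inessential}, showing that the total bound-time accumulated there is $\le\frac{13\alpha p_k}{\lambda}$, which forces those returns to be even shallower than $e^{-5\alpha n}$. That ingredient is missing from your outline. Your final invocation of Lemma \ref{leaf} to produce the long stable leaf once expansion is in hand is correct and matches the paper's Step~3, but the construction of $z$ itself is where the argument must stand, and there your route is both harder than necessary and incomplete.
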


\begin{proof}
We divide the proof into three steps. First, we prove the
existence of $z\in\omega_0$ with the property as in the first
statement. Next, we give angle estimates. Finally, we show the
existence of long stable leaves through $fz$.
\medskip

\noindent{\it Step1. Construction of slowly recurrent points.} Let
$n_0=0$. Let $p_0$ denote the bound period of $\omega_0$. Let
$\mathcal P_0,\mathcal P_1,\mathcal P_2,\cdots$ denote the
sequence of partitions of $\omega_0$ constructed in the same way
as in Sect.\ref{srb}. We construct a (possibly finite) sequence
$p_0\leq n_1<n_2<\cdots$ and a nested sequence
$\omega_0\supset\omega_1\supset\omega_2\supset\cdots$ of curves
for which the following holds for every $k\geq0$. Obviously, any
point in the intersection $\bigcap_{k\geq0} \omega_k$ satisfies
the desired property:
\medskip

\noindent{$\bullet$} $\omega_k\in\mathcal P_{n_k}$, and for $0\leq
n\leq n_k$ such that $f^n\omega_k$ is free, $d_{\mathcal
C}(f^n\omega_k)\geq e^{-5\alpha n}$;

\noindent{$\bullet$} $n_{k+1}$ is a cutting time of $\omega_k$. If
there exists no cutting time of $\omega_k$, then $n_{k+1}$ is
undefined. 
\medskip

\noindent

The construction of the sequence is by induction that is described
as follows. Given $n_k$, $\omega_{k}\in\mathcal P_{n_k}$ such that
$f^{n_k}\omega_k\subset I(2\delta)$, with a bound period $p_k$,
define $n_{k+1}\geq n_k+p_k$ to be the cutting time of $\omega_k$.
Then $f^{n_{k+1}}\omega_k$ is a free segment of length $\geq
e^{-5\alpha n_{k+1}}$. Indeed, by Lemma \ref{boun},
$f^{n_k+p_k}\omega_k$ is a free segment of length $\geq
e^{-4\alpha p_k}$. Using Lemma \ref{outside} from time $n_k+p_k$
to $n_{k+1}$, $|f^{n_{k+1}}\omega_k|\geq |
f^{n_k+p_k}\omega_k|\geq  e^{-4\alpha n_{k+1}}.$ Hence, it is
possible to take an element $\omega_{k+1}\in\mathcal P_{n_{k+1}}$
such that $\omega_{k+1}\subset\omega_k$,
$f^{n_{k+1}}\omega_{k+1}\subset I(2\delta)$ and $d_{\mathcal
C}(f^{n_{k+1}}\omega_{k+1})\geq e^{-5\alpha n_{k+1}}$. To recover
the assumption of the induction, it suffices to show $d_{\mathcal
C}(f^n\omega_{k})\geq e^{-5\alpha n}$ for $n_k+p_k\leq n<n_{k+1}$
such that $f^n\omega_{k}$ is free. If $f^n\omega_k\cap
I(\delta)=\emptyset,$ then $d_{\mathcal C}(f^{n}\omega_{k})=1 \geq
e^{-5\alpha n}$. To treat the case where $n$ is a free return
time, we need
\begin{sublemma}\label{inessential}
Let $\tilde n_1<\cdots<\tilde n_s$ denote all the free return
times of $\omega_k$ in $[n_k+p_k,n_{k+1})$, with $\tilde
p_1,\cdots,\tilde p_s$ the corresponding bound periods. Then
$$\tilde p_1+\cdots+\tilde p_s\leq\frac{13\alpha p_k }{\lambda}.$$
\end{sublemma}

\begin{proof}
Splitting the time interval $[n_k+p_k,n_{k+1})$ into bound and
free segments, for all $z\in f^{n_k+p_k}\omega_k$ we have $\Vert
Df^{n_{k+1}-n_k-p_k}t(z)\Vert\geq  e^{\frac{\lambda}{3}(\tilde
p_1+\cdots+\tilde p_s)}.$ Combining this with
$|f^{n_k+p_k}\omega_k|\geq e^{-4\alpha p_k}$ from Lemma
\ref{boun}, we get $3>|f^{n_{k+1}}\omega_{k}|\geq
e^{\frac{\lambda}{3}(\tilde p_1+\cdots+\tilde p_s)-4\alpha p_k}.$
The first inequality is due to the elementary fact that the
forward iterates of $\omega_k$ cannot grow to a free segment of
length $>3$ without intersecting $I(\delta)$. Taking logs we
obtain the desired inequality.
\end{proof}

For each $\tilde n_i$ we have $d_{\mathcal C}(f^{\tilde
n_i}\omega)\geq e^{-\frac{\log C_0}{3}\tilde p_i}\geq
e^{-\frac{5\alpha \log C_0 }{\lambda}p_k}\geq e^{-5\alpha n}.$ The
last inequality follows from $p_k\leq\frac{3\alpha n_k }{\lambda}$
as in (a) Proposition \ref{recovery0}.
\medskip

\noindent{\it Step2. Angle estimates.} We introduce a useful
language along the way.

\begin{definition}
{\rm Let $z\in I(\delta)\setminus\mathcal C$. A tangent vector $v$
at $z$ is in {\it tangential position} relative to
$\zeta\in\mathcal C$ if there exists a horizontal curve $\gamma$
which is tangent to both $v$ and
$t(\zeta)$.}
\end{definition}

Let $z\in\omega_0$ have the property in Lemma \ref{controlpoint}.
Let $\theta_n= {\rm angle}(Df^{n}t(z),w_n(z))$. Let
$0=:n_0<n_1<n_2<\cdots$ denote all the free return times of $z$,
with $\zeta_0,\zeta_1,\zeta_2,\cdots$ the corresponding binding
points. The next lemma allows us to use $\zeta_k$ as a binding
point for $w_{n_k}(\zeta)$.
\begin{lemma}\label{behavior}
For every free return time $n_k>0$ of $z$, 
$\theta_{n_k}\leq (Cb)^\frac{n_k}{3}$ holds. In addition,
$w_{n_k}(z)$ is in tangential position relative to $\zeta_k$.
\end{lemma}

\begin{proof}
Let $p_k$ denote the binding period for $n_k$. The next three
angle estimates follow from [\cite{T1} Sublemma 3.2.]:
\begin{equation}\label{formu}\theta_{p_0}\leq \theta_{1}
(Cb)^{(p_0-1)/2} \frac{\Vert Dft(z)\Vert}{\Vert Df^{p_0}t(z)\Vert}
\frac{\|w_1(z)\|}{\Vert
w_{p_0}(z)\Vert}\leq(Cb)^{p_0/3};\end{equation}
\begin{equation}\label{formu1}\theta_{n_{k+1}}\leq \theta_{n_k+p_k}
(Cb)^{(n_{k+1}-n_k-p_k)/2} \frac{\Vert
Df^{n_k+p_k}t(z)\Vert}{\Vert Df^{n_{k+1}}t(z)\Vert} \frac{\Vert
w_{n_{k}+p_k}(z)\Vert}{\Vert w_{n_{k+1}}(z)\Vert}\quad {\rm for}\
k\geq0;\end{equation}
\begin{equation}\label{formu2}\theta_{n_{k}+p_k}\leq \theta_{n_k}
(Cb)^{p_k/2} \frac{\Vert Df^{n_k}t(z)\Vert}{\Vert
Df^{n_{k}+p_k}t(z)\Vert} \frac{\Vert w_{n_{k}}(z)\Vert}{\Vert
w_{n_{k}+p_k}(z)\Vert}\quad {\rm  for} \ k\geq1.\end{equation}
Using these, we prove the statement by induction on $k$. Take
$k=0$ in (\ref{formu1}). By (\ref{formu}) and Lemma \ref{outside},
the two fractions of the right-hand side are $\leq 1/\delta$ and
$\theta_{n_{1}}\leq (Cb)^{n_{1}/3}$ holds. This estimate and the
distance bound in Lemma \ref{controlpoint} implies that
$w_{n_1}(z)$ is in tangential position relative to $\zeta_1$.
Then, taking $k=1$ in (\ref{formu2}) we get
$\theta_{n_{1}+p_1}\leq(Cb)^{(n_{1}+p_1)/3} $. Taking $k=2$ in
(\ref{formu1}) we get $\theta_{n_{2}}\leq(Cb)^{n_{2}/3} $, and
that $w_{n_2}(z)$ is in tangential position relative to $\zeta_2$,
and so on.
\end{proof}

\noindent{\it Step 3. The existence of long stable leaves.} In
view of Lemma \ref{leaf}, it suffices to show that $fz$ is
expanding. In the next lemma, we assume
$1<\sigma<e^{\frac{\lambda}{4}}$.
\begin{lemma}\label{conduire}
For every $n\geq1$, $\Vert w_n(z)\Vert\geq \sigma^{n-1}.$
\end{lemma}
\noindent{\it Proof.} The inequality for $1\leq n\leq n(\lambda)$,
where $n(\lambda)<p_0$ only depends on $\lambda$, follows from the
closeness of our map $f$ to $(x,0)\to(1-2x^2,0)$. For
$n(\lambda)\leq n\leq p_0$, it follows from the exponential growth
and the bounded distortion along the orbit of $\zeta$. Let
$n>p_0$. Suppose $f^{n}z$ is free. Applying Lemma \ref{outside} to
each free segment and Proposition \ref{recovery} to each bound
segment, we have $\Vert w_n(z)\Vert\geq\delta
e^{\frac{\lambda}{3}(n-1)}$. If $n\leq n_1$, then the factor
$\delta$ can be dropped by Lemma \ref{outside} If $n>n_1$, then
using $\delta> d_{\mathcal C}(f^{n_1}z)\geq e^{-5\alpha n_1}$ we
have $\Vert w_n(z)\Vert\geq\delta e^{\frac{\lambda}{3}(n-1)}\geq
d_{\mathcal C}(f^{n_1}z)\Vert w_n(z)\Vert\geq
e^{(\frac{\lambda}{3}-5\alpha)(n-1)}\geq \sigma^{n-1}.$ If
$f^{n}z$ is bound, namely $n_k<n<n_k+p_k$ for some $n_k$, then
$\Vert w_n(z)\Vert\geq 5^{-(n_k+p_k-n)} \Vert
w_{n_k+p_k}(z)\Vert\geq
5^{-p_k}e^{\frac{\lambda}{3}(n_k+p_k-1)}\geq 5^{-\frac{3\alpha
n}{\lambda}}e^{\frac{\lambda}{3}n}\geq \sigma^{n-1}.$ For the
third inequality we have used $p_k\leq \frac{3\alpha
n_k}{\lambda}$ as in (a) Proposition \ref{recovery0}. This
completes the proof of Lemma \ref{conduire} and hence that of
Proposition \ref{controlpoint}.
\end{proof}

\subsection{Close return time}\label{shrink}
Let $\mathcal Q^{(k)}$ denote any component of $\mathcal C^{(k)}$.
Let $\zeta_0,\zeta_1$ denote the critical points on the horizontal
boundaries of $\mathcal Q^{(k)}$. Take curves $\gamma_0$,
$\gamma_1$ of length $\delta^{\frac{k}{10}}$ in the horizontal
boundaries of $\mathcal Q^{(k)}$ so that:  (i) $\gamma_0$ (resp.
$\gamma_1$) contains $\zeta_0$ (resp. $\zeta_1$) within $\mathcal
O(b^{\frac{k}{4}})$ of the midpoint of it; (ii) $\gamma_0$,
$\gamma_1$ are connected by two vertical lines. Let $\mathcal
B^{(k)}\subset\mathcal Q^{(k)}$ denote the region bordered by
$\gamma_0$ is connected to $\gamma_1$ by the two vertical lines
through their endpoints.

We construct a region $\mathcal B_0^{(k)}\subset\mathcal B^{(k)}$
as follows. Assume $\Gamma(f\zeta_0)$ is at the right of
$\Gamma(f\zeta_1)$. Choose a point $z\in\gamma_1$ for which
$\delta^{k}\leq |z-\zeta_1|\leq \delta^{\frac{k}{2}}$, and
$d_{\mathcal C}(f^{n}z)\geq e^{-5\alpha n}$ holds for every
$n\geq1$. Proposition \ref{controlpoint} ensures the existence of
such a point. By Remark \ref{verti},
$\Gamma(z)$ intersects $f\gamma_1$ exactly at two points.

By (\ref{leaf2}), the Hausdorff distance between $\Gamma(z)$ and
$\Gamma(f\zeta_0)$ is $\leq C|fz-f\zeta_1|+C
|f\zeta_1-f\zeta_0|\leq C\delta^{\frac{k}{2}}$. Hence,
$\Gamma(z)$ intersects $f\gamma_0$ at one point. 
By Remark \ref{verti}, $\Gamma(z)$ intersects
$f\gamma_0$ exactly at two points. Define $\mathcal B_0^{(k)}$
to be the region bordered by $\gamma_0,\gamma_1$ and the parabola
$f^{-1}\Gamma(z)$. By construction, the horizontal boundaries of
$\mathcal B_0^{(k)}$ extend both sides around $\zeta_0,\zeta_1$ to
length from $ \approx \delta^k$ to $\approx \delta^{\frac{k}{2}}$.
Let $\mathcal A^{(k)}$ denote the collection of all $\mathcal
B_0^{(k)}.$

\begin{definition}
{We say $z\in I(\delta)$ is {\it controlled up to time}
$\nu>0$ if $f^n z\notin \mathcal A^{(n)}$ holds for every $1\leq
n<\nu$. In addition, if $f^\nu z\in \mathcal A^{(\nu)},$ then we
say $z$ makes a {\it close return} at time $\nu$, and call $\nu$ a
close return time of $z.$}
\end{definition}

\subsection{Infinitely many close returns are improbable}\label{clodec}
Let $z\in I(\delta)$.
 Let
$\nu_1,\nu_2,\cdots$ be defined inductively as follows: $\nu_1$ is
a close return time of $z$; given $\nu_1,\cdots,\nu_k$, let
$\nu_{k+1}$ be the close return time of
$f^{\nu_1+\nu_2+\cdots+\nu_k}z\in I(\delta)$. If
$\nu_1,\cdots,\nu_k$ are defined in this way, we say $z$ {\it has
$k$ close return times.} If the sequence is defined is
indefinitely, we say $z$ has {\it infinitely close return times}.
Otherwise, we say $z$ {\it only finitely many close return times}.
We say $z$ is {\it controlled} if there is no close return time of
$z$.

Let $k_0$ be a large integer, to be specified later. Let
$\Omega_\infty$ denote the set of all $z\in\mathcal A^{(k_0)}$
which has infinitely many close return times. We have
$\Omega_\infty=\bigcap_{k\geq1}\Omega_k,$ where $\Omega_k$ denotes
the set of all $z\in\mathcal A^{(k_0)}$ which has $k$ close return
times. Obviously, $\Omega_k\subset\Omega_{k-1}$ holds.
\begin{prop}\label{zeromeasure}
$|\Omega_k|/|\Omega_{k-1}|\to0$ exponentially fast, as
$k\to\infty$. In particular, $\Omega_\infty$ has zero Lebesgue
measure.
\end{prop}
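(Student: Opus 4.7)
The strategy is a Benedicks--Viana volume control argument \cite{BV01} adapted to the non-attracting setting. The plan is to build inductively a decreasing sequence of collections of rectangles $\{R_{i_0\cdots i_k}\}$ whose union is, up to a null set, $\Omega_k$, and to show that at each step the conditional measure drops by a factor $\tau_k$ satisfying $\tau_k\to 0$ geometrically. This will give $|\Omega_k|/|\Omega_{k-1}|\le\tau_k\to 0$ exponentially, and in particular $|\Omega_\infty|=0$.

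Each rectangle $R_{i_0\cdots i_k}$ will be bordered on the stable side by two long stable leaves through slowly recurrent points (Proposition \ref{controlpoint}) and on the unstable side by two $C^2(b)$-pieces of $W^u$-type, where the label $i_j=(\nu_j,\mathcal B_0^{(\nu_j)})$ records the $j$-th close-return time and the component of $\mathcal A^{(\nu_j)}$ visited at that time. Absolute continuity of the stable holonomy (Lemma \ref{leaf}) reduces, via Fubini, every two-dimensional estimate to a one-dimensional estimate on unstable-like curves crossing the rectangle. The partition $\{R_{i_0\cdots i_k}\}$ is refined at each step by enumerating all admissible next labels $i_k$.

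The core step is the estimate $|R_{i_0\cdots i_{k-1}}\cap\Omega_k|\le\tau_k|R_{i_0\cdots i_{k-1}}|$. Fixing a transverse unstable-like curve $\omega\subset R_{i_0\cdots i_{k-1}}$, I would apply Corollary \ref{escap} to its iterate after the $(k-1)$-st close return: in the interval up to the next close return the orbit is controlled, so the corollary yields a stopping-time partition of $\omega$ with uniformly bounded distortion and an exponentially small tail. On each stopped element the iterate is a $C^2(b)$-curve stretching across a component of $I(2\delta)\setminus I(\delta)$, and the one-dimensional Lebesgue measure of its intersection with $\mathcal A^{(m)}$ is $\lesssim\delta^{m/2}$ per component, by the radial size condition imposed in Sect.\ref{shrink}, summed against the subexponential component count controlled as in Sect.\ref{dev}. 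Summing $(\delta^{1/3})^m$ and pulling back through the distortion bound gives $\tau_k\le\sum_{m\ge m_{\min}(k)}(\delta^{1/3})^m$. The lower bound $m_{\min}(k)$ grows with $k$ because each close return at depth $\nu$ triggers, by Proposition \ref{recovery0}, a bound period of length $\gtrsim\nu$ during which no further close return can occur; this forced growth of the minimal admissible next return time produces the claimed exponential decay of $\tau_k$. Integrating through the holonomy yields the two-dimensional bound.

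The main obstacle will be controlling distortion across the potentially long gaps between consecutive close returns, during which the orbit may make many non-close returns to $I(\delta)$ and suffer long bound periods. This is handled by packaging Proposition \ref{recovery0} on each bound segment and Lemma \ref{outside} on each free segment into Lemma \ref{izo}, exactly as in Section 6; the exponential tail in Corollary \ref{escap} then absorbs the combinatorial explosion from summing over itineraries. A second delicate point is the geometric coherence of rectangle boundaries across successive refinements: here the slow-recurrence property of points on the boundary leaves (Proposition \ref{controlpoint}) together with the persistence of long stable leaves (Lemma \ref{leaf}) guarantee that the boundaries survive iteration and remain $C^2$-close to the parabolic boundaries of the $\mathcal B_0^{(\nu_k)}$-regions, so the inductive Markov-like construction stays consistent. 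Finally, Proposition \ref{zero} is invoked to discard $K^+\cap W^u$ from all error terms and legitimize the reduction to the controlled dynamics on the chosen unstable curves.
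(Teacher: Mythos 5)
Your plan diverges substantially from the paper's, and the divergence introduces real gaps.

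The paper's proof of Proposition \ref{zeromeasure} is a strictly two-dimensional rectangle argument built on the machinery of Section 7: Proposition \ref{lipschitz} (the unstable sides of each collar $f^{\nu_0+\cdots+\nu_k}Q(R_{i_0\cdots i_k})$ are roughly parallel, with Lipschitz constant $C_2C_3^{3\nu_k}$), the Gronwall inequality to turn this into $L(x)/L(y)\le 2$ for the vertical separation of the unstable sides, Proposition \ref{rect2} to ensure those sides stretch across $\mathcal B^{(\nu_k)}$ as $C^2(b)$-curves, and Proposition \ref{metric2} (an area distortion bound $e^{C_1^{-1}k}$) to pull the length-ratio $\le 2\delta^{\nu_k/5}$ back to a two-dimensional measure ratio on $R_{i_0\cdots i_{k-1}}$. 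The exponential (indeed super-exponential) decay then follows from $\nu_k\ge 4^kk_0$ (equation (\ref{nu})) and the component count $\le 2^{\nu_k}$. In particular Section 6's one-dimensional machinery is never invoked: Proposition \ref{zero} is used only later, to deduce $|\Lambda|=0$ from the combination of \ref{zeromeasure} and the projection to $W^u$ along stable leaves, not inside the proof of \ref{zeromeasure}.

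Your plan instead leans on Corollary \ref{escap} and Lemma \ref{izo}, Fubini through stable holonomy, and a projected one-dimensional estimate. That route has two concrete problems. First, Corollary \ref{escap} carries the hypothesis that $\omega\cap K^+$ has positive Lebesgue measure and produces a partition of $\omega\cap K^+$; but $\Omega_k$ and $R_{i_0\cdots i_{k-1}}(\nu_k)$ are not defined in terms of $K^+$ at all, so the corollary does not apply as stated. Second, and more seriously, the Section 6 stopping-time theory (Lemma \ref{escape}, \ref{izo}, Corollary \ref{escap}) lives on $W^u$: the only pieces of $W^u$ inside a rectangle $R_{i_0\cdots i_{k-1}}$ are its unstable sides, so you can at best get the escape/measure estimate on those two boundary curves. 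Transferring that estimate to the interior of the rectangle is precisely the role of Proposition \ref{lipschitz}: without it, "Fubini along stable holonomy'' does not produce the factor $\le 2\delta^{\nu_k/5}$ uniformly across the foliation, because the rectangle is not a priori close to a product. Your proposal does not supply a substitute for this proposition (nor for Proposition \ref{metric2}, which converts the crossing-length estimate into an area estimate with uniformly bounded error). You do correctly identify the combinatorial heart of the argument — the forced growth $\nu_{k+1}\ge 4\nu_k$ from being trapped near a critical point — but the mechanism you give ("bound period of length $\gtrsim\nu$'') is not how (\ref{nu1}) is derived; rather, $|f^{\nu_l}z-\zeta|\le C\delta^{\nu_l/2}$ forces shadowing up to $4\nu_l$ iterates. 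To repair the proof along your lines you would essentially have to rebuild Propositions \ref{lipschitz} and \ref{metric2}, at which point you would have reproduced the paper's argument.
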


Let $\Lambda=\bigcap_{n\geq0}f^{-n}R_0.$ We show how
$\left|\Lambda\right|=0$ follows from this proposition. We argue
contradiction assuming $\left|\Lambda\right|>0.$ Lemma
\ref{outside} indicates that $\Lambda$ intersects
$\bigcup_{n\geq0}f^{-n}I(\delta)$ in a set with positive Lebesgue
measure. For almost every
$z\in\Lambda\cap\bigcup_{n\geq0}f^{-n}I(\delta)$, define
$m(z)\geq0$ to be the smallest such that $f^{m(z)}z$ is
controlled. Let us see $m(z)$ is well-defined. This is clear in
the case $z\notin\bigcup_{n\geq0}f^{-n}\mathcal A^{(k_0)}$.
Otherwise, take $i_0(z)\geq0$ such that $f^{i_0(z)}z\in\mathcal
A^{(k_0)}$. By Proposition \ref{zeromeasure}, one of the following
holds: either (i) $f^{i_0(z)}z$ is controlled, or else (ii)
$f^{i_0(z)}z$  has only finitely many close return times, denoted
by $\nu_1,\cdots,\nu_{k}$. By definition,
$f^{i_0+\nu_1+\cdots+\nu_{k}}z$ is controlled.

Let $V_j=\{z\in \Lambda\cap\bigcup_{n\geq0}f^{-n}I(\delta)\colon
m(z)=j\}.$ Take $j$ such that $|V_j|>0.$ By definition, any point
in $f^{j}V_j$ is controlled. The next lemma indicates that
$f^{j+1}V_j$ is foliated by long stable leaves.

\begin{lemma}\label{conduir} If $z\in I(\delta)$ is controlled
up to time $\nu$, then $\Vert w_n(z)\Vert\geq
\delta^{\frac{12n\log2}{\lambda}}$
holds for $1\leq n<\nu.$ 
\end{lemma}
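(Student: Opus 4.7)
The plan is to decompose the orbit $fz, f^2z, \ldots, f^n z$ into free segments and bound periods and combine the expansion estimates of Lemma \ref{outside} on free segments with those of Proposition \ref{recovery0} on bound periods, using the control hypothesis to limit the depth of each return.

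First I would set up notation: let $\nu_1 < \cdots < \nu_k$ denote the free return times of the orbit of $fz$ to $I(\delta)$ in $[1,n-1]$, with binding points $\zeta_i \in \mathcal{C}$ and bound periods $p_i$. The control condition $f^{\nu_i + 1}z \notin \mathcal{A}^{(\nu_i + 1)}$, combined with the construction of $\mathcal{B}_0^{(\nu_i + 1)}$ in Sect.\ref{shrink}, where the horizontal extent around critical points is at least $\delta^{\nu_i + 1}$, yields the distance estimate $|f^{\nu_i + 1}z - \zeta_i| \geq \delta^{\nu_i + 1}$. Proposition \ref{recovery0}(a) then bounds the bound periods by $p_i \leq (3(\nu_i + 1)/\lambda)\log(1/\delta) \leq (3n/\lambda)\log(1/\delta)$.

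I would then accumulate expansion over completed segments. Each completed bound period contributes a factor $\geq e^{\lambda p_i/3}$ by Proposition \ref{recovery0}(e), and each free segment of length $f_i$ ending with a return to $I(\delta)$ contributes $\geq e^{\lambda_0 f_i}$ by Lemma \ref{outside}(b). Since $\lambda_0 = 2\lambda$, writing $F + B = \nu_k$ for the decomposition into total free and bound portions up to the last return, the telescoping product gives $\|w_{\nu_k + 1}(z)\| \geq e^{\lambda_0 F + \lambda B/3} \geq e^{\lambda \nu_k/3}$, an exponentially growing bound that already exceeds $\delta^{12 n \log 2/\lambda}$ at each return time.

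The main obstacle is handling $n$ strictly inside the $k$-th (possibly incomplete) bound period, where $\|w_n(z)\|$ can drop well below $\|w_{\nu_k + 1}(z)\|$ during the fold. For this I would combine Proposition \ref{recovery0}(d), (g) to lower bound the tangent past the fold by $\delta^{\nu_k + 1}\|w_{\nu_k + 1}(z)\|$, and extend this backward into the fold itself through the uniform norm bound $\|Df\| \leq C_0$ from (M3), giving $\|w_n(z)\| \geq C_0^{-q_k}\delta^{\nu_k + 1}\|w_{\nu_k + 1}(z)\|$. Using $q_k \leq C\beta p_k$ with $\beta = 2\log C_0/\log(1/b)$ and $p_k \leq (3n/\lambda)\log(1/\delta)$, the factor $C_0^{-q_k}$ is negligible relative to $\delta^n$ when $b$ is chosen sufficiently small relative to $\delta$. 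Together with the exponentially growing bound from completed segments, this gives $\|w_n(z)\| \geq \delta^{cn}$ for some absolute constant $c$; arithmetic using $\lambda_0 < \log 2$ confirms $c \leq 12\log 2/\lambda$, completing the proof.
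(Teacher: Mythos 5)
Your proposal follows the same basic strategy as the paper's proof: decompose the orbit into bound and free segments, use the control hypothesis to bound the depth of each free return, convert that into a bound on the bound periods via Proposition~\ref{recovery0}(a), and then control the drop of $\|w_n(z)\|$ inside a bound period. However, two places deserve comment.

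First, the paper's treatment of $n$ inside a bound period is much cruder, and it immediately produces the exponent in the statement: $\|w_n(z)\|\geq 4^{-p_l}$ by the trivial one-step derivative bound, combined with $p_l\leq\frac{6n_l}{\lambda}\log(1/\delta)$ from the distance lower bound $|f^{n_l}z-\zeta_l|\geq\delta^{2n_l}$ gives exactly $4^{-p_l}\geq \delta^{12n\log 2/\lambda}$, with no appeal to fold periods at all. The origin of the specific constant $12\log 2/\lambda$ is precisely $(\log 4)\cdot\frac{6}{\lambda}$, which your plan obscures. Your fold-period refinement using Proposition~\ref{recovery0}(d),(g) and the $C_0^{-q_k}$ factor is not wrong, and in fact gives a sharper bound, but it invokes the parameter $\beta=2\log C_0/\log(1/b)$ and drags in a $b$-versus-$\delta$ comparison that the paper avoids entirely.

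Second, and more importantly, the distance estimate is the real content of the lemma, and your justification of it is too quick. Exiting $\mathcal A^{(\nu_i+1)}$ by itself does not immediately identify which $\zeta\in\mathcal C$ serves as the binding point, nor give the claimed lower bound: $f^{\nu_i+1}z$ could still lie deep inside the nested structure $\mathcal C^{(k)}$ with $k$ unrelated to $\nu_i+1$, close to a critical point at a different level. The paper handles this by choosing $k$ to be the \emph{largest} integer with $f^{n_{l+1}}z\in\mathcal C^{(k)}$, picking the binding point on the horizontal boundary of the parent component of $\mathcal C^{(k-1)}$ via (S3) of Proposition~\ref{geo}, and then asserting $|f^{n_l}z-\zeta_l|\geq\delta^{2n_l}$ (not $\delta^{n_l}$). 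You should spell out this inductive choice of binding point; without it the claim ``distance $\geq\delta^{\nu_i+1}$'' is not justified. That said, since the target exponent $12\log 2/\lambda>24$ is very generous, the factor-of-two discrepancy in the exponent does not threaten the conclusion, so this is a missing step rather than a fatal error.
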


\begin{proof}
We inductively define a sequence $0<n_1<n_1+p_1\leq
n_2<n_2+p_2\leq \cdots\leq n_s<n_s+p_s\leq\nu$ of integers and
critical points $\zeta_{1},\zeta_{2},\cdots,\zeta_s$ such that:
\noindent{\rm (i)} $f^{n_l}z\in I(\delta)$ for each $n_l$, and
$w_{n_l}(z)$ is in tangential position relative to $\zeta_l$, with
$p_l$ the bound period and $|f^{n_l}z-\zeta_{n_l}|\geq
\delta^{2n_l}$; \noindent{\rm (ii)} $n_{l+1}$ is the next time of
returns to $I(\delta)$ after $n_l+p_l$.

Given $n_l,\zeta_l$ and $p_l$, let $n_{l+1}\geq n_l+p_l$ denote
the smallest such that $f^{n_{l+1}}z\in I(\delta)$. By the
assumption, $f^{n_{l+1}}z\notin \mathcal A^{(n_{l+1})}$ holds. Let
$k$ denote the largest integer such that $f^{n_{l+1}}z\in\mathcal
C^{(k)}$, and let $\mathcal Q^{(k)}$ denote the component of
$\mathcal C^{(k)}$ containing $f^{n_{l+1}}z$. By (S3),
$f^{n_{l+1}}z$ is in tangential position relative to critical
points on the horizontal boundaries of the component of $\mathcal
C^{(k-1)}$ containing $\mathcal Q^{(k)}$. Choose one of them as
$\zeta_{l+1}$.

Suppose that $n_l<n<n_l+p_l$ holds. In the same way as in the
proof of Lemma \ref{conduire}, we have $\Vert w_n(z)\Vert\geq
4^{-p_l}$. Substituting $p_l\leq \frac{6n_l}{\lambda}\log
(1/\delta)$ into the exponent yields the desired inequality. For
all other $n$ it is immediate to show
the desired inequality, 
in the same way as
in the proof of Lemma \ref{conduire}. 
\end{proof}

Consider the projection $\pi\colon f^{j+1}V_j\to
\partial R_0$ along the long stable leaves. (\ref{leaf2})
says that $\pi$ is Lipschitz continuous. In particular,
$\pi(f^{j+1}V_j)$ has positive one-dimensional Lebesgue measure in
$W^u$. By the contraction along the leaves,
$\pi(f^{j+1}V_j)\subset K^+$ holds. This yields a contradiction to
Proposition \ref{zero}.

The rest of this paper is devoted to the proof of Proposition
\ref{zeromeasure}. Before proceeding, let us give some estimates
on close return times which will be used sometimes in the sequel.
Let $z\in\Omega_{k}$, and let $\nu_1,\cdots,\nu_{k}$ denote the
corresponding sequence of $k$ close return times of $z$. By
definition, for $1\leq l\leq k-1$, $f^{\nu_l}z\in \mathcal
A^{(\nu_l)}$ holds. Let $\zeta$ denote any critical point on the
horizontal boundary of the component of $\mathcal A^{(\nu_l)}$
containing $f^{\nu_l}z$. By definition, $|f^{\nu_l}z-\zeta|\leq
C\delta^{\frac{\nu_l}{2}}$ holds. Then
$$|f^{\nu_l+i}z-f^i\zeta|\leq C\delta^{\frac{\nu_l}{2}}4^{i}\ll
e^{-\alpha i}\quad{\rm for}\ 1\leq i\leq4\nu_l.$$ This implies
\begin{equation}\label{nu1}
\nu_{l+1}\geq 4\nu_{l}\quad {\rm for}\ 1\leq l<k.\end{equation} The
same reasoning gives $\nu_1\geq 4k_0$, and thus
\begin{equation}\label{nu}
\nu_{l}\geq 4^lk_0\quad{\rm for}\ 1\leq l\leq k.\end{equation}








\subsection{Partitions of rectangles}\label{partition}
By a {\it rectangle} $R$ we mean a compact region bounded by two
disjoint curves in $W^u$ and two disjoint stable leaves. The
boundaries of $R$ in $W^u$ are called {\it unstable sides}. The
boundaries in the stable leaves are called {\it stable sides}.

We define partitions of rectangles, using the families of long
stable leaves constructed in Section \ref{controlp}. To this end,
let us fix once and for all an enumeration $\mathcal
C=\{\zeta_m\}_{m=1}^\infty$ of all the critical points and let
$\gamma_m$ denote the maximal free segment containing $\zeta_m.$
We deal with a rectangle $R$ in $I(\delta)$ such that: \medskip

\noindent{\rm (R1)} the unstable sides of $R$ are made up of two
free segments, each contained in  $\gamma_{m_0}$ and
$\gamma_{m_1}$. In addition, $|\zeta_{m_0}-\zeta_{m_1}|\leq
(Cb)^{\frac{k}{2}}$ holds for some $k\geq1$;

\noindent{\rm (R2)} the unstable sides of $R$ extend to both sides
around $\zeta_{m_0},\zeta_{m_1}$ to length $\approx\delta^{k}$;

\noindent{\rm (R3)} $\Gamma(f\zeta_{m_0})$ is at the right of
$\Gamma(f\zeta_{m_1})$;

\noindent{\rm (R4)} there exists a long stable leaf
$\Gamma_\infty$ such that $f^{-1}\Gamma_\infty$ contains the
stable sides of $R$.
\medskip

One typical situation we have in mind is that two maximal free
segments in $\partial R_{\nu}$ stretch across $\mathcal
B_0^{(k)}$, where $k<\nu$. If this happens, then the region
bounded by the two maximal free segments and the stable sides of
$\mathcal B_0^{(k)}$ is a rectangle satisfying all the
requirements.

 By Lemma \ref{controlpoint}, in each element of the critical
partition of $\gamma_{m_1}$ there exists a point $z$ such that the
long stable leaf through $fz$ exists. Take just one such point
from each element of the partition and denote the associated
countable number of long stable leaves by $\Gamma_\Delta$,
$\Delta=-1,-2,-3,\cdots$ from the left to the right. We repeat
essentially the same construction for $\gamma_{m_0}$. The
difference is that, only those of the elements of the critical
partition of $\gamma_{m_0}$ come into play whose $f$-image is at
the right of $\Gamma(f\zeta_{m_1})$. We denote by $\Gamma_\Delta$
the associated countable number of long stable leaves at the right
of $\Gamma(f\zeta_{m_1})$, where $\Delta=1,2,3,\cdots$ from the
left to the right.

By Remark \ref{verti}, 
if $\Delta>0$, then $f^{-1}\Gamma_\Delta$
intersects the unstable side of $R$ containing $\zeta_{m_0}$
exactly at two points, one on the right of $\zeta_{m_0}$ and the
other on the left. 
If $\Delta<0$, then $f^{-1}\Gamma_\Delta$ intersects the stable
side of $R$ containing $\zeta_{m_0}$. By Remark \ref{verti} again, 
$f^{-1}\Gamma_\Delta$ intersects each of the unstable sides of $R$
exactly at two points. These observations and the Lipschitz
continuity of the tangent directions of the leaves as in
(\ref{leaf2}) altogether indicate that, the family of the long
stable leaves induces a partition of $R$. Each element of the
partition is a rectangle, bounded by the unstable sides of $R$ and
two neighboring parabolas, which are preimages of
$\Gamma_{\Delta}$, $\Gamma_{\Delta+1}$.

\begin{figure}
\begin{center}
\input{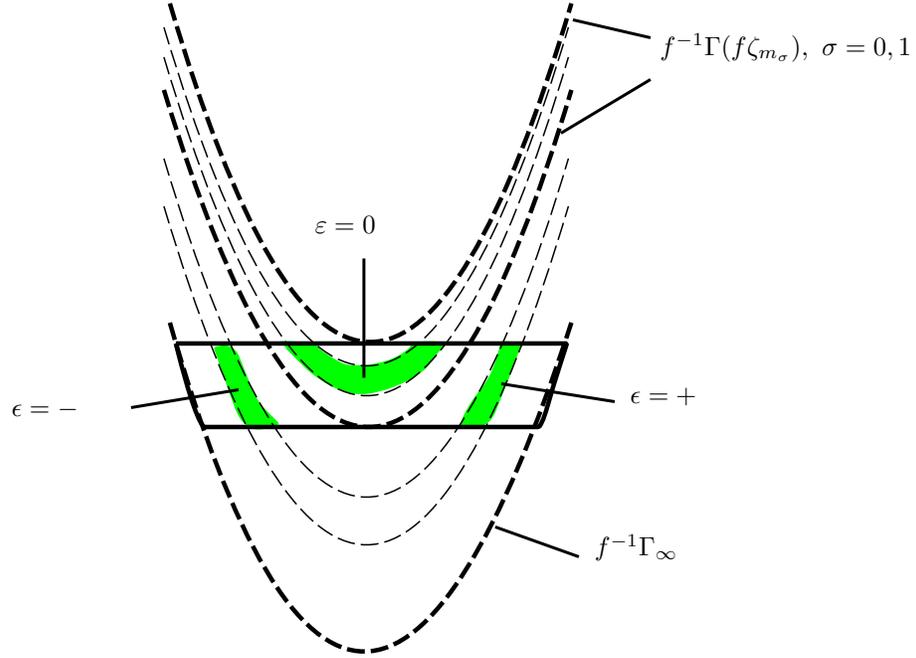}
\caption{A rectangle and its partition with long stable leaves}
\end{center}
\end{figure}

\subsection{Symbolic coding}\label{symb}
Each rectangle in the partition of $R$ constructed in Section
\ref{partition} is denoted by $R(\rho,\epsilon,\Delta,p)$. Here,
the meanings of $\rho,\epsilon,\Delta,p$ are as follows:
\medskip

\noindent$\bullet$ if the unstable sides of
$R(\rho,\epsilon,\Delta,p)$ intersect both $\gamma_{m_0}$ and
$\gamma_{m_1}$, then $\rho=m_1.$ Otherwise, $\rho=m_0;$

\noindent$\bullet$ if $\rho=m_0$, then $\epsilon=0$. If
$\rho=m_1$, then $\epsilon=+$ or $-$, depending on whether the
unstable sides of $R(\rho,\epsilon,\Delta,p)$ is at the ``right''
or the ``left'' of $\zeta_{m_0}$ and $\zeta_{m_1}$;

\noindent$\bullet$ the stable sides of
$f(R(\rho,\epsilon,\Delta,p))$ are contained in
$\Gamma_\Delta\cup\Gamma_{\Delta+1}.$

\noindent$\bullet$ $p=\max\{p(\zeta_{\rho},z)\colon
z\in\gamma_{\rho}\cap R(\rho,\epsilon,\Delta,p)\}.$
\medskip

The integer $p$ is called a {\it bound period} of
$R(\rho,\epsilon,\Delta,p)$.
 By the monotonicity of the function $z\mapsto p(\zeta_{\rho},z)$, the maximum is attained
 at one of the edges of the rectangle $R(\rho,\epsilon,\Delta,p).$
It is immediate to see:

\smallskip

(i) all points in $f(R(\rho,\epsilon,\Delta,p))$ are expanding up
to time $p_{i_0}-1$;
\smallskip

(ii) for all $\xi,\eta\in R(\rho,\epsilon,\Delta,p)$ and  $1\leq
i\leq p,$ $\Vert w_{i}(\xi)\Vert/\Vert w_{i}(\eta)\Vert\leq2$.

\begin{lemma}{\rm (Geometry of rectangles at the end of bound periods)}
\label{behavior} For all $z$ in the unstable sides of
$R(\rho,\epsilon,\Delta,p)$, $\Vert Df^{p}t(z)\Vert\geq
C\delta\Vert Df^{i}t(z)\Vert$ holds for every $0\leq i<p$. In
particular, the unstable sides of
$f^{p}(R(\rho,\epsilon,\Delta,p))$ are made up of two
$C^2(b)$-curves.
\end{lemma}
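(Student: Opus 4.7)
The plan is to apply Proposition~\ref{recovery0}(f) at each point $z$ on either unstable side of $R=R(\rho,\epsilon,\Delta,p)$, taking as binding critical point the one lying on the maximal free segment that contains~$z$, and then feed the resulting expansion estimate into the standard curvature recurrence to get the $C^2(b)$ assertion.

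First I would verify that for every $z$ in the unstable side of $R$ that lies in $\gamma_\rho$, the bound period $p(\zeta_\rho,z)$ actually equals $p$. This is forced by the construction of the partition in Section~\ref{partition}: the stable sides of $R$ are preimages of two consecutive long stable leaves $\Gamma_\Delta,\Gamma_{\Delta+1}$, and these were selected from the critical partition of $\gamma_\rho$. Thus $\gamma_\rho\cap R$ lies in (at most one or two adjacent) $\gamma_{k,s}$'s, on which the integer $\chi(k)$ defining the bound period is constant; the ``$\max$'' in the definition of $p$ is therefore realized at every point. Proposition~\ref{recovery0}(f) then gives $\|Df^p t(z)\|\ge (\delta/10)\|Df^i t(z)\|$ for $0\le i<p$. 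For a point $z'$ on the other unstable side, which lies in $\gamma_{m_{1-\rho}}$ rather than $\gamma_\rho$, I would apply Proposition~\ref{recovery0}(f) with $\zeta_{m_{1-\rho}}$ as binding point. By (R1) the two critical points are $(Cb)^{k/2}$-close, and by Lemma~\ref{leaf}(b) the long stable foliations through $f\zeta_{m_0}$ and $f\zeta_{m_1}$ track each other up to exponentially small errors; hence the corresponding bound period agrees with $p$ up to a uniformly bounded additive shift of the strip index. The resulting loss is a uniformly bounded multiplicative constant, absorbed into the constant $C\delta$.

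For the $C^2(b)$ conclusion I would invoke the curvature recurrence of \cite{T1} Lemma~2.4: writing $\kappa_p(z)$ for the curvature of the unstable side of $f^p R$ at $f^p z$,
$$
\kappa_p(z)\;\le\;\sqrt{b}\,(Cb)^p\frac{\|Dft(z)\|^3}{\|Df^p t(z)\|^3}\;+\;\sum_{i=1}^{p}(Cb)^i\frac{\|Df^{p-i}t(z)\|^3}{\|Df^p t(z)\|^3}.
$$
Substituting the just-proved bound $\|Df^{p-i}t(z)\|/\|Df^p t(z)\|\le (C\delta)^{-1}$ makes the sum a geometric series in $Cb/(C\delta)^3$, so $\kappa_p(z)\le\sqrt{b}$ for $b$ small. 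A parallel (and simpler) estimate controls the slope of the tangent to $f^p R$, using the $C^2(b)$-property of the original unstable sides as the base case. Together these give the $C^2(b)$-property of the unstable sides of $f^p R$.

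The main obstacle is the bookkeeping in the second step: the two unstable sides belong \emph{a priori} to different maximal free segments with different associated critical points, yet must be assigned coherent bound-period estimates compatible with a single integer~$p$. The resolution rests on (R1) together with Lemma~\ref{leaf}(b), which force the long-stable-leaf foliations through $f\zeta_{m_0}$ and $f\zeta_{m_1}$ to be essentially identical on the scale of the cell~$R$. If one preferred to avoid this comparison altogether, one could instead splice the bound-period estimate of Proposition~\ref{recovery0}(f) for $z$ with its actual bound period $p(z)\le p$ to a post-bound free phase via Lemma~\ref{outside}, paying an extra factor~$\delta$ that again fits into~$C\delta$; either approach works.
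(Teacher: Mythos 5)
The central claim in your proposal --- that $p(\zeta_\rho,z)=p$ for every $z$ on the unstable side of $R(\rho,\epsilon,\Delta,p)$ meeting $\gamma_\rho$ --- is not correct in general. By construction one long stable leaf $\Gamma_\Delta$ is chosen in each element $\gamma_{k,s}$ of the critical partition, so the rectangle between $f^{-1}\Gamma_\Delta$ and $f^{-1}\Gamma_{\Delta+1}$ cuts across \emph{two} consecutive elements of $\{\gamma_{k,s}\}$. When those two elements sit in $\gamma_k$ and $\gamma_{k+1}$ respectively, the bound period $p(\zeta_\rho,z)=\chi(k)$ on one part of the rectangle and $\chi(k+1)$ on the other, and by (G3) the gap $\chi(k+1)-\chi(k)$ can be as large as $1+\sqrt{\alpha}\,k$. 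So the max defining $p$ is \emph{not} realized at every point, and your claimed ``uniformly bounded additive shift of the strip index'' does not translate into a uniformly bounded shift of $\chi$. Your proposed fallback --- using Proposition~\ref{recovery0}(f) up to the actual bound period $p(\zeta,z)$ and then Lemma~\ref{outside} on $[p(\zeta,z),p]$ --- is also not sound as written, because on a window of length of order $\sqrt{\alpha}\,p$ the orbit may re-enter $I(\delta)$, where Lemma~\ref{outside} no longer applies by itself.

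The paper sidesteps all of this. Using (g) of Proposition~\ref{recovery0} together with the distortion control (ii) stated just before the lemma, it first establishes
\[
\Vert Df^{i}t(z)\Vert\approx |\zeta-z|\,\|w_i(\zeta)\|\qquad\text{for } q(\zeta,z)\leq i\leq\max\bigl(p(\zeta,z),p\bigr),
\]
i.e.\ \emph{beyond} the $z$-specific bound period; dividing the value at time $p$ by that at time $i$ and appealing to (G3) for $\zeta$ then gives the factor $C\delta$, while the short range $1\le i\le q(\zeta,z)$ is handled separately via (h) and (e). To ensure $q(\zeta,z)<p$ it carries out explicit comparisons at the edges --- $|\zeta-\xi_1|\approx|\zeta-\xi_3|$ in case $\epsilon=0$, and $|\zeta-z|\geq C|\zeta-\xi_1|$, $|\zeta-z|\geq C|\zeta'-\xi_3|$ in case $\epsilon=\pm$ --- obtained from (\ref{leaf2}) and Lemma~\ref{quadratic}. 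Your proposal does not engage with this case distinction or these distance estimates. The second half of your argument (feeding the derivative inequality into the curvature recurrence to get the $C^2(b)$ property) is essentially what the paper does, but it rests on a derivative bound you have not yet correctly established.
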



\begin{proof}
Let $\zeta$ denote the critical point on the unstable side of $R$
which contains $z.$ Let $p(\zeta,z), q(\zeta,z)$ denote the bound
and fold periods of $z$ with respect to $\zeta,$ as defined in
Sect.\ref{rec}. In view of (ii) as above and (g) Proposition
 \ref{recovery0},
\begin{equation}\label{parcourir}\Vert
Df^{i}t(z)\Vert\approx |\zeta-z| \cdot \|w_i(\zeta)\|\ \ {\rm for
}\ \ q(\zeta,z)\leq i\leq\max(p(\zeta,z),p).\end{equation}

Let $\xi_1,\xi_2,\xi_3,\xi_4$ denote the {\it edges} of the
rectangle, namely, the points which belong to both the stable and
the unstable sides of $R(\rho,\epsilon,\Delta,p).$ In the
discussion to follow, we assume that $\xi_1,\xi_2$ are on the same
unstable side of $R$, and $f\xi_{i},f\xi_{i+2}$ $(i=1,2)$ are
connected by the long stable leaf which defines the stable side of
$f(R(\rho,\epsilon,\Delta,p))$. \medskip

\noindent{\it Case 1: $\epsilon=0$.} In this case,
$\xi_1,\xi_2,\xi_3,\xi_4$ are on the same unstable side of $R.$ We
suppose that $\xi_1$ is closest to $\zeta$. Then
$p=\max(p(\zeta,\xi_1), p(\zeta,\xi_3))$ holds.
(\ref{leaf2}) and Lemma \ref{quadratic} give
$|\zeta-\xi_1|\approx|\zeta-\xi_3|.$ Hence, (a,b) Proposition
\ref{recovery0} gives
$$q(\zeta,z)\leq C\beta\max(
\log|\zeta-\xi_1|^{-1},\log|\zeta-\xi_3|^{-1})< p.$$ This means
that (\ref{parcourir}) holds for $q(\zeta,z)\leq i\leq p$ and
therefore
$$\frac{\Vert Df^{p}t(z)\Vert}{\Vert Df^{i}t(z)\Vert}\geq
C\frac{\|w_{p}(\zeta)\|} {\|w_{i}(\zeta)\|}\geq C\delta.$$ For
$1\leq i\leq q(\zeta,z)$,
 \begin{align*}\frac{\Vert Df^{p}t(z)\Vert}{\Vert
Df^{i}t(z)\Vert}&\geq\Vert Df^{p}t(z)\Vert \geq
C\delta|\zeta-z|\|w_{p(\zeta,z)}(\zeta)\| \geq
C\delta^{\frac{\alpha}{\log C_0}}>\delta.\end{align*} The first
inequality follows from (h) Proposition \ref{recovery0}. The
second inequality follows from $\|w_{p}(\zeta)\|\geq C\delta
\|w_{p(\zeta,z)}(\zeta)\|$. For the third inequality we have used
$|\zeta-z|\|w_{p(\zeta,z)}(\zeta)\|\geq|\zeta-z|^{-1+\frac{\alpha}{\log
C_0 }} \geq\delta^{-1+\frac{\alpha}{\log C_0}}$ which follows from
(e) Proposition \ref{recovery0}. Since the unstable sides of
$R(\rho,\epsilon,\Delta,p)$ are $C^2(b)$, these two inequalities
and the curvature estimate in [\cite{T1} Lemma
 2.3] together imply that
 the unstable sides of
$f^{p}(R(\rho,\epsilon,\Delta,p))$ are
$C^2(b)$.
\smallskip

\noindent{\it Case 2: $\epsilon=+$ or $-$.} In this case, $\xi_1$
and $\xi_3$ (resp. $\xi_2$ and $\xi_4$) are on different unstable
sides of $R$. We suppose that $\Gamma(f\xi_1)$ is at the right of
$\Gamma(f\xi_2)$, and that $\xi_1$ and $\zeta$ belong to the same
unstable side of $R.$ Let $\zeta'$ denote the other critical point
of $R$ on the unstable side of $R.$ Then
$p=\max\{p(\zeta,\xi_1),p(\zeta',\xi_3)\}$ holds. By (\ref{leaf2})
and Lemma \ref{quadratic} again, $|\zeta-z|\geq C|\zeta-\xi_1|$
and $|\zeta-z|\geq C|\zeta'-\xi_3|.$ Hence, $q(\zeta,z)\leq
C\beta\log|\zeta-z|^{-1}< p.$ This means that (\ref{parcourir})
holds for $q(\zeta,z)\leq i\leq p$. The rest of the argument is
analogous to that in Case 1.
\end{proof}

\begin{figure}[t]
\begin{center}
\input{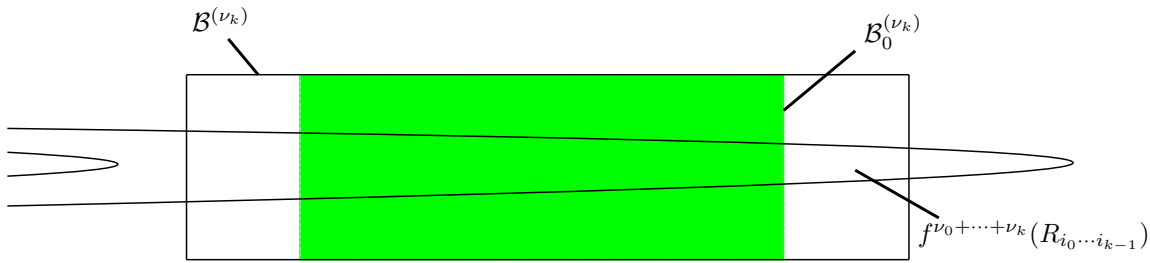}
\caption{Situation considered in Proposition \ref{rect2}}
\end{center}
\end{figure}


\subsection{Construction of partitions}\label{itin}
Putting the results in Sections \ref{partition}, \ref{symb}
together, for each $k\geq0$ we inductively construct a partition
of each $\Omega_k$ into a countable number of rectangles. Each
element of the partition of $\Omega_k$ will be denoted by
$R_{i_0\cdots i_{k}}$, where $(i_0,\cdots,i_{k})$ are itineraries
which record the behavior of the rectangle under iteration, up to
time $\nu_1+\cdots+\nu_k$.
\medskip

\noindent{\it Initial step}. Let $\Omega_0={\mathcal A}^{(k_0)}$.
Take a component of $\Omega_0$ and denote it by $R$. Following the
steps in Sect.\ref{partition}, define a partition of $R$ with the
family of long stable leaves. To each element of the partition,
assign the set of symbols according to the rule described in
Sect.\ref{symb}. Each element is denoted by $R_{i_0}$, where
$i_0=(\rho_0,\epsilon_0,\Delta_0,p_0)$ and
$R_{i_0}=R(\rho_0,\epsilon_0,\Delta_0,p_0)$. We repeat the same
construction for any component of $\Omega_0$.
\medskip

\noindent{\it General step}. Given the partition $\{R_{i_0\cdots
i_{k-1}}\}_{i_0,\cdots,i_{k-1}}$ of $\Omega_{k-1}$ for some
$k\geq1$, define $R_{i_0\cdots i_{k-1}}(\nu_k)= \{ z\in
R_{i_0\cdots i_{k-1}} :\text{$\nu_k$ is a close return time of
$f^{\nu_0+\cdots+\nu_{k-1}}z$}\}.$ Here and for the rest of this
section we adopt the next
\smallskip

\noindent{\it Convention.} $\nu_0=0$.
\smallskip

\noindent By definition,
$$\Omega_{k}=\bigcup_{(i_0,\cdots,i_{k-1})}\bigcup_{\nu_{k}}
R_{i_0\cdots i_{k-1}}(\nu_{k}).$$

\begin{prop}\label{rect2}{\rm
(Geometry of rectangles at close return times)} Let $z\in
f^{\nu_0+\cdots+\nu_{k-1}}R_{i_0\cdots i_{k-1}}$ and suppose
$f^{\nu_k}z\in\mathcal B_0^{(\nu_k)}\subset\mathcal B^{(\nu_k)}$.
Then the unstable sides of $f^{\nu_1+\cdots+\nu_k}R_{i_0\cdots
i_{k-1}}\cap \mathcal B^{(\nu_k)}$ are $C^2(b)$-curves stretching
across $\mathcal B^{(\nu_k)}$.
\end{prop}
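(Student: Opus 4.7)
The plan is to prove the proposition by induction on $k$, tracking the geometry of $f^{j}R_{i_0\cdots i_{k-1}}$ for $\nu_1+\cdots+\nu_{k-1}\le j\le \nu_1+\cdots+\nu_k$. The inductive hypothesis says that at time $\nu_1+\cdots+\nu_{k-1}$, the set $f^{\nu_1+\cdots+\nu_{k-1}}R_{i_0\cdots i_{k-1}}$ sits inside a sub-rectangle of $f^{\nu_1+\cdots+\nu_{k-1}}R_{i_0\cdots i_{k-2}}\cap\mathcal{B}^{(\nu_{k-1})}$ produced by the long-stable-leaf partitioning of Section~\ref{partition}; in particular its unstable sides are $C^2(b)$-subarcs of horizontal curves that cross $\mathcal{B}^{(\nu_{k-1})}$. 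The base case $k=1$ reduces to the definition of $\Omega_0=\mathcal{A}^{(k_0)}$, whose components are the rectangles $\mathcal{B}_0^{(k_0)}$.

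The first main step is to push through the bound period $p_{k-1}$ encoded in the symbol $i_{k-1}$. Lemma~\ref{behavior} (geometry of rectangles at the end of bound periods) applies directly and gives that the unstable sides of $f^{\nu_1+\cdots+\nu_{k-1}+p_{k-1}}R_{i_0\cdots i_{k-1}}$ are $C^2(b)$-curves, together with the comparison $\|Df^{p_{k-1}}t(z)\|\ge C\delta\,\|Df^i t(z)\|$ for $0\le i<p_{k-1}$. Between time $\nu_1+\cdots+\nu_{k-1}+p_{k-1}$ and time $\nu_1+\cdots+\nu_k$ the iterates may re-enter $I(\delta)$ at ordinary free returns, each triggering a new bound period; crucially, the defining property of $\nu_k$ forbids any intermediate entrance into $\mathcal{A}^{(n)}$, so Corollary~\ref{cap} supplies, at each such intermediate free return, a common horizontal curve containing the maximal free segment together with a critical binding point. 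I would then apply Lemma~\ref{behavior} to the corresponding sub-rectangle and interleave with Lemma~\ref{outside} and the curvature bound of Lemma~\ref{curv} along the intervening free segments. This preserves the $C^2(b)$-property of the unstable sides of $f^j R_{i_0\cdots i_{k-1}}$ all the way up to $j=\nu_1+\cdots+\nu_k$.

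It remains to verify that the unstable sides of $f^{\nu_1+\cdots+\nu_k}R_{i_0\cdots i_{k-1}}$ stretch across $\mathcal{B}^{(\nu_k)}$, whose horizontal width is $\delta^{\nu_k/10}$. Cumulating the expansion estimates (e), (f) of Proposition~\ref{recovery0} across each bound period with the exponential expansion of Lemma~\ref{outside} along the free segments, and using $\nu_k\ge 4\nu_{k-1}$ from (\ref{nu1}) together with the a priori bound $p_{k-1}\le \tfrac{3\alpha}{\lambda}\nu_{k-1}$ from (a) of Proposition~\ref{recovery0}, the length of each unstable side at time $\nu_1+\cdots+\nu_k$ is bounded below by a quantity growing like $e^{c\nu_k}$ for some $c>0$, which overwhelms $\delta^{\nu_k/10}$ once $k_0$ is large enough. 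Since by hypothesis some point of the image lies in $\mathcal{B}_0^{(\nu_k)}\subset\mathcal{B}^{(\nu_k)}$ and the unstable sides are nearly horizontal $C^2(b)$-curves of length dwarfing the width of $\mathcal{B}^{(\nu_k)}$, they must exit through the two vertical boundaries, giving the stretching condition.

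The main obstacle is the Step~2 bookkeeping: binding points and bound periods must be chosen \emph{uniformly along the entire unstable side}, not just for individual orbits, and the geometric deformation during each intermediate bound period must be controlled enough for Lemma~\ref{behavior} to apply. This is precisely what the maximal-free-segment formalism of Definition~\ref{indu1} and Corollary~\ref{cap} was built for: a whole horizontal curve, rather than a single orbit, carries the binding data, so Lemma~\ref{behavior} can be invoked uniformly along the unstable side at each intermediate free return.
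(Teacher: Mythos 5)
Your plan has two genuine gaps. First, you try to propagate the \emph{entire} unstable side of $f^{j}R_{i_0\cdots i_{k-1}}$ from $j=\nu_1+\cdots+\nu_{k-1}+p_{k-1}$ up to $j=\nu_1+\cdots+\nu_k$ by alternating Lemma~\ref{behavior} at intermediate free returns with Lemma~\ref{outside} on free stretches. But there is no rectangle subdivision at those intermediate returns, so Lemma~\ref{behavior} (a statement about partition rectangles $R(\rho,\epsilon,\Delta,p)$) has nothing to bite on; the curve simply bends and folds around the critical set at each such return, and its $\nu_k$-th iterate may meet $\mathcal B^{(\nu_k)}$ in many pieces with no a priori geometry. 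The hypothesis $f^{\nu_k}z\in\mathcal B_0^{(\nu_k)}$ appears in your argument only as a witness, whereas it is the engine of the paper's proof: one forms the vertical strip $V$ of width $\delta^{\nu_k/20}$ around the long stable leaf $\Gamma_{\nu_k-1}(z)$; Lemma~\ref{park} --- which rests entirely on the slow-recurrence property $d_{\mathcal C}(f^n y)\geq e^{-5\alpha n}$ of the points $y$ through which the stable sides of the rectangle were built (Proposition~\ref{controlpoint}) --- shows $V$ misses those stable sides; and only the small subsegment $\gamma$ cut from each unstable side by $V$ is then tracked. Because $\gamma$ shadows $z$'s orbit for $\nu_k-1$ steps, one has the single uniform comparison $\Vert Df^j t(z')\Vert\approx|\zeta'-f^{-1}z'|\cdot\Vert w_j(z)\Vert$ for all $z'\in\gamma$ and $p_{k-1}-1\leq j<\nu_k$ (equation (\ref{er2})), and the curvature formula of [\cite{T1}, Lemma 2.3] gives $C^2(b)$ in one pass, with no recounting of bound and free segments. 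The regularity $\Vert w_{\nu_k}(z)\Vert\geq C\delta\Vert w_{j+1}(z)\Vert$ that closes this formula is itself a consequence of the close return of $z$ at time $\nu_k$ being a bindable return --- another essential use of the hypothesis you left idle.

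Second, your stretching estimate is quantitatively wrong: the length of the relevant piece of the unstable side at time $\nu_1+\cdots+\nu_k$ is not bounded below by $e^{c\nu_k}$; the piece that lands in $\mathcal B^{(\nu_k)}$ is of order $\delta^{\nu_k/10}$, because $z$ returns to within $\delta^{\nu_k/2}$ of the critical set and everything in the shadowing tube contracts with it. The correct computation integrates $\Vert Df^{\nu_k-1}t(\gamma(s))\Vert\,ds$ over $\gamma$, using $|\zeta'-f^{-1}\gamma(s)|\geq C^{-p_{k-1}}$ together with the width $\delta^{\nu_k/20}$ of $V$, giving a lower bound $C^{-\nu_k}\delta^{\nu_k/20}$ --- exponentially small, not exponentially large, yet still dominating $\delta^{\nu_k/10}$ once $\delta$ is fixed small. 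Without localizing to $V$ you have no handle on the length of the relevant subsegment, and a lower bound on the total length of a long, folded unstable side says nothing about crossing $\mathcal B^{(\nu_k)}$.
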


We finish the construction of the partition of $\Omega_k$ assuming
the conclusion of the proposition. Take a component of
$f^{\nu_1+\cdots+\nu_{k}}R_{i_0\cdots i_{k-1}}(\nu_{k})$ and
denote it by $R$.
 By the proposition and
the geometric structure of critical regions in Proposition
\ref{geo}, on each unstable side of $R$ there exists a critical
point, within $\mathcal O(b^{\frac{\nu_k}{8}})$ of its midpoint.
In particular, $R$ meets all the requirements (R1-4) in
Sect.\ref{partition}.  Following the steps in
Sect.\ref{partition}, \ref{symb}, define a partition of $R$ with
the family of long stable leaves and assign to each element the
set of symbols. Let $R_{i_0\cdots i_{k-1}i_k}=
f^{-(\nu_1+\cdots+\nu_{k})}R(\rho_k,\epsilon_k,\Delta_k,p_k)$,
where $i_k=(\rho_k,\epsilon_k,\Delta_k,p_k,\nu_k)$. We repeat the
same construction for any component of
$f^{\nu_1+\cdots+\nu_{k}}R_{i_0\cdots i_{k-1}}(\nu_{k})$.

\medskip

\noindent{\it Proof of Proposition \ref{rect2}.} Let
$\Gamma_{\nu_k-1}(z)=\{(x(y),y)\colon |y|\leq\sqrt{b}\}$. Consider
the vertical strip $V=\{(x,y)\colon |x-x(y)|\leq
\delta^{\frac{\nu_k}{20}}, |y|\leq\sqrt{b}\}$.

\begin{lemma}\label{park}
$V$ does not intersect the stable sides of
$f^{\nu_0+\cdots+\nu_{k-1}+1}R_{i_0\cdots i_{k-1}}$.
\end{lemma}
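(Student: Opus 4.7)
I would argue by contradiction, combining the Lipschitz continuity of long stable leaves with the slow-recurrence property of control points used to build the partition in Sect.~\ref{partition}. Suppose $V$ meets one of the two stable sides $\Gamma_*$ of $f^{\nu_0+\cdots+\nu_{k-1}+1}R_{i_0\cdots i_{k-1}}$. Condition (R4) at the previous inductive step, together with the construction in Sect.~\ref{partition}, identifies $\Gamma_*$ as a segment of some long stable leaf $\widetilde\Gamma_*$ obtained through Proposition~\ref{controlpoint}: concretely, $\widetilde\Gamma_*$ is the long stable leaf $\Gamma(fw)$ where $w$ is a control point with the slow-recurrence property $d_{\mathcal{C}}(f^n w)\geq e^{-5\alpha n}$ for every $n\geq 1$ with $f^n w$ free.

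Because $V$ is the vertical strip of width $\delta^{\nu_k/20}$ around $\Gamma_{\nu_k-1}(z)$, the hypothetical intersection point $\xi\in V\cap\Gamma_*$ gives a horizontal gap of at most $\delta^{\nu_k/20}$ between $\Gamma_{\nu_k-1}(z)$ and $\widetilde\Gamma_*$ at the height of $\xi$. Invoking (\ref{leaf2}) and Lemma~\ref{leaf}(b), I would propagate this proximity to every height $|y|\leq\sqrt b$, producing the uniform horizontal gap bound $e^{C\sqrt b}\delta^{\nu_k/20}\leq 2\delta^{\nu_k/20}$. Since $z$ is controlled up to time $\nu_k$, Lemma~\ref{conduir} gives $\|w_n(z)\|\geq \delta^{12n\log 2/\lambda}\geq \delta^{15n}$ (for $\alpha$ small), so Lemma~\ref{leaf}(a) is available on both leaves. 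Iterating forward by $\nu_k-1$ steps and comparing the corresponding iterates of $z$ and of the point $w_\xi\in\widetilde\Gamma_*$ with the same $y$-coordinate as $z$, the horizontal gap is preserved up to a factor $e^{C\sqrt b}$, while the component along the leaves is contracted geometrically by the factor $(Cb/\delta^{15})^{\nu_k-1}$.

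The contradiction then appears at the close return: $f^{\nu_k}z\in\mathcal B_0^{(\nu_k)}$ lies within $\delta^{\nu_k/2}$ of some $\zeta\in\mathcal C$, so the point of $f^{\nu_k-1}\widetilde\Gamma_*$ one more iterate of $f$ from lies within $3\delta^{\nu_k/20}$ of $\zeta$. Matching this with the orbit of the control point $w$ yields $d_{\mathcal{C}}(f^{(\cdot)}w)\leq 3\delta^{\nu_k/20}$ at a time $n$ comparable to $\nu_k$, which contradicts the slow-recurrence bound $d_{\mathcal{C}}(f^n w)\geq e^{-5\alpha n}$ provided $\alpha$ is chosen small enough that $5\alpha<\tfrac{1}{20}\log(1/\delta)$. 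The main obstacle—beyond routine bookkeeping of constants—is to pin down precisely which iterate of the control point $w$ tracks $f^{\nu_k}z$ through the chain of inductive close returns encoded by $(i_0,\ldots,i_{k-1})$, and to verify that this iterate is in a free state (or, alternatively, to carry the recurrence bound through the intervening bound periods using Proposition~\ref{recovery0}) so that the slow-recurrence inequality is actually applicable at that time.
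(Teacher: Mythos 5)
Your approach matches the paper's: argue by contradiction, use the slow-recurrence property $d_{\mathcal C}(f^n y)\geq e^{-5\alpha n}$ of the control point $y$ whose stable leaf $\Gamma(y)$ contains the stable side $\sigma$, propagate the $\delta^{\nu_k/20}$ horizontal gap forward $\nu_k-1$ iterations, and derive by the triangle inequality that some iterate of $y$ lands within roughly $\delta^{\nu_k/22}$ of the critical point $\zeta$ on $\mathcal B_0^{(\nu_k)}$, contradicting the slow-recurrence bound for $\alpha$ small. This is exactly the paper's proof.

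Regarding the ``main obstacle'' you flagged: it is more innocuous than you feared. There is no need to track $y$ through the whole chain of close returns encoded by $(i_0,\ldots,i_{k-1})$ --- the point $y$ already sits on the stable side of the {\em current} rectangle $f^{\nu_0+\cdots+\nu_{k-1}+1}R_{i_0\cdots i_{k-1}}$, so the relevant iterate is simply $f^{\nu_k-1}y$. The freeness of $f^{\nu_k-1}y$ is exactly what the paper establishes by invoking the argument from the proof of Corollary \ref{oo}: the estimate $|\zeta - f^{\nu_k-1}y|\leq\delta^{\nu_k/22}$ already shows $f^{\nu_k-1}y$ is deep inside $\mathcal C^{(k')}$ for $k'$ comparable to $\nu_k$, and bound segments cannot reach that deep, so $f^{\nu_k-1}y$ must be free. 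Finally, the paper phrases the terminal contradiction a bit more carefully than ``close return versus slow recurrence'': Proposition \ref{controlpoint} produces the binding critical point $\zeta'$ at distance $\geq e^{-5\alpha\nu_k}$ from $f^{\nu_k-1}y$, and then both $\zeta$ (close) and $\zeta'$ (far) would have to lie on a common horizontal curve through $f^{\nu_k-1}y$, which is impossible since horizontal curves carry a unique critical point; this is essentially your numerical inequality $5\alpha<\frac{1}{20}\log(1/\delta)$, repackaged.
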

\begin{proof}
Let $\sigma$ denote any stable side of
$f^{\nu_0+\cdots+\nu_{k-1}+1}R_{i_0\cdots i_{k-1}}$. By
construction, there exists $y\in W^u\cap\sigma$ such that
$d_\mathcal C(f^{n}y)\geq e^{-5\alpha n}$ holds whenever $f^ny$ is
free, and $\sigma\subset\Gamma(y)$.
 Suppose $V\cap \sigma\neq\emptyset$, and let $\xi\in
V\cap \sigma$. Let $\eta$ denote the point of intersection between
$\Gamma$ and the horizontal through $\xi$. The definition of $V$
gives $|\xi-\eta|\leq \delta^{\frac{\nu_k}{20}}$, and thus
$|f^{\nu_k-1}\eta-f^{\nu_k-1}\xi|\leq \delta^{\frac{\nu_k}{21}}$.
Since $\eta\in\Gamma$, $|f^{\nu_k}z-f^{\nu_k-1}\eta|\leq
(Cb)^{\nu_k-1}$ holds. Hence $|f^{\nu_k}z-f^{\nu_k-1}\xi|\leq
(Cb)^{\frac{\nu_k}{2}}$ follows. Meanwhile
$|f^{\nu_k-1}\xi-f^{\nu_k-1}y|\leq (Cb)^{\nu_k-1}$ holds, and the
assumption on $z$ gives $|\zeta-f^{\nu_k}z|\leq
C\delta^{\frac{\nu_k}{2}}$, where $\zeta$ is any critical point on
the unstable sides of $\mathcal B_0^{(\nu_k)}$. Therefore
$$|\zeta-f^{\nu_k-1}y|\leq|\zeta-f^{\nu_k}z|+
|f^{\nu_k}z-f^{\nu_k-1}\xi|+|f^{\nu_k-1}\xi- f^{\nu_k-1}y|\leq
\delta^{\frac{\nu_k}{22}}.$$ This estimate and the proof of
Corollary \ref{oo} together indicate that $f^{\nu_k-1}y$ is free.
Hence, Proposition \ref{controlpoint} gives a critical point
$\zeta'$ such that $|\zeta'-f^{\nu_k-1}y|\geq e^{-5\alpha\nu_k}.$
Then it is possible to choose a horizontal curve $\gamma$ such
that both $\zeta$ and $\zeta'$ are on $\gamma.$
This is a contradiction.
\end{proof}

By Lemma \ref{park}, $V$ cuts a segment in each unstable side of
$f^{\nu_0+\cdots+\nu_{k-1}+1}R_{i_0\cdots i_{k-1}}$, denoted by
$\gamma$. Let $\zeta'$ denote the critical point on the same
unstable side of $f^{\nu_0+\cdots+\nu_{k-1}}R_{i_0\cdots i_{k-1}}$
as that of $f^{-1}\gamma.$ Let $z'$ be an arbitrary point in
$\gamma.$ Let $p_{k-1}$ denote the bound period of
$f^{\nu_0+\cdots+\nu_{k-1}}R_{i_0\cdots i_{k-1}}$. The bounded
distortion gives $\Vert Df^{j}(fz)\Vert\approx \Vert
Df^{j}(z')\Vert\approx\Vert w_{j}(z)\Vert$ for $1\leq j<\nu_k,$
and thus for $p_{k-1}-1\leq j<\nu_k$,
\begin{equation}\label{er2} \Vert
Df^jt(z')\Vert\approx|\zeta'-f^{-1}z'|\cdot\Vert Df^j(z')\Vert
\approx |\zeta'-f^{-1}z'|\cdot\Vert w_j(z)\Vert.
\end{equation}
By Lemma \ref{behavior}, $f^{p_{k-1}-1}\gamma$ is $C^2(b)$. Then,
by [\cite{T1} Lemma 2.3.] and (\ref{er2}), the curvature of
$f^{\nu_k-1}\gamma$ is everywhere bounded from above by
$$(Cb)^{\nu_k-p_{k-1}}
\frac{\Vert w_{p_{k-1}}(z)\Vert^3} {\Vert
w_{\nu_k}(z)\Vert^3}\sqrt{b}+
\sum_{j=p_{k-1}}^{\nu_k-1}(Cb)^{\nu_k-j-1} \frac{\Vert
w_{j+1}(z)\Vert^3}{\Vert w_{\nu_k}(z)\Vert^3}.$$ Since $\Vert
w_{\nu_1}(z)\Vert\geq C\delta\Vert w_{j+1}(z)\Vert$ for
$p_{k-1}\leq j<\nu_k,$ it follows that the curvature is everywhere
$\leq\sqrt{b}$. (\ref{er2}) also implies that the slopes of the
tangent directions of $f^{\nu_k-1}\gamma$ are $\leq\sqrt{b}$.
Hence, $f^{\nu_k-1}\gamma$ is a $C^2(b)$-curve.

Parametrize $\gamma$ by arc length $s$. Using
$|\zeta-f^{-1}\gamma(s)|\geq C^{-p_{k-1}}$ for all $s$ and the
fact that the width of the strip $V$ is
$\delta^{\frac{\nu_k}{20}}$,
\begin{align*}
&\int \Vert Df^{\nu_k-1}t(\gamma(s))\Vert ds\geq C\Vert
w_{\nu_k}(z)\Vert\int|\zeta-f^{-1}(\gamma(s))| ds\geq
C^{-\nu_{k}}\delta^{\frac{\nu_k}{20}}\gg
\delta^{\frac{\nu_k}{10}}.
\end{align*}
This implies that $f^{\nu_k-1}\gamma$ stretches across $\mathcal
B^{(\nu_k)}$. \qed

\subsection{Unstable sides are roughly parallel}\label{uns}
A main step in the proof of Proposition \ref{zeromeasure} is an
estimate of the measure of the set
$$R_{i_0\cdots
i_{k-1}}(\nu_k)=\{z\in R_{i_0\cdots i_{k-1}}\colon\text{$\nu_k$ is
a close return time of $f^{\nu_0+\cdots+\nu_{k-1}}z$}\}.$$ This
subsection and the next are devoted to obtaining this estimate.
For the purpose of stating the next proposition we need some
definitions.
\medskip

\noindent $\bullet$ (New constants) Choose $C_1$, $C_2$ as
follows: $|\det Df|\geq C_1\text{ on }R_0;$ for all $\xi,\eta$ in
the unstable sides of any component of $\Omega_0$,
${\rm angle}(u(\xi),u(\eta))\leq C_2|\xi-\eta|.$ 
Let $C_3=C_0e^{\frac{6}{\log C_0}}$.
\medskip

\noindent $\bullet$ (Attachment of collars) 
For each $R_{i_0}\subset\Omega_0$, let $Q(R_{i_0})$ denote the
component of $\Omega_0$ containing $R_{i_0}$. Let $k\geq1$. For
each $R_{i_0\cdots i_{k}}\subset\Omega_k$,
 By Proposition
\ref{rect2}, there exists exactly one component $\mathcal
B^{(\nu_k)}$ of $\mathcal A^{(\nu_k)}$ containing
$f^{\nu_1+\cdots+\nu_k}R_{i_0\cdots i_{k}}$. Let $Q(R_{i_0 \cdots
i_{k}})$ denote the component of
$f^{-(\nu_1+\cdots+\nu_k)}\mathcal B^{(\nu_k)}\bigcap R_{i_0
\cdots i_{k-1}}$ containing $R_{i_0\cdots i_{k}}$.
\medskip



\noindent $\bullet$ For any $z$ in a free segment of $W^u$, let
$u(z)$ denote the unit vector tangent to $W^u$ at $z$ such that
the sign of the first component is positive.

\begin{prop}\label{lipschitz}
For every $j\geq 0$ and any $\xi$, $\eta$ in the unstable side of
$ f^{\nu_0+\cdots+\nu_{j}}Q(R_{i_0\cdots i_{j}})$,
\begin{equation}\label{lipeq}{\rm angle}(u(\xi),u(\eta))\leq
C_2C_3^{3\nu_{j}}|\xi-\eta|.\end{equation}
\end{prop}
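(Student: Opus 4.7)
The argument is by induction on $j$. For the base case $j = 0$, note that $\nu_0 = 0$ by convention, so $f^{\nu_0}Q(R_{i_0}) = Q(R_{i_0})$ is a component of $\Omega_0 = \mathcal{A}^{(k_0)}$, and the inequality ${\rm angle}(u(\xi),u(\eta))\leq C_2|\xi-\eta|=C_2 C_3^{3\nu_0}|\xi-\eta|$ is precisely the defining condition on $C_2$.

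For the inductive step, I would suppose the bound ${\rm angle}(u(\xi'),u(\eta'))\leq C_2 C_3^{3\nu_{j-1}}|\xi'-\eta'|$ holds for $\xi',\eta'$ on the unstable sides of $f^{\nu_0+\cdots+\nu_{j-1}}Q(R_{i_0\cdots i_{j-1}})$. Since the map $\xi \mapsto u(\xi)$ along a $C^2$-curve is Lipschitz with constant $L$ if and only if the curve has curvature at most $L$, the plan is to translate the inequality into a curvature bound $\kappa_{j-1} \leq L_{j-1} := C_2 C_3^{3\nu_{j-1}}$ and propagate it across the $\nu_j$ iterates separating the two levels. The key tool is the forward-time analogue of the curvature formula of \cite{T1} Lemma 2.3 (already employed in the proof of Lemma \ref{curv}), which controls $\kappa_n$ by a leading term proportional to $(Cb)^n \kappa_0/\|Df^n t\|^3$ plus a summation involving ratios $\|Df^i t\|^3/\|Df^n t\|^3$. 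Applied with $n=\nu_j$, the leading term is damped by $(Cb)^{\nu_j}$ and easily absorbed. For the summation I would decompose the orbit segment $z, fz, \ldots, f^{\nu_j}z$ into alternating free and bound pieces as in Section 3.4, applying Lemma \ref{outside} on free pieces and Proposition \ref{recovery0}(e,f) together with Lemma \ref{behavior} on each bound piece to give per-iterate multiplicative control of the ratios by a factor of $C_3^3$. Compounded over $\nu_j$ iterates this bounds the summation by a constant multiple of $C_3^{3\nu_j}$.

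The hard part will be matching constants so as to produce the specific $C_3 = C_0 e^{6/\log C_0}$. I expect this to arise from combining the crude derivative bound $\|Df\|\leq C_0$ with Proposition \ref{recovery0}(e), which costs a factor of $|\zeta-z|^{\alpha/\log C_0}$ over each bound period of length $p\leq (3/\lambda)\log|\zeta-z|^{-1}$; balancing these losses against the per-iterate $C_0$-growth produces the exponential correction $e^{6/\log C_0}$, while the cubic exponent in $C_3^{3\nu_j}$ reflects the cubic denominator $\|Df^n t\|^3$ intrinsic to the curvature formula. Summing the damped initial-curvature term with the controlled summation will then give $\kappa_{\nu_j}\leq C_2 C_3^{3\nu_j}$, equivalently the Lipschitz bound (\ref{lipeq}) at level $j$, closing the induction.
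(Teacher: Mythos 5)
Your proposal misreads the statement: ``the unstable side'' in Proposition \ref{lipschitz} is (as the usage throughout Sections 7.5--7.7 makes clear) the pair of unstable \emph{sides}, so $\xi$ and $\eta$ may lie on \emph{different} boundary curves of $f^{\nu_0+\cdots+\nu_j}Q(R_{i_0\cdots i_j})$. This is encoded in the choice of $C_2$ (``for all $\xi,\eta$ in the unstable sides of any component of $\Omega_0$'') and is exactly how the proposition is used in Section 7.7: Gronwall is applied to $L(x)=|\gamma_1(x)-\gamma_2(x)|$, where $\gamma_1,\gamma_2$ are the two sides, and $L'(x)$ is the angle between $u(\xi)$ at $\xi\in\gamma_1$ and $u(\eta)$ at $\eta\in\gamma_2$. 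Translating the claim into a curvature bound for a single curve is therefore not a paraphrase but a different (weaker) statement. Indeed, for a single curve there is nothing to prove: each unstable side of $f^{\nu_1+\cdots+\nu_k}Q(R_{i_0\cdots i_k})$ is a $C^2(b)$-curve by Proposition \ref{rect2}, so its curvature is already $\leq\sqrt{b}\ll C_2C_3^{3\nu_k}$. The substance of the proposition is the \emph{transversal alignment} of the two sides, whose separation is $\mathcal O(b^{\nu_k/2})$, and no amount of propagating the \cite{T1} curvature formula along a single orbit segment can produce information about how two distinct $C^2(b)$-curves track each other.

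The paper's actual mechanism is different: it builds (Lemma \ref{liplem6}) a $C^1$ vector field $\phi_0$ on the entire rectangle at time $\nu_0+\cdots+\nu_{k-1}+1$ that interpolates the tangent fields of the two sides via a nearly orthogonal coordinate trivializing the stable foliation; this interpolation is precisely where the two-curve alignment enters, and the prerequisite two-sided Lipschitz estimates (Lemmas \ref{liplem}, \ref{eqlip0}) are obtained by a direct one-step computation, not by a curvature formula. The field $\phi_0$ is then pushed forward $\nu_k-1$ times under the projectivization $f_*$, and $\|D\phi_{\nu_k-1}\|$ is bounded via the explicit derivative estimates (\ref{pro1}), (\ref{pro2}) together with the regularity bound of Lemma \ref{expa}. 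Your proposal also never addresses the change of collar from $Q(R_{i_0\cdots i_{k-1}})$ to $Q(R_{i_0\cdots i_k})$ between levels, which is handled in the paper by the one-step Lemma \ref{liplem} and the affine extension in Lemma \ref{liplem6}; and the reverse-engineering of $C_3=C_0e^{6/\log C_0}$, which in the paper comes out of the slope bound (\ref{A1})--(\ref{A4}) in the $(\hat x,\hat y)$-coordinate, is described but never actually carried out. These are not cosmetic gaps: without the interpolating field there is no quantity whose $C^1$ norm records the angle between the two sides, and the induction cannot close.
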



\noindent{\it Proof of Proposition \ref{lipschitz}.}
We argue by
induction on $j$. The choice of $C_2$ and the convention $\nu_0=0$
give (\ref{lipeq}) for $j=0$. Let $k\geq1$ and assume
(\ref{lipeq}) for $j=k-1$.
\begin{lemma}\label{liplem}
For any $\xi$, $\eta$ in the unstable sides of
$f^{\nu_0+\cdots+\nu_{k-1}+1}Q(R_{i_0\cdots i_{k-1}})$,
$${\rm angle}(Dfu(\xi), Dfu(\eta))
\leq C_2C_3^{\nu_{k}}|f\xi-f\eta|.$$
\end{lemma}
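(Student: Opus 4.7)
The plan is to derive the conclusion by iterating the standard one-step tangent-angle estimate twice, starting from the inductive hypothesis of Proposition \ref{lipschitz}. Set $n := \nu_0+\cdots+\nu_{k-1}$ and $\xi' := f^{-1}\xi$, $\eta' := f^{-1}\eta$, so that $\xi',\eta'$ lie on the unstable side of $f^n Q(R_{i_0\cdots i_{k-1}})$, a subset of the critical region $\mathcal B^{(\nu_{k-1})}$. The inductive hypothesis then reads
$${\rm angle}(u(\xi'),u(\eta'))\leq C_2 C_3^{3\nu_{k-1}}|\xi'-\eta'|.$$
Expanding the cross product
$$Dfu(a)\times Dfu(b) = \det Df(a)\,(u(a)\times u(b)) + Dfu(a)\times (Df(b)-Df(a))u(b)$$
and dividing by $\|Dfu(a)\|\|Dfu(b)\|$ yields the one-step bound
$$\sin{\rm angle}(Dfu(a),Dfu(b))\leq \frac{|\det Df|}{\|Dfu(a)\|\|Dfu(b)\|}\sin{\rm angle}(u(a),u(b)) + \frac{\|D^2f\|}{\|Dfu(b)\|}|a-b|.$$
I would apply this first with $(a,b)=(\xi',\eta')$, producing a bound on ${\rm angle}(u(\xi),u(\eta))$ at time $n+1$, and then once more with $(a,b)=(\xi,\eta)$ to bound ${\rm angle}(Dfu(\xi), Dfu(\eta))$ at time $n+2$, which is the quantity in Lemma \ref{liplem}.

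The denominators and the distance comparison are controlled through Lemma \ref{quadratic}: on the unstable side of $\mathcal B^{(\nu_{k-1})}$ with critical point $\zeta$, the splitting $Dft(z)=A(z)(1,0)+B(z)e$ gives $\|Dfu(z)\| \approx \max(|z-\zeta|, \sqrt b)$, and integrating $Dft$ along the curve compares $|\xi'-\eta'|$ with $|f\xi-f\eta|$ in the two relevant cases. When the arc $[\xi',\eta']$ does not straddle $\zeta$ (non-folded case), $|f\xi-f\eta|$ is bounded below by a constant multiple of $\min\|Dfu\|\cdot|\xi'-\eta'|$ along the arc, and the standard estimates close. When it does straddle $\zeta$ (folded case), one uses that the quadratic tangency of $f\gamma$ to $\Gamma(f\zeta)$ at $f\zeta$ from Proposition \ref{criticaset}(d) makes $f\gamma$ a parabolic curve on which tangent directions vary linearly with Euclidean distance.

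The main obstacle will be the folded case, where the quadratic cancellation near $f\zeta$ causes $|f\xi-f\eta|$ to be much smaller than $|\xi'-\eta'|$, so that the ratio of angle to Euclidean distance may become large. Here one uses that the tangent-direction Lipschitz constant of the parabolic image curve with respect to Euclidean distance is of order $C_0\|Dft(\zeta)\|^{-2}$, which is at most $C_0/b$ by the construction. Since $\nu_k\geq 4\nu_{k-1}\geq 4^k k_0$ by (\ref{nu}) and $C_3=C_0e^{6/\log C_0}>1$, choosing $k_0$ sufficiently large at the outset makes $C_2 C_3^{\nu_k}$ dominate both $C_0/b$ and the multiplicative loss $C_3^{O(1)}$ accumulated in the two applications of the one-step bound, yielding the desired inequality.
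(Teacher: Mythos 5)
Your route is more circuitous than the paper's and contains some mis‑steps, though the governing idea (one‑step cross‑product estimate plus the inductive hypothesis, with the loss absorbed by the gap $\nu_k-3\nu_{k-1}\geq\nu_{k-1}$) is the same. The paper applies the one‑step estimate \emph{once}, directly to the pair $\xi,\eta$: with $\theta_0={\rm angle}(u(\xi),u(\eta))$ and $\theta_1={\rm angle}(Dfu(\xi),Dfu(\eta))$ it gets $\theta_1\leq CC_1^{-1}(\theta_0+|\xi-\eta|)$, bounds $\theta_0$ by (\ref{lipeq}) with $j=k-1$, uses $|\xi-\eta|\leq C_1^{-1}|f\xi-f\eta|$, and absorbs $CC_1^{-2}$ into $C_3^{\nu_k-3\nu_{k-1}}\geq C_3^{4k_0}$. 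Two features of your plan diverge from this. First, you step back to $\xi'=f^{-1}\xi$, $\eta'=f^{-1}\eta$, apply the hypothesis at time $\nu_0+\cdots+\nu_{k-1}$, and run the one‑step bound \emph{twice}. This is triggered by taking the ``$+1$'' in the lemma's hypothesis at face value, but that exponent is inconsistent with the way \ref{liplem} is invoked in the proof of Lemma \ref{eqlip0} (where it is applied to $f^{-1}\xi_1,f^{-1}\xi_2$, i.e.\ at time $\nu_0+\cdots+\nu_{k-1}$) and with the paper's own proof, which uses (\ref{lipeq}) with $j=k-1$ directly on $\xi,\eta$; only one step through $Df$ is needed. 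Second, your split into ``folded'' and ``non‑folded'' cases, and the appeal to the curvature of the parabolic image, is unnecessary. The entire point of the two‑dimensional setting is that the denominators never degenerate: $\|Dfu(z)\|\geq|\det Df(z)|/\|Df(z)\|\geq C_1/C_0$ for every $z\in R_0$, whether or not $z$ is near $\zeta$, so the one‑step bound produces a single $\mathcal O(C_1^{-1})$ multiplicative loss with no case distinction. Relatedly, your estimate $\|Dfu(z)\|\approx\max(|z-\zeta|,\sqrt b)$ is not correct: $|B(z)|\leq C\sqrt b$ in Lemma \ref{quadratic} is only an upper bound; near $\zeta$ the true floor is $\|Dfu\|\geq C_1/C_0\approx b$, coming from the determinant, not $\sqrt b$. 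As a consequence the Lipschitz constant you quote, $C_0\|Dft(\zeta)\|^{-2}\leq C_0/b$, is also off (the honest worst case is of order $1/b^2$). These inaccuracies do not wreck the argument, because any fixed power of $1/b$ is swamped by $C_3^{\nu_k-3\nu_{k-1}}\geq C_3^{4k_0}$ once $k_0$ is large, so your scheme would ultimately close; it just takes two passes, a case analysis, and an appeal to curve geometry where the paper uses the determinant bound and a single line.
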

\begin{proof}
Let $\theta_i={\rm angle}(Df^iu(\xi), Df^iu(\eta))$, $i=0,1.$ A
simple computation gives
$$\theta_1
\leq \frac{ Cb\theta_0+C|\xi-\eta|}{\|Dfu(\xi)\|\|Dfu(\eta)\|}.$$
Hence $\theta_1\ll1,$ provided $k_0$ is sufficiently large. We
have
\begin{align*}\theta_1
\leq CC_1^{-1}\left(|\xi-\eta|+{\rm
angle}(u(\xi),u(\eta))\right).\end{align*} The inequality follows from
the following elementary fact: for any nonzero vectors $u$, $v$
such that ${\rm angle}(u,v)\ll1$, ${\rm angle}(u,v)\leq
2|u-v|/\min\{\Vert u\Vert,\Vert v\Vert\}.$ (\ref{lipeq}) with
$j=k-1$ and $|\xi-\eta|\leq C_1^{-1}|f(\xi)-f(\eta)|$ give
\begin{align*}\label{induc}
|\xi-\eta|+{\rm angle}(u(\xi),u(\eta))\leq
2C_1^{-1}C_2C_0^{3\nu_{k-1}}|f\xi-f\eta|.\end{align*} Replacing
this in the previous inequality,
\begin{align*}\theta_1\leq
CC_1^{-2}C_2C_3^{3\nu_{k-1}}|f\xi-f\eta|\leq
C_2C_3^{\nu_{k}}|f\xi-f\eta|.\end{align*} The last inequality
holds for sufficiently large $k_0$, because of
$C_3^{4\nu_{k-1}}\leq C_3^{\nu_{k}}$ from (\ref{nu1}).
\end{proof}

For any $\xi$ on the unstable sides of
$f^{\nu_0+\cdots+\nu_{k-1}+1}Q(R_{i_0\cdots i_{k}})$, let
\begin{equation}\label{va}v(\xi)= \rho\cdot
Dfu(f^{-1}\xi),\end{equation} where $\rho>0$ is the normalizing
constant.
 If $k_0$ is
sufficiently large, then $v(\xi)$ has a large slope. By the
definition of $u(\cdot)$, the sign of the second component of
$v(\xi)$ is constant for all $\xi$.

By Proposition \ref{rect2} and the distortion control, the
contractive field $e_{\nu_k-1}$ is well-defined on
$f^{\nu_0+\cdots+\nu_{k-1}+1}Q(R_{i_0\cdots i_{k}})$. Fix once and
for all the orientation of $e_{\nu_k-1}$ so that the second
component of $e_{\nu_{k}-1}$ and that of $v(\xi)$ have the same
sign. Let $f_{\nu_{k}-1}$ denote the unit vector field orthogonal
to $e_{\nu_{k}-1}$. Split $v(\xi)=A(\xi)e_{\nu_{k}-1}(\xi)+
B(\xi)f_{\nu_{k}-1}(\xi).$

\begin{lemma}\label{eqlip0}
For any $\xi_1,\xi_2$ on the unstable sides of
$f^{\nu_0+\cdots+\nu_{k-1}+1}Q(R_{i_0\cdots i_{k}}),$
$$\max\{|A(\xi_1)-A(\xi_2)|,|B(\xi_1)-B(\xi_2)|\}\leq
2C_2C_3^{\nu_{k}}|\xi_1-\xi_2|.$$
\end{lemma}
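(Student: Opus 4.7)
The plan is a telescoping split
\[
A(\xi_1) - A(\xi_2) = \langle v(\xi_1) - v(\xi_2), e_{\nu_k-1}(\xi_1)\rangle + \langle v(\xi_2), e_{\nu_k-1}(\xi_1) - e_{\nu_k-1}(\xi_2)\rangle,
\]
which by Cauchy--Schwarz, together with the fact that $v$ and $e_{\nu_k-1}$ are unit vectors, reduces the problem to separately bounding $|v(\xi_1) - v(\xi_2)|$ and $|e_{\nu_k-1}(\xi_1) - e_{\nu_k-1}(\xi_2)|$ by $C_2 C_3^{\nu_k}|\xi_1 - \xi_2|$. An identical decomposition with $f_{\nu_k-1}$ in place of $e_{\nu_k-1}$ then takes care of the $B$ component.

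For the first quantity I use that, by (\ref{va}), $v(\xi)$ is the normalization of $Dfu(f^{-1}\xi)$, so for unit vectors the Euclidean distance is controlled by the angle, $|v(\xi_1) - v(\xi_2)| \leq \mathrm{angle}(Dfu(f^{-1}\xi_1),Dfu(f^{-1}\xi_2))$. The preimage points $f^{-1}\xi_1, f^{-1}\xi_2$ lie on the unstable sides of $f^{\nu_0+\cdots+\nu_{k-1}}Q(R_{i_0\cdots i_k})$, which is contained in $f^{\nu_0+\cdots+\nu_{k-1}}Q(R_{i_0\cdots i_{k-1}})$, so Lemma \ref{liplem} applies and gives directly
\[
|v(\xi_1) - v(\xi_2)| \leq C_2 C_3^{\nu_k}|\xi_1 - \xi_2|.
\]

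For the second quantity I invoke the Lipschitz regularity of the most contracting direction. Proposition \ref{rect2}, combined with the bounded distortion on the rectangle and the expansion estimate at the end of a bound period from Lemma \ref{behavior}, ensures that every point on the unstable side of $f^{\nu_0+\cdots+\nu_{k-1}+1}Q(R_{i_0\cdots i_k})$ is $\kappa$-expanding up to time $\nu_k-1$ with $\kappa\geq b^{1/10}$. Corollary \ref{e1} then yields $\|\partial e_{\nu_k-1}\|\leq Cb/\kappa^{3}\leq Cb^{7/10}$, which for sufficiently small $b$ is far smaller than $C_2 C_3^{\nu_k}$. Adding the two contributions,
\[
|A(\xi_1) - A(\xi_2)| \leq (C_2 C_3^{\nu_k} + Cb^{7/10})|\xi_1 - \xi_2| \leq 2C_2 C_3^{\nu_k}|\xi_1 - \xi_2|,
\]
as desired. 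The same argument applied to $f_{\nu_k-1}$, whose Lipschitz constant is inherited from that of $e_{\nu_k-1}$, completes the proof for $B$.

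The main delicate point is verifying that a uniform $\kappa$-expansion estimate really holds along the whole unstable side, and not merely at some central point, so that Corollary \ref{e1} can be applied pointwise to produce a Lipschitz bound on $e_{\nu_k-1}$. This uniformity is exactly what the $C^2(b)$-geometry at the end of the bound period secured by Lemma \ref{behavior} and Proposition \ref{rect2} guarantees, once combined with the distortion estimate (ii) of Section \ref{symb}; all other steps are then triangle-inequality bookkeeping.
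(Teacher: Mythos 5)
Your proof is correct, and it reaches the same bound by a genuinely different algebraic route. The paper writes $A(\xi_i)=\cos\bigl(\mathrm{angle}(e_{\nu_k-1}(\xi_i),v(\xi_i))\bigr)$, passes through $\cos^{-1}$ using $|(\cos^{-1})'|\geq 1$, and then applies the elementary triangle inequality for angles, so that the final estimate reduces to bounding $\mathrm{angle}(v(\xi_1),v(\xi_2))$ (via Lemma~\ref{liplem}) and $\mathrm{angle}(e_{\nu_k-1}(\xi_1),e_{\nu_k-1}(\xi_2))$ (which the paper attributes to the contracting-direction estimates of Lemma~\ref{fo}). You instead telescope the inner product directly and apply Cauchy--Schwarz, reducing to bounding $|v(\xi_1)-v(\xi_2)|$ and $|e_{\nu_k-1}(\xi_1)-e_{\nu_k-1}(\xi_2)|$; the first term still comes from Lemma~\ref{liplem} (after observing that, for unit vectors, Euclidean distance is dominated by angle), while for the second you invoke Corollary~\ref{e1}, which gives $\Vert\partial e_{\nu_k-1}\Vert\leq Cb/\kappa^3\leq Cb^{7/10}$, a negligible Lipschitz constant compared with $C_2C_3^{\nu_k}$. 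Both routes rest on the same two key inputs (the inductive angle bound of Lemma~\ref{liplem}, and the smooth dependence of the most-contracting direction from Sect.~\ref{contracting}); your telescoping avoids the slightly indirect passage through $\cos^{-1}$ and its derivative bound, and your explicit appeal to Corollary~\ref{e1} is arguably a cleaner reference than the paper's citation of Lemma~\ref{fo} for a spatial Lipschitz bound. The one point that needed care --- and which you correctly flagged --- is that $e_{\nu_k-1}$ must be well-defined and smoothly varying across the whole unstable side, which is guaranteed by the expansion control supplied by Proposition~\ref{rect2} together with the distortion estimate (ii) of Sect.~\ref{symb} and Lemma~\ref{behavior}.
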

\begin{proof}
The following elementary fact is used. For
$u_i=\left(\begin{smallmatrix}\cos\theta_i\\
\sin\theta_i
\end{smallmatrix}\right)$, $0\leq\theta_i\leq\pi$, $i=1,2,3,4$,
$$|{\rm angle}(u_1,u_2)-{\rm angle}(u_3,u_4)|
\leq {\rm angle}(u_1,u_3)+{\rm angle}(u_2,u_4).$$
This can be checked using ${\rm angle}(u_i,u_j)=|\theta_i-\theta_j|$
and the triangle inequality.

We have $A(\xi_i)=\langle e_{\nu_{k}-1}(\xi_i),v(\xi_i)\rangle
=\cos\left({\rm angle}(e_{\nu_{k}-1}(\xi_i),v(\xi_i))\right),$
 where
the bracket denotes the scholar product. Then ${\rm
angle}(e_{\nu_{k}-1}(\xi_i),v(\xi_i))\in[0,\pi]$ holds, which is
close to $0$. Considering $\cos^{-1}\colon[-1,1]\to[0,\pi]$ and
$|(\cos^{-1})'|\geq1$ we have $|A(\xi_1)-A(\xi_2)|\leq
|\cos^{-1}(A(\xi_1))- \cos^{-1}(A(\xi_2))|$, and
\begin{align*}
|\cos^{-1}(A(\xi_1))- \cos^{-1}(A(\xi_2))|&= |{\rm angle
}(e_{\nu_{k}-1}(\xi_1),v(\xi_1))-
{\rm angle}(e_{\nu_{k}-1}(\xi_2),v(\xi_2))\\
 &\leq{\rm angle}(v(\xi_1),v(\xi_2))+{\rm angle}
(e_{\nu_{k}-1}(\xi_1), e_{\nu_{k}-1}(\xi_2))\\&\leq
2C_2C_3^{\nu_{k}}|\xi_1-\xi_2|.
\end{align*}
The first factor in the second line is bounded by Lemma
\ref{liplem}. The second factor is bounded by Lemma \ref{fo}. In
the same way,  we have $B_i=\langle
f_{\nu_{k}-1}(\xi_i),v(\xi_i)\rangle =\cos\left({\rm
angle}(f_{\nu_{k}-1}(\xi_i),v(\xi_i))\right)$ and ${\rm
angle}(f_{\nu_{k}-1}(\xi_i),v(\xi_i))\in[0, \pi]$, which is
close to $\pi/2$. Then
\begin{align*}
|\cos^{-1}(B(\xi_1))- \cos^{-1}(B(\xi_2))|&= |{\rm angle
}(f_{\nu_{k}-1}(\xi_1),v(\xi_1))-
{\rm angle}(f_{\nu_{k}-1}(\xi_2),v(\xi_2))|\\
 &\leq{\rm angle}(v(\xi_1),v(\xi_2))+{\rm angle}(f_{\nu_{k}-1}
 (\xi_1),
f_{\nu_{k}-1}(\xi_2))\\&\leq 2C_2C_3^{\nu_{k}}|\xi_1-\xi_2|.
\end{align*}
For the last inequality we have used the orthogonality of
$f_{\nu_{k}-1}$ to $e_{\nu_{k}-1}$.
\end{proof}

\begin{lemma}\label{liplem6}
There is a $C^1$ vector field $\phi_0$ on
$f^{\nu_0+\cdots+\nu_{k-1}+1} Q(R_{i_0\cdots i_k})$ which is
tangent to the unstable sides of it, with $\Vert\phi_0\Vert\leq 2$
and $\Vert D\phi_0\Vert\leq 4C_2C_3^{2\nu_{k}}$.
\end{lemma}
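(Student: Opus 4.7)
The field $v(\xi) = \rho\cdot Dfu(f^{-1}\xi)$ from \eqref{va} is already defined on the two unstable sides of $\Omega := f^{\nu_0+\cdots+\nu_{k-1}+1}Q(R_{i_0\cdots i_k})$, is tangent to them, and has unit norm. The plan is therefore to extend $v$ to the interior of $\Omega$ while preserving the Lipschitz estimate of Lemma~\ref{eqlip0}. By Proposition~\ref{rect2} and the construction of stable sides from long stable leaves, $\Omega$ is foliated by stable curves running from the top unstable side to the bottom unstable side; the sides $\Gamma_1,\Gamma_2$ of $\Omega$ are themselves such leaves, and every interior point $\xi$ lies on a unique leaf $\Gamma(\xi)$ meeting the top and bottom at points $\xi^+$ and $\xi^-$ respectively.

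Using this foliation I would write $v(\xi^\pm) = A(\xi^\pm)e_{\nu_k-1}(\xi^\pm) + B(\xi^\pm)f_{\nu_k-1}(\xi^\pm)$ as in Lemma~\ref{eqlip0} and define, for each $\xi\in\Omega$,
\begin{equation*}
\tilde A(\xi) = (1-s(\xi))A(\xi^+) + s(\xi)A(\xi^-),\qquad
\tilde B(\xi) = (1-s(\xi))B(\xi^+) + s(\xi)B(\xi^-),
\end{equation*}
where $s(\xi)\in[0,1]$ is the normalized arc-length parameter of $\xi$ along $\Gamma(\xi)$. Then set $\phi_0(\xi) := \tilde A(\xi) e_{\nu_k-1}(\xi) + \tilde B(\xi)f_{\nu_k-1}(\xi)$. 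On the unstable sides $s=0$ or $s=1$, so $\phi_0 = v$ there, guaranteeing tangency. Since $|A|,|B|\leq 1$ on the unstable sides, linear interpolation gives $|\tilde A|,|\tilde B|\leq 1$ on $\Omega$, hence $\|\phi_0\|\leq\sqrt{2}\leq 2$.

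For the derivative bound, differentiate:
\begin{equation*}
D\phi_0 = (D\tilde A)e_{\nu_k-1} + \tilde A\, De_{\nu_k-1} + (D\tilde B)f_{\nu_k-1} + \tilde B\, Df_{\nu_k-1}.
\end{equation*}
Two quantities must be controlled. First, the partial derivatives $\|De_{\nu_k-1}\|$ and $\|Df_{\nu_k-1}\|$ are harmless: by Corollary~\ref{focor2} applied to the contracting field on $\Omega$, they are of order $Cb/\kappa^{2+j}$ for some $\kappa\geq\delta^{15}$, hence absorbed into constants times $b^{1/2}$. Second, $D\tilde A,D\tilde B$ must be bounded; tangential to the stable leaves the interpolation contributes $|A(\xi^+)-A(\xi^-)|$ and $|B(\xi^+)-B(\xi^-)|$ divided by leaf length, while transversally one picks up $2C_2C_3^{\nu_k}$ directly from Lemma~\ref{eqlip0}. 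Using Lemma~\ref{leaf}(b) and \eqref{leaf2} to compare $(\xi_1^\pm,\xi_2^\pm)$ as $\xi$ varies, and the fact that the foliation is $C^1$ with Lipschitz tangent field of slope $\gtrsim 1/\sqrt b$, one obtains $\|D\tilde A\|,\|D\tilde B\|\leq 2C_2C_3^{\nu_k}(1+C\sqrt b)$. Assembling, $\|D\phi_0\|\leq 4C_2C_3^{\nu_k} + Cb^{1/2} \leq 4C_2C_3^{2\nu_k}$.

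The main obstacle, and the only delicate point, is the transverse step: verifying that interpolating $A,B$ along stable leaves does not inflate the Lipschitz constant beyond a factor of $C_3^{\nu_k}$. This reduces to a bounded-holonomy statement for the long stable foliation of $\Omega$, which follows by coupling Lemma~\ref{leaf}(b) (the tangent-field Lipschitz estimate of the foliation) with the angle control of Lemma~\ref{liplem} for the unstable sides, together with the fact that the unstable sides are $C^2(b)$-curves so that the projection along stable leaves onto either unstable side is Lipschitz with constant $1+\mathcal O(\sqrt b)$. Once this holonomy estimate is in hand, the rest is routine bookkeeping within the already-established constants.
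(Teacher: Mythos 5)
Your overall architecture is sound and close to the paper's: split $v$ on the unstable sides into $A e_{\nu_k-1} + B f_{\nu_k-1}$ using Lemma~\ref{eqlip0}, interpolate between the two unstable sides, and reassemble. The paper does exactly this, except it works in a coordinate $(\hat x,\hat y)$ trivializing the two stable leaves $\Gamma(f\zeta^{(1)}),\Gamma(f\zeta^{(2)})$ and interpolates affinely in $\hat y$, which avoids any appeal to a genuine continuous foliation by long stable leaves (only discrete families of such leaves through well-chosen points are constructed in the paper, so your ``foliated by stable curves'' premise is not actually established).

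The genuine gap is in your transverse estimate. You assert that ``the projection along stable leaves onto either unstable side is Lipschitz with constant $1+\mathcal O(\sqrt b)$'' because the unstable sides are $C^2(b)$-curves, and you conclude $\|D\tilde A\|,\|D\tilde B\|\leq 2C_2C_3^{\nu_k}(1+C\sqrt b)$. This is false. Inside $f^{\nu_0+\cdots+\nu_{k-1}+1}Q(R_{i_0\cdots i_k})$ the unstable sides are \emph{not} nearly-horizontal $C^2(b)$-curves: they are pieces of parabolas through the critical values $f\zeta^{(\sigma)}$, tangent at those points to the (nearly vertical) stable direction. The map sending a transverse parameter to the intersection point $\xi^\pm$ on such a parabola therefore has derivative of order $\big|\zeta^{(\sigma)}-f^{-1}\xi^\pm\big|^{-1}$, which near the tip is bounded only by $e^{3p_{k-1}/\log C_0}\le C_3^{\nu_k}$; this is exactly what \eqref{A1} and \eqref{A4} in the paper quantify as $|d\gamma_\sigma/d\hat x|$. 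Multiplying by the $C_2C_3^{\nu_k}$ from Lemma~\ref{eqlip0} is what produces the $C_3^{2\nu_k}$ in the stated bound — you cannot get away with $C_3^{\nu_k}$. As written, your intermediate bound is a stronger, incorrect claim; the final display only appears to give the right inequality because you discard the factor at the last step, but the implication chain it rests on would not survive a careful derivative computation. Fixing this requires precisely the slope estimate on the unstable boundaries over the stable coordinate, which is the heart of the paper's proof.
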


\begin{proof}
Let $\zeta^{(1)}, \zeta^{(2)}$ denote the critical points on the
unstable sides of $f^{\nu_0+\cdots+\nu_{k-1}} Q(R_{i_0\cdots
i_{k-1}})$. We introduce a nearly orthogonal coordinate on the
rectangle which trivialize $\Gamma(f\zeta)$ and $\Gamma(f\zeta')$.
Namely, it is a $C^1$ coordinate $(\hat x,\hat y)$ on
$f^{\nu_0+\cdots+\nu_{k-1}+1}
 Q(R_{i_0\cdots i_{k-1}})$ such that:
\smallskip

(i) $9/10\leq\Vert\partial_{\hat x}\Vert\leq 10/9$,
$\Vert\partial_{\hat y}\Vert=1$, $\langle\partial_{\hat
x},\partial_{\hat y}\rangle=0$, $\langle\partial_{\hat
y},t(f\zeta_{1})\rangle=1$;

(ii) $\Gamma(f\zeta^{(1)})=\{\hat x=0\}$,
$\Gamma(f\zeta^{(2)})=\{\hat x=c\}$, where $c$ is a constant.
\smallskip

\noindent It is possible to choose such a coordinate, by the
properties of long stable leaves and (b) Lemma \ref{leaf}. Let
$T\colon (x,y)\to(\hat x,\hat y)$ denote the coordinate
transformation.

With respect to $(\hat x,\hat y)$-coordinate, we represent the
unstable sides of $f^{\nu_0+\cdots+\nu_{k-1}+1}
 Q(R_{i_0\cdots i_{k}})$ by
graphs of functions $\gamma_1, \gamma_2$, $\gamma_1(\hat
x)<\gamma_2(\hat x)$. For all $\xi$ in the unstable sides the
rectangle, let \begin{equation}\label{suv1} (\gamma_2(\hat
x)-\gamma_1(\hat x))\cdot v(\xi)=\tilde A(\xi)e_{\nu_{k}-1}(\xi) +
\tilde B(\xi)f_{\nu_{k}-1}(\xi),\end{equation} where $v(\xi)$ is
the one in (\ref{va}) and $T(\xi)=(\hat x,\hat y)$. In what
follows, we shall extend $\tilde A,\tilde B$ to $C^1$ functions on
the the entire $f^{\nu_0+\cdots+\nu_{k-1}+1} Q(R_{i_0\cdots
i_k})$, in such a way that $\max(\Vert \tilde A \Vert,\Vert \tilde
B\Vert)\leq 1$ and $\max(\Vert D\tilde A\Vert,\Vert D\tilde
B\Vert)\leq 3C_2C_3^{2\nu_{k}}$. For all $z$ in the rectangle,
define
\begin{equation}\label{suv2} \phi_0(z)=\tilde
A(z)e_{\nu_{k}-1}(z)+\tilde B(z)f_{\nu_{k}-1}(z).\end{equation}
Since $\Vert De_{\nu_{k-1}}\Vert$, $\Vert Df_{\nu_{k-1}}\Vert$ are
bounded by Lemma \ref{fo}, this yields the desired inequality.

To simplify notation, write $A$ for $A\circ T^{-1}$, and the same
for $B$, $\tilde A$, $\tilde B$. On the assumption that both
$\gamma_1(\hat x)$ and $\gamma_2(\hat x)$ make sense, we extend
$\tilde A$ affinely along the $\hat y$-direction. In other words,
for $\hat y\in[\gamma_1(\hat x),\gamma_2(\hat x)]$, define
\begin{equation}\label{A}\tilde A(\hat x,\hat y)=
\tilde A(\hat x,\gamma_1(\hat x))+ (\hat y-\gamma_1(\hat
x))\left(A(\hat x,\gamma_2(\hat x))-A(\hat x,\gamma_1(\hat
x))\right).\end{equation} In the same way, we extend $\tilde B$
affinely along the $\hat y$-direction. If, for instance,
$\gamma_1(\hat x)$ makes sense and $\gamma_2(\hat x)$ does not, we
enlarge the domain of definition of $\gamma_2$ so that
$\gamma_2(\hat x)$ makes sense. It is possible to show, using
 the long stable leaf of order $\nu_k-1$ through $\gamma_1(\hat x)$,
 that $\gamma_2(\hat x)$
 is sufficiently close to the unstable sides of
the rectangle, so that all the preceding arguments go through.

The definition gives $\max(\Vert \tilde A\Vert,\Vert \tilde
B\Vert)\leq \gamma_2(\hat x)-\gamma_1(\hat x)\ll1$. Lemma
\ref{eqlip0} and the choice of $(\hat x,\hat y)$-coordinate give
$\max(\Vert
\partial_{\hat y} \tilde A\Vert,\Vert
\partial_{\hat y} \tilde B\Vert)\leq 3C_2C_3^{\nu_k}$.
To evaluate the norms of $\hat x$-derivatives, we assume that
$\zeta^{(\sigma)}$ and $f^{-1}\gamma_\sigma(\hat x)$ belong to the
same unstable side, $\sigma=1,2$. Recall the symbolic coding
$i_{k-1}=(\rho_{k-1},\epsilon_{k-1},\Delta_{k-1},p_{k-1},\nu_{k-1})$.
In the case $\epsilon_{k-1}=+$ or $-$,
\begin{equation}\label{A1} \left|\frac{d\gamma_\sigma}{d\hat x}
(\hat x)\right|\leq \frac{C\sqrt{b}}{|f^{-1}(\hat
x,\gamma_\sigma(\hat x))-\zeta^{(\sigma)}|}\leq
e^{\frac{3p_{k-1}}{\log C_0}} \leq e^{\frac{3\nu_{k}}{\log
C_0}}.\end{equation} In the case $\epsilon_{k-1}=0$,
\begin{equation}\label{A4} \left|\frac{d\gamma_\sigma}{d\hat x}
(\hat x)\right|\leq \frac{C\sqrt{b}}{|f^{-1}(\hat
x,\gamma_\sigma(\hat x))-\zeta_{\rho_{k-1}}|}\leq
e^{\frac{3p_{k-1}}{\log C_0}} \leq e^{\frac{3\nu_{k}}{\log
C_0}}.\end{equation} In either of the two cases, Sublemma
\ref{eqlip0} gives
\begin{equation}\label{A2}\left|\frac{dA}{d\hat x}
(\hat x,\gamma_\sigma(\hat x))\right|\leq
3C_2C_3^{\nu_{k}}\left|\frac{d\gamma_\sigma}{d\hat x}(\hat
x)\right|\leq 3C_2C_3^{\nu_k}e^{\frac{3\nu_{k}}{\log
C_0}}.\end{equation}  As $\tilde A(\hat x,\gamma_\sigma(\hat
x))=(\gamma_2(\hat x)-\gamma_1(\hat x) )A(\hat
x,\gamma_\sigma(\hat x))$,
\begin{equation}\label{A3}
\left|\frac{d\tilde A}{d\hat x}(\hat x,\gamma_\sigma(\hat
x))\right|\leq C_2C_3^{\nu_k}e^{\frac{3\nu_{k}}{\log
C_0}}.\end{equation}
Differentiating (\ref{A}) with $\hat x$ and then using (\ref{A1})
(\ref{A4}) (\ref{A2}) (\ref{A3}), we obtain $\Vert\partial_{\hat
x} \tilde A\Vert\leq C_2C_3^{2\nu_k}$. In the same way we obtain
the desired
upper estimate of $\Vert\partial_{\hat x} \tilde B\Vert$. 
Transforming all these derivative estimates back to the original
$(x,y)$-coordinate, we obtain the desired estimates.
\end{proof}

We now introduce the projectivization $f_*$ of $Df$, given by
$f_*(\xi,v)=Df(\xi)v/\Vert Df(\xi)v\Vert,$ and define vector
fields $\phi_j$ on $f^{\nu_0+\cdots+\nu_{k-1}+j+1} Q(R_{i_0\cdots
i_k})$ for $1\leq j<\nu_{k},$ by push-forward under $f_*$:
$$\phi_j(z)=f_*(f^{-1}z,\phi_{j-1}(f^{-1}z)).$$

If $\epsilon_{k-1}=+$ or $-$, then for all $\xi,\eta$ in the
unstable side of $f^{\nu_0+\cdots+\nu_k}Q(R_{i_0\cdots i_k})$,
${\rm angle}(u(\xi),u(\eta))={\rm angle}(\phi_{\nu_k-1}(\xi),
\phi_{\nu_k-1}(\eta))$ holds. If $\epsilon_{k-1}=0$, then ${\rm
angle}(u(\xi),u(\eta))<{\rm angle}(\phi_{\nu_k-1}(\xi),
\phi_{\nu_k-1}(\eta))$ holds. Hence, \ref{lipeq}) for $j=k$ is a
direct consequence of the next
\begin{lemma}\label{viana}
For all $z\in f^{\nu_0+\cdots+\nu_k}Q(R_{i_0\cdots i_k})$, $\Vert
D\phi_{\nu_{k}-1}(z)\Vert\leq C_2C_3^{3\nu_{k}}.$
\end{lemma}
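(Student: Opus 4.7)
The plan is to derive a recursive inequality
\[
\|D\phi_j\|\leq c_1\|D\phi_{j-1}\|+c_2\qquad(1\leq j\leq \nu_k-1)
\]
with constants $c_1,c_2$ bounded in terms of $C_0$, $C_1$ and $C_3$ but independent of $j$, and iterate it $\nu_k-1$ times starting from $\|D\phi_0\|\leq 4C_2C_3^{2\nu_k}$ (Lemma~\ref{liplem6}). The target $\|D\phi_{\nu_k-1}\|\leq C_2C_3^{3\nu_k}$ permits a total blow-up by roughly a factor $C_3=C_0 e^{6/\log C_0}$ per step, which is exactly the room provided by one application of $Df$; there is therefore enough slack for a direct chain-rule estimate, provided the recursion constants do not depend on $\nu_k$.

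The first step is to write unit vectors in angular form $\phi_i=(\cos\theta_i,\sin\theta_i)$ and differentiate the defining relation $\phi_j(z)=Df(f^{-1}z)\phi_{j-1}(f^{-1}z)/\|Df(f^{-1}z)\phi_{j-1}(f^{-1}z)\|$. Setting $\Theta(\xi,\theta)$ equal to the angle of $Df(\xi)(\cos\theta,\sin\theta)$ and applying the chain rule,
\[
D\theta_j(z)=\bigl(\partial_\xi\Theta+\partial_\theta\Theta\cdot D\theta_{j-1}(\xi)\bigr)\bigr|_{\xi=f^{-1}z,\ \theta=\theta_{j-1}(\xi)}\circ Df^{-1}(z).
\]
A direct computation gives $\partial_\theta\Theta=\det Df(\xi)/\|Df(\xi)\phi_{j-1}(\xi)\|^2$ and $|\partial_\xi\Theta|\leq C_0/\|Df\phi_{j-1}\|$. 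Since $|\det Df|\leq C_0b$ on $R_0$ and $\|Df^{-1}\|\leq C_0/C_1$ (using the lower bound $|\det Df|\geq C_1$ from the definition of $C_1$), the whole estimate collapses to finding a uniform lower bound on $\|Df\phi_{j-1}\|$ throughout the rectangle $f^{\nu_0+\cdots+\nu_{k-1}+j}Q(R_{i_0\cdots i_k})$.

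This lower bound should follow from the fact that every point in $f\,Q(R_{i_0\cdots i_k})$ is $\kappa_0^{1/2}$-expanding up to time $\nu_k-1$, a property inherited from item~(i) recorded in Section~\ref{symb} together with Lemma~\ref{outside} and Proposition~\ref{recovery0} applied across internal bound periods. Since $\phi_0$ is $C^2(b)$-close to horizontal by Lemma~\ref{liplem6}, and the forward iterates of $Q(R_{i_0\cdots i_k})$ avoid every $\mathcal A^{(n)}$ with $n<\nu_k$ (so that no close return intervenes), Lemma~\ref{fo} and Corollary~\ref{focor1} together with the angle control of Lemma~\ref{behavior} keep $\phi_{j-1}$ at a definite angular distance from the most contracting direction $e_{\nu_k-j}$; equivalently $\|Df\phi_{j-1}\|\geq C^{-1}\kappa_0^{1/2}$ uniformly in $j$. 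Substituting produces the recursion $\|D\phi_j\|\leq C_3\|D\phi_{j-1}\|+C_3$, and iterating $\nu_k-1$ times gives
\[
\|D\phi_{\nu_k-1}\|\leq C_3^{\nu_k-1}\cdot 4C_2C_3^{2\nu_k}+C_3\sum_{i=0}^{\nu_k-2}C_3^i\leq C_2C_3^{3\nu_k},
\]
once $k_0$, hence $\nu_k\geq 4^k k_0$ from (\ref{nu}), is large enough to absorb the additive term.

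The main obstacle is the uniform lower bound on $\|Df\phi_{j-1}\|$. A priori the projectivized orbit of $\phi_0$ could drift into the contracting cone while traversing a bound period that shadows a critical point deep inside the orbit of the rectangle, in which case both $\|Df\phi_{j-1}\|$ and the denominator in $\partial_\theta\Theta$ would become small and the recursion would degenerate. Ruling this out requires carefully combining (i) the angle-transport estimates of Lemma~\ref{behavior} for orbits of critical points, (ii) the distortion of the long stable leaves bounding $Q(R_{i_0\cdots i_k})$ as in (\ref{leaf2}) and Lemma~\ref{leaf}, and (iii) the fact that the rectangle has not experienced a close return before time $\nu_k$, which excludes precisely the configurations in which $\phi_{j-1}$ could become nearly contracting.
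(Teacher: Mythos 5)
The one-step recursion you propose,
$\|D\phi_j\|\le c_1\|D\phi_{j-1}\|+c_2,$
cannot be made to work with constants $c_1,c_2$ independent of $j$, and the uniform lower bound $\|Df\phi_{j-1}\|\ge C^{-1}\kappa_0^{1/2}$ on which it rests is false. The forward orbit of $Q(R_{i_0\cdots i_k})$ makes many ordinary free returns to $I(\delta)$ during $(\nu_0+\cdots+\nu_{k-1},\nu_0+\cdots+\nu_k)$; these are excluded from being \emph{close} returns, but they still occur and still produce fold periods. After a free return at depth $d$, Lemma~\ref{quadratic} splits the tangent vector into a horizontal component of size $\approx d$ and a component along the (contracted) stable direction of size $\approx\sqrt{b}$. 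In the initial part of the ensuing fold period the stable component dominates and the one-step ratio $\|Df\phi_{j-1}\|=\|Df^{i+1}t\|/\|Df^{i}t\|$ is of order $Cb$, not $\kappa_0^{1/2}$. With $c_1=\|Df\|/\|Df\phi_{j-1}\|^2\approx b^{-2}$ at such a step, the iteration blows up immediately, and nothing in the subsequent steps cancels a multiplicative loss that has already been incurred: the recursion only moves forward. Your closing paragraph correctly names the obstacle but then asserts that avoiding close returns ``excludes precisely the configurations in which $\phi_{j-1}$ could become nearly contracting''; this is not so, since the depth restriction for non-close returns only forbids $d\lesssim\delta^{k/2}$, and $d\le\delta$ already suffices for the stable component to dominate at the return.

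The paper sidesteps this by forgoing the step-by-step contraction entirely: it differentiates the $(\nu_k-1)$-fold composition in one stroke, producing a telescoping sum in which the only denominators that appear are the \emph{multi-step} quantities $\|Df^{\,i-1}(f^{-i+1}z)\phi_{\nu_k-i}\|$, equal (after normalization) to $\|Df^{\nu_k-1}\phi_0\|/\|Df^{\nu_k-i}\phi_0\|$. Lemma~\ref{expa} — which is a regularity statement, ultimately Proposition~\ref{recovery0}(f) applied across the bound segments — bounds every such ratio below by $C\delta$, even though individual one-step factors inside the product can be of order $b$. The growth~$10^i$ in each summand is then absorbed by the factor $C_3^{\nu_k}$ in the target bound, using $\nu_k\ge 4^kk_0$ from (\ref{nu}). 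Your derivative calculations $\partial_\theta\Theta=\det Df/\|Df\phi\|^2$ and $|\partial_\xi\Theta|\le C_0/\|Df\phi\|$ match the paper's (\ref{pro1})--(\ref{pro2}); the missing ingredient is that these must be composed across the whole interval and controlled via the multi-time regularity lemma, not chained step by step.
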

\begin{proof} The following estimates, proved in Appendix A.3, are used:
\begin{equation}\label{pro1}
\left|\partial_{v} f_*(\xi,v)\right|\leq
2\frac{|\det Df(\xi)|}{\Vert
Df(\xi)v\Vert^2}.
\end{equation}
\begin{equation}\label{pro2}
|\partial_\xi f_*(\xi,v)| \leq \frac{\Vert D^2f(\xi)\Vert\Vert
v\Vert}{\Vert Df(\xi)v\Vert}.
\end{equation}

Differentiating the formula of $\phi_j$ and using the result
recursively we get
\begin{align*}D\phi_{\nu_{k}-1}(z)=
&\sum_{i=1}^{\nu_{k}-1}\partial_v f_*^{i-1}(f^{-i+1}z,
\phi_{\nu_{k}-i})\partial_{\xi}f_*(f^{-i}z,\phi_{\nu_{k}-1-i})Df^{-i}(z)\\&+
\partial_v f_*^{\nu_{k}-1}(f^{-\nu_{k}+1}z,
\phi_{0})D\phi_{0}(f^{-\nu_{k}+1}z)Df^{-\nu_{k}+1}(z),\end{align*}
where $\phi_{\nu_{k}-1-i}$ means $\phi_{\nu_{k}-1-i}(f^{-i}z)$.
By (\ref{pro1}), for every $1\leq i<\nu_{k},$
\begin{eqnarray*}\label{ve}
\Vert\partial_vf_*^{i-1}(f^{-i+1}z,\phi_{\nu_{k}-i})\Vert &\leq&
2^{i-1}\frac{|\det Df^{i-1}(f^{-i+1}z)|} {\Vert
Df^{i-1}(f^{-i+1}z)\phi_{\nu_{k}-i}\Vert^2}\\
&=& \frac{2^{i-1}} {\Vert
Df^{i-1}(f^{-i+1}z)\phi_{\nu_{k}-i}\Vert^2}\frac{\Vert
Df^{i-1}(f^{-i+1}z)\Vert}{\Vert Df^{-i+1}(z)\Vert}.\end{eqnarray*}
(\ref{pro2}) gives
\begin{align*}\Vert\partial_\xi f_*(f^{-i}z,\phi_{\nu_{k}-1-i})
Df^{-i}(z)\Vert&\leq \Vert\partial_\xi
f_*(f^{-i}z,\phi_{\nu_{k}-1-i})
Df^{-1}(f^{-i+1}z)\Vert\Vert Df^{-i+1}(z)\Vert\\
&\leq\frac{C\Vert Df^{-1}(f^{-i+1}z)\Vert\Vert
Df^{-i+1}(z)\Vert}{\Vert Df(f^{-i}z)\Vert}\leq CC_1^{-2}\Vert
Df^{-i+1}(z)\Vert.\end{align*}
Replacing all these in the above equality,
$$\Vert
D\phi_{\nu_{k}-1}(z)\Vert\leq\sum_{i=1}^{\nu_{k}-1}2^{i-1}CC_1^{-2}
\frac{\Vert Df^{i-1}(f^{-i+1}z)\Vert}{\Vert Df^{i-1}(f^{-i+1}z)
\phi_{\nu_{k+1}-i}\Vert^2}+\frac{\Vert
Df^{\nu_k-1}(f^{-\nu_k+1}z)\Vert} {\Vert
Df^{\nu_k-1}(f^{-\nu_{k}+1}z) \phi_{0}\Vert^2}\Vert
D\phi_0(f^{-\nu_{k}+1}z)\Vert.$$ To evaluate the denominators of
the fractions, we need
\begin{lemma}\label{expa}
For all $\xi$ in the unstable sides of
$f^{\nu_0+\cdots+\nu_{k-1}+1} Q(R_{i_0\cdots i_k})$ and
$0\leq j<\nu_k,$ $\Vert \phi_{\nu_{k}-1}(\xi)\Vert\geq
C\delta\Vert \phi_{j}(\xi)\Vert.$
\end{lemma}
\begin{proof}
Let $\zeta$ denote the critical point on the same unstable side of
$f^{\nu_{0}+\cdots+\nu_{k-1}}Q(R_{i_0\cdots i_{k-1}})$ as that of
$f^{-1}\xi$. Let $q$ denote the fold period for $f^{-1}\xi$. In
view of Proposition \ref{recovery0} and the bounded distortion,
for $q\leq j<\nu_k$ we have
$\frac{\Vert\phi_{\nu_{k}-1}(\xi)\Vert}{\Vert\phi_j(\xi)\Vert}
\geq C\frac{\Vert w_{\nu_{k}}(f^{-1}\xi)\Vert}{\Vert
w_j(f^{-1}\xi) \Vert}\geq C\delta,$ and for $0\leq j< q$,
$\Vert\phi_{q}\Vert>\Vert\phi_{j}(\xi)\Vert$.
\end{proof}
Lemma \ref{liplem6} and Lemma \ref{expa} give
$$\Vert D\phi_{\nu_{k}-1}(z)\Vert\leq \sum_{i=1}^{\nu_k-1}
C\delta^{-2}10^i+C\delta^{-2}5^{\nu_k} C_2C_3^{2\nu_k}\leq
C_2C_3^{3\nu_k}.$$ The last inequality holds for sufficiently
large $k_0$.
\end{proof}

\subsection{Area distortion bounds.}\label{aread}
Proposition \ref{lipschitz} and the next area distortion bounds
together allow us to estimate the Lebesgue measure of the set in
question.
\begin{prop}\label{metric2}
For every $k\geq1$ and all $\xi_1,\xi_2\in
f^{\nu_0+\cdots+\nu_{k-1}} Q(R_{i_0\cdots i_{k}})$,
$$\frac{|\det Df^{\nu_{k}}(\xi_1)|}{|\det Df^{\nu_{k}}(\xi_2)|}\leq
e^{C_1^{-1}}.$$
\end{prop}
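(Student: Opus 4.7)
The plan is to reduce the Jacobian distortion to a summable distance estimate along the orbit, then bound the latter using the geometry already established for $R_{i_0\cdots i_k}$. Writing
\[
\log\frac{|\det Df^{\nu_k}(\xi_1)|}{|\det Df^{\nu_k}(\xi_2)|}=\sum_{j=0}^{\nu_k-1}\Bigl(\log|\det Df(f^j\xi_1)|-\log|\det Df(f^j\xi_2)|\Bigr),
\]
and using $|\det Df|\ge C_1$ together with the $C^2$-bound $\|\partial^2 f\|\le C_0$ from (M3), one sees that $\log|\det Df|$ is Lipschitz with some constant $L=L(C_0,C_1)$. Thus it is enough to prove
\[
\sum_{j=0}^{\nu_k-1}|f^j\xi_1-f^j\xi_2|\le C_1^{-1}/L,
\]
which will follow because each summand can be made geometrically small by choosing $b$ small and $k_0$ large.

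The next step is to decompose $\xi_1-\xi_2$ into a stable and an unstable component. By (R4) the stable sides of $f^{\nu_0+\cdots+\nu_{k-1}}Q(R_{i_0\cdots i_k})$ lie on long stable leaves; let $\eta_i$ denote the projection of $\xi_i$ along the long stable leaf through $\xi_i$ onto one fixed unstable side of the rectangle. Lemma \ref{leaf}(a) gives $|f^j\xi_i-f^j\eta_i|\le(Cb/\kappa)^{j}$, so after the triangle inequality the ``stable part'' contributes
\[
\sum_{j\ge 0}2(Cb/\kappa)^{j}\le Cb,
\]
which is negligible. It therefore remains to bound $\sum_{j=0}^{\nu_k-1}|f^j\eta_1-f^j\eta_2|$, a one-dimensional problem along the unstable side. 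At the terminal time, Proposition \ref{rect2} places $f^{\nu_k}\eta_i$ in $\mathcal B^{(\nu_k)}$, whose horizontal extent is $\le C\delta^{\nu_k/10}$, so $|f^{\nu_k}\eta_1-f^{\nu_k}\eta_2|\le C\delta^{\nu_k/10}$.

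To transfer this smallness backward to earlier times $j$, I would split $[0,\nu_k]$ into the initial bound period of length $p_{k-1}$ (recorded in $i_{k-1}$) and the remaining free/bound alternation on $[p_{k-1},\nu_k]$. On the initial bound period Proposition \ref{recovery0}(c) applied at the common critical point on the unstable side gives $|f^j\eta_1-f^j\eta_2|\le 2e^{-2\alpha p_{k-1}}$ for $0\le j\le p_{k-1}$, whose total is at most $2p_{k-1}e^{-2\alpha p_{k-1}}\ll 1$. On $[p_{k-1},\nu_k]$ the unstable sides are already $C^2(b)$ by Lemma \ref{behavior}, and every interior close return is separated from $\mathcal C$ by at least $e^{-\alpha j}$ (since $f^j\eta_i\notin\mathcal A^{(j)}$ for $j<\nu_k$), so the bounded distortion Lemma \ref{izo} applies on the relevant arc. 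Combining Lemma \ref{izo} with the exponential growth on free segments from Lemma \ref{outside} and with Proposition \ref{recovery0} on any interior bound interval, one obtains
\[
|f^j\eta_1-f^j\eta_2|\le \tfrac{C}{\delta}\,e^{-(\lambda/3)(\nu_k-j)}\,|f^{\nu_k}\eta_1-f^{\nu_k}\eta_2|,
\]
whose sum over $j\in[p_{k-1},\nu_k)$ telescopes to at most $(C/\delta)\cdot C\delta^{\nu_k/10}$, which is much smaller than $C_1^{-1}/L$ provided $k_0$ is large.

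The main obstacle is keeping the distortion bound intact across the \emph{interior} bound periods $[m,m+p]$ (for intermediate essential free returns of the orbit inside $[p_{k-1},\nu_k]$), where the tangential expansion is briefly lost. The point is that both $f^m\eta_1,f^m\eta_2$ are bound to a common binding critical point $\zeta\in\mathcal C$ for the same duration $p$, so Proposition \ref{recovery0}(c) again yields $|f^{m+i}\eta_1-f^{m+i}\eta_2|\le 2e^{-2\alpha p}$ for $0\le i\le p$, while the total fraction of time spent in such bound intervals is $\le C\alpha\nu_k$ (as in \S\ref{recoverysec}). Thus the contributions from all interior bound periods add up to $\sum_p 2p\,e^{-2\alpha p}$, which is a convergent series independent of $\nu_k$. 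Gathering the three estimates (stable part, initial bound, free+interior-bound part), the total $\sum_{j=0}^{\nu_k-1}|f^j\xi_1-f^j\xi_2|$ is dominated by a universal constant that is as small as desired when $b$ is small and $k_0$ large, yielding the stated bound $e^{C_1^{-1}}$.
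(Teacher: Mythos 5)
Your overall strategy (reduce the Jacobian ratio to $\sum_{j}|f^{j}\xi_1-f^{j}\xi_2|$, project along long stable leaves onto an unstable side, then transfer the terminal smallness $|f^{\nu_k-1}\eta_1-f^{\nu_k-1}\eta_2|\lesssim\delta^{\nu_k/10}$ backward) is the same as the paper's. However, your treatment of the \emph{interior} bound periods has a genuine gap, and it is exactly the point that the paper settles with Lemma~\ref{expa}.

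You handle interior bound periods by applying Proposition~\ref{recovery0}(c) to get the pointwise bound $|f^{m+i}\eta_1-f^{m+i}\eta_2|\le 2e^{-2\alpha p}$ and then asserting that the total contribution is ``$\sum_p 2p\,e^{-2\alpha p}$, a convergent series independent of $\nu_k$.'' This is not a valid count: the index running over the sum is the \emph{returns}, not distinct values of $p$, and bound periods of the same length can and do recur many times inside the window $[p_{k-1},\nu_k)$. Since the spacing between consecutive interior returns is only bounded below by $\sim\log(1/\delta)$, there can be $O(\nu_k/\log(1/\delta))$ of them, each contributing up to $\sim 1/\alpha$; the bound-period contribution is then a priori $O(\nu_k)$, not $O(1)$. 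Your invocation of ``the total fraction of time spent in bound intervals is $\le C\alpha\nu_k$'' makes this explicit: multiplying that count by the \emph{uniform} bound $2e^{-2\alpha p}\le 2$ gives something that grows with $\nu_k$. The estimate $|f^j\eta_1-f^j\eta_2|\le 2e^{-2\alpha p}$ simply does not see that the curve from $\eta_1$ to $\eta_2$ is the pre-image of a tiny arc of length $\lesssim\delta^{\nu_k/10}$ at time $\nu_k$; that is the information you need, and Proposition~\ref{recovery0}(c) does not retain it during the fold/bound window. A minor but related inaccuracy: you assert that the interior return depths satisfy $d_{\mathcal C}(f^j\eta_i)\ge e^{-\alpha j}$ ``since $f^j\eta_i\notin\mathcal A^{(j)}$''; the correct lower bound extracted in the proof of Lemma~\ref{conduir} is only $\delta^{2j}$ (the regions $\mathcal B_0^{(j)}$ have an annular structure, so avoiding $\mathcal A^{(j)}$ does not rule out very deep returns on its own).

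The paper closes this gap in a single stroke with Lemma~\ref{expa}: for the push-forward vector fields $\phi_j$ on the rectangle one has $\Vert\phi_{\nu_k-1}\Vert\ge C\delta\Vert\phi_j\Vert$ for \emph{every} $0\le j<\nu_k$, free or bound. This is a consequence of the regularity conditions (G1--G3) already established for the binding critical point $\zeta$ (via $\Vert w_{\nu_k}\Vert\ge\delta\Vert w_j\Vert$ and the bounded distortion on the rectangle edges), and it gives the uniform estimate $|f^i\eta_1-f^i\eta_2|\le C\delta^{-1}|f^{\nu_k-1}\eta_1-f^{\nu_k-1}\eta_2|\le C\delta^{\nu_k/10-1}$ for all $i$, which after adding the stable-leaf contraction and summing $\nu_k$ terms yields $\sum_i|f^i\xi_1-f^i\xi_2|\le C\nu_k\delta^{\nu_k/10-1}\le C$. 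Incidentally, the reason the final bound $e^{C_1^{-1}}$ is achievable even with a sum that is merely $O(1)$ (rather than small) is that $\Vert D\log|\det Df|\Vert\le CbC_1^{-1}$ carries an extra factor of $b$ coming from $|\det Df|\le C_0 b$; your Lipschitz constant $L(C_0,C_1)$ omits this factor, though it would not change the outcome given that your goal was to make the sum small outright.

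In short: replace the ad hoc handling of interior bound periods by an appeal to Lemma~\ref{expa} (or re-derive its content, namely $\Vert Df^{\nu_k-1-j}\phi_j\Vert\ge C\delta$ for all $j$, including inside fold periods, from (G1--G3) and Proposition~\ref{recovery0}(g),(h)). Then the rest of your plan goes through.
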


\begin{proof}
Since $\Vert D\log|\det Df|\Vert\leq CbC_1^{-1}$, it suffices to
show \begin{equation}\label{bc}
\sum_{i=0}^{\nu_{k}-1}|f^i\xi_1-f^i\xi_2|\leq C.\end{equation}

Let $\gamma$ denote one of the unstable sides of
$f^{\nu_0+\cdots+\nu_{k-1}+1} Q(R_{i_0\cdots i_{k-1}})$. Let
$\eta_\sigma$ denote the point of intersection between
$\Gamma_{\nu_k-1}(\xi_\sigma)$ and $\gamma$ $(\sigma=1,2)$. Let
$0\leq i<\nu_k$.
 If
$\eta_1$ and $\eta_2$ are on the unstable sides of
$f^{\nu_0+\cdots+\nu_{k-1}+1}Q(R_{i_0\cdots i_{k}})$, Lemma
\ref{expa} implies $|f^i\eta_1-f^i\eta_2|\leq
C\delta^{-1}|f^{\nu_{k}-1}\eta_1- f^{\nu_{k}-1}\eta_2| \leq
C\delta^{\frac{\nu_{k}}{10}-1}.$ On the other hand, the
contraction along the long stable leaves gives
$|f^i(f\xi_\sigma)-f^i\eta_\sigma|\leq
(Cb)^{\frac{i}{2}}|f\xi_{\sigma}-\eta_{\sigma}|\leq
(Cb)^{\frac{i+1}{2}}$. It follows that $|f^i\xi_1-f^i\xi_2| \leq
C\delta^{\frac{\nu_{k}}{10}-1}$. Summing this over all $0\leq
i<\nu_k$ yields (\ref{bc}). Even if $\eta_1$ or $\eta_2$ is not on
the unstable side of $f^{\nu_0+\cdots+\nu_{k-1}+1}Q(R_{i_0\cdots
i_{k}})$, the constants in Lemma \ref{expa} are not significantly
affected because $f\xi_1,f\xi_2\in
f^{\nu_0+\cdots+\nu_{k-1}+1}Q(R_{i_0\cdots i_{k}})$ holds. Hence
we obtain the same conclusion.
\end{proof}

\subsection{Proof of Proposition \ref{zeromeasure}}\label{ch}
In what follows, we assume $k\geq k_0$ is large so that
$C_2C_3^{3\nu_k}\leq C_3^{4\nu_k}$. Denote by $\gamma_1$ and
$\gamma_2$ the two unstable sides of
$f^{\nu_0+\cdots+\nu_k}Q(R_{i_0\cdots i_{k}})$, and consider their
graph representations $\gamma_1=\{(x,\gamma_1(x))\}$,
$\gamma_2=\{(x,\gamma_2(x))\}$.
Let $L(x)=|\gamma_1(x)-\gamma_2(x)|$. 
Proposition \ref{lipschitz} and the Gronwall inequality give
 $L(x)/L(y)\leq e^{C_3^{4\nu_k}|x-y|}$ for all
$x$, $y$. As $|x-y|\leq C\delta^{\frac{\nu_k}{10}}$,
$L(x)/L(y)\leq 2$ holds.

Let $S_{\nu_k,1},S_{\nu_k,2},\cdots$ denote the components of
$R_{i_0\cdots i_{k-1}}(\nu_k)$, the total number of which is
clearly $\leq 2^{\nu_k}$. For each $S_{\nu_k,m}$, the above
estimate and Proposition \ref{rect2} give
$$\frac{|\mathcal B_0^{(\nu_k)}\cap
f^{\nu_0+\cdots+\nu_k}S_{\nu_k,m}|}
{|f^{\nu_0+\cdots+\nu_k}Q(R_{i_0\cdots i_{k}})|}\leq
2\delta^{\frac{\nu_k}{5}}.$$
Proposition \ref{metric2} gives $\frac{|\det
Df^{\nu_0+\cdots+\nu_k}(\xi_1)|} {|\det
Df^{\nu_0+\cdots+\nu_k}(\xi_2)|} \leq e^{C_1^{-1}k}$ for all
$\xi_1,\xi_2\in Q(R_{i_0\cdots i_{k}})$. Hence
$$\frac{|f^{-(\nu_0+\cdots+\nu_k)}(\mathcal B_0^{(\nu_k)})\cap
S_{\nu_k,m}|} {|R_{i_0\cdots
i_{k-1}}|}\leq\frac{|f^{-(\nu_0+\cdots+\nu_k)}(\mathcal
B_0^{(\nu_k)})\cap S_{\nu_k,m}|} {|Q(R_{i_0\cdots i_{k}})|}\leq
2e^{C_1^{-1}k}\delta^{\frac{\nu_k}{5}}.$$ The first inequality
follows from the obvious inclusion $Q(R_{i_0\cdots,i_{k}})\subset
R_{i_0\cdots i_{k-1}}.$ Summing this over all components, and then
for all feasible $\nu_k$,
$$\sum_{\nu_k}\sum_{m}
\frac{|S_{\nu_k,m}|}{|R_{i_0\cdots i_{k-1}}|} \leq
2\sum_{\nu_k\geq 4^kk_0}
2^{\nu_k}e^{C_1^{-1}k}\delta^{\frac{\nu_k}{5}}\leq
e^{C_1^{-1}k}\delta^{\frac{4^kk_0}{6}}.$$  Therefore
\begin{align*}|\Omega_k|=\sum_{(i_0,\cdots,i_k)}|R_{i_0\cdots i_k}|&
=\sum_{(i_0,\cdots,i_{k-1})} |R_{i_0\cdots i_{k-1}}|\sum_{\nu_k,m}
\frac{|S_{\nu_k,m}|} {|R_{i_0\cdots i_{k-1}}|}\leq
e^{C_1^{-1}k}\delta^{\frac{4^kk_0}{6}}|\Omega_{k-1}|.\end{align*}
 This completes the proof of Proposition \ref{zeromeasure}.

\subsection{Transitivity}\label{tranasitive}
We show $f$ is transitive on $K$. Let $H(Q)$ denote the closure of
transverse homoclinic points of $Q$. Then $H(Q)\subset K$ holds.
It suffices to show the reverse inclusion. Let $z\in K$, and let
$U$ be an open set containing $z$. Since the Lebesgue measure of
$U\cap K^+$ is zero, $U$ intersects $W^s(Q)$. It follows that
$W^s(Q)$ is dense in $K$. By Inclination Lemma, $z$ is accumulated
by transverse homoclinic points of $Q$. Hence $K\subset H(Q)$
holds.


\section*{Appendix}

\subsection*{A.1. Proof of Lemma \ref{c2}.}
From the next sublemma, it follows that $f^nG\cap I(\delta)$ is
made up of $C^2(b)$-curves. This yields the conclusion of Lemma
\ref{c2}. For a proof of it, the correct order for the reader is
to go over Sect.\ref{rec}, \ref{cripa} first. For $z\in W^u(Q)$,
let $t(z)$ denote any unit vector tangent to $W^u(Q)$ at $z.$
\begin{sublemma}\label{kuya}
Let $n\geq0$ and $z\in G$. If $f^iz\notin I(\delta)$ for
$0\leq i\leq n$, then there exists a sequence $0\leq
n_1<n_1+p_1\leq n_2<n_2+p_2\leq n_3<\cdots\leq n$ of integers such
that:
\smallskip

\noindent{\rm (a)} $f^{n_i}z\in I(\delta)$;

\noindent{\rm (b)} $f^jz\in \{(x,y)\in\mathbb R^2\colon |x|\geq
9/10\}$ for $n_i+1\leq j\leq n_{i}+p_i$;

\noindent{\rm (c)} $\Vert Df^{n_i}t(z)\Vert\geq(\delta/10)\Vert
Df^{j}t(z)\Vert$ for $0\leq j<n_i$.
\end{sublemma}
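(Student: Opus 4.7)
My plan is to prove the sublemma by induction on $i$, constructing the return times $n_1 < n_2 < \cdots \leq n$ and bound periods $p_i$ recursively via the binding procedure of Sect.~3.4 adapted to $f = f_{a^*}$. Throughout, (a) will hold by definition of $n_i$, while (b) and (c) will be the substantive content.

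For the base case, let $n_1$ be the smallest $j \in [0,n]$ with $f^j z \in I(\delta)$; if no such $j$ exists the sequence is empty and there is nothing to prove. Since $z,fz,\dots,f^{n_1-1}z$ are free in $R_0 \setminus I(\delta)$ and $t(z)$ is tangent to $W^u(Q)$, Lemma~\ref{outside}(a,b) applied from any intermediate time $j$ up to $n_1$ gives
$\|Df^{n_1-j}(Df^{j}t(z))\| \geq \delta e^{\lambda_0(n_1-j)} \|Df^{j}t(z)\|$,
whence $\|Df^{n_1}t(z)\| \geq (\delta/10)\|Df^{j}t(z)\|$ for every $0 \leq j < n_1$, verifying (c); property (b) is vacuous.

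For the inductive step, suppose $n_{i-1}$ and $p_{i-1}$ are constructed. Property (f) of Proposition~\ref{recovery} ensures that $Df^{n_{i-1}+p_{i-1}}t(z)$ is $(\delta/10)$-regular, and Lemma~\ref{outside} preserves this regularity along the free segment until the next entry $n_i$ into $I(\delta)$. I then invoke the binding procedure (Lemma~\ref{zyunzo}) to produce a critical approximation $\zeta_i$ of some order $m_i$ such that $Df^{n_i}t(z)$ is in admissible position relative to $\zeta_i$, and let $p_i$ be the bound period from Sect.~\ref{recoverysec}. Property~(c) at index $i$ then follows by chaining: (f) of Proposition~\ref{recovery} gives $(\delta/10)$-dominance at the end of each bound period over all iterates within that bound period, Lemma~\ref{outside} gives expansion over free segments, and the inductive hypothesis handles times before $n_{i-1}$. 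Property (b) follows from (c) of Proposition~\ref{recovery}, which yields $|f^{n_i+j}z - f^{j}\zeta_i| \leq e^{-2\alpha p_i}$ for $1 \leq j \leq p_i$, together with the fact that the orbit of $\zeta_i$ lies in $\{|x| \geq 9/10\}$ for $1 \leq j < 20 m_i$ — a range that contains $[1,p_i]$ by (a) of Proposition~\ref{recovery}.

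The main obstacle is establishing the non-recurrence of $\zeta_i$'s orbit needed for (b) \emph{without invoking Proposition~\ref{good1}}, since the present sublemma is used in the proof of that proposition via Lemma~\ref{c2}. The way around this circularity is to argue directly from the geometry of $f_{a^*}$: by the construction of critical points in Sect.~\ref{proo}, each genuine critical point $\zeta_{\mathrm{crit}}$ on $W^u(Q)$ has $f\zeta_{\mathrm{crit}}$ lying on a long stable leaf that stretches to $\partial R_0$ and beyond into the uniformly hyperbolic region $D_0$, so the forward orbit of $\zeta_{\mathrm{crit}}$ shadows an orbit leaving $R_0$ and in particular stays well inside $\{|x| \geq 9/10\}$. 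A critical approximation $\zeta_i$ of order $m_i$ lies within $(Cb)^{m_i/4}$ of such a $\zeta_{\mathrm{crit}}$ (by Lemma~\ref{sox}), and this proximity propagates under $20m_i$ iterates with the usual exponentially small loss, giving the orbit bound for $\zeta_i$; combined with the tracking estimate above this yields (b) for the orbit of $z$, up to an exponentially small slack $e^{-2\alpha p_i}$ which is absorbed into the $9/10$ threshold for sufficiently small $\alpha$ and large $M$.
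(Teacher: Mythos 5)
Your plan correctly isolates the two essential components — verifying (b)/(c) along bound segments by binding, and the threat of circularity with Proposition~\ref{good1} — but the specific machinery you invoke does not escape that circularity. The binding procedure (Lemma~\ref{zyunzo}) and the bound-period estimates you cite (Proposition~\ref{recovery} (c),(f)) both sit under the standing hypothesis of Sections~2.6 and~3 that every nice critical approximation of the relevant order already has a good critical behavior (G1--G3). For $f_{a^*}$ that hypothesis is precisely what Proposition~\ref{good1} would deliver; you cannot call on it in the middle of the argument whose end product is Proposition~\ref{good1}. Your attempted remedy — pulling each $\zeta_i$ close to a genuine critical point via Lemma~\ref{sox} and then tracking — only addresses the non-recurrence statement (b); it does not give you the license to invoke Lemma~\ref{zyunzo} or Proposition~\ref{recovery} in the first place. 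Moreover Lemma~\ref{sox} applies to $C^2(b)$-curves that lie in $W^u(Q)$ and stretch across $I(\delta)$, whereas the approximations produced by the Section~3 binding procedure live on iterates $f^{\mu_i}l_i$ of straight segments, which are not a priori in $W^u$; you would need something like Claim~\ref{trivial}, whose proof itself leans on the Section~4.11 apparatus and hence on ordering you have not arranged.

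The paper avoids all of this by not touching critical approximations at this stage. It inducts on the iterate count $n$, not on the index $i$, and at each free return it binds to a \emph{genuine} critical point of $f_{a^*}$ in the tangency sense of Section~4.11 (a point where the long stable leaf is tangent to $W^u$). For those points the non-recurrence needed for (b) is definitional: they lie in $S$, and by (M1) all of $fS$ escapes $R_0$ under forward iteration. That fact is also what makes the Proposition~\ref{recovery0} estimates applicable without any standing hypothesis, and it is what ensures, at each step of the induction, that the image $f^{n_i}G \cap I(\delta)$ consists of $C^2(b)$-curves carrying a genuine critical point and an associated critical partition (Section~4.4). This self-contained loop is exactly what your route through Section~3 lacks. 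If you want to salvage your argument, you must first establish — for $f_{a^*}$, inductively in the order, and before invoking Lemma~\ref{zyunzo} or Proposition~\ref{recovery} — that critical approximations of the orders actually arising have good critical behavior; at that point you would essentially be reproducing the paper's induction on $n$ with an extra layer of approximation, and the genuine-critical-point route is the cleaner choice.
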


\noindent{\it Proof of Sublemma \ref{kuya}.} The argument is an
induction on $n$.
 For $n=0$, the assertions are direct consequences of
the definition of $G$. Suppose that they hold for $n=k$. From the
fact that the orbits of all critical points on $W^u(Q)$ are out of
$R_0$, all the estimates in Proposition \ref{recovery0} remain to
hold for them. This allows us to decompose the orbit of $z$ into
bound and free segments as follows: $n_i\leq k$ is a return time
to $I(\delta)$. By the assumption of the induction, there exists a
$C^2(b)$-curve in $W^u(Q)$ tangent to $Df^{n_i}t(z)$ stretching
across $I(\delta)$. Let $p_i$ denote the bound period, given by
the critical point on the $C^2(b)$-curve and an associated
critical partition in Sect.\ref{cripa}. Let $n_{i+1}$ denote the
next return time to $I(\delta)$. By (c) in Proposition
\ref{recovery0}, bound parts of $f^{k+1}G$ do not return to
$I(\delta)$. This recovers all the assertions for $n=k+1$. \qed

\subsection*{A.2. Proof of Lemma \ref{boun}}
First, for $M\leq  k<20n-1,$ we show
\begin{equation}\label{compte}
e^{-3\alpha k}D_k(\zeta)\leq D_{k+1}(\zeta)\leq
e^{-3\alpha}D_k(\zeta).
\end{equation}
To this end, let
$d_\ell(i)=\min_{j\in[i,\ell+1]}\|w_{j}(\zeta)\|^2
\|w_{i}(\zeta)\|^{-3}$. Then
\begin{align*}
\frac{D_{k+1}(\zeta)}{D_k(\zeta)}&
=e^{-3\alpha}\frac{\min_{i\in[1,k+1]}d_{k+1}(i)}{
\min_{i\in[1,k]}d_{k}(i)}\leq e^{-3\alpha}\frac
{\min_{i\in[1,k]}d_{k+1}(i)}{\min_{i\in[1,k]}d_{k}(i)}\leq e^{-3\alpha},
\end{align*}
and the second inequality holds.

(G2) gives
$\|w_{k+2}(\zeta)\|\geq e^{-2\alpha (k+1)}\| w_{k+1}(\zeta)\|$,
and thus for $1\leq i\leq k,$
\begin{align*}
d_{k+1}(i)&=\min\left\{d_{k}(i),
\|w_{k+2}(\zeta)\|^2\|w_{i}(\zeta)\|^{-3}\right\} \geq e^{-4\alpha
k}d_{k}(i)\geq e^{-4\alpha (k+1)}D_k(\zeta).
\end{align*}
 Using
$\|w_{k+1}(\zeta)\|\leq C_0\|w_{k}(\zeta)\|$ and
$\|w_{j}(\zeta)\|\geq e^{-2\alpha k}\| w_{j-1}(\zeta)\|$ from
(G2),
\begin{equation*}d_{k+1}(k+1)\geq C_0^{-3}e^{-4\alpha (k+1) }d_{k}(k).\end{equation*}

These two inequalities yield the first inequality in
(\ref{compte}).

We now show (a) (b). From (f) Proposition \ref{recovery0},
[\cite{T1} Lemma 2.3] and the fact that $\gamma$ is $C^2(b)$,
$f^{\chi(k)}\gamma_{k,s}$ is $C^2(b)$. Using (\ref{compte}),
\begin{align*}
{\rm length}(f^{\chi(k)}\gamma_{k,s})&\geq
Ce^{-3\alpha k}\|w_{\chi(k)}(\zeta)\|(D_k(\zeta)-D_{k+1}(\zeta))\geq
Ce^{-3\alpha k}\|w_{\chi(k)}(\zeta)\|D_k(\zeta)(1-e^{-3\alpha})\\&\geq
Ce^{-3\alpha k}\|w_{k}(\zeta)\|D_k(\zeta)C_0^{-\sqrt{\alpha}k}
(1-e^{-3\alpha}) \geq e^{-4\alpha k}.
 \end{align*}
The third inequality follows from $k-\chi(k)\leq\sqrt{\alpha} k$
in (G2). Using $D_{k+1}(\zeta)\geq C_0^{-3k}$ and $ {\rm
length}(\gamma_k)\leq Ce^{2\alpha k} D_{k+1}^{\frac{1}{2}}(\zeta)$
which follows from (\ref{compte}),
$${\rm length}(\gamma_{k,s})\leq e^{-3\alpha k}\cdot{\rm
length}(\gamma_k) \leq D_{k+1}^{\frac{1}{2}+\frac{\alpha}{3\log
C_0}}(\zeta)\leq d(\gamma_{k,s},\zeta)^{1+C\alpha}.$$ Here,
$d(\gamma_{k,s},\zeta)$ denotes the distance between
$\gamma_{k,s}$ and $\zeta$. Now (b) follows from
 [\cite{T1} Lemma 5.12]. \qed

\subsection*{A.3. Derivative estimates of projectivization}
We prove (\ref{pro1}) (\ref{pro2}).
Let $v^\bot$ denotes any unit
vector orthogonal to $v$.
Then
\begin{align*}\left|\partial_{v} f_*(\xi,v)\right| &=\lim_{\Delta\theta\to0}\left\Vert\frac{1}{\Delta\theta}
\left(\frac{Df(\xi)(v+\Delta\theta v^\bot)}{\Vert
Df(\xi)(v+\Delta\theta v^\bot)\Vert}- \frac{Df(\xi)v}{\Vert
Df(\xi)v\Vert}\right)\right\Vert\\
&\leq\frac{\Vert Df(\xi)v^\bot\Vert}{\Vert Df(\xi)v\Vert}+
\lim_{\Delta\theta\to0}\left\Vert\frac{1}{\Delta\theta}\frac{\Vert
Df(\xi)v\Vert-\Vert Df(\xi)(v+\Delta\theta v^\bot)\Vert}{\Vert
Df(\xi)v\Vert}\right\Vert\\
&\leq 2\frac{\Vert Df(\xi)v^\bot\Vert}{\Vert
Df(\xi)v\Vert}=2\frac{|\det Df(\xi)|}{\Vert
Df(\xi)v\Vert^2}.\end{align*}

Let $\xi=(x,y)$.
Writing $\xi_x=\xi+(\Delta
x,0)$ we have
\begin{align*}\left|\partial_x f_*(\xi,v)\right| &=\lim_{\Delta x \to0}\left\Vert\frac{1}{\Delta x}
\left(\frac{Df(\xi_x)v}{\Vert Df(\xi_x)v\Vert}-
\frac{Df(\xi)v}{\Vert
Df(\xi)v\Vert}\right)\right\Vert\\
&=\lim_{\Delta x \to0}\left\Vert\frac{1}{\Delta x}
\left(\frac{Df(\xi_x)v-Df(\xi)v}{\Vert Df(\xi)v\Vert}- \frac{\Vert
Df(\xi_x)v\Vert- \Vert Df(\xi)v\Vert}{\Vert Df(\xi)v\Vert\Vert
Df(\xi_x)v\Vert}Df(\xi_x)v\right)\right\Vert\\
&\leq 2\lim_{\Delta x \to0}\left\Vert\frac{1}{\Delta x}
\frac{(Df(\xi_x)-Df(\xi))v}{\Vert
Df(\xi)v\Vert}\right\Vert=\frac{2}{\Vert
Df(\xi)v\Vert}\left\Vert
\left(\frac{\partial}{\partial x }Df(\xi)\right)v\right\Vert.
\end{align*}
In the same way we get
$$\left|\partial_y F(\xi,v)\right|\leq
\frac{2}{\Vert
Df(\xi)v\Vert}\left\Vert
\left(\frac{\partial}{\partial y}Df(\xi)\right)v\right\Vert.$$




\bibliographystyle{amsplain}

\end{document}